\theoremstyle{theorem}
\newtheorem{thm}{Theorem}[section]
\newtheorem*{thm*}{Theorem 1}
\newtheorem{cor}[thm]{Corollary}
\newtheorem*{cor*}{Corollary 1}
\newtheorem{lem}[thm]{Lemma}
\newtheorem{prop}[thm]{Proposition}
\theoremstyle{definition}
\newtheorem{defn}[thm]{Definition}
\newtheorem{rmk}[thm]{Remark}
\newtheorem{cla}[thm]{Claim}
\newcommand{\N}{\mathbb{N}}
\newcommand{\R}{\mathbb{R}}
\newcommand{\Z}{\mathbb{Z}}
\newcommand{\Fc}{\mathcal{F}}
\newcommand{\Gc}{\mathcal{G}}
\renewcommand{\a}{\alpha}
\renewcommand{\b}{\beta}
\renewcommand{\d}{\delta}
\newcommand{\e}{\epsilon}
\renewcommand{\l}{\lambda}
\newcommand{\graph}{\operatorname{graph}}
\newcommand{\lip}{\operatorname{Lip}}
\newcommand{\Id}{\text{Id}}
\newcommand{\Bc}{\mathcal{B}}
\renewcommand{\tilde}{\widetilde}
\newcommand{\tf}{\tilde{f}}
\newcommand{\ds}{/ \! /}
\newcommand{\Sc}{\mathcal{S}}
\newcommand{\Oc}{\mathcal{O}}
\newcommand{\Wc}{\mathcal{W}}
\newcommand{\Bs}{\mathscr{B}}
\newcommand{\As}{\mathscr{A}}
\newcommand{\oN}{\overline{N}}
\newcommand{\la}{\lambda}
\newcommand{\td}{\tilde{\delta}}
\newcommand{\tri}[1]{{\left\vert\kern-0.25ex\left\vert\kern-0.25ex\left\vert #1 
    \right\vert\kern-0.25ex\right\vert\kern-0.25ex\right\vert}}
\begin{document}

\title{\bf Entropy, volume growth and SRB measures \\
   for Banach space mappings}

\author{Alex Blumenthal\thanks{Courant Institute of Mathematical Sciences, New York University, New York, USA.  Email: alex.blumenthal@gmail.com.}
\and Lai-Sang Young\thanks{Courant Institute of Mathematical Sciences, New York University, New York, USA.  Email: lsy@cims.nyu.edu. This research was supported in part by NSF Grant DMS-1363161.}
}

\maketitle

\begin{abstract} We consider $C^2$ Fr\'echet differentiable mappings of Banach spaces
leaving invariant compactly supported Borel probability measures, and study the relation
between entropy and volume growth for a natural notion of volume defined on finite dimensional subspaces. 
SRB measures are characterized as exactly those measures
for which entropy is equal to volume growth on unstable manifolds, equivalently the sum of
positive Lyapunov exponents of the map. In addition to numerous difficulties incurred by 
our infinite-dimensional setting, a crucial aspect to the proof is the technical point that
the volume elements induced on unstable manifolds are regular enough to permit
distortion control of iterated determinant functions.
The results here generalize previously known results for diffeomorphisms of finite dimensional Riemannian manifolds, and are applicable to dynamical systems defined by large classes of dissipative parabolic PDEs.
\end{abstract}

\tableofcontents

\vskip .6in

This paper is part of a program to expand the scope of smooth ergodic theory,
with a view towards making it applicable to PDEs as well as ODEs. 
Here, we extend to Banach space mappings 
an important result known in finite dimensions, namely the
characterization of SRB measures as  invariant measures for which
 entropy attains its upper bound given by the rate of unstable volume growth. 
 The class of mappings to which our results apply includes 
(but is not limited to) time-t maps 
of semiflows defined by periodically forced nonlinear dissipative parabolic PDEs.

Orbits tend to attractors in dissipative dynamical systems. It is often assumed
in the physics literature that asymptotic behaviors of ``typical" orbits are captured 
by certain special invariant measures called SRB measures \cite{eckmannRuelle}, 
in the sense that their time averages  tend to space averages taken with respect
to these measures. Mathematically, proving the existence of SRB measures 
poses nontrivial challenges, but for finite dimensional systems it has been shown
that when they exist, ergodic SRB measures have the properties above, confirming
that they are, to dissipative systems, what Liouville measures are to Hamiltonian systems. 
See \cite{youngMathTheoryLE} for a review. Without a doubt, 
SRB measures (named after Sinai, Ruelle and Bowen, who discovered them 
for uniformly hyperbolic attractors) are among the most important ideas in 
finite dimensional theory. This paper extends to Banach space mappings,
including time-$t$ maps of semiflows generated by certain kinds of PDEs, 
the characterization of SRB
measures in terms of two much studied dynamical invariants, {\it metric entropy} 
and {\it Lyapunov exponents}. 

Entropy measures the growth in
randomness in the sense of information theory as a transformation is iterated, while 
Lyapunov exponents measure geometric instability: they give the rates at which nearby orbits diverge. Though {\it a priori} quite different, these two ways to capture dynamical complexity 
are in fact closely related: For a differentiable mapping of a finite dimensional Riemannian
manifold preserving a compactly supported Borel probability measure, it was shown 
by Ruelle \cite{ruelle} that entropy is always dominated by the sum of positive Lyapunov exponents (counted with multiplicity). In the case of volume preserving diffeomorphisms, Pesin
showed that the two quantities above are in fact equal \cite{pes}; another proof was later given
by Ma\~n\'e \cite{mane}. The ultimate results in this direction are contained in the 
combined works of Ledrappier {\it et. al.} \cite{ledstrel, led, ledyou1, ledyou2}, 
which identified the SRB property of the invariant measure as both 
necessary and sufficient for the entropy formula to hold, and related the gap
 in this formula to the dimension of the invariant measure in general.

The results of  \cite{ledstrel, led, ledyou1} reinforce the view 
that the ``effective dimension" of a dynamical system
is equal to its number of positive Lyapunov exponents, in that all of the
dynamical complexity of a system is captured in its expanding directions. 
Even though it is impossible to mathematically carry out a dimension reduction  
procedure along the lines described, these ideas are conceptually valid.
SRB measures and their entropy formula characterization are therefore
especially relevant for systems with many degrees of freedom and
relatively low effective dimensions, such as time-$t$ maps of 
semiflows defined by nonlinear dissipative parabolic PDEs.
For background information on dynamical systems generated by parabolic
PDEs, see e.g. \cite{henry}.

%
%

\bigskip \noindent
{\it Technical issues associated with ergodic theory on Banach spaces}

\smallskip
The main results of this paper generalize \cite{ledstrel} and \cite{led}
to Banach space mappings $f$ preserving a compactly supported Borel 
probability measure $\mu$ with finitely many positive Lyapunov exponents.
We prove that under the condition of no zero Lyapunov exponents, 
$\mu$ is an SRB measure if and only if the entropy of $f$ is equal to 
the sum of its positive Lyapunov exponents. 

A number of results in nonuniform hyperbolic theory have been
extended to Hilbert space mappings \cite{thieu,  lianyou1, lianyou2, linshu1, linshu2, lianyou3,
luwangyoung}, and some have further been extended to mappings of Banach space, 
e.g. Ruelle's inequality for deterministic and random maps \cite{thieu, linshu2} and 
the absolute continuity of the stable foliation for systems with invariant cones
\cite{lianyou3}. Observe, however, that ideas surrounding the entropy formula and 
SRB measures have not been studied for systems on Banach spaces.
A hurdle might be that in one way or another, these ideas are related to
volume growth on finite dimensional (unstable) manifolds, and in Banach spaces
there is no intrinsic notion of $k$-dimensional volume for $k>1$. While one may be
able to make do with a Lebesgue measure class on unstable manifolds
(Haar measure is certainly well defined), we believe a systematic understanding 
of volume growth is conducive to understanding SRB measures and the relation 
between entropy and Lyapunov exponents. 

It is simple enough to put a notion of volume on a fixed finite dimensional normed 
vector space, and one can do that -- one subspace at a time -- for all finite dimensional 
subspaces of a Banach space. But for such a notion to be useful in smooth ergodic theory, 
{\it regularity} of this volume function as subspaces are varied is essential. 
It is well known that norms do not necessarily vary smoothly with vectors on Banach spaces; 
volumes and determinants are not likely to fare better. Hence it is important that
 the volumes we introduce are regular enough to support distortion  estimates 
 on unstable manifolds, as such bounds
are key to many important results in hyperbolic theory. We will show
that they have the regularity we need, but it is not clear that finite dimensional results
involving higher regularity of determinants, e.g. \cite{ruelle2}, will carry over to
Banach spaces.
%
%

In addition to the absence of an intrinsic notion of volume, another difficulty
we face has to do with {\it noninvertibility} of the map $f$, which is not onto and has
arbitrarily strong contraction in some directions. Even where $f^{-1}$ is
defined, one cannot expect it to have nice properties. This leads to regularity issues
for objects such as $E^u$-spaces the definitions of which involve backward iterations.
In response to these difficulties, throughout this paper we have tried to identify 
differences between diffeomorphisms and
maps that are not invertible, and differences between finite and infinite dimensions. 
We have taken special care in 
the treatment of volume growth, recognizing that Banach spaces do not always admit 
a notion of volume as nice as that on Hilbert spaces or 
on finite dimensional Riemannian manifolds.

\bigskip
The organization of this paper is as follows: The main results are stated in
Section 1. Section 2 contains a discussion of volumes and determinants on
finite dimensional subspaces of Banach spaces; part of this material is included for
the convenience of the reader, and other parts (e.g. regularity of determinants) are new.
We hope this basic material will be useful beyond the present paper. 
Section 3 contains a small addendum to the Multiplicative Ergodic
Theorem, following up on volume growth ideas in relation to Lyapunov exponents. Sections 4 and 5 contain preparations for the proofs of our main results, such as Lyapunov charts, distortion estimates, etc. Additional technical issues and
the proofs of the main results are carried out in Section 6. 

\section{Statement of Results}


Let $(\Bc, |\cdot|)$ be a Banach space. After some preliminary work fixing a notion
of volume on finite-dimensional subspaces of $\Bc$ (Section 2), 
we turn to the main topic of this paper, 
nonuniform hyperbolic theory for Banach space mappings. We begin with some basic facts of this theory, 
proved under conditions (H1)--(H3) below.

\bigskip \noindent
{\bf Setting for basic nonuniform hyperbolic theory.} We consider $(f, \mu)$,
where $f : \Bc \to \Bc$ is a mapping and $\mu$ is an $f$-invariant Borel probability measure. The following properties are assumed: 
\begin{itemize} 
\item[(H1)] (i) $f$ is $C^{2}$ Fr\'echet differentiable and injective; 

\vspace{-3 pt}
(ii) the derivative of $f$ at $x \in \Bc$, denoted $df_x$, is also injective.

\vspace{-3 pt}
\item[(H2)] (i) $f$ leaves invariant a compact set $\As \subset \Bc$, with $f(\As)=\As$;

\vspace{-4 pt}
(ii) $\mu$ is supported on $\As$. 

\vspace{-3 pt}
\item[(H3)] We assume 
$$l_\alpha(x):=\lim_{n\to\infty}\frac1n \log |df^n_x|_\alpha \ < \ 0 
\quad \mbox{ for } \ \mu-\text{a.e. }x\ . $$
Here $|df^n_x|_\alpha$ is the Kuratowski measure of noncompactness of 
the set $df^n_x(B)$, where
$B$ is the unit ball in $\Bc$. 
\end{itemize}

Condition (H3) is discussed in more detail in Sect. 3.1. It is a relaxation of 
the condition that $df_x$ is the sum of a compact operator and a contraction 
for each $x \in \As$ (see Remark \ref{rmk:impliesH3}), and it implies 
that positive and zero Lyapunov exponents of $(f,\mu)$ 
are well defined and have finite multiplicity. 

\bigskip
\noindent
{\bf Two other relevant assumptions.} \vspace{-3 pt}
\begin{itemize}
\item[(H4)] $(f,\mu)$ has no zero Lyapunov exponents.

\vspace{-3 pt}
\item[(H5)] the set $\As$ in (H2) has finite box-counting dimension.
\end{itemize}

\smallskip
We remark that (H5) is automatically satisfied if $f$ satisfies (H1) and (H2)(i),
 and $df_x$ is the sum of a compact operator and a contraction for each $x \in \As$;
see \cite{maneDimension}.
 
\medskip
For diffeomorphisms of Riemannian manifolds, one generally requires in the definition
of SRB measures 
 that the conditional measures of $\mu$ on unstable manifolds
be absolutely continuous with respect to the Riemannian measures induced on 
these manifolds. In Banach spaces, the notion of Riemannian volume is absent, 
but there is the following well defined {\it Lebesgue measure class} on any finite dimensional submanifold $W$: For $x \in W$, we let $\Bc_x$ denote
the tangent space to $\Bc$ at $x$, and choose a closed subspace $F$ so that $\Bc_x = E \oplus F$ where $E$ is the subspace
tangent to $W$ at $x$. Then on a small neighborhood $U$ of $x$ in $W$, 
the ``Lebesgue measure class" is the one that when projected to $E$ along
$F$ gives the Haar measure class on $E$. We state below a provisional definition
of SRB measures; see Sect. 6.1 for a formal definition.

\smallskip
\begin{defn} \label{defn:introSRB}
We say $\mu$ is an {\it SRB measure} if (i) it has a positive Lyapunov exponent 
$\mu$-a.e. and (ii) the conditional measures of $\mu$ on unstable manifolds are 
in the ``Lebesgue measure class" induced on these manifolds.
\end{defn}


Let $(f,\mu)$ be as above. We let $h_\mu(f)$ denote the entropy
with respect to $\mu$, and let $\lambda_1(x)> \lambda_2(x) > \cdots$, with multiplicities
$m_1(x), m_2(x), \dots$, denote the distinct Lyapunov exponents of $(f,\mu)$ at $x$.
Write $a^+ = \max\{a, 0\}$. Our main results are the following:

\bigskip \noindent
{\bf Theorem 1.} {\it Suppose $(f,\mu)$ satisfies (H1)--(H4) above, and assume that $\mu$ is an SRB measure. Then
\begin{equation} \label{entropyformula}
h_\mu(f) = \int \sum_i m_i(x) \lambda_i^+(x)  \ d\mu\ .
\end{equation}
}

\smallskip \noindent
{\bf Theorem 2.} {\it Suppose $(f,\mu)$ satisfies (H1)--(H5). If $\lambda_1>0$ $\mu$-a.e.
and the entropy formula (\ref{entropyformula}) holds, then $\mu$ is an SRB measure.}

\bigskip
The results in Theorems 1 and 2 were proved in \cite{ledstrel},\cite{led} in a finite dimensional context,
more precisely for diffeomorphisms of compact Riemannian manifolds, and extended to Hilbert spaces in \cite{linshu1}. In all likelihood, the no zero exponents assumption (H4)  is not necessary, but in the presence of zero Lyapunov exponents, the proofs are more
elaborate and we have elected to treat that case elsewhere.

One way to understand Theorem 1 is to view the sum of positive
Lyapunov exponents as representing  {\it volume growth} on unstable manifolds, 
so that the right side of (\ref{entropyformula}) tells us how
volumes on unstable manifolds are transformed, while entropy describes the
transformation of the conditional measures of $\mu$. To express
these ideas in a systematic way, we need a coherent notion of {\it volume} on unstable
manifolds, not just a measure class (which was sufficient for purposes of defining
SRB measures). We know of no previous studies of volumes on finite dimensional
subspaces of Banach spaces that serve our purposes, the closest approach to these ideas being the `volume function' in \cite{lianlu} (see also \cite{quasConcise}), which does not
arise from a genuine volume on subspaces in the usual sense.
Thus we include in Section 2 a short introduction to these ideas. 

With a coherent notion of induced volumes on finite dimensional subspaces in hand,
we have a well defined notion of finite dimensional {\it determinant} for $df_x$, 
and a result to the following effect:

\medskip \noindent
{\bf Corollary 3.} {\it The conditional densities of an SRB measure on unstable manifolds are Lipschitz
and have the form
$$
\frac{\rho(x)}{\rho(y)} = \prod_{i = 1}^{\infty} \frac{ \det(df_{f^{-i}y}|_{T_{f^{-i} y} W})}{ \det(df_{f^{-i}x}|_{T_{f^{-i} x} W})}
$$
for all $x,y$ on the same local unstable manifold $W$.}

\medskip

A precise statement of this result requires some preparation and is given in Sect. 6.5.

\section{Volumes on Finite Dimensional Subspaces of a Banach Space}

Whereas in a Hilbert space, a finite dimensional subspace is naturally an inner product space with an obvious choice of volume element, there is no such `obvious' choice in a Banach space. The objective of this section is to introduce a coherent notion of volume on finite-dimensional subspaces of a Banach space, and to establish some basic properties.
Definitions and basic facts of induced volumes and determinants are given in 
Sects. 2.1 and 2.2. Their regularity, which are the main results of this section,
are proved in Sect. 2.3. As noted in the Introduction, regularity of the determinant
is relevant for controlling the distortion of iterated densities on unstable manifolds.

We assume throughout that $(\Bc, |\cdot|)$ is a Banach space.

\subsection{Relevant Banach space geometry (mostly review)}

We gather in this subsection some known facts that are relevant
to smooth ergodic theory on Banach spaces, casting them in a light suitable for our purposes.

\subsubsection{Induced volumes}

Following the idea of the Busemann-Hausdorff volume in Finsler geometry \cite{buse}, \cite{rund}, we make the following definition.

\begin{defn}
Let $E \subset \Bc$ be a finite-dimensional subspace. We define the \emph{induced volume }$m_E$ on $E$ to be the unique Haar measure on $E$ for which
$$
m_E \{u \in E \mid |u| \leq 1\} = \omega_k
$$
where $k = \dim E$ and $\omega_k$ is the volume of the Euclidean unit ball in $\R^k$.
\end{defn}

The following are some basic properties of $m_E$ entailed by this definition.

\begin{lem} \label{lem:indMeasProps}
Let $E \subset \Bc$ be a $k$-dimensional subspace. Then $m_E$ satisfies the following.
\begin{enumerate}
\vspace{-4 pt}
\item For any $v \in E$ and any Borel measurable set $S \subset E$, we have $m_E(v + S) = m_E (S)$.
\vspace{-4 pt}
\item If $m'$ is any other $\sigma$-finite non-zero measure on $E$ satisfying item 1, then 
$m'$ and $m_E$ are equivalent with $\ \frac{d m'}{dm_E} \equiv c \ \ m_E$-a.e. for a constant $c>0$.
\vspace{-4 pt}
\item For any $a > 0$ and any Borel measurable set $S \subset E$, we have $m_E (a S) = a^k m_E(S)$.
\end{enumerate}
\end{lem}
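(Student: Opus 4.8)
The plan is to reduce all three statements to standard facts about Lebesgue (Haar) measure on $\R^k$. Fix a linear isomorphism $T : \R^k \to E$; since a finite-dimensional real vector space carries a unique Hausdorff vector-space topology, $T$ is a homeomorphism, so $\lambda := T_*(\mathrm{Leb})$ is a translation-invariant Radon measure on $E$ assigning finite positive mass to the compact set $\{|u| \le 1\}$ (which has nonempty interior). Item 1 is then essentially the definition: a Haar measure on the locally compact abelian group $(E,+)$ exists and is unique up to a positive scalar, so normalizing the unit-ball mass to $\omega_k$ singles out $m_E$, and Haar measures are by definition translation-invariant. In particular $m_E = c_0\,\lambda$ for some $c_0 \in (0,\infty)$, a fact I will reuse.

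For item 2, let $m'$ be $\sigma$-finite, nonzero, and translation-invariant. First I would compare $m'$ with $\lambda$ by a Tonelli argument: for a Borel set $A \subset E$, integrating the non-negative Borel function $(x,y) \mapsto \mathbf{1}_A(x+y)$ against the product measure $m' \times \lambda$ in the two orders and using translation-invariance of each factor gives the identity $m'(A)\,\lambda(E) = \lambda(A)\,m'(E)$. Since $\lambda(E) = \infty$, this forces $m'(A) = 0$ whenever $\lambda(A) = 0$; since $m'(E) > 0$, it forces $\lambda(A) = 0$ whenever $m'(A) = 0$; so $m' \sim \lambda$. Next, Radon--Nikodym yields $m' = g\,\lambda$ with $g : E \to [0,\infty)$ Borel and, because $m'$ is $\sigma$-finite, finite $\lambda$-a.e.; translation-invariance of $m'$ and $\lambda$ then forces $g(x-v) = g(x)$ for $\lambda$-a.e.\ $x$, for each fixed $v \in E$. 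Finally I would show any such $g$ is $\lambda$-a.e.\ constant: if not, a superlevel set $A = \{g > t\}$ and its complement both have positive $\lambda$-measure while $A$ is invariant modulo $\lambda$-null sets under every translation; transporting a Lebesgue density point of $A$ around by translations makes every point of $E$ a density point of $A$, whence $\lambda(E \setminus A) = 0$---a contradiction. Thus $m' = c\,\lambda = (c/c_0)\,m_E$ with $c/c_0 \in (0,\infty)$, proving item 2 (equivalence with constant density).

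For item 3, the dilation $D_a : x \mapsto a x$ is a linear automorphism of $E$ which under $T$ becomes $a\cdot\mathrm{Id}$ on $\R^k$, of determinant $a^k$; since Lebesgue measure on $\R^k$ transforms under a linear automorphism $L$ by the factor $|\det L|$, and $m_E$ is a scalar multiple of $\lambda = T_*(\mathrm{Leb})$, we get $m_E(aS) = a^k\,m_E(S)$. If one prefers to avoid the linear change-of-variables formula, note instead that $S \mapsto a^{-k}m_E(aS)$ is again $\sigma$-finite, nonzero, and translation-invariant, hence by item 2 equals $c(a)\,m_E$ for some $c(a) > 0$; the function $c$ is multiplicative, is monotone because $a \le b$ implies $a\,\bar B \subset b\,\bar B$ for the closed unit ball $\bar B$, and has $c(n) = n^k$ for positive integers $n$ by tiling the parallelepiped spanned by a basis of $E$ with $n^k$ translates of itself---and these three properties force $c(a) = a^k$.

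The only genuine subtlety is item 2. The textbook uniqueness theorem for Haar measure is stated for Radon (in particular locally finite) measures, whereas $m'$ is assumed only $\sigma$-finite; the Tonelli comparison with $\lambda$ is precisely what converts this weaker hypothesis into mutual absolute continuity. After that, the two points requiring care are that the Radon--Nikodym density is $\lambda$-a.e.\ finite (this uses $\sigma$-finiteness of $m'$) and that ``$g$ translation-invariant modulo null sets'' genuinely upgrades to ``$g$ a.e.\ constant,'' for which the density-point argument, rather than a naive pointwise manipulation, is the clean route. Everything else is routine.
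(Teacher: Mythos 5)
Your proof is correct. The paper states this lemma without proof, treating it as a collection of standard facts about Haar measure on a finite-dimensional vector space entailed by the definition; your reduction to Lebesgue measure on $\R^k$ via a linear isomorphism, together with the Tonelli comparison and density-point argument handling the merely $\sigma$-finite hypothesis in item 2, is precisely the standard argument being invoked and fills in the details completely.
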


\subsubsection{Complementation and `angles'}


Let $\Gc(\Bc)$ denote the Grassmanian of closed subspaces of $\Bc$.  The topology on $\Gc(\Bc)$ is the metric topology defined by the Hausdorff distance $d_H$ between unit spheres: for nontrivial subspaces $E, E' \in \Gc(\Bc)$,
$$
d_H(E, E') = \max \{ \sup\{d(e, S_{E'}) : e \in S_E \}, \sup\{d(e', S_E) : e' \in S_{E'}\}\}
$$
where $S_E =\{v \in E \,|\, |v|=1\}$. 

A more convenient definition, known as the \emph{aperture} or \emph{gap} (\cite{kato}, see also \cite{akh}), is
$$
\d_a(E, E') = \max\{ \sup\{d(e, E') : e \in S_E \}, \sup\{d(e', E) : e'  \in S_{E'} \} \}\ .
$$
On Hilbert spaces, $\d_a$ is a metric, and coincides with the operator norm of the difference between orthogonal projections. On Banach spaces, $\d_a$ is not a metric, but $d_H$ and 
$\d_a$ are related by the inequality $\d_a(E, E') \leq d_H(E, E') \leq 2 \d_a(E, E')$ \cite{kato}. 
We will
work with $d_H$ or $\d_a$, whichever one is more convenient. 

We say $E \in \Gc(\Bc)$ is  \emph{complemented} if there exists $F \in \Gc(\Bc)$
such that $\Bc = E \oplus F$, and call $F$ a {\it complement} of $E$. Observe that if
$E, F \in \Gc(\Bc)$ are complements, then $\pi = \pi_{E \ds F}: \Bc \to E$,
the projection to $E$ along $F$ defined by $\pi(e+f)=e$ for $e \in E, f \in F$, is automatically
bounded as an operator by the closed graph theorem. We note further that
$$ 
|\pi_{E \ds F}|^{-1} = \a(E, F)   \qquad \mbox{ where } \qquad
\a(E, F) = \inf\{|e - f | : e \in E, |e| = 1, f \in F\}\ .
$$
Though $\a(\cdot, \cdot)$ is not symmetric, it satisfies $\a(E, F) \leq 2 \a(F, E)$ 
whenever $E, F$ are complements. These quantities have the geometric connotation of `angle' between $E$ and $F$.

The next lemma gives conditions under which complementation persists.

\begin{lem}\label{lem:openCond} 
Let $\Bc = E \oplus F $ for $E, F \in \Gc(\Bc)$. If $E' \in \Gc(\Bc)$ is such that 
$d_H(E, E') < |\pi_{E \ds F}|^{-1}$, then $\Bc = E' \oplus F $.
\end{lem}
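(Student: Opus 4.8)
The plan is to show that the map $\pi_{E \ds F}|_{E'} : E' \to E$ is a bounded linear isomorphism; once this is established, $\Bc = E' \oplus F$ follows by a standard argument. First I would set $\pi = \pi_{E\ds F}$ and observe that $E' \cap F = \{0\}$: if $v \in E' \cap F$ with $|v| = 1$, then $d(v, E') = 0$, yet $\d_a(E,E') \le d_H(E, E') < |\pi|^{-1} \le 1$ forces $d(v', E) $ to be small for all $v' \in S_{E'}$, and in particular $d(v, E) $ is small — but $v \in F$ need not have $d(v,E)$ small unless we use the angle $\a(F,E)$. So the cleaner route is to estimate $\pi$ on $E'$ directly. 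For $v \in E'$ with $|v| = 1$, pick $e \in S_E$ with $|v - e|$ close to $d(v, E) \le d_H(E, E')$; then $|\pi v - e| = |\pi(v - e)| \le |\pi|\, |v - e|$, so $|\pi v| \ge |e| - |\pi|\,|v-e| \ge 1 - |\pi|\, d_H(E,E') > 0$, using the hypothesis $d_H(E,E') < |\pi|^{-1}$. Taking an infimum over such $e$ gives a uniform lower bound $|\pi v| \ge 1 - |\pi|\, d_H(E,E') =: c > 0$ for all unit $v \in E'$, hence $|\pi v| \ge c |v|$ for all $v \in E'$. In particular $\pi|_{E'}$ is injective with bounded inverse on its image.

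Next I would check surjectivity of $\pi|_{E'} : E' \to E$. Since $\pi|_{E'}$ is bounded below, its image is a closed subspace of $E$; I claim it is all of $E$. One way: the inequality $d_H(E,E') < \infty$ together with the bound just obtained shows $E'$ and $E$ are "close," and a dimension-count argument applies if $E$ is finite-dimensional — but $E$ need not be finite-dimensional here. Instead, I would argue that $\pi|_{E'}$ is also bounded below in the other direction on the relevant scale, or more simply invoke that $E' \oplus F$ is closed and use the open mapping theorem: the map $\Phi : E' \oplus F \to \Bc$, $\Phi(v + f) = v + f$, is bounded; showing it is bijective suffices. Injectivity of $\Phi$ is exactly $E' \cap F = \{0\}$, which follows from the lower bound $|\pi v| \ge c|v|$ on $E'$ (if $v \in E' \cap F$ then $\pi v = 0$, so $v = 0$). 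For surjectivity: given $x \in \Bc$, write $x = \pi x + (x - \pi x)$ with $\pi x \in E$, $x - \pi x \in F$; I then need to hit $\pi x \in E$ by an element of $E'$, which brings us back to surjectivity of $\pi|_{E'}$ onto $E$.

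The honest approach, and the one I expect the author takes, is a Neumann-series / perturbation argument: define $T : E \to E$ by choosing, for the "identity-like" comparison, the operator $T = \pi \circ \iota$ where $\iota$ encodes the near-isometry between $E$ and $E'$; more concretely, build a bounded operator $S : E \to E'$ and show $\pi \circ S = \id_E + R$ with $|R| < 1$, so $\pi|_{E'}$ is onto by invertibility of $\id_E + R$. The cleanest version: since $d_H(E, E') < |\pi|^{-1}$, for each $e \in E$ there is $e' \in E'$ with $|e - e'| \le d_H(E,E')\, |e|$ (scaling the sphere estimate); this does not immediately give a linear $S$, so one instead works with $\pi|_{E'}$ having closed image $M \subseteq E$ and shows $M = E$ by noting that any $e \in S_E$ is within $d_H(E,E')$ of $S_{E'}$, hence within $d_H(E,E') \cdot |\pi| \cdot(\text{const})$ of $M$ after applying $\pi$ — and since $M$ is closed and $E$ is spanned by points arbitrarily close to $M$... this requires $d_H < $ something to close up. I expect the main obstacle to be precisely this surjectivity step: getting from "$\pi|_{E'}$ is bounded below" to "$\pi|_{E'}$ is onto $E$" without a finite-dimensionality crutch, which is handled by a contraction-mapping estimate showing $\dist(e, \pi(E')) \le (|\pi|\, d_H(E,E'))\cdot \dist(e, \pi(E'))$ for all $e \in E$, forcing $\pi(E') $ dense and hence (being closed) equal to $E$. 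Once $\pi|_{E'} : E' \to E$ is a bounded isomorphism, the decomposition $\Bc = E' \oplus F$ is immediate: uniqueness of the $F$-component is injectivity of $\Phi$, existence is surjectivity of $\pi|_{E'}$ combined with $\pi$ being a projection onto $E$ along $F$.
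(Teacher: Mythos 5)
Your overall strategy -- proving that $\pi_{E \ds F}|_{E'} : E' \to E$ is a bounded isomorphism and reading off the direct sum decomposition from that -- is a legitimate alternative to the paper's argument. The paper instead proceeds in three steps entirely inside $\Bc$: it shows $E' \cap F = \{0\}$ by essentially the same lower-bound computation you give; it shows $E' + F$ is closed via the Kober criterion (the inequality $|e'| \leq A|e' + f|$); and it shows $E' + F = \Bc$ by contradiction using the Riesz lemma in $\Bc$. Your route trades the Kober step for the observation that a bounded-below operator has closed range, and moves the Riesz-type argument from $\Bc$ into $E$. Both routes need the same two ingredients (a quantitative lower bound, then a ``no proper closed subspace'' argument), but yours is arguably more streamlined once cleaned up.

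The gap is in the surjectivity step, and it is not cosmetic. The displayed inequality you propose,
$\dist(e, \pi(E')) \le \bigl(|\pi|\, d_H(E,E')\bigr)\cdot \dist(e, \pi(E'))$,
is vacuous: it is not an estimate one proves, it is a tautology that would only hold when the distance is already zero, so it cannot serve as the engine of a contraction argument. The estimate you can actually prove is $\dist(e, \pi(E')) \le \bigl(|\pi|\, d_H(E,E')\bigr)\,|e|$ for every $e \in E$: given $e \in S_E$, pick $e' \in S_{E'}$ with $|e - e'| \lesssim d_H(E,E')$, note $\pi e' - e = \pi(e' - e)$, and bound $|\pi e' - e| \le |\pi|\,|e'-e|$. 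From this correct inequality you can finish in either of two ways: (i) iterate it -- replace $e$ by $e - \tilde e'$ for a near-optimal $\tilde e' \in \pi(E')$, obtaining $\dist(e, \pi(E')) \le (|\pi|\,d_H + \varepsilon)^n |e|$ for all $n$, forcing the distance to be $0$ and hence $\pi(E')$ dense, hence equal to $E$ since it is closed; or (ii) apply the Riesz lemma inside $E$: if $\pi(E')$ were a proper closed subspace of $E$, there would be $e \in S_E$ with $\dist(e, \pi(E'))$ arbitrarily close to $1$, contradicting the uniform bound $|\pi|\,d_H(E,E') < 1$. Either completion is fine, and option (ii) makes the parallel with the paper's proof explicit. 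As it stands, though, the proposal does not actually establish surjectivity, and you acknowledge as much; the rest of the proposal (the Neumann-series musings, the several false starts) should be cut in favor of one of these two clean finishes.
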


A proof of this result is given in the appendix.

Not every $E \in \Gc(\Bc)$ admits a complement, but all finite dimensional subspaces do,
and while there are no orthogonal complements to speak of,
the following result provides a substitute that is adequate for our purposes.

\begin{lem}[\cite{woj}] \label{lem:compExist} 
Every subspace $E \subset \Bc$ of finite dimension $k$ has a complement $F \in \Gc(\Bc)$ 
with the property that $\a(E, F) \geq \frac{1}{\sqrt{k}}$, equivalently, $|\pi_{E \ds F}| \leq \sqrt{k}$.
\end{lem}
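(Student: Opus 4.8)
The plan is to reduce the statement to producing a bounded projection $P\colon\Bc\to\Bc$ with range $E$ and operator norm $|P|\le\sqrt k$. Indeed, once such a $P$ is found, $F:=\ker P$ is closed (as $P$ is bounded), $\Bc=E\oplus F$, and $P=\pi_{E\ds F}$; the identity $|\pi_{E\ds F}|^{-1}=\a(E,F)$ recorded just above then yields $\a(E,F)=|P|^{-1}\ge 1/\sqrt k$. So the whole content is the Kadec--Snobar projection bound for the $k$-dimensional normed space $(E,|\cdot|)$ — the form of this lemma appearing in \cite{woj} — which I would prove via John's theorem.

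First I would apply John's theorem to the symmetric convex body $B_E=\{u\in E:|u|\le 1\}$. After the affine change of coordinates turning the maximal-volume inscribed ellipsoid $\mathcal E$ into a Euclidean ball, $E$ carries an inner product $\langle\cdot,\cdot\rangle$ with associated norm $|\cdot|_2$ for which $|x|\le|x|_2$ on $E$ (since $\mathcal E\subseteq B_E$), together with contact points $u_1,\dots,u_m\in E$ satisfying $|u_i|=|u_i|_2=1$ and weights $c_i>0$ giving John's decomposition of the identity $\sum_i c_i\,u_i\otimes u_i=\Id_E$; taking the trace gives $\sum_i c_i=k$. Next, for each $i$, because $u_i$ lies on the common boundary of $\mathcal E$ and $B_E$, the supporting hyperplane $\{x:\langle x,u_i\rangle=1\}$ of $\mathcal E$ at $u_i$ also supports $B_E$ there, so $x\mapsto\langle x,u_i\rangle$ has dual norm exactly $1$ on $(E,|\cdot|)$; extend it by Hahn--Banach to $u_i^*\in\Bc^*$ with $|u_i^*|=1$. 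Then define $Px:=\sum_i c_i\,u_i^*(x)\,u_i$. For $x\in E$ the decomposition of the identity gives $Px=\sum_i c_i\langle x,u_i\rangle u_i=x$, so $P$ is a projection of $\Bc$ onto $E$. For the norm bound, $Px\in E$ gives $|Px|\le|Px|_2=\sup_{|y|_2\le1}\sum_i c_i\,u_i^*(x)\langle u_i,y\rangle$, and Cauchy--Schwarz for the weights $c_i$, together with $\sum_i c_i\langle u_i,y\rangle^2=|y|_2^2\le1$, bounds this by $\bigl(\sum_i c_i\,u_i^*(x)^2\bigr)^{1/2}$, which since $|u_i^*|=1$ is at most $\bigl(\sum_i c_i\bigr)^{1/2}|x|=\sqrt k\,|x|$.

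The step I expect to be the real obstacle is the appeal to John's decomposition of the identity, and knowing to use it: the cheap substitutes — an Auerbach basis of $E$, or merely the sandwich $\mathcal E\subseteq B_E\subseteq\sqrt k\,\mathcal E$ with an $|\cdot|_2$-orthonormal basis and norm-preserving extensions of its coordinate functionals — only deliver $|P|\le k$, and it is precisely the contact points $u_i$ with their weights $c_i$, inserted into the weighted Cauchy--Schwarz estimate above, that recover the sharp constant $\sqrt k$. Once John's theorem is in hand, the remainder (the Hahn--Banach extensions, the verifications $P^2=P$ and $\operatorname{range}P=E$, and the two short norm estimates) is entirely routine, so the write-up will consist mainly of quoting John's theorem in the precise form needed and then the computation just sketched.
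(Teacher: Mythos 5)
Your argument is correct: it is the standard Kadec--Snobar proof (John's decomposition of the identity at the contact points, Hahn--Banach extension of the supporting functionals, and the weighted Cauchy--Schwarz estimate giving $|P|\le(\sum_i c_i)^{1/2}=\sqrt k$), and the reduction to a norm-$\sqrt k$ projection via $F=\ker P$ and $\a(E,F)=|\pi_{E\ds F}|^{-1}$ is exactly right. The paper gives no proof of this lemma --- it simply cites \cite{woj} --- and your write-up reproduces the argument found in that reference, so there is nothing to add.
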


The next lemma contains some estimates that are used repeatedly in Sect. 2.3.
Proofs are given in the Appendix.

\begin{lem} \label{lem:graNormEst}
Let $E, E' \subset \Bc$ be subspaces with finite dimension $k$, and let $F\in \Gc(\Bc)$ be 
a complement to $E$ with $|\pi_{E \ds F}| \leq \sqrt{k}$. Suppose 
$d_H(E, E') \leq \frac{1}{2 \sqrt{k}}$. Then:
\begin{itemize}
\vspace{-4 pt}
\item[(a)] $\Bc = E' \oplus F$, with
\vspace{-4 pt}
$$
|\pi_{E' \ds F}|  \leq 2 \sqrt{k} \qquad \mbox{ and }
\qquad |\pi_{F \ds E'}|_E| \leq 4 \sqrt{k}\ d_H(E, E')\ ;
$$
\item[(b)] for any $e \in E$ with $|e|=1$,
\vspace{-4 pt}
$$
1 - 4 \sqrt{k} \ d_H(E, E') \leq |\pi_{E' \ds F} e| \leq 1 + 4 \sqrt{k} \ d_H(E, E') \ .
$$
\end{itemize}
\end{lem}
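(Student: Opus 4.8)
The plan is to prove everything by elementary Banach--space geometry, using only the facts recalled above: the identity $|\pi_{E\ds F}|^{-1}=\a(E,F)$, the bound $\a(E,F)\le 2\a(F,E)$, the relation between $d_H$ and $\a$, and Lemma \ref{lem:openCond} to produce the direct sum decomposition. Throughout write $\delta=d_H(E,E')$, so $\delta\le \frac{1}{2\sqrt k}$. First I would dispose of the decomposition claim in (a): since $|\pi_{E\ds F}|\le \sqrt k$ we have $|\pi_{E\ds F}|^{-1}\ge \frac{1}{\sqrt k}>\frac{1}{2\sqrt k}\ge \delta$, so Lemma \ref{lem:openCond} gives $\Bc=E'\oplus F$; in particular $\pi_{E'\ds F}$ and $\pi_{F\ds E'}=\id-\pi_{E'\ds F}$ are well-defined bounded operators, and it makes sense to speak of their norms.

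For the bound $|\pi_{E'\ds F}|\le 2\sqrt k$ I would estimate $\a(E',F)$ from below, since $|\pi_{E'\ds F}|=\a(E',F)^{-1}$. Fix $e'\in S_{E'}$ and $f\in F$. By the definition of $d_H$ and compactness of $S_E$ (recall $E$ is finite dimensional, so its unit sphere is compact), there is $e\in S_E$ with $|e-e'|\le\delta$; then
$|e'-f|\ge |e-f|-|e-e'|\ge \a(E,F)-\delta\ge \frac{1}{\sqrt k}-\frac{1}{2\sqrt k}=\frac{1}{2\sqrt k}$.
Taking the infimum over $e'\in S_{E'}$ and $f\in F$ gives $\a(E',F)\ge \frac{1}{2\sqrt k}$, hence $|\pi_{E'\ds F}|\le 2\sqrt k$. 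For the second estimate in (a), fix $e\in S_E$ and, using the other half of the definition of $d_H$ together with compactness of $S_{E'}$, choose $e'\in S_{E'}$ with $|e-e'|\le\delta$. Since $e'\in E'$ we have $\pi_{F\ds E'}e'=0$, so $\pi_{F\ds E'}e=\pi_{F\ds E'}(e-e')$, whence
$|\pi_{F\ds E'}e|\le |\pi_{F\ds E'}|\,\delta\le (1+|\pi_{E'\ds F}|)\,\delta\le (1+2\sqrt k)\,\delta\le 4\sqrt k\,\delta$,
the last step using $k\ge 1$. Taking the supremum over $e\in S_E$ yields $|\pi_{F\ds E'}|_E|\le 4\sqrt k\,\delta$.

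Part (b) is then immediate: for $e\in S_E$ we have $\pi_{E'\ds F}e-e=-(\id-\pi_{E'\ds F})e=-\pi_{F\ds E'}e$, so by the reverse triangle inequality $\big|\,|\pi_{E'\ds F}e|-1\,\big|\le |\pi_{E'\ds F}e-e|=|\pi_{F\ds E'}e|\le 4\sqrt k\,\delta$, which rearranges to the claimed two-sided bound. I do not expect any serious obstacle here; the only points requiring care are the asymmetry of $\a(\cdot,\cdot)$ and keeping straight which of the two halves of the definition of $d_H$ is invoked at each step, together with checking that finite-dimensionality of $E$ and $E'$ makes the relevant distances attained, so that the approximating unit vectors $e,e'$ can be chosen with the stated bounds (an $\varepsilon$-approximation followed by a limiting argument would work equally well). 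The choice of the threshold $\frac{1}{2\sqrt k}$ is exactly what makes the estimate $\a(E',F)\ge \a(E,F)-\delta\ge \frac{1}{2\sqrt k}$ close.
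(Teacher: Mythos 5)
Your proposal is correct and follows essentially the same route as the paper: the paper proves a general-form estimate in the Appendix (Lemma A.1, bounding $\alpha(E',F)\ge\alpha(E,F)-d_H(E,E')$ via the triangle inequality and then bounding $|\pi_{F\ds E'}|_E|$), and specializes to the hypotheses of Lemma 2.5, while you prove the same two inequalities directly in the numerical regime $|\pi_{E\ds F}|\le\sqrt k$, $d_H\le\frac{1}{2\sqrt k}$. The only cosmetic differences are that you use compactness of the finite-dimensional unit spheres in place of the paper's $c\to1$ limiting argument, and you bound $|\pi_{F\ds E'}|\le 1+|\pi_{E'\ds F}|$ by the triangle inequality rather than invoking the inequality $\alpha(E',F)\le 2\alpha(F,E')$, both of which lead to the same constant $4\sqrt k$.
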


\subsubsection{Comparison with norms arising from inner products} \label{sec:johnThm}

One can leverage known results on inner product spaces by comparing 
$|\cdot|$ to norms that arise from inner products. The key to this direction of thinking
 is John's Theorem \cite{bollo}, which states that a convex body in $\R^n$ 
is contained in a unique volume-minimizing ellipsoid. Since ellipsoids and inner products are equivalent, this result can be stated in terms of inner products. We take the liberty
to state a version of John's theorem that fits with the way it will be used in this paper.

\begin{thm}[John's Theorem] \label{thm:johnEll} 

Let $E \subset \Bc$ be a subspace of finite dimension $k$. Then there is an inner product $(\cdot, \cdot)$ on $E$ and a norm $\|\cdot\|$ arising from it such that for all $v \in E$, 
$$
\|v\| \leq |v| \leq \sqrt{k} \|v\|\ .
$$
\end{thm}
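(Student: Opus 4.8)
The plan is to deduce this from the classical John's Theorem for convex bodies in $\R^n$. First I would fix a linear isomorphism $T : E \to \R^k$; this is possible since $\dim E = k < \infty$. Under $T$, the closed unit ball $B_E = \{v \in E : |v| \le 1\}$ is pushed forward to a convex body $K = T(B_E) \subset \R^k$: it is convex because $T$ is linear and $B_E$ is convex, it is symmetric about the origin because $B_E$ is (as $|\cdot|$ is a norm), it is closed and bounded because $T$ is a homeomorphism and $B_E$ is compact (finite-dimensional), and it has nonempty interior because $B_E$ does. Thus $K$ is a genuine symmetric convex body in $\R^k$.

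Next I would invoke the classical John's Theorem: there is a unique ellipsoid $\mathcal{E}$ of minimal volume containing $K$, and for a \emph{symmetric} convex body this ellipsoid satisfies the inclusions
$$
\tfrac{1}{\sqrt{k}}\, \mathcal{E} \subseteq K \subseteq \mathcal{E}\ .
$$
(For general, not necessarily symmetric, convex bodies the constant is $1/k$; the improvement to $1/\sqrt{k}$ for centrally symmetric bodies is the standard refinement, and is exactly what gives the clean bound in the statement.) The ellipsoid $\mathcal{E}$, being centered at the origin, is the unit ball of a Euclidean norm $\|\cdot\|_{\mathcal{E}}$ on $\R^k$ arising from some inner product $(\cdot,\cdot)_{\mathcal{E}}$.

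Finally I would pull everything back through $T$. Define an inner product on $E$ by $(u,v) := (Tu, Tv)_{\mathcal{E}}$ and let $\|\cdot\|$ be the norm it induces, so that $T$ becomes an isometry from $(E, \|\cdot\|)$ to $(\R^k, \|\cdot\|_{\mathcal{E}})$. The inclusion $K \subseteq \mathcal{E}$ translates to $B_E \subseteq \{v : \|v\| \le 1\}$, i.e. $\|v\| \le |v|$ for all $v \in E$; the inclusion $\tfrac{1}{\sqrt{k}}\mathcal{E} \subseteq K$ translates to $\{v : \|v\| \le 1/\sqrt{k}\} \subseteq B_E$, i.e. $|v| \le \sqrt{k}\,\|v\|$ for all $v \in E$. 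Combining gives $\|v\| \le |v| \le \sqrt{k}\,\|v\|$, as claimed.

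There is no serious obstacle here: the result is essentially a restatement of classical John's Theorem, and the only points requiring a word of care are (i) verifying that $T(B_E)$ genuinely is a symmetric convex body with nonempty interior — routine from finite-dimensionality and the norm axioms — and (ii) being careful to use the symmetric version of John's Theorem (constant $1/\sqrt{k}$ rather than $1/k$), since the subspace's unit ball is automatically centrally symmetric. The choice of $T$ is immaterial: any two inner products on $E$ obtained this way differ by a linear change of coordinates, which is why the statement only asserts existence of $(\cdot,\cdot)$ and not uniqueness.
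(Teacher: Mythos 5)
Your proposal is correct and follows exactly the route the paper itself indicates: the paper states Theorem~\ref{thm:johnEll} as a repackaging of the classical John's Theorem (which it cites rather than proves), and your argument supplies precisely the missing details — push the unit ball forward to a symmetric convex body in $\R^k$, apply the symmetric version of John's Theorem with constant $1/\sqrt{k}$, and pull the resulting inner product back to $E$. The translation of the two inclusions into the two inequalities $\|v\| \le |v| \le \sqrt{k}\,\|v\|$ is carried out correctly, and your remark that the choice of isomorphism $T$ is immaterial for an existence statement is the right observation.
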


The following is a direct consequence of John's Theorem and Lemma \ref{lem:indMeasProps}.

\begin{cor} \label{lem:orthoBasis} Let $E \subset \Bc$ be a subspace of dimension $k$.
We let $(\cdot,\cdot)$ and $\|\cdot\|$ be given by Theorem \ref{thm:johnEll}.
Scaling $(\cdot,\cdot)$ by a suitable constant, one can obtain a new inner product 
$(\cdot,\cdot)_E$ and norm $\|\cdot\|_E$ on $E$ with the property that if $\hat m_E$ is the
induced volume on $E$ with respect to $\|\cdot\|_E$, then
$$
m_E = \hat m_E \qquad \mbox{ and } \qquad 
\frac{1}{\sqrt{k}} \|\cdot\|_E \leq |\cdot| \leq \sqrt{k} \|\cdot\|_E\ .
$$
\end{cor}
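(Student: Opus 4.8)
The plan is to produce the inner product $(\cdot,\cdot)_E$ by rescaling the John inner product $(\cdot,\cdot)$ from Theorem~\ref{thm:johnEll} by a single positive constant, chosen so that the resulting induced volume matches $m_E$ exactly; the two-sided norm comparison then follows from the bound $\|v\| \leq |v| \leq \sqrt{k}\|v\|$ after bookkeeping how the rescaling distorts the constants. Concretely, for $t > 0$ set $(\cdot,\cdot)' = t^2 (\cdot,\cdot)$, so the associated norm is $\|\cdot\|' = t\|\cdot\|$, and the $\|\cdot\|'$-unit ball of $E$ is $t^{-1}$ times the $\|\cdot\|$-unit ball. Write $\hat m_E^{\,t}$ for the induced volume on $E$ with respect to $\|\cdot\|'$ in the sense of Definition~2.2 (the Haar measure giving the $\|\cdot\|'$-unit ball mass $\omega_k$). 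Both $m_E$ and $\hat m_E^{\,t}$ are Haar measures on $E$, hence proportional by Lemma~\ref{lem:indMeasProps}(2); I would compute the proportionality constant as a function of $t$ using the scaling property Lemma~\ref{lem:indMeasProps}(3).

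The key computation is: if $B_{\|\cdot\|}$ denotes the $\|\cdot\|$-unit ball in $E$, then by Definition~2.2 applied to $\|\cdot\|$ we have some fixed Haar measure $\hat m_E^{\,1}$ with $\hat m_E^{\,1}(B_{\|\cdot\|}) = \omega_k$; and $\hat m_E^{\,t}$ is characterized by $\hat m_E^{\,t}(t^{-1}B_{\|\cdot\|}) = \omega_k$. Since $\hat m_E^{\,t} = c(t)\, \hat m_E^{\,1}$ for some $c(t) > 0$, Lemma~\ref{lem:indMeasProps}(3) gives $\omega_k = \hat m_E^{\,t}(t^{-1}B_{\|\cdot\|}) = c(t)\,\hat m_E^{\,1}(t^{-1}B_{\|\cdot\|}) = c(t)\, t^{-k}\,\hat m_E^{\,1}(B_{\|\cdot\|}) = c(t)\, t^{-k}\,\omega_k$, so $c(t) = t^k$, i.e. $\hat m_E^{\,t} = t^k\, \hat m_E^{\,1}$. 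Meanwhile $m_E = \l\, \hat m_E^{\,1}$ for a fixed $\l > 0$ (again Lemma~\ref{lem:indMeasProps}(2)), so choosing $t = \l^{1/k}$ yields $\hat m_E^{\,t} = m_E$. Set $(\cdot,\cdot)_E = t^2(\cdot,\cdot)$, $\|\cdot\|_E = t\|\cdot\|$, and $\hat m_E = \hat m_E^{\,t}$; then $m_E = \hat m_E$ as required.

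It remains to track the norm comparison. From $\|v\| \leq |v| \leq \sqrt{k}\|v\|$ and $\|\cdot\|_E = t\|\cdot\|$ we get $t^{-1}\|v\|_E \leq |v| \leq \sqrt{k}\, t^{-1}\|v\|_E$. The only thing left is to show $t = \l^{1/k} \in [1/\sqrt{k},\ \sqrt{k}]$ — equivalently $k^{-k/2} \leq \l \leq k^{k/2}$ — which is where the John comparison enters again: $B_{\|\cdot\|} \supseteq \{v : |v| \leq 1\} \supseteq \{v : \|v\| \leq 1/\sqrt{k}\} = \frac{1}{\sqrt k} B_{\|\cdot\|}$, so by Lemma~\ref{lem:indMeasProps}(3) the set $\{|v|\le 1\}$ has $\hat m_E^{\,1}$-measure between $k^{-k/2}\omega_k$ and $\omega_k$; but that same set has $m_E$-measure exactly $\omega_k$ by Definition~2.2, and $m_E = \l\,\hat m_E^{\,1}$, which pins $\l$ into $[1, k^{k/2}]$, and in particular $t = \l^{1/k} \in [1,\sqrt k] \subseteq [1/\sqrt k, \sqrt k]$. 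Feeding $t \le \sqrt k$ and $t \ge 1$ (hence $t^{-1} \ge 1/\sqrt k$ and $\sqrt k\, t^{-1} \le \sqrt k$) into the displayed inequality gives $\frac{1}{\sqrt k}\|v\|_E \leq |v| \leq \sqrt{k}\|v\|_E$. The main obstacle is purely organizational — keeping straight which of the several Haar measures on $E$ is normalized against which unit ball — rather than any genuine difficulty; no estimate here is deep, and everything reduces to the homogeneity in Lemma~\ref{lem:indMeasProps}(3) plus the John bound.
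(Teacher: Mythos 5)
Your proposal is correct and follows essentially the same route as the paper: your scaling factor $t=\l^{1/k}$ coincides with the paper's $C^{1/k}$ (where $m_E\tilde B_E = C\omega_k$), and the remainder of your argument is precisely the "exercise" the paper leaves to the reader, carried out correctly.
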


\begin{proof} Let $\tilde{B}_E = \{v \in E \,|\, \|v\| \le 1\}$, and let $C$ be such that
$m_E \tilde{B}_E = C \omega_k$. Scale $(\cdot, \cdot)$ so that 
$\|\cdot\|_E := C^{1/k} \|\cdot\|$. We leave the rest as an exercise.
\end{proof}


\begin{rmk} For  purposes of this paper
what matters in the results in Theorem \ref{thm:johnEll} and Corollary \ref{lem:orthoBasis}
 is not the bound $\sqrt{k}$ but the fact that there is a bound that depends only 
 on the dimension of the subspace in question. Indeed, any means of constructing an inner product on $E \subset \Bc$ would do, so long as it gives rise to a norm uniformly equivalent
 to the original one, with constants depending only on the dimension of $E$.
\end{rmk}

Corollary \ref{lem:orthoBasis} is used many times in the discussion to follow. It
enables us to deduce quickly many results for normed vector spaces
by appealing to their counterparts on inner product spaces. For example, 
for $\{v_1, \dots, v_k\} \subset \Bc$, let $P[v_1, \cdots, v_k]$ denote the 
parallelepiped defined by the vectors $\{v_i\}$, i.e.,
$$
P[v_1, \cdots, v_k] = \{a_1 v_1 + \cdots + a_k v_k : 0 \leq a_i \leq 1 \}\ .
$$
Then given $\{v_i\} \subset E$ and $\lambda_i \in \mathbb R$, we have
relations such as
$$
m_E(P[\lambda_1 v_1, \cdots, \lambda_k v_k])  = 
\left(\prod_{i=1}^k |\lambda_i| \right) m_E(P[v_1, \cdots, v_k])
$$
because this is true for $\hat m_E$, and
\begin{equation} \label{pgram}
m_E(P[v_1, \cdots, v_k]) = \hat m_E(P[v_1, \cdots, v_k]) \le \prod_{i=1}^k \|v_i\|_E
\le k^{\frac{k}{2}} \prod_{i=1}^k |v_i|\ .
\end{equation}

\subsection{The determinant and its properties}

Associated with the induced volumes defined in Sect. 2.1, we have, for each linear map 
$A: \Bc \to \Bc$, a notion of {\it determinant} on finite dimensional subspaces
which describes how these measures are 
transformed by $A$.

\begin{defn}
Let $A : \Bc \to \Bc$ be a bounded linear operator and $E \subset \Bc$  a finite-dimensional subspace. Then
$$
\det(A | E) := \begin{cases} \frac{m_{A (E)} (A (B_E))}{m_E(B_E)} & \dim A(E) = \dim E \, , \\ 0 & \text{else,} \end{cases}
$$
where $B_E = \{v \in E : |v| \leq 1\}$.
\end{defn}

It follows from this definition and from Lemma \ref{lem:indMeasProps} that $\det(\cdot)$
has the basic properties of the usual determinant, such as:

\begin{lem} \label{lem:detProps}
Let $E, F, G$ be subspaces of $\Bc$ of the same finite dimension, and let $A, B : \Bc \to \Bc$ be bounded linear maps for which $A E \subset F, B F \subset G$. Then:
\begin{enumerate}
\vspace{-4 pt}
\item $m_F (A (S)) =  \det(A | E) \cdot m_E(S)$ for every Borel set $S \subset E$;
\vspace{-4 pt}
\item $\det (B A|E) = \det (B|F) \cdot \det (A|E)$.
\end{enumerate}
\end{lem}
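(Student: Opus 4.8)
\textbf{Proof proposal for Lemma \ref{lem:detProps}.}

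The plan is to reduce both statements to the corresponding facts about Haar measures and the change-of-variables interplay between the domain subspace $E$ and its image $A(E)$. For item 1, fix $E$ and $F = A(E)$ (the case $\dim A(E) < \dim E$ being trivial since then $\det(A|E) = 0$ and $A(S)$ lies in a lower-dimensional subspace of $F$, hence is $m_F$-null). The pushforward measure $S \mapsto m_F(A(S))$ is a Borel measure on $E$; since $A$ is linear and injective on $E$ (here using $\dim A(E) = \dim E$) and $m_F$ is translation-invariant on $F$ by Lemma \ref{lem:indMeasProps}(1), this pushforward is translation-invariant on $E$. By Lemma \ref{lem:indMeasProps}(2) it must therefore equal $c \cdot m_E$ for some constant $c \geq 0$. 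Evaluating both sides on $S = B_E$ identifies $c = m_F(A(B_E))/m_E(B_E) = \det(A|E)$, which is exactly the claimed formula.

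Item 2 then follows almost formally. Apply item 1 twice: first to the map $A : \Bc \to \Bc$ with $A(E) \subset F$, giving $m_F(A(S)) = \det(A|E)\, m_E(S)$ for Borel $S \subset E$; then to the map $B$ with $B(F) \subset G$, giving $m_G(B(T)) = \det(B|F)\, m_F(T)$ for Borel $T \subset F$. Taking $T = A(S)$ and composing,
\[
m_G((BA)(S)) = m_G(B(A(S))) = \det(B|F)\, m_F(A(S)) = \det(B|F)\det(A|E)\, m_E(S).
\]
On the other hand, $BA$ maps $E$ into $G$, so item 1 applied to $BA$ gives $m_G((BA)(S)) = \det(BA|E)\, m_E(S)$. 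Taking $S = B_E$ (so that $m_E(S) = \omega_k \ne 0$) and comparing yields $\det(BA|E) = \det(B|F)\det(A|E)$. One should note that if either $\det(A|E) = 0$ or $\det(B|F) = 0$ — i.e. $\dim A(E) < k$ or $\dim B(F) < k$ — then $\dim (BA)(E) < k$ as well, so $\det(BA|E) = 0$ and the identity holds trivially; the computation above is valid whenever all three maps preserve dimension.

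The only genuinely substantive point is the uniqueness invoked in item 1, namely that a translation-invariant $\sigma$-finite measure on a finite-dimensional space is a constant multiple of $m_E$; this is Lemma \ref{lem:indMeasProps}(2), which is already available, so no real obstacle arises. The one thing to check carefully is $\sigma$-finiteness (and nonzeroness) of the pushforward $S \mapsto m_F(A(S))$: since $A|_E$ is a linear isomorphism onto $F$ when dimensions match, it carries a countable cover of $E$ by bounded sets to a countable cover of $F$ by bounded sets, each of finite $m_F$-measure, and $m_F(A(B_E)) > 0$ because $A(B_E)$ has nonempty interior in $F$; so Lemma \ref{lem:indMeasProps}(2) applies as stated. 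Everything else is bookkeeping.
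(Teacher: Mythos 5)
Your proof is correct and follows exactly the route the paper has in mind: the paper gives no argument for this lemma, saying only that it ``follows from this definition and from Lemma~\ref{lem:indMeasProps},'' and your reduction of item~1 to the uniqueness clause of Lemma~\ref{lem:indMeasProps} (via translation-invariance of the pushforward), followed by the two-fold application of item~1 to get item~2, is precisely that intended deduction. Your careful treatment of the degenerate cases and of $\sigma$-finiteness of the pushforward is appropriate and introduces no gap.
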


The following are further illustration of how one can leverage results for inner product 
spaces via John's Theorem. The proofs are left as (easy) exercises.

\begin{lem} \label{lem:johnSVD}
Let $A : \Bc \to \Bc$ be a bounded linear operator, and let $V, V' \subset \Bc$ be
$k$-dimensional subspaces such that $A(V)=V'$. We equip $V$ and $V'$ with 
the inner products $(\cdot, \cdot)_V$ and $(\cdot, \cdot)_{V'}$ in Corollary \ref{lem:orthoBasis}.
\begin{enumerate}
\item If $\{v_1, \dots, v_k\} \subset V$ are orthonormal with respect to 
$(\cdot, \cdot)_V$, then it follows from (\ref{pgram}) that
$$
\det(A | V) = \frac{m_{V'} A(P[v_1, \dots, v_k])}{ m_V P[v_1, \dots, v_k]}
\le k^{\frac{k}{2}} \prod_{i=1}^k |Av_i|  \ .
$$
\item If $\{v_1, \dots, v_k\}$ is an orthonormal basis of $V$ corresponding to the singular 
value decomposition of $A|_V:V \to V'$ , then
$$
k^{-\frac k 2} \prod_{i = 1}^k |A v_i| \leq \det(A|V) \leq k^{\frac k 2} \prod_{i = 1}^k |A v_i| \ .
$$
\end{enumerate}
\end{lem}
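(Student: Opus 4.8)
The plan is to prove Lemma \ref{lem:johnSVD} by transferring each statement to the inner product space $(V, (\cdot,\cdot)_V)$ and its image $(V', (\cdot,\cdot)_{V'})$, where the corresponding facts are classical linear algebra, and then paying the price of the norm-equivalence constants $k^{-1/2} \le |\cdot|/\|\cdot\|_E \le k^{1/2}$ from Corollary \ref{lem:orthoBasis}. For part (1): since $\{v_1,\dots,v_k\}$ is orthonormal with respect to $(\cdot,\cdot)_V$, the parallelepiped $P[v_1,\dots,v_k]$ is the unit cube in these coordinates, so $\hat m_V(P[v_1,\dots,v_k]) = 1$, and since $m_V = \hat m_V$ (Corollary \ref{lem:orthoBasis}), the quotient $m_{V'}(A(P[v_1,\dots,v_k]))/m_V(P[v_1,\dots,v_k])$ equals $\det(A|V)$ by Lemma \ref{lem:detProps}(1) applied with $S = P[v_1,\dots,v_k]$. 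The upper bound $\det(A|V) \le k^{k/2}\prod_i |Av_i|$ is then exactly the second inequality in (\ref{pgram}) applied to the vectors $\{Av_i\} \subset V'$, since $m_{V'}(A(P[v_1,\dots,v_k])) = m_{V'}(P[Av_1,\dots,Av_k])$.

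For part (2), I would work in the SVD-adapted orthonormal basis $\{v_1,\dots,v_k\}$ of $V$, with the property that $\{Av_i/\|Av_i\|_{V'}\}$ (for $Av_i \ne 0$) is orthogonal in $(\cdot,\cdot)_{V'}$ — this is precisely what the singular value decomposition of $A|_V : (V,(\cdot,\cdot)_V) \to (V',(\cdot,\cdot)_{V'})$ provides. Then $\hat m_{V'}(P[Av_1,\dots,Av_k]) = \prod_i \|Av_i\|_{V'}$, again because the $Av_i$ are mutually orthogonal. Combining with $m_{V'} = \hat m_{V'}$ and part (1)'s identity $\det(A|V) = m_{V'}(P[Av_1,\dots,Av_k])$, we get $\det(A|V) = \prod_{i=1}^k \|Av_i\|_{V'}$. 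Finally, applying the norm equivalence $k^{-1/2}\|w\|_{V'} \le |w| \le k^{1/2}\|w\|_{V'}$ coordinatewise to each $w = Av_i$ converts the product over $\|Av_i\|_{V'}$ into the two-sided bound $k^{-k/2}\prod_i |Av_i| \le \det(A|V) \le k^{k/2}\prod_i |Av_i|$.

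I expect the only genuine subtlety — rather than an obstacle — to be the bookkeeping around degenerate cases: if some $Av_i = 0$ then $\dim A(V) < k$, so $\det(A|V) = 0$ by definition, and simultaneously $\prod_i |Av_i| = 0$, so both bounds hold trivially; one should dispatch this at the outset so that in the main argument $A|_V$ may be assumed injective and the singular values strictly positive. A second minor point is ensuring the SVD is invoked for a map between two different inner product spaces (domain $(V,(\cdot,\cdot)_V)$, codomain $(V',(\cdot,\cdot)_{V'})$), which is the standard polar/singular-value decomposition for operators between finite-dimensional Hilbert spaces; no issue arises since both are genuine inner product spaces by Corollary \ref{lem:orthoBasis}. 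Since the lemma's statement itself says the proofs are left as easy exercises, I would keep the written argument to a few lines along the lines above.
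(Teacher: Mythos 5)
Your proof is correct and is exactly the route the paper intends: the paper leaves Lemma \ref{lem:johnSVD} as an exercise in leveraging Corollary \ref{lem:orthoBasis} and (\ref{pgram}), which is precisely what you do (transfer to the John inner products, use $\hat m_V(P[v_1,\dots,v_k])=1$ for an orthonormal basis and orthogonality of the $Av_i$ in the SVD basis, then pay the $\sqrt{k}$ norm-equivalence factor per vector). The only remark is that the degenerate case you flag cannot occur under the stated hypotheses, since $A(V)=V'$ with $\dim V'=k$ already forces $A|_V$ to be injective.
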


The following is how $\det(\cdot)$ behaves with respect to splittings.

\begin{lem} \label{lem:detSplit}
For any $k \geq 1$ there is a constant $C_k \geq 1$ with the following property. Suppose that $V,V' \subset \Bc$ have dimension $k$ and $A : V \to V'$ is invertible. Let $V = E \oplus F, V' = E' \oplus F'$ be splittings for which $A E = E', A F = F'$. Then,
$$
\frac{\a(E', F')^q}{C_k } \leq \frac{\det(A | V)}{\det(A | E) \det(A | F)} \leq \frac{C_k}{ \a(E, F)^q}
$$
where $q = \dim E$.
\end{lem}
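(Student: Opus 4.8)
The plan is to reduce everything to the inner-product case via Corollary~\ref{lem:orthoBasis}, where the analogous statement is a standard fact about Gram determinants and angles between subspaces, and then absorb the discrepancy between $|\cdot|$ and the John norms $\|\cdot\|_E$ into the dimension-dependent constant $C_k$. First I would equip $V$ and $V'$ with the inner products $(\cdot,\cdot)_V$ and $(\cdot,\cdot)_{V'}$ supplied by Corollary~\ref{lem:orthoBasis}, so that $m_V=\hat m_V$, $m_{V'}=\hat m_{V'}$, and the norms are uniformly equivalent to $|\cdot|$ with constants depending only on $k$. Since $q=\dim E$ and $\dim F = k-q$ are the only dimensions in sight, all the norm-comparison losses below are bounded by powers of $k$, hence can be folded into a single $C_k$.

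Next I would compute $\det(A|V)$, $\det(A|E)$, $\det(A|F)$ in the inner-product picture. Pick an orthonormal basis $\{e_1,\dots,e_q\}$ of $E$ with respect to $(\cdot,\cdot)_V$ and an orthonormal basis $\{f_1,\dots,f_{k-q}\}$ of $F$; together they form a basis of $V$ whose Gram determinant (again with respect to $(\cdot,\cdot)_V$) is, by the block structure, exactly $\sin^2$(the "angle" between $E$ and $F$) raised to the appropriate power — more precisely it equals $\det(I - P^*P)$ where $P$ is the relevant cross term, and this quantity is controlled above and below by powers of $\a(E,F)$ (in the $(\cdot,\cdot)_V$ metric). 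By Corollary~\ref{lem:orthoBasis} the $(\cdot,\cdot)_V$-angle and the $|\cdot|$-angle $\a(E,F)$ agree up to a factor $k$, and likewise on the image side for $\a(E',F')$. Using Lemma~\ref{lem:johnSVD}(2) to pass between $\det$ and products of singular values, and Lemma~\ref{lem:detProps}(1) together with the multiplicativity of volume under the direct-sum decomposition in the inner-product space, one gets
$$
\frac{\det(A|V)}{\det(A|E)\det(A|F)} \ \asymp_k\ \frac{\a(E',F')^{?}}{\a(E,F)^{?}},
$$
and the bookkeeping of which power of which angle lands on top is what gives the stated one-sided bounds with exponent $q$.

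To get the precise form of the inequality I would argue as follows. Projecting $V'$ onto $E'$ along $F'$ has norm $\a(E',F')^{-1}$, so $m_{V'}$ of any set is comparable to $m_{E'}\times m_{F'}$ of its images with a loss no worse than $\a(E',F')^{-q}$ in one direction and a gain no better than $1$ in the other (this is where John's-theorem constants again only cost powers of $k$); applying this to $S=A(P)$ for $P$ a unit parallelepiped spanned by an orthonormal basis adapted to $E\oplus F$, and using that $A$ maps the $E$-part into $E'$ and the $F$-part into $F'$, yields the upper bound with $\a(E,F)^{-q}$ after also accounting for the non-orthogonality of $\{e_i\}\cup\{f_j\}$ inside $V$, which contributes a factor bounded below by $\a(E,F)^{q}$. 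The lower bound is the same computation with the roles of $V$ and $V'$ (equivalently $A$ and $A^{-1}$) interchanged, giving $\a(E',F')^{q}$ on top. Collecting all the dimension-dependent fudge factors into one constant $C_k\ge 1$ finishes the proof.

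The main obstacle, and the only genuinely non-cosmetic point, is keeping the asymmetry of $\a(\cdot,\cdot)$ and the non-orthogonality of the combined basis under control simultaneously: one must be careful that the "angle" appearing naturally from the Gram-determinant computation in the $(\cdot,\cdot)_V$ metric really is comparable to $\a(E,F)$ as defined with $|\cdot|$ (and not its reverse $\a(F,E)$), but this is exactly what the inequality $\a(E,F)\le 2\a(F,E)$ for complements, together with $|\pi_{E\ds F}|^{-1}=\a(E,F)$, is there to handle — again at the cost of constants absorbed into $C_k$. Everything else is routine linear algebra on a finite-dimensional inner-product space.
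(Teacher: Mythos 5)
Your proposal is correct and follows essentially the same route as the paper: pass to the John inner products on $V$ and $V'$ via Corollary~\ref{lem:orthoBasis}, establish the inner-product version of the inequality, and absorb the dimension-dependent norm and measure comparisons into $C_k$. The only difference is that the paper cites the inner-product inequality as a standard fact (written as $\|\pi_{E'\ds F'}\|^{-q}\le\hat\det(A|V)/(\hat\det(A|E)\,\hat\det(A|F))\le\|\pi_{E\ds F}\|^{q}$) rather than deriving it via Gram determinants as you sketch; your sketch is sound in outline, though to nail down the exponent it is worth saying explicitly that the Gram determinant is a product over only $\min(q,k-q)\le q$ principal angles, each sine being $\le 1$.
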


\begin{proof}
We let $(\cdot, \cdot)_V$ and $(\cdot, \cdot)_{V'}$ be as above, and let
$\hat \det(A| V)$ denote the determinant with respect to these inner products.
Let us take for granted the (standard) result for inner product spaces which says that
\begin{align} \label{eq:inProdDetSplit}
\frac{1}{\|\pi_{E' \ds F'}\|_V^q} \leq \frac{\hat \det(A | V)}{\hat \det(A | E) \hat \det (A | F)} \leq \|\pi_{E \ds F}\|_{V'}^q 
\end{align}
where $q = \dim E$. As noted earlier, $\det(A|V) = \hat \det(A|V)$. As for $\det(A | E)$,
though $(\hat m_V)|E$ is not necessarily equal to $m_E$, they differ by a multiplicative
constant depending only on $k$, so $\det(A | E)$ and  $\hat \det(A | E)$ differ in the
same way, as do $\det(A | F)$ and  $\hat \det(A | F)$. Finally, as
$$
\frac{1}{k} |\pi_{E \ds F}| \leq \|\pi_{E \ds F}\|_V \leq k |\pi_{E \ds F}| 
$$
and similarly for $\|\pi_{E' \ds F'}\|_{V'}$, the proof is complete upon 
relating $|\pi_{E \ds F}|$ to $\a(E,F)$.
\end{proof}

\subsection{Regularity of induced volumes and determinants}

We motivate the results in this subsection as follows: Let $W$ and $W'$ be embedded
$k$-dimensional submanifolds, and consider a $C^1$ map $f: \Bc \to \Bc$ that maps $W$ diffeomorphically onto $W'$. For each $x \in W$, let $T_x W$ denote the tangent space to $W$
at $x$. From Sect. 2.1, we have an induced volume $m_{T_x W}$ on each $T_x W$. 
Under very mild regularity assumptions on $m_E$, these volumes on the tangent 
spaces of $W$ induce a $\sigma$-finite Borel 
measure $\nu_W$ on $W$. Analogous definitions hold for $W'$. It follows from 
Sect. 2.2 that if 
$f_*(\nu_W)$ is the pushforward of the measure $\nu_W$ by $f$,
then at each $y \in W'$, 
$$
\frac{d f_*(\nu_W)}{d \nu_{W'}} (y) \ = \ \frac{1}{\det(df_{f^{-1}y}|T_{f^{-1} y} W)} \ .
$$
For reasons to become clear in the pages to follow, it is important to control these densities. 
This translates
into regularity properties of the function $x \mapsto  \det(df_x| T_x W)$. We will tackle
these questions below in a slightly more general context in preparation for
the distortion estimates in Sect. 5.3.

\subsubsection{Regularity of induced volumes}

For linearly independent vectors $v_1, \cdots, v_k \in \Bc$, 
let $\langle v_1, \dots, v_k \rangle$
denote the subspace spanned by $\{v_1, \dots, v_k\}$, and recall that
$P[v_1, \cdots, v_k]$ denotes the parallelepiped defined by the vectors $\{v_i\}$. To simplify notation,
we write $m_{\langle \{v_i\} \rangle} = m_{\langle v_1, \dots, v_k \rangle}$.
We remark from the outset that
there is no reason, in general, to expect the dependence of $m_{\langle \{v_i\} \rangle} P[v_1, \cdots, v_k]$ on $v_1, \cdots, v_k$ to be any better than Lipschitz, as $v \mapsto |v|$ is not differentiable in general Banach spaces. An instance of this already occurs 
for $\R^2 = \{(x,y) : x, y \in \R\}$ endowed with the norm
$|(x,y)|= \max\{|x|, |y|\}$, for which the breakdown of differentiability occurs along 
the diagonal lines $\{(x, x)\}$ and $\{(x, -x)\}$.

To control how far a basis $\{v_i\}$ deviates from `orthogonality', we introduce 
the quantity
$$
N[v_1, \cdots, v_k] = \sum_{i = 1}^k |\pi_{\langle v_i \rangle \ds \langle v_j : j \neq i \rangle} | \ ,
$$
where $\pi_{\langle v_i \rangle \ds \langle v_j : j \neq i \rangle} : \langle v_1, \dots, v_k\rangle
\to \langle v_i \rangle$ is the projection operator defined earlier.

\begin{prop} \label{prop:measReg}
For any $k \geq 1$ and $\oN > k$, there exist $L = L(\oN,k) > 0$ and $\d = \d(\oN, k) \geq 0$ such that the following holds. If $\{v_i\}, \{w_i\}$ are two sets of $k$ linearly independent unit vectors in $\Bc$ for which $\max_{i \leq k} |v_i - w_i| \leq \d$ and $N[v_1, \cdots, v_k], N[w_1, \cdots, w_k]  \leq \oN$, then
$$
\left| \log  \frac{m_{\langle \{v_i\} \rangle} P[v_1, \cdots, v_k] }{ m_{\langle \{w_i\} \rangle}  P[w_1, \cdots, w_k]}  \right| \leq L \sum_{i = 1}^k |v_i - w_i|\ .
$$
\end{prop}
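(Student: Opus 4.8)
The plan is to reduce the comparison of induced volumes to a comparison of inner-product volumes via John's Theorem (Corollary \ref{lem:orthoBasis}), where the analogous Lipschitz estimate is a matter of linear algebra, and then control the error introduced by the John ellipsoid. First I would observe that the hypothesis $N[v_1,\dots,v_k]\leq\oN$ gives a uniform lower bound on the ``angles'' $\a(\langle v_i\rangle, \langle v_j:j\neq i\rangle)$, hence a uniform bound $\prod|v_i|\cdot(\text{const}) \geq m_{\langle\{v_i\}\rangle}P[v_1,\dots,v_k] \geq c(\oN,k)>0$; combined with (\ref{pgram}) this bounds $m_{\langle\{v_i\}\rangle}P[v_1,\dots,v_k]$ above and below by constants depending only on $\oN,k$. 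Therefore it suffices to bound the difference (rather than the ratio) $|m_{\langle\{v_i\}\rangle}P[v_1,\dots,v_k] - m_{\langle\{w_i\}\rangle}P[w_1,\dots,w_k]|$ by $L\sum|v_i-w_i|$, since $|\log a - \log b|\leq c^{-1}|a-b|$ on the relevant range.

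Next I would split this difference using a triangle inequality along an interpolating path, changing $\langle\{v_i\}\rangle$ to $\langle\{w_i\}\rangle$ one vector at a time, so it is enough to treat the case where $v_j = w_j$ for $j\neq i_0$ and $|v_{i_0}-w_{i_0}|\leq\d$ is small. Here the subspaces $E=\langle\{v_i\}\rangle$ and $E'=\langle\{w_i\}\rangle$ are close: I would estimate $d_H(E,E')$ (or the aperture $\d_a$) by $O(\oN)\,|v_{i_0}-w_{i_0}|$, using that $N\leq\oN$ forces the spanning directions to be quantitatively in general position, and using Lemma \ref{lem:graNormEst}-type projection estimates. Now decompose
$$
m_E P[v_1,\dots,v_k] - m_{E'} P[w_1,\dots,w_k]
= \big(m_E - m_{E'}\circ\pi_{E'\ds F}\big)P[v_1,\dots,v_k]
+ m_{E'}\big(\pi_{E'\ds F}P[v_1,\dots,v_k] - P[w_1,\dots,w_k]\big),
$$
where $F$ is a complement to $E$ from Lemma \ref{lem:compExist}. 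The second term is controlled by Lemma \ref{lem:graNormEst}(b): the projection $\pi_{E'\ds F}$ moves each generating vector $v_i$ by at most $O(\sqrt k\, d_H(E,E'))$, hence moves the parallelepiped (a Lipschitz function of its generators, again by (\ref{pgram})) by $O(\oN^2)|v_{i_0}-w_{i_0}|$ in $m_{E'}$.

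The genuinely delicate term is the first, $(m_E - m_{E'}\circ\pi_{E'\ds F})P[v_1,\dots,v_k]$ — this is where the non-smoothness of the Banach norm concentrates and is the main obstacle. My approach is: transfer both $m_E$ and $m_{E'}$ to the same abstract $k$-dimensional model space $\R^k$ via fixed coordinate identifications (sending $\{v_i\}$ and $\{w_i\}$ to the standard basis), so both become Haar=Lebesgue measure on $\R^k$ scaled by a factor, namely $m_E P[v_1,\dots,v_k] = \omega_k / \mathrm{vol}_{\R^k}\{|u|_{E}\leq 1\}$ where $|\cdot|_E$ is the pullback norm. Thus the whole problem reduces to: the volume of the unit ball of a norm on $\R^k$ depends Lipschitz-continuously on the norm, measured in (say) the Banach-Mazur or uniform sense, \emph{provided} we stay in a compact family of uniformly-equivalent norms — and the $N\leq\oN$ hypothesis, via John's Theorem applied on $E$ and $E'$, is exactly what pins us into such a compact family with constants depending only on $\oN,k$. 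The Lipschitz dependence of $\mathrm{vol}\{|u|\leq 1\}$ on the norm $|\cdot|$ in this regime follows because the unit balls are sandwiched between fixed Euclidean balls, so a uniform pointwise perturbation of the norm perturbs the radial function of the unit ball proportionally, and integrating in polar coordinates over $S^{k-1}$ gives a Lipschitz bound with constant depending only on $k$ and the equivalence constants. Pulling the estimate $d_H(E,E')\lesssim_{\oN}|v_{i_0}-w_{i_0}|$ back through all the identifications, and summing the $k$ single-vector steps, yields the claimed bound $L\sum_i|v_i-w_i|$ with $L=L(\oN,k)$; choosing $\d=\d(\oN,k)$ small enough keeps every subspace perturbation inside the range where Lemmas \ref{lem:openCond} and \ref{lem:graNormEst} apply.
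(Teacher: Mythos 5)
Your proposal is correct in substance but takes a genuinely different route from the paper. The paper's proof never leaves the ambient Banach space: it projects the $v_i$ onto $E'=\langle\{w_i\}\rangle$ along a good complement $F$ (Lemma \ref{lem:compExist}), invokes Lemma \ref{lem:diffSubsp} to compare $m_E$ with $m_{E'}$ via the distortion of $\pi_{E'\ds F}$ on the unit ball (Lemma \ref{lem:graNormEst}(b)), and then compares the two resulting parallelepipeds \emph{inside} $E'$ using the John inner product and the smoothness of the classical determinant there. Your core mechanism is instead the pullback identity $m_E P[v_1,\dots,v_k]=\omega_k/\mathrm{vol}_{\R^k}\{u:|\textstyle\sum_i u_iv_i|\leq 1\}$, reducing everything to a comparison of two norms on $\R^k$. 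What you may not have noticed is that this identity, combined with the observation that $\sum_i|u_i|\leq N[v_1,\dots,v_k]\,|\sum_i u_iv_i|$ (immediate from the definition of $N$), already finishes the whole proof in one step: the pullback norms satisfy $\bigl||u|_E-|u|_{E'}\bigr|\leq\sum_i|u_i|\,|v_i-w_i|\leq\oN\max_i|v_i-w_i|\cdot|u|_E$, so the two unit balls in $\R^k$ are sandwiched within a multiplicative factor $(1\pm\oN\max_i|v_i-w_i|)$ of each other, and homogeneity of Lebesgue measure gives the bound with $L=2k\oN$ directly. This makes your interpolation step, the two-term decomposition involving $\pi_{E'\ds F}$, and the appeal to John's Theorem all unnecessary scaffolding. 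Two thin spots in the write-up as it stands: (i) the one-vector-at-a-time interpolation requires that the intermediate bases retain a bound $N[\cdots]\leq C(\oN,k)$, which you do not verify (it holds for $\d$ small, by the same norm-comparison, but needs saying); (ii) the claim that the parallelepiped volume is Lipschitz in its generators "by (\ref{pgram})" is not quite right — (\ref{pgram}) is only an upper bound, and the Lipschitz estimate for the logarithm also needs the lower bound $m_{E'}P[\cdots]\geq c(\oN,k)$ that you establish in your first paragraph, together with multilinearity of the determinant in a fixed inner product on $E'$. Both are repairable, and both evaporate if you run the pullback argument on the full quotient at once.
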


First we prove the following lemma:

\begin{lem}\label{lem:diffSubsp}
For any $k \geq 1$, there exist $\d_1 > 0$ and $L_1 > 0$ (depending only on $k$) such that 
the following hold. Assume 

(i) $E, E' \subset \Bc$ are two $k$-dimensional subspaces with $d_H(E, E') \leq \d_1$, 

(ii) $F$ is a complement to $E$ with  $|\pi_{E \ds F}| \leq \sqrt{k}$
(exists by Lemma \ref{lem:compExist}), and

(iii) $\{v_i\}_{i = 1}^k$ is a basis of unit vectors of $E$.

\noindent
Assume $\d_1$ is small enough that  $\Bc = E' \oplus F$. Let
$v_i' := (\pi_{E' \ds F} v_i) / |\pi_{E' \ds F} v_i|$. Then
\begin{enumerate}
\item $N[v_1', \cdots, v_k'] \leq 2 k N[v_1, \cdots, v_k]$;
\item $$ \left| \log  \frac{m_E P[v_1, \cdots, v_k]}{m_{E'} P[v_1', \cdots, v_k'] } \right| \leq L_1 d_H(E, E')\ .$$
\end{enumerate}
\end{lem}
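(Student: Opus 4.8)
\noindent\textit{Proof strategy.} The idea is to transport the basis $\{v_i\}$ from $E$ to $E'$ through the projection $T := \pi_{E' \ds F}|_E \colon E \to E'$ — which, up to normalization, is exactly the map producing the $v_i'$ — and to control separately how the ``non-orthogonality'' quantity $N$ changes and how the parallelepiped volume changes. As a preliminary step (valid once $\d_1$ is small in terms of $k$, e.g. $\d_1 \le \tfrac{1}{8k}$), Lemma \ref{lem:graNormEst}(a) gives $\Bc = E' \oplus F$ and $\|T\| \le |\pi_{E' \ds F}| \le 2\sqrt k$, while hypothesis (ii) gives $\|T^{-1}\| \le |\pi_{E \ds F}| \le \sqrt k$ (note $T^{-1} = \pi_{E \ds F}|_{E'}$); moreover Lemma \ref{lem:graNormEst}(b) yields $1 - 4\sqrt k\, d_H(E,E') \le |T v_i| \le 1 + 4\sqrt k\, d_H(E,E')$ for each unit vector $v_i \in E$. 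Since $v_i'$ is a positive multiple of $T v_i$, we have $\langle v_i' \rangle = T\langle v_i \rangle$ and $\langle v_j' : j \ne i\rangle = T\langle v_j : j \ne i\rangle$, and $\{v_i'\}$ is a basis of unit vectors of $E'$.

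For item~1: since $T$ carries the splitting $E = \langle v_i \rangle \oplus \langle v_j : j \ne i\rangle$ onto $E' = \langle v_i' \rangle \oplus \langle v_j' : j \ne i\rangle$, the projection onto $\langle v_i' \rangle$ along $\langle v_j' : j \ne i\rangle$ is the conjugate $T \circ \pi_{\langle v_i \rangle \ds \langle v_j : j \ne i\rangle} \circ T^{-1}$ of the corresponding projection on $E$, so its operator norm is at most $\|T\|\,\|T^{-1}\|\,|\pi_{\langle v_i \rangle \ds \langle v_j : j \ne i\rangle}| \le 2k\,|\pi_{\langle v_i \rangle \ds \langle v_j : j \ne i\rangle}|$. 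Summing over $i$ gives item~1.

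For item~2: I would first record the exact identity
$$
\frac{m_E P[v_1, \dots, v_k]}{m_{E'} P[v_1', \dots, v_k']} \;=\; \frac{\prod_{i=1}^k |T v_i|}{\det(\pi_{E' \ds F}| E)}\,,
$$
which follows from the scaling relation for $m_{E'}$ together with $\pi_{E' \ds F}(P[v_1, \dots, v_k]) = P[Tv_1, \dots, Tv_k]$ and $m_{E'}(\pi_{E' \ds F}(S)) = \det(\pi_{E' \ds F}| E)\, m_E(S)$ from Lemma \ref{lem:detProps}(1). Taking logarithms, it remains to bound $\sum_i |\log |T v_i||$ and $\bigl|\log\det(\pi_{E' \ds F}| E)\bigr|$, each by a constant depending only on $k$ times $d_H(E,E')$. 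The first is immediate from the two-sided bound on $|Tv_i|$ above (and $|\log(1+t)| \le 2|t|$ for $|t| \le \tfrac12$).

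The real work — and the step I expect to be the main obstacle — is the estimate $\bigl|\log\det(\pi_{E' \ds F}| E)\bigr| \le C_k\, d_H(E,E')$. One cannot simply invoke Lemma \ref{lem:johnSVD}(2), since the factors $k^{\pm k/2}$ there do not tend to $1$ as $E' \to E$, whereas the determinant itself does (it equals $1$ when $E' = E$). Instead I would work with $\det(\pi_{E' \ds F}| E) = m_{E'}(\pi_{E' \ds F}(B_E))/\omega_k$ directly and sandwich the image of the unit ball between $(1 - c_k d_H(E,E'))\,B_{E'}$ and $(1 + c_k d_H(E,E'))\,B_{E'}$; Lemma \ref{lem:indMeasProps}(3) then forces $\det(\pi_{E' \ds F}| E) \in [(1 - c_k d_H)^k,\, (1 + c_k d_H)^k]$, which gives the logarithmic bound. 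The outer inclusion follows at once from $|\pi_{E' \ds F} e - e| = |\pi_{F \ds E'} e| \le 4\sqrt k\, d_H(E,E')$ for $e \in B_E$. The inner inclusion amounts to $T^{-1}$ mapping $B_{E'}$ into a slight dilate of $B_E$, i.e. to a ``reverse gap'' estimate $|\pi_{F \ds E}|_{E'}| \le 4k\, d_H(E,E')$ that is \emph{not} among the conclusions of Lemma \ref{lem:graNormEst}; I would derive it from the identity $\pi_{F \ds E} e' = -\,\pi_{F \ds E'}(T^{-1} e')$ for $e' \in E'$ (apply $\pi_{F \ds E'}$ to $T^{-1}e' = e' - \pi_{F \ds E}e'$ and use $\pi_{F \ds E'}|_F = \id$), together with $|T^{-1}e'| \le \sqrt k\,|e'|$ and $|\pi_{F \ds E'}|_E| \le 4\sqrt k\, d_H(E,E')$. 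Assembling the three estimates yields item~2, with $L_1$ and $\d_1$ depending only on $k$.
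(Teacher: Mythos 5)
Your proof is correct and follows essentially the same route as the paper's: the same factorization of the ratio into $\prod_i|Tv_i|$ over $\det(\pi_{E'\ds F}|E)$, with the same tools (Lemma \ref{lem:graNormEst} and the scaling property from Lemma \ref{lem:indMeasProps}). The one unnecessary detour is your derivation of the ``reverse gap'' bound $|\pi_{F \ds E}|_{E'}| \le 4k\,d_H(E,E')$ to obtain the inner inclusion $\pi_{E'\ds F}(B_E) \supset (1 - c_k d_H)\,B_{E'}$; that inclusion already follows from the \emph{lower} bound in Lemma \ref{lem:graNormEst}(b) together with convexity --- $\pi_{E'\ds F}(B_E)$ is a convex symmetric body in $E'$ whose boundary $\pi_{E'\ds F}(S_E)$ lies outside $(1 - 4\sqrt{k}\,d_H)\,B_{E'}$, so it must contain that ball --- which is exactly how the paper handles it.
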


\begin{proof} Assuming $\d_1 \le \frac{1}{2 \sqrt{k}}$, Lemma \ref{lem:graNormEst} guarantees that $\Bc = E' \oplus F$ with $|\pi_{E' \ds F}| \leq 2 \sqrt{k}$. 
Let $v_i' \in E'$ be as in the statement. Then
\begin{equation} \label{reg-vol}
\frac{m_E  P[v_1, \cdots, v_k]}{m_{E'} P[v_1', \cdots, v_k']} \ = \ \frac{m_E P[v_1, \cdots, v_k]}{m_{E'} \big( \pi_{E' \ds F} P[v_1, \cdots v_k] \big) } \cdot \frac{m_{E'} \big(\pi_{E' \ds F} P[v_1, \cdots, v_k] \big) }{m_{E'} P[v_1', \cdots, v_k']}\ .
\end{equation}
By Lemma \ref{lem:indMeasProps}, the first quotient on the right side is equal to
$$
\frac{m_E B_E}{m_{E'} (\pi_{E' \ds F} B_E)}\ .
$$
In light of Lemmas \ref{lem:indMeasProps} and \ref{lem:graNormEst}, we have that
$$
(1 - 4 \sqrt{k} \ d_H(E,E') )^k \cdot m_{E'}B_{E'} \le m_{E'}(\pi_{E' \ds F} B_E) 
\le (1 + 4 \sqrt{k} \ d_H(E, E'))^k \cdot m_{E'} B_{E'}\ ,
$$
So long as $\d_1 \leq \frac{1}{8 \sqrt{k}}$, and recalling that $|\log(1 +z)| \leq 2 |z|$ for $z \in [-1/2,1/2]$, it follows that
$$
\left| \log \frac{m_E P[v_1, \cdots, v_k]}{m_{E'} \big( \pi_{E' \ds F} P[v_1, \cdots v_k] \big) } \right| \leq 8 k \sqrt{k}\ d_H(E, E')\ .
$$
For the second quotient on the right side of (\ref{reg-vol}), observe that for each $i$, $\pi_{E' \ds F} v_i = |\pi_{E' \ds F} v_i| \ v'_i$. The same reasoning as in Sect. \ref{sec:johnThm} then gives 
\begin{align*}
m_{E'} P[\pi_{E' \ds F} v_1, \cdots, \pi_{E' \ds F} v_k] = \left( \prod_{i = 1}^m |\pi_{E' \ds F} v_i| \right) m_{E'} P[v_1', \cdots, v_k']\ .
\end{align*}
Using Lemma \ref{lem:graNormEst} again to estimate the quantity in parenthesis, we obtain in a similar fashion that
$$
\left| \log  \frac{m_{E'} \big( \pi_{E' \ds F} P[v_1, \cdots, v_k] \big) }{m_{E'} P[v_1', \cdots v_k'] } \right| \leq 8 k \sqrt{k} \ d_H(E, E')\ .
$$
So, Item 2 in Lemma \ref{lem:diffSubsp} holds with $L_1 = 16 k \sqrt{k}$ and any $\d_1 \leq \frac{1}{8 \sqrt{k}}$.

For Item 1 in the lemma, observe that when $\pi_i$ is the projection onto $v_i$ parallel to the rest of the basis and $\pi_i'$ is the analogous for $\{v_i'\}$, we have 
$$
\pi'_i = \pi_{E' \ds F} \circ \pi_i \circ \pi_{E \ds F}| E'\ ,
$$
so that
$$
|\pi'_i| \le  |\pi_{E' \ds F}| \cdot |\pi_i| \cdot | \pi_{E \ds F}| \le 2 \sqrt{k} \cdot |\pi_i| \cdot \sqrt{k}\ ,
$$
giving the desired bound. 
\end{proof}

\begin{proof}[Proof of Proposition \ref{prop:measReg}]

Let $\{v_i\}, \{w_i\}$ be as in the statement, and denote $\langle \{v_i\} \rangle = E, \langle \{w_i\} \rangle = E'$. We will estimate the quantity in question by
\begin{equation}\label{2quotients}
\frac{m_E P[v_1, \cdots, v_k] }{ m_{E'}  P[w_1, \cdots, w_k]}
= \frac{m_E P[v_1, \cdots, v_k] }{m_{E'} P[v'_1, \cdots, v'_k] } \cdot
\frac{m_{E'} P[v'_1, \cdots, v'_k] } { m_{E'}  P[w_1, \cdots, w_k]}\ ,
\end{equation}
where $\{v'_i\}$ is as in Lemma \ref{lem:diffSubsp}. To apply Lemma \ref{lem:diffSubsp} to
 the first quotient on the right side, we must show 
 $d_H(E, E') \leq \text{const} \cdot \sum_i |v_i-w_i|$: 
For $v \in E$ with $ |v| = 1$, we write $v= \sum_i a_i v_i$, and let $w=\sum_i a_i w_i$. Then
\begin{eqnarray*}
d(v, E') \ \le \ |v-w| \ \le \ \sum_i |a_i| |v_i-w_i| & \le & \left(\sum_i |a_i| \right) \cdot \max_i |v_i-w_i|\\
& \le & N[v_1,\dots, v_k] \cdot \max_i |v_i-w_i|\ .
\end{eqnarray*}
Clearly, the role of $E$ and $E'$ can be interchanged in the above. Recalling that
$d_H \le 2 \d_a$ where $\d_a$ is as in Sect. 2.1.2, we have
\begin{eqnarray*} 
d_H(E, E') & \le & 2 \max \{N[v_1,\dots, v_k] , N[w_1, \dots, w_k] \} \cdot \max_i |v_i-w_i|\\
& \le & 2 \oN \max_i |v_i-w_i| \ .
\end{eqnarray*}
So, as long as $2  \oN \max_i |v_i - w_i| \le \d_1$, where $\d_1$ is as in 
Lemma \ref{lem:diffSubsp}, this lemma gives 
$$
\left| \log \frac{m_{E'}  P[v_1', \cdots, v_k']}{m_{E} P[v_1, \cdots, v_k]}  \right| \leq  L_1 d_H(E, E') \leq 2  \oN L_1 \sum_{i = 1}^k |w_i - v_i|\ .
$$

As for the second quotient on the right side of (\ref{2quotients}),
 since all vectors lie in $E'$, it is easy to see, by putting the
 inner product $(\cdot, \cdot)_{E'}$ on $E'$ and using the regularity of $\log \circ \det$ on $E'$,  
that there is a constant $L'_1$ (depending on $\oN$) such that
$$
\left| \log \frac{m_{E'} P[v'_1, \cdots, v'_k]}{m_{E'} P[w_1, \cdots, w_k]} \right| 
\leq L'_1 \sum_{i = 1}^k \|v'_i - w_i \|_{E'}\ .
$$
We need to bound $\|v'_i - w_i \|_{E'}$ by a quantity involving
$\sum_i |v_i-w_i|$. Now $\|v'_i - w_i \|_{E'} \le \sqrt{k} | v_i'-w_i|$ and
$| v_i'-w_i| \le |v_i' - v_i| + |v_i - w_i|$. It remains to observe that
\begin{eqnarray*}
|v_i - v'_i| & \le & |v_i - \pi_{E' \ds F} v_i | + |\pi_{E' \ds F} v_i - v_i'|\\
& = & |\pi_{F \ds E'} v_i| + ||\pi_{E' \ds F} v_i|-1|\\
& \le & 8\sqrt{k} \ d_H(E, E') \qquad \mbox{ by Lemma \ref{lem:graNormEst}}\ .
\end{eqnarray*}
This together with the bound on $d_H(E, E')$ above completes the proof.
\end{proof}

\subsubsection{Regularity of the determinant}

The following is the main result of this section.

\begin{prop}\label{prop:detReg}
For any $k \geq 1$ and any $M > 1$ there exist $L_2, \d_2 > 0$ with the following properties. If $A_1, A_2 : \Bc \to \Bc$ are bounded linear operators and $E_1, E_2 \subset \Bc$ are $k$-dimensional subspaces for which
\begin{gather*}
|A_j|,~ |(A_j|_{E_j})^{-1}| \leq M \quad j=1,  2 \, ,\\
|A_1 - A_2|, ~d_H(E_1, E_2) \leq \d_2 \, ,
\end{gather*}
then we have the estimate
\begin{equation} \label{regularity}
\left| \log \frac{\det(A_1 | E_1)}{\det (A_2 | E_2)} \right| \leq L_2 (|A_1 - A_2| + d_H(E_1, E_2))\ .
\end{equation}
\end{prop}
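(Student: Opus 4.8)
The plan is to reduce the estimate (\ref{regularity}) to Proposition~\ref{prop:measReg} by writing each determinant as a ratio of parallelepiped volumes built from near-orthogonal bases, and by factoring
\[
\frac{\det(A_1 | E_1)}{\det(A_2 | E_2)} = \frac{\det(A_1 | E_1)}{\det(A_1 | E_2)} \cdot \frac{\det(A_1 | E_2)}{\det(A_2 | E_2)}
\]
so that in the first factor only the subspace varies and in the second only the operator. To set up the bases I would use Corollary~\ref{lem:orthoBasis}: equip $E_1$ with its John inner product $(\cdot,\cdot)_{E_1}$, take an orthonormal basis, and normalize in $|\cdot|$ to obtain a unit basis $\{\hat v_i\}$ of $E_1$; since the coordinate projections $\pi_i$ of an inner-product orthonormal basis are the orthogonal projections, the norm equivalence in Corollary~\ref{lem:orthoBasis} gives $|\pi_i| \le k$, and rescaling each $\hat v_i$ does not change $\pi_i$, so $N[\hat v_1,\dots,\hat v_k] \le k^2$. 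For $E_2$ I would take $\hat w_i := (\pi_{E_2 \ds F}\hat v_i)/|\pi_{E_2 \ds F}\hat v_i|$, where $F$ is a complement to $E_1$ with $|\pi_{E_1 \ds F}| \le \sqrt k$ (Lemma~\ref{lem:compExist}); once $\d_2 \le \tfrac1{2\sqrt k}$, Lemma~\ref{lem:graNormEst} gives $\Bc = E_2 \oplus F$, $|\hat v_i - \hat w_i| \le 8\sqrt k\, d_H(E_1,E_2)$, and, exactly as in Lemma~\ref{lem:diffSubsp}(1), $N[\hat w_1,\dots,\hat w_k] \le 2k\,N[\hat v_i] \le 2k^3$.

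Next I would record the two facts that make Proposition~\ref{prop:measReg} applicable with a \emph{fixed} $\oN$. First, for bounded $A$ injective on a $k$-subspace $E$ and any basis $\{u_i\}$ of $E$, Lemmas~\ref{lem:detProps} and \ref{lem:indMeasProps} give $\det(A|E) = m_{\langle \{Au_i\}\rangle}P[Au_1,\dots,Au_k]/m_E P[u_1,\dots,u_k]$, a quantity unchanged if each $u_i$ is rescaled; moreover, if $T := A|_E$ is invertible onto its image then the coordinate projection onto $\langle Tu_i\rangle$ inside $TE$ equals $T\circ\pi_i\circ T^{-1}$, so that $N[Tu_1,\dots,Tu_k] \le |T|\,|T^{-1}|\,N[u_1,\dots,u_k]$. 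Second, once $\d_2$ is small (say $\d_2 \le \tfrac1{2(1+M^2)}$), each of $A_1|_{E_2}$, $A_2|_{E_2}$, $A_1|_{E_1}$ is invertible onto its image with operator norm $\le M$ and inverse norm $\le 2M$: indeed $|(A_j|_{E_j})^{-1}|\le M$ handles $A_1|_{E_1}$ directly, and for $w\in E_2$, $|w|=1$, picking $p\in E_1$ with $|w-p|\le d_H(E_1,E_2)$ (possible since a unit vector of $E_2$ lies within $d_H$ of $E_1$) gives $|A_1 w| \ge |p|/M - M\,d_H \ge \tfrac1{2M}$, and similarly for $A_2|_{E_1}$. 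Combining the two facts, the image sets $\{A_1\hat v_i\}$, $\{A_1\hat w_i\}$, $\{A_2\hat w_i\}$ all have $N[\cdot]$ bounded by some $\oN = \oN(M,k)$ (e.g. $\le 4M^2k^3$), and each such image vector has norm in $[\tfrac1{2M},M]$.

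Finally I would estimate the two factors. For the second, using the basis $\{\hat w_i\}$ of $E_2$,
\[
\frac{\det(A_1 | E_2)}{\det(A_2 | E_2)} = \frac{m_{\langle \{A_1\hat w_i\}\rangle}P[A_1\hat w_1,\dots,A_1\hat w_k]}{m_{\langle \{A_2\hat w_i\}\rangle}P[A_2\hat w_1,\dots,A_2\hat w_k]};
\]
rescaling the $A_j\hat w_i$ to unit length changes the logarithm by $O_M(|A_1-A_2|)$ (since $\big||A_1\hat w_i|-|A_2\hat w_i|\big| \le |A_1-A_2|$ and the norms are $\ge \tfrac1{2M}$), the rescaled vectors are within $O_M(|A_1-A_2|)$ of one another and have $N[\cdot]\le\oN(M,k)$, so Proposition~\ref{prop:measReg} (with $\oN=\oN(M,k)$) bounds this factor by $C(M,k)\,|A_1-A_2|$, provided $\d_2 \le \d(\oN(M,k),k)$. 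For the first factor I would write
\[
\frac{\det(A_1 | E_1)}{\det(A_1 | E_2)} = \frac{m_{\langle \{A_1\hat v_i\}\rangle}P[A_1\hat v_1,\dots,A_1\hat v_k]}{m_{\langle \{A_1\hat w_i\}\rangle}P[A_1\hat w_1,\dots,A_1\hat w_k]}\cdot \frac{m_{E_2}P[\hat w_1,\dots,\hat w_k]}{m_{E_1}P[\hat v_1,\dots,\hat v_k]};
\]
the second quotient is bounded by $C(k)\,d_H(E_1,E_2)$ directly by Proposition~\ref{prop:measReg} (unit bases $\{\hat v_i\},\{\hat w_i\}$, close by $\le 8\sqrt k\, d_H$, with $N\le 2k^3$), and the first quotient, after the same unit rescaling (another $O_M(d_H)$ error) and using $|A_1\hat v_i - A_1\hat w_i|\le 8M\sqrt k\, d_H$ together with the $N$-bounds above, is bounded by $C(M,k)\,d_H(E_1,E_2)$ via Proposition~\ref{prop:measReg}. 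Summing the two factors yields (\ref{regularity}) with $L_2 = L_2(M,k)$ assembled from the Lipschitz constants of Proposition~\ref{prop:measReg} and $M,k$, and $\d_2 = \d_2(M,k)$ taken below $\tfrac1{8\sqrt k}$, the $\d_1$ of Lemma~\ref{lem:diffSubsp}, $\tfrac1{2(1+M^2)}$, and the relevant $\d(\oN(M,k),k)$. The main obstacle is the point isolated in the second paragraph: one must convert the coarse bound $|A_j|,|(A_j|_{E_j})^{-1}|\le M$ into an $(M,k)$-only control of the near-orthogonality defect $N$ of the \emph{image} bases (this is exactly what the conjugation identity $\pi_i = T\pi_i T^{-1}$ buys), along with uniform invertibility of the ``cross'' restrictions $A_j|_{E_{3-j}}$ after an $O(d_H)$ perturbation; the remaining passage to unit vectors is routine bookkeeping but genuinely needed, since Proposition~\ref{prop:measReg} is stated only for unit vectors.
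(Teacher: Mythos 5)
Your argument is correct and ultimately rests on the same tools as the paper's proof (John's theorem via Corollary~\ref{lem:orthoBasis}, the conjugation identity $\sigma_i = T\pi_i T^{-1}$ to bound $N[\cdot]$ of image bases, and Proposition~\ref{prop:measReg} applied after normalizing to unit vectors), but the decomposition is genuinely different. You factor $\det(A_1|E_1)/\det(A_2|E_2)$ as $\big(\det(A_1|E_1)/\det(A_1|E_2)\big)\cdot\big(\det(A_1|E_2)/\det(A_2|E_2)\big)$, changing the subspace and then the operator one at a time. The price of this clean organization is that it introduces the ``cross'' determinant $\det(A_1|E_2)$, whose nondegeneracy is not among the hypotheses; you correctly recover a uniform lower bound on $m(A_1|_{E_2})$ via the perturbation argument in your second paragraph, but this is an extra step that the paper's decomposition avoids entirely. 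The paper instead keeps $A_1$ tied to $E_1$ and $A_2$ to $E_2$ throughout: it takes $\{v_i\}$ from the singular value decomposition of $A_1|_{E_1}$ (so that $\{w_i = A_1v_i/|A_1v_i|\}$ is automatically John-orthonormal in $A_1E_1$, with $N[w_i]\le k^2$ for free), projects to get $v_i'\in E_2$, sets $w_i' = A_2v_i'/|A_2v_i'|$, and pulls out the full normalization factor $\prod_i |A_1v_i|/|A_2v_i'|$ in one stroke before applying Proposition~\ref{prop:measReg} to the two remaining parallelepiped ratios. The paper's version is thus a bit leaner (no cross-invertibility lemma, and the SVD spares a conjugation step for the first image basis); yours is arguably more transparent in that each factor isolates exactly one source of variation, and it shows the SVD is a convenience rather than a necessity.
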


\begin{proof} Putting the inner products from Corollary \ref{lem:orthoBasis} on $E_1$ and $A_1E_1$, we let
$\{v_i\}, \{w_i\}$ be bases for $E_1$ and $ A_1 E_1$ respectively consisting of orthogonal
vectors corresponding to a singular value decomposition of $A_1|_{E_1}$, normalized so as to 
have $|v_i|=|w_i|=1$, and ordered so that $w_i = |A_1 v_i|^{-1} A_1 v_i$.
Taking $\d_2 \leq \d_1$ with $\d_1$ as in Lemma \ref{lem:diffSubsp} and fixing a complement $F$ to $E_1$ with $|\pi_{E_1 \ds F}| \leq \sqrt k$, we define
$$v_i' = \frac{\pi_{E_2 \ds F} v_i }{ |\pi_{E_2 \ds F} v_i|} \ , \qquad \mbox{ and }
\qquad w_i' = \frac{A_2 v_i' }{ |A_2 v_i'|}\ .
$$
First we argue that (with respect to the norms $|\cdot|$) all four of the $N[\cdots]$ quantities so defined are bounded by 
some $\oN$ 
depending only on $k$ and $M$: clearly, $N[v_1, \cdots, v_k], N[w_1, \cdots, w_k] \leq k^2$, 
and $N[v_1', \cdots, v_k'] \leq 2 k N[v_1, \cdots, v_k]$ by Lemma \ref{lem:diffSubsp}. 
To bound $N[w_1', \cdots, w_k']$, write $\pi_i'$ for the parallel projection onto $v_i'$ and $\sigma_i'$ the parallel projection onto $w_i'$,
and observe that $\sigma_i' \circ A_2 = A_2 \circ \pi_i'$,
which yields the bound $|\sigma_i'| \leq |A_2| \cdot |(A_2|_{E_2})^{-1}| \cdot |\pi_i'|$,  
so that $N[w_1', \cdots, w_k'] \leq M^2 N[v_1', \cdots, v_k']$. This bounds all four $N[\cdots]$ quantities by $\oN = 2 k^3 M^2$.

We will estimate the left side of (\ref{regularity}) as follows:
\begin{align*}
\frac{\det (A_1 | E_1) }{\det(A_2 | E_2)} &= \frac{m_{A_1 E_1} P[A_1 v_1, \cdots, A_1 v_k]}{m_{E_1} P[v_1, \cdots, v_k]} \cdot 
 \left( \frac{m_{A_2 E_2} P[A_2 v_1', \cdots, A_2 v_k']}{m_{E_2} P[v_1', \cdots, v_k']} \right)^{-1} \\
 & = \left( \prod_{i =1}^k \frac{|A_1 v_i|}{|A_2 v_i' |}\right) \cdot \frac{m_{A_1 E_1} P[w_1, \cdots, w_k]}{m_{A_2 E_2} P[w_1', \cdots, w_k']} \cdot 
 \frac{m_{E_2} P[v_1', \cdots, v_k']}{m_{E_1} P[v_1, \cdots, v_k]}
\end{align*}
where the extraction of the parenthetical term is as discussed in Section \ref{sec:johnThm}.

We estimate the three factors above separately. For the first, a simple computation gives
$$
\left|  \frac{|A_1 v_i|}{|A_2 v_i'|} -1 \right| = \left| \frac{|A_2 v_i'|-|A_1 v_i|}{|A_2 v_i'|}\right|
\le M |A_1 v_i - A_2 v_i'| \leq M |A_1 - A_2| + M^2 |v_i - v_i'|\ .
$$

For the second and third terms, we will show that for $\d_2$ small enough, $\max_i |v_i - v_i'|, |w_i - w_i'| \leq \d(\oN, k)$ with $\d(\oN, k)$ as in Proposition \ref{prop:measReg}, so that we obtain
$$
\bigg| \log \frac{m_{E_2} P[v_1', \cdots, v_k']}{m_{E_1} P[v_1, \cdots, v_k]} \bigg|  \ \le \ 
L \sum_{i = 1}^k |v_i - v_i'|\ , \qquad
\bigg| \log \frac{m_{A_1 E_1} P[w_1, \cdots, w_k]}{m_{A_2 E_2} P[w_1', \cdots, w_k']} \bigg|  \ \le \ 
L \sum_{i = 1}^k |w_i - w_i'|\ ,
$$
where $L = L(\oN, k)$. 

It now remains to control $|v_i - v_i'|$ and $|w_i - w_i'|$ in terms of $|A_1 - A_2|$ and 
$d_H(E_1, E_2)$. A bound on $|v_i - v_i'| $ was given in the proof of 
Proposition \ref{prop:measReg}, and it is straightforward to estimate
\begin{align*}
|w_i - w_i'| & \leq \frac{1}{|A_1 v_i|} \bigg( |A_1 v_i - A_2 v_i'|  + \big| |A_1 v_i| - |A_2 v_i'| \big| \bigg) \\
&  \le 2 M  |A_1 v_i - A_2 v_i'| \leq 2 M |A_1 - A_2| + 2 M^2 |v_i - v_i'| \ .
\end{align*}
The proof is complete.
\end{proof}

\begin{rmk}\label{rmk:constantsDependence}
Later, when we apply Proposition \ref{prop:detReg} to distortion estimates, we will need to use the dependence of the constants $\d_2, L_2$ on the parameters $k, M$. Keeping track of the constraints on the constants $\d_2, L_2$ made throughout Section 2, one can show that there exists a constant $C_k \geq 1$, depending only on the dimension $k \in \N$, such that we may take $\d_2 = (C_k M^{10 k })^{-1}$ and $L_2 = C_k M^{10 k }$ in the conclusion to Proposition \ref{prop:detReg}.
\end{rmk}

As we have shown, in spite of the lack of differentiability present in finite-dimensional and Hilbert spaces, the notion of determinant we have introduced in this section is at least locally Lipschitz in the sense of Proposition \ref{prop:detReg}. This regularity is used in a crucial way in Section \ref{sec:distEst}, when we apply Proposition \ref{prop:detReg} to distortion estimates.

\section{Addendum to the Multiplicative Ergodic Theorem}

The Multiplicative Ergodic Theorem (MET)
has by now been proved a number of times. Limiting our discussion to infinite dimensions,
it was proved in \cite{Rmet} for Hilbert space cocycles, and in 
\cite{Mmet}, \cite{thieu}, \cite{lianlu} for Banach space cocycles; see also \cite{QFmet}, \cite{QGTmet}, \cite{quasConcise} and \cite{blumenthal}. 
In Sect. 3.1, we recall a version of the MET that is adequate for our purposes, and in Sect. 3.2, we add some 
interpretation in terms of volume growth, following up on the ideas in the previous section. In Sect. 3.3, we discuss continuity properties of certain subspaces.

When proving Theorems 1 and 2, standard techniques will allow us to reduce to 
working only with ergodic measures, so to keep the exposition simple we will state and work with the MET assuming that the underlying dynamical system is ergodic.

\subsection{A version of the MET for Banach space cocyles}

We recall below a precise statement of the MET on Banach spaces following Thieullen \cite{thieu}, in a (slightly simplified) setting that is adequate for our purposes.

\bigskip \noindent
{\it Standing hypotheses and notation for Section 3:} 
Let $X$ be a compact metric space, and let $f : X \to X$ be a homeomorphism
preserving an ergodic Borel probability measure $\mu$ on $X$. 
We consider a continuous map $T : X \to {\bf B}(\Bc)$ where
${\bf B}(\Bc)$ denotes the space of bounded linear operators on $\Bc$,
the topology on ${\bf B}(\Bc)$ being the   operator norm topology.
We will sometimes refer to the triple $(f,\mu;T)$ as a {\it cocycle}, and write 
$T_x^n = T_{f^{n-1} x} \circ \cdots \circ T_x$.

\begin{defn}
Let $C \subset \Bc$ be any bounded set. The {\it Kuratowski measure of 
noncompactness} of $C$ is defined by
$$
\a(C) = \sup\{r > 0 : \text{ there is a finite cover of } C \text{ by balls of radius }r \}\ .
$$
\end{defn}
For $A \in {\bf B}(\Bc)$, we denote $|A|_{\a} = \a(A(B))$, where $B$ is the closed ball of radius $1$ in $\Bc$. The assignment $|\cdot|_{\a}$ is a submultiplicative seminorm for which $|A|_\a \leq |A|$ for any $A \in {\bf B}(\Bc)$ (in particular, $A \mapsto |A|_\a$ is continuous as a map on ${\bf B}(\Bc)$ with the operator norm). This and other properties of $|\cdot|_{\a}$ can be found in \cite{nuss}. Since 
$x \mapsto |T^n_x|_{\a}$ is continuous for any $n \geq 1$, it follows from subadditivity that the limit
$$
l_{\a} = \lim_{n \to \infty} \frac{1}{n} \log |T^n_x|_{\a} \ \ge \ -\infty
$$
exists and is constant $\mu$-almost surely; moreover, it  coincides $\mu$-a.s. 
with $\inf_{n \geq 1} \frac 1 n \log |T^n_x|_\a$.

\begin{rmk}\label{rmk:impliesH3}
For $c > 0$, the condition that $l_{\a}< \log c$ $\mu$-a.e. is implied by the following:
Let $\mathcal L_c(\Bc) = \{A \in {\bf B}(\Bc) : A = C + K, \text{ where } K \text{ is compact and } |C| < c\}$. If $T_x \in \mathcal L_c(\Bc)$ for all $x \in X$, then $\sup_{x \in X} |T_x|_{\a} < c$, hence $l_\a < \log c$ $\mu$-almost surely by the continuity of $x \mapsto |T_x|_\a$ and the compactness of $X$.
\end{rmk}

\begin{thm}[Multiplicative ergodic theorem \cite{thieu}]  \label{thm:MET} In addition
to the Standing Hypotheses above we assume that $T_x$ is injective for every $x \in X$.
Then, for any $\l_{\a} > l_{\a}$, there is a measurable, $f$-invariant set 
$\Gamma \subset X$ 
with $\mu(\Gamma)=1$ and at most finitely many real numbers 
$$\l_1 > \l_2 > \cdots > \l_r $$ 
with $\l_r > \l_{\a}$ for which the following properties hold. 
For any $x \in \Gamma$, there is a splitting 
$$\Bc = E_1(x) \oplus E_2(x) \oplus \cdots \oplus E_r(x) \oplus F(x)$$
such that
\begin{itemize}
\item[(a)] for each $i=1,2,\dots, r$, $\dim E_i(x) = m_i$ is finite and constant 
$\mu$-a.s., $T_x E_i(x) = E_i(f x)$, and for any $v \in E_i(x) \setminus \{0\}$, we have
$$
\l_i = \lim_{n \to \infty} \frac{1}{n} \log |T^n_x v| = - \lim_{n \to \infty} \frac{1}{n} \log  |( T^n_{f^{-n} x})^{-1} v |\ ;
$$
\item[(b)] the distribution $F$ is closed and finite-codimensional, satisfies $T_x F(x) \subset F(f x)$ and
$$
\la_\a \geq \limsup_{n \to \infty} \frac{1}{n} \log |T^n_x |_{F(x)}|  \ ;
$$
\item[(c)] the mappings $x \mapsto E_i(x), x \mapsto F(x)$ are $\mu$-continuous (see Definition \ref{defn:muCont} below), and
\item[(d)] writing $\pi_i(x)$ for the projection of $\Bc$ onto $E_i(x)$ via the splitting at $x$, 
we have
$$
\lim_{n \to \pm \infty} \frac{1}{n} \log |\pi_i(f^n x)| = 0 \quad a. s.
$$
\end{itemize}
\end{thm}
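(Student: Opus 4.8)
The plan is to follow the volume-growth approach to the MET on Banach spaces (as in Thieullen, and in the spirit of the determinant machinery of Section 2), in four stages: first, extract the Lyapunov spectrum above $\la_\a$ from subadditive ergodic theorems applied to volume-growth functions; second, build a forward $T$-invariant filtration of ``slow'' subspaces; third, upgrade this filtration to the splitting $\Bc = E_1 \oplus \cdots \oplus E_r \oplus F$ by taking limits of backward iterates of ``fast'' subspaces; fourth, verify the exact exponential rates in (a), the $\mu$-continuity (c), and the tempering estimate (d).

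\emph{Stage 1.} For $k \ge 1$, $n \ge 1$ set $D_k(x,n) = \sup\{\det(T^n_x|V) : V \subset \Bc,\ \dim V = k\}$. Since $T^n_x$ is injective, Lemma \ref{lem:detProps}(2) gives $\det(T^{m+n}_x|V) = \det(T^m_{f^nx}|T^n_xV)\,\det(T^n_x|V) \le D_k(f^nx,m)\,D_k(x,n)$, so $n \mapsto \log D_k(x,n)$ is a subadditive cocycle; together with a bound $D_k(x,n) \le C_k|T^n_x|^k$ (Lemma \ref{lem:johnSVD}) and measurability, Kingman's theorem yields $\Lambda_k := \lim_n \tfrac1n\log D_k(x,n)$, constant $\mu$-a.s.\ by ergodicity. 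The increments $\hat\la_k := \Lambda_k - \Lambda_{k-1}$ ($\Lambda_0 = 0$) form a non-increasing sequence — the Lyapunov exponents counted with multiplicity — and comparing $D_k(x,n)$ against $|T^n_x|_\a$ on a finite-codimensional complement shows that only finitely many $\hat\la_k$ exceed any fixed $\la_\a > l_\a$, with finite total multiplicity; these give $\la_1 > \cdots > \la_r$ with multiplicities $m_i$.

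\emph{Stages 2--3.} For each $i$, choose a $(m_1+\cdots+m_i)$-dimensional subspace $U_i(x)$ nearly realizing $D_{m_1+\cdots+m_i}(x,n)$ for large $n$. Using submultiplicativity of $\det$ and the exponential gap between $\la_i$ and $\la_{i+1}$, one shows the backward iterates $T^n_{f^{-n}x}\big(U_i(f^{-n}x)\big)$ form a Cauchy sequence in the gap metric $\d_a$: the key estimate bounds the aperture between consecutive iterates by a ratio of growth rates, converting norm bounds on $T^n_{f^{-n}x}$ and on $(T^n_{f^{-n}x}|_{U_i})^{-1}$ into control of projections via Lemma \ref{lem:graNormEst} and the angle quantities $\a(\cdot,\cdot)$; noninvertibility of $f$ is absorbed by iterating $T$ only on the nested images $T^n_{f^{-n}x}(\Bc)$, where it is injective. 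Writing $U_i^\infty(x)$ for the limit, one sets $E_i(x) := U_i^\infty(x) \cap \ker\pi_{i-1}^\infty(x)$, of dimension $m_i$, equivariant under $T$, with $E_1(x)\oplus\cdots\oplus E_i(x) = U_i^\infty(x)$. The slow part $F(x)$ is the closed, finite-codimensional subspace $\{v : \limsup_n \tfrac1n\log|T^n_xv| \le \la_\a\}$ (intersected with a complement of $\bigoplus_i E_i(x)$), and $|T^n_x|_{F(x)}|$ is controlled by $|T^n_x|_\a$ up to a subexponential factor from a finite-dimensional complement, giving (b); that $\Bc = \bigoplus_i E_i(x) \oplus F(x)$ follows from Lemma \ref{lem:openCond} once the apertures are shown bounded below.

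\emph{Stage 4.} The exact limits in (a) come from a two-sided squeeze: the upper bound $\la_i$ on $E_i(x)$ is built into the construction of $U_i^\infty$, while the matching lower bound (and the backward rate, using injectivity of $T^n_{f^{-n}x}$) holds because otherwise $D_{m_1+\cdots+m_i}(x,n)$ would grow strictly slower than $e^{n(m_1\la_1+\cdots+m_i\la_i)}$, contradicting Stage 1; here Lemma \ref{lem:johnSVD}(2) is used to compare the determinant on $E_1\oplus\cdots\oplus E_i$ with $\prod_j|T^n_xv_j|$. Part (c) is Lusin's theorem applied to the measurable maps $x \mapsto E_i(x), F(x)$ into the metrizable Grassmannian $\Gc(\Bc)$. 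Part (d) is the standard tempering lemma: the gap bounds give $\log^+|\pi_i(\cdot)| \in L^1(\mu)$, whence $\tfrac1n\log|\pi_i(f^nx)| \to 0$ $\mu$-a.s.\ by Birkhoff plus Borel--Cantelli, and symmetrically as $n \to -\infty$. The main obstacle is Stage 3 — convergence of the backward-iterated fast subspaces in the gap topology and positivity of the apertures of the limiting splitting: in a Hilbert space this is handled via orthogonal projections and singular value decompositions, but in a general Banach space one must substitute throughout the John's-theorem comparisons of Section \ref{sec:johnThm} and the quantitative angle estimates of Lemma \ref{lem:graNormEst}, all while tracking that $f$, and hence the cocycle's ``inverse,'' lives only on nested images. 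This is exactly where the regularity developed in Section 2 is indispensable.
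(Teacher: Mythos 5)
The paper does not prove Theorem~\ref{thm:MET}: it is quoted from Thieullen \cite{thieu} (``We recall below a precise statement of the MET on Banach spaces following Thieullen''), so there is no in-paper argument to compare yours against. Measured against the proofs in the cited literature, your outline does follow the standard volume-growth route — subadditive ergodic theorem applied to $\sup_V \det(T^n_x|V)$, extraction of the spectrum from the increments, backward iteration of nearly-maximizing ``fast'' subspaces to build the splitting, and a squeeze plus tempering argument at the end — and you correctly identify the gap-metric convergence of the backward iterates as the crux. One small correction: in the Standing Hypotheses of Section 3 the base map $f$ is a homeomorphism, so the difficulty you attribute to ``noninvertibility of $f$'' is really the noninvertibility of the cocycle $T_x$; the operators $T^n_{f^{-n}x}$ are always defined, and it is their non-surjectivity that forces one to work on the nested images.

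That said, as a proof the proposal has substantive holes at exactly the points where the theorem is hard. (i) In Stage 1 you assert that the increments $\hat\la_k=\Lambda_k-\Lambda_{k-1}$ are non-increasing; this is the concavity of $k\mapsto\Lambda_k$, which in a Hilbert space follows from singular values and exterior powers but in a Banach space with the induced-volume determinant requires an argument (Lemma~\ref{lem:detSplit} gives subadditivity $\Lambda_{k+l}\le\Lambda_k+\Lambda_l$ up to angle constants, which is weaker than concavity). (ii) The claim that only finitely many $\hat\la_k$ exceed $\la_\a>l_\a$ is the Ma\~n\'e--Thieullen compactness step relating the Kuratowski seminorm $|T^n_x|_\a$ to the dimensions of fast subspaces; ``comparing $D_k(x,n)$ against $|T^n_x|_\a$ on a finite-codimensional complement'' names the conclusion rather than proving it. (iii) Stage 3, the Cauchy estimate for $T^n_{f^{-n}x}(U_i(f^{-n}x))$ in the aperture together with a uniform lower bound on the angles of the limiting splitting, is where essentially all of the analytic work of \cite{thieu} lives; you flag it as the main obstacle but do not supply the estimate. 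If you intend this as a self-contained proof rather than a reading of the literature, these three steps need to be carried out; otherwise the honest route is the one the paper takes, namely to cite \cite{thieu} (or \cite{lianlu}, \cite{blumenthal}) for the theorem and prove only the addenda of Sections 3.2--3.3.
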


\begin{defn}\label{defn:muCont}
Let $(X, \mu)$ be as in the beginning of Section 3.1, and let $Z$ be a metric space. We say that a map $\Phi : X \to Z$ is $\mu$-continuous if there is an increasing sequence $ \bar K_n, n \in \N,$ of compact subsets of $X$, satisfying $\mu (\cup_n \bar K_n) = 1$, for which $\Phi|_{ \bar K_n}$ is continuous for each $n \in \N$.
\end{defn}

\begin{rmk}
When $Z$ is separable, Lusin's Theorem implies directly that $\mu$-continuity is equivalent with Borel measurability, i.e., the inverse image of a Borel subset of $Z$ is a Borel subset of $X$ \cite{castaing1977convex}. This equivalence continues to hold for arbitrary metric spaces $Z$ as a consequence of a deep result of Fremlin; see Theorem 4.1 in \cite{kupka}.


%
%
\end{rmk}

\noindent 
We may assume going forward that there exist Borel $K_n \subset X$, $n \in \N$, such that 
\begin{itemize}
\vspace{-4 pt}
\item[--] $\Gamma = \cup_n K_n$ is an $f$-invariant set with $\mu(\Gamma)=1$, and
\vspace{-4 pt}
\item[--] the mappings $x \mapsto F(x)$ and $x \mapsto E_i(x)$, $1 \leq i \leq r$, are 
continuous on the closure of each $K_n$.
\end{itemize}
To see this, let $\{ \bar K^{(i)}_n\}$ and $\{ \bar K^{(F)}_n\}$ be the compact sets
given by the $\mu$-continuity of $x \mapsto E_i(x)$ and $x \mapsto  F(x)$
respectively, and let $ \bar K_n = \cap_{i=1}^r  \bar K^{(i)}_n \cap \bar K^{(F)}_n$. 
It is easy to check that $\mu(\cup_n  \bar K_n)=1$. Trimming 
away sets of measure $0$, we obtain an invariant set as claimed.

\begin{lem}\label{lem:measurable} The $\mu$-continuity of
$x \mapsto E(x)$ for $E=E_i$, any $i$, or $E = F$, implies that the following functions are
Borel measurable: 
\begin{itemize}
\vspace{-4 pt}
\item[(i)] $x \mapsto |T_x|_{E(x)}|$, 
\vspace{-4 pt}
\item[(ii)] $x \mapsto m(T_x|_{E(x)})$ where $m(A|_V) = 
\min\{ |A v| : v \in V, |v| = 1\}$ is the minimum norm,
\vspace{-4 pt}
\item[(iii)] $x \mapsto \det(T_x | E(x))$ for $E=E_i$.
\end{itemize}
\end{lem}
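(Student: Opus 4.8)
The plan is to reduce everything to showing that each of the three functions is \emph{continuous} on each of the compact sets $\bar K_n$ furnished by the $\mu$-continuity of $x \mapsto E(x)$. Once this is done, each function is $\mu$-continuous in the sense of Definition \ref{defn:muCont} (extend it by $0$ on the Borel full-measure set $\Gamma = \bigcup_n \bar K_n$, using the same witnessing sets $\bar K_n$), and since all three functions are real-valued, Lusin's theorem as recalled in the Remark following Definition \ref{defn:muCont} gives Borel measurability. On a fixed $\bar K_n$ the maps $x \mapsto T_x$ (continuous by hypothesis) and $x \mapsto E(x)$ are continuous, hence so is $x \mapsto (T_x, E(x))$ into ${\bf B}(\Bc) \times \Gc(\Bc)$; by compactness both are uniformly continuous and $M' := \sup_{x \in \bar K_n} |T_x| < \infty$.

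For (i) and (ii), I would first record the joint continuity, on ${\bf B}(\Bc) \times \Gc(\Bc)$, of the maps $(A, V) \mapsto |A|_V| = \sup\{|Av| : v \in V, |v| = 1\}$ and $(A, V) \mapsto m(A|_V) = \inf\{|Av| : v \in V, |v| = 1\}$. This is a one-line estimate from the definition of $d_H$: given $e$ in the unit sphere of $V$, choose $e'$ in the unit sphere of $V'$ with $|e - e'|$ as close as desired to $d(e, S_{V'}) \leq d_H(V, V')$, and note $\big|\,|Ae| - |A'e'|\,\big| \leq |A|\,|e - e'| + |A - A'|$; taking the appropriate supremum or infimum over unit spheres, and then symmetrizing in $(A, V) \leftrightarrow (A', V')$, yields $\big|\,|A|_V| - |A'|_{V'}|\,\big| \leq \max(|A|,|A'|)\, d_H(V, V') + |A - A'|$, and likewise with $m(A|_V), m(A'|_{V'})$ in place of $|A|_V|, |A'|_{V'}|$. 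Precomposing with the continuous map $x \mapsto (T_x, E(x))$ then gives continuity of (i) and (ii) on $\bar K_n$.

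For (iii) (where $E = E_i$, so $\dim E_i(x) \equiv m_i$), injectivity of $T_x$ forces $T_x|_{E_i(x)}$ to be injective, so $\dim T_x(E_i(x)) = m_i$ and hence $\det(T_x | E_i(x)) > 0$ for every $x$; moreover $m(T_x|_{E_i(x)}) > 0$, and by part (ii) this is a continuous positive function on the compact set $\bar K_n$, so $c := \inf_{x \in \bar K_n} m(T_x|_{E_i(x)}) > 0$ and $|(T_x|_{E_i(x)})^{-1}| \leq 1/c$ throughout $\bar K_n$. Taking $M := \max(M', 1/c, 2)$ and $k = m_i$, Proposition \ref{prop:detReg} provides $\d_2, L_2 > 0$ such that $\big|\log\det(T_x|E_i(x)) - \log\det(T_y|E_i(y))\big| \leq L_2\big(|T_x - T_y| + d_H(E_i(x), E_i(y))\big)$ whenever $x, y \in \bar K_n$ satisfy $|T_x - T_y|, d_H(E_i(x), E_i(y)) \leq \d_2$. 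Since $x \mapsto T_x$ and $x \mapsto E_i(x)$ are continuous on $\bar K_n$, these smallness conditions hold once $x$ is close to $y$; hence $x \mapsto \log\det(T_x|E_i(x))$, and therefore $x \mapsto \det(T_x|E_i(x))$, is continuous on $\bar K_n$.

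The hard part is (iii). Proposition \ref{prop:detReg} is only a local Lipschitz statement conditioned on two-sided operator bounds, so before it can be invoked one must (a) rule out degeneracy of the determinant — which is exactly where injectivity of $T_x$ enters — and (b) obtain uniform bounds on $|T_x|$ and on $|(T_x|_{E_i(x)})^{-1}|$ over each $\bar K_n$, for which compactness of $\bar K_n$ together with part (ii) is used; only after both are in place does the local estimate upgrade to genuine continuity via continuity of the data. Items (i) and (ii) are, by comparison, routine once the joint-continuity estimate above is in hand.
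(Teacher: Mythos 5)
Your proof is correct and follows essentially the same route as the paper: the paper deduces (i)--(iii) from the continuity of $(A,V)\mapsto |A|_V|$, $(A,V)\mapsto m(A|_V)$, and $(A,V)\mapsto \det(A|V)$ (the last on injective operators, via Proposition \ref{prop:detReg}) composed with the $\mu$-continuous map $x\mapsto (T_x,E(x))$. You have merely supplied the details the paper leaves implicit, in particular the uniform bounds on $|T_x|$ and $|(T_x|_{E_i(x)})^{-1}|$ over each $\bar K_n$ (via compactness and part (ii)) that are needed before Proposition \ref{prop:detReg} can be invoked.
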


\begin{proof} 
Items (i) and (ii) follow from the $\mu$-continuity of $x \mapsto E(x)$ together with the
continuity of $(A, V) \mapsto |A|_V|$ and $(A, V) \mapsto m(A|_V)$ as maps on
${\bf B}(\Bc) \times \Gc(\Bc)$. Now assume $E = E_i$ with dim$(E_i)=m$, and let $\Gc_m(\Bc)$ denote the Grassmannian of $m$-dimensional subspaces. Item (iii) follows from the $\mu$-continuity 
of $x \mapsto E(x)$ together with the continuity of $(A, V) \mapsto \det(A |V)$ viewed
as a map on ${\bf B}_{inj}(\Bc) \times \Gc_m(\Bc)$, ${\bf B}_{inj}(\Bc) \subset {\bf B}(\Bc)$ being the subset of injective linear operators; see Proposition \ref{prop:detReg}.
\end{proof}

\subsection{Interpretation as volume growth and corollaries}

We now verify for the notion of volume introduced in Section 2 that Lyapunov 
exponents are infinitesimal volume growth rates. 
The setting and notation are as in Theorem \ref{thm:MET}. 

\begin{prop} \label{prop:volGrowthMET} For any collection of
indices
%
%
%
$1 \leq i_1 < i_2 < \cdots < i_k \leq r$ the map $x \mapsto \det(T_x | \oplus_{l = 1}^k E_{i_l}(x))$ is measurable, and for $\mu$-a.e. $x \in \Gamma$,
$$
\lim_{n \to \infty} \frac{1}{n} \log \det \big(T^n_x \big| \bigoplus_{l = 1}^k E_{i_l}(x) \big) = \sum_{l = 1}^k m_{i_l} \l_{i_l}\ .
$$
\end{prop}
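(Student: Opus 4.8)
The plan is to write $V(x) := \bigoplus_{l=1}^{k} E_{i_l}(x)$, to reduce -- via the regularity estimates of Section 2 -- the left-hand side to the growth of the singular values of the restricted cocycle $T|_V$, and then to invoke the classical (finite-dimensional) Oseledets theorem for that cocycle. For measurability, note first that each $x \mapsto E_{i_l}(x)$ is $\mu$-continuous by Theorem \ref{thm:MET}(c), and on each of the compact sets $\bar K_n$ on which they are continuous the projections $\pi_i(\cdot)$ are bounded, so the direct-sum map $x \mapsto V(x)$ is $\mu$-continuous (continuity of complementation, cf. Lemma \ref{lem:openCond}). Arguing exactly as in Lemma \ref{lem:measurable}(iii), using continuity of $(A,V) \mapsto \det(A\mid V)$ on ${\bf B}_{inj}(\Bc) \times \Gc_m(\Bc)$ from Proposition \ref{prop:detReg}, the function $x \mapsto \det(T_x \mid V(x))$ is Borel; hence so is $x \mapsto \det(T^n_x \mid V(x))$ for each $n$, by $\det(T^n_x \mid V(x)) = \prod_{j=0}^{n-1} \det(T_{f^j x}\mid V(f^j x))$ (Lemma \ref{lem:detProps}(2), using the equivariance $T_{f^j x} V(f^j x) = V(f^{j+1}x)$).

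For the growth rate, equip each $V(x)$ with the inner product from Corollary \ref{lem:orthoBasis} and let $\sigma^{(n)}_1(x) \ge \cdots \ge \sigma^{(n)}_m(x)$, where $m = \sum_l m_{i_l}$, be the singular values of $T^n_x|_{V(x)} : V(x) \to V(f^n x)$ with respect to these inner products. Lemma \ref{lem:johnSVD}(2), combined with the uniform comparison $\tfrac{1}{\sqrt m}\|\cdot\|_{V(\cdot)} \le |\cdot| \le \sqrt m \|\cdot\|_{V(\cdot)}$, gives $\big|\log\det(T^n_x \mid V(x)) - \log\prod_j \sigma^{(n)}_j(x)\big| \le m\log m$, so it suffices to show $\tfrac1n \log\prod_{j=1}^m \sigma^{(n)}_j(x) \to \sum_l m_{i_l}\lambda_{i_l}$ a.e. Since $T_x$ is injective, $T|_V$ is a cocycle of \emph{invertible} linear maps over the finite-dimensional bundle $\{(x,V(x))\}$; choosing frames of $V(x)$ orthonormal for the Corollary \ref{lem:orthoBasis} inner products and patching them measurably over $\{\bar K_n\}$ turns $T|_V$ into a $GL(m,\R)$-valued cocycle $A(\cdot)$ with $\|A(x)\| \le m \sup_X |T| < \infty$, so $\log^+\|A\| \in L^\infty$. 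The Oseledets theorem then applies, and $\tfrac1n \log\prod_j \sigma^{(n)}_j(x) = \tfrac1n \sum_{j<n} \log|\det A(f^j x)| \to \int \log|\det A|\, d\mu = \mu_1 + \cdots + \mu_m$ a.e., the sum of the Lyapunov exponents of $A$ counted with multiplicity. Finally, Theorem \ref{thm:MET}(a), applied in the ambient space, shows that every nonzero $v \in E_{i_l}(x)$ has exponent $\lambda_{i_l}$ under $T|_V$; hence the $E_{i_l}(x)$ are exactly the Oseledets subspaces of $A$ and $\{\mu_j\} = \{\lambda_{i_l} \text{ with multiplicity } m_{i_l}\}$, giving $\mu_1 + \cdots + \mu_m = \sum_l m_{i_l}\lambda_{i_l}$.

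The main obstacle is the honest reduction in the last paragraph to classical Oseledets. Two delicate points arise, both small-dimensional in nature: the Banach norm $|\cdot|$ on the fibers $V(x)$ is not an inner product norm, which is precisely what Corollary \ref{lem:orthoBasis} (with distortion depending only on $\dim V$) is designed to absorb; and $x \mapsto V(x)$ is merely $\mu$-continuous, so the measurable trivialization and the integrability of $\log^+\|A\|$ must be arranged by hand over the exhausting family $\{\bar K_n\}$. Neither is serious, since $V$ is finite-dimensional and $T|_V$ is fiberwise invertible with uniformly bounded norm; everything else is bookkeeping with the Section 2 estimates and standard ergodic theory.
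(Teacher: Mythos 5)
Your proof is correct, but it takes a genuinely different route from the paper's. The paper works entirely within the Banach-space framework: for a single $E_i$ it applies the Birkhoff ergodic theorem to $\phi(x) = \log\det(T_x \mid E_i(x))$ (bounded above, so the limit $\gamma \in \R \cup \{-\infty\}$ exists), proves $\gamma \le m_i\lambda_i$ from the forward SVD bound of Lemma \ref{lem:johnSVD}, and obtains the much less obvious lower bound $\gamma \ge m_i\lambda_i$ by iterating \emph{backwards} and applying the same SVD bound to $(T^n_{f^{-n}x})^{-1}$; the general-$k$ case is then reduced to the $k=1$ case via Lemma \ref{lem:detSplit}, with the projection-angle correction terms shown to be subexponential by Theorem \ref{thm:MET}(d). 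You instead trivialize the finite-dimensional bundle $\{V(x)\}$ measurably using the John inner products of Corollary \ref{lem:orthoBasis}, turn $T|_V$ into a $GL(m,\R)$-cocycle $A$, and invoke the classical one-sided Oseledets theorem; the identification of the exponents of $A$ with the $\lambda_{i_l}$ via the ambient MET then gives the answer. This cleanly handles all $k$ at once and absorbs the paper's backward-iteration trick into the black box of finite-dimensional Oseledets, which is a reasonable trade. Two points where your write-up is thinner than it should be: (1) the measurable orthonormal trivialization over $\{\bar K_n\}$ is asserted rather than constructed -- it works, but one should note that the John ellipsoid depends continuously on the subspace and then invoke a measurable selection of frames; (2) the chain ``$\to \int \log|\det A|\,d\mu = \mu_1 + \cdots + \mu_m$'' quietly presumes $\log|\det A| \in L^1$, which is not obvious a priori (only $\log^+$ is immediate) -- the clean argument goes the other way: Oseledets gives $\tfrac1n\log|\det A^n| \to \mu_1+\cdots+\mu_m$ directly via top exterior power, the identification with the $\lambda_{i_l}$ shows this limit is finite, and integrability of $\log|\det A|$ is a consequence rather than a hypothesis. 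Incidentally, since Corollary \ref{lem:orthoBasis} arranges $m_E = \hat m_E$, one actually has $\det(T^n_x\mid V(x)) = \prod_j\sigma^{(n)}_j(x)$ exactly, so the $m\log m$ error term is superfluous.
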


\begin{proof} Let us first prove the result for $k=1$, writing $\l$, $E$, $m$ instead
of $\l_{i_1}$, $E_{i_1}$, $m_i$.

Define $\phi(x) = \log \det (df_x | E(x))$ for $x \in \Gamma$. By Lemma \ref{lem:measurable}, $x \mapsto \phi(x)$ is Borel measurable, and since $\phi$ is bounded from above (this follows from Lemma \ref{lem:johnSVD} and that $\sup_{x \in X} |T_x| < \infty$), the Birkhoff Ergodic Theorem tells us that there is a constant $\gamma \in \R \cup \{- \infty\}$ such that for $\mu$-almost every $x \in \Gamma$, 
$$
\gamma = \lim_{n \to \infty} \frac{1}{n} \sum_{i =0}^{n-1} \phi(f^ix) 
= \lim_{n \to \infty} \frac{1}{n} \sum_{i =1}^{n} \phi(f^{-i}x)\ .
$$
It suffices to show that $\gamma = m \l$: For each $x$, 
choosing a basis $\{v_1, \cdots, v_m\}$ for $E(x)$ orthonormal with respect to 
$(\cdot, \cdot)_{E(x)}$, we have $\det(T^n_x | E(x)) \leq m^{m/2} \prod_{i = 1}^m  |T_x^n v_i|$ (see Lemma \ref{lem:johnSVD}, Item 1). The growth rates of $|T_x^n v_i|$ are given by the MET, proving $\gamma \leq m \l$. Since one cannot
estimate easily a lower bound for $\det(T^n_x | E(x))$ starting from a fixed set of vectors
in $E(x)$, we iterate {\it backwards} instead. Fixing $\d > 0$ and unit vectors $\{v_1, \cdots, v_m\} \subset
E$ as above, we obtain for large enough $n$,
$$
\det((T^{n}_{f^{-n} x})^{-1} | E(x)) \leq m^{m/2} \prod_{i = 1}^m |(T_{f^{-n} x}^n)^{-1} v_i| \leq m^{m/2} e^{nm (- \l + \d)}\ ,
$$
which gives the desired lower bound for 
$$
\prod_{i = 1}^n e^{\phi(f^{-i} x)} = \det(T^n_{f^{-n}x} | E(f^{-n} x))\ .
$$ 

Proceeding to the general case, it suffices to give a proof for $k=2$, which contains
the main ideas. Let $1 \leq i_1 < i_2 \leq r$. The bounds in Lemma \ref{lem:detSplit}
together with the result for individual $E_i$ proven above gives
$$
\left( C_{m_{i_1} + m_{i_2}} |\pi_{i_1}(f^n x)|^{m_{i_1}} \right)^{-1}  \leq \frac{\det(T^n_x | E_{i_1}(x) \oplus E_{i_2}(x))}{\det(T^n_x | E_{i_1}(x))\det(T^n_x | E_{i_2}(x)) } \leq C_{m_{i_1} + m_{i_2}} |\pi_{i_1}(x)|^{m_{i_1}}\ .
$$
Here we have used the fact that $|\pi_{i_1}(x) |_{E_{i_1}(x) \oplus E_{i_2}(x)}| \le |\pi_{i_1}(x)|$.
The volume growth formula now follows from the single subspace case and the fact that $$\lim_{n \to \infty} \frac{1}{n} \log |\pi_i(f^n x)|  =0$$ for any $1 \leq i \leq r$.
\end{proof}

Let $1 \leq i_1 < i_2 < \cdots < i_k \leq r$. A technical fact that will be needed is the integrability of 
$\log^- m(T_x|_{\oplus_{l = 1}^k E_{i_l}(x)})$, equivalently the integrability of $\log^+ |(T_x|_{\oplus_{l = 1}^k E_{i_l}(x)})^{-1}|$,
which requires justification as these minimum norms can be arbitrarily small. We deduce it from Proposition \ref{prop:volGrowthMET}.

\begin{cor}\label{cor:integrable} For any collection of indices $1 \leq i_1 < i_2 < \cdots < i_k \leq r$, the function $\psi(x) = \log^+ |(T_x|_{\oplus_{l = 1}^k E_{i_l}(x)})^{-1}| \in L^1(\mu)$.
\end{cor}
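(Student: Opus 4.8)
The plan is to deduce the $L^1$-integrability of $\psi$ from Proposition \ref{prop:volGrowthMET} by playing the lower bound on $\det(T^n_x | \oplus_l E_{i_l}(x))$ coming from the exponential growth rate $\sum_l m_{i_l} \lambda_{i_l}$ against an upper bound for the same determinant in terms of $|T_x|^k$ and the single ``bad'' factor $|(T_x|_V)^{-1}|$, where I abbreviate $V = V(x) = \oplus_{l=1}^k E_{i_l}(x)$ and $d = \sum_l m_{i_l} = \dim V$. The key linear-algebra input is Lemma \ref{lem:johnSVD}, Item 2: if $\{v_1,\dots,v_d\}$ is a singular-value-decomposition basis of $V(x)$ for $T_x|_{V(x)}$ (orthonormal with respect to $(\cdot,\cdot)_{V(x)}$), then
$$
\det(T_x | V(x)) \ \le \ d^{d/2} \prod_{j=1}^d |T_x v_j| \ \le \ d^{d/2} |T_x|^{d-1} \cdot \min_j |T_x v_j| \ = \ d^{d/2} |T_x|^{d-1} \, m(T_x|_{V(x)})\ ,
$$
since all but one of the factors $|T_x v_j|$ is at most $|T_x| \le \sup_{y \in X}|T_y| =: \Lambda < \infty$, and the smallest is $m(T_x|_{V(x)})$. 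Taking logarithms and using $m(T_x|_{V(x)})^{-1} = |(T_x|_{V(x)})^{-1}|$, this gives the pointwise bound
$$
\log |(T_x|_{V(x)})^{-1}| \ \le \ \tfrac{d}{2}\log d + (d-1)\log^+ \Lambda - \log \det(T_x|V(x))\ ,
$$
so that $\psi(x) \le C - \phi_V(x)$ where $\phi_V(x) := \log\det(T_x|V(x))$ and $C$ is a constant depending only on $d$ and $\Lambda$.

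\textbf{Key steps.} First I would record that $\phi_V$ is Borel measurable: this is exactly Proposition \ref{prop:volGrowthMET} (or Lemma \ref{lem:measurable}(iii) applied to each summand combined with Lemma \ref{lem:detSplit}), and that $\phi_V$ is bounded above, by Lemma \ref{lem:johnSVD} Item 1 together with $|T_x| \le \Lambda$. Hence $\phi_V^+ \in L^\infty(\mu) \subset L^1(\mu)$, and the only thing in question is $\phi_V^- \in L^1(\mu)$. Second, I would invoke the Birkhoff Ergodic Theorem in the form already used inside the proof of Proposition \ref{prop:volGrowthMET}: since $\phi_V$ is bounded above and Borel, the Birkhoff averages $\frac1n\sum_{i=0}^{n-1}\phi_V(f^i x)$ converge $\mu$-a.e. to a constant $\gamma \in \R \cup \{-\infty\}$, and by Proposition \ref{prop:volGrowthMET} that constant equals $\sum_l m_{i_l}\lambda_{i_l}$, which is a finite real number. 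But a function that is bounded above and whose Birkhoff averages converge a.e. to a \emph{finite} constant must be in $L^1$: indeed, if $\int \phi_V^- \, d\mu = +\infty$ then, writing $\phi_V = \phi_V^+ - \phi_V^-$ with $\phi_V^+$ bounded, the ergodic theorem for the (nonnegative) function $\phi_V^-$ forces $\frac1n\sum_{i=0}^{n-1}\phi_V^-(f^i x) \to +\infty$ a.e., whence $\frac1n\sum_{i=0}^{n-1}\phi_V(f^i x) \to -\infty$ a.e., contradicting finiteness of $\gamma$. Therefore $\phi_V \in L^1(\mu)$. Third and finally, from $\psi \le C - \phi_V$ and $\psi \ge 0$ I conclude $0 \le \psi \le C + \phi_V^- \in L^1(\mu)$, so $\psi \in L^1(\mu)$, as claimed.

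\textbf{Main obstacle.} The only genuinely delicate point is the implication ``bounded above $+$ Birkhoff averages converge a.e. to a finite constant $\Rightarrow$ $L^1$''; everything else is bookkeeping with the constants $\Lambda$, $d$, and the already-established measurability and determinant estimates. This implication is the standard converse direction of the $L^1$ ergodic theorem applied to the nonnegative function $\phi_V^- = (\phi_V)^-$ (using that $\phi_V^+$ is bounded to transfer between $\phi_V$ and $\phi_V^-$), and is where Proposition \ref{prop:volGrowthMET}'s identification of the limit as the \emph{finite} number $\sum_l m_{i_l}\lambda_{i_l}$, rather than possibly $-\infty$, is used in an essential way. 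One should also double-check that $\sum_l m_{i_l}\lambda_{i_l} > -\infty$: this holds because each $\lambda_{i_l}$ with $i_l \le r$ is a genuine real Lyapunov exponent exceeding $\lambda_\alpha$ (by the MET, Theorem \ref{thm:MET}), so the sum is finite.
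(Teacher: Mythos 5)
Your proof is correct and takes essentially the same route as the paper: bound $|(T_x|_{V})^{-1}|$ by $C/\det(T_x|V)$ using the determinant estimates of Lemma \ref{lem:johnSVD}, and then deduce $\log \det(T_x|V)\in L^1(\mu)$ from the finiteness of the a.e.\ Birkhoff limit established in Proposition \ref{prop:volGrowthMET} (your "converse Birkhoff" step is exactly what the paper leaves implicit when it says the integrability of $-\log\det$ "we have just proved"). The only quibble is that $\min_j |T_x v_j|$ equals $m(T_x|_V)$ only up to a constant depending on $d$, since the SVD basis is normalized in the John norm $\|\cdot\|_V$ rather than in $|\cdot|$; this is harmless, as the discrepancy is absorbed into $C$.
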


\begin{proof} Since for any $v \in \oplus_{l = 1}^k E_{i_l}(x)$ with $|v|=1$, we have
$$
\det(T_x| \oplus_{l = 1}^k E_{i_l}(x) ) \le m^m |T_x|_{\oplus_{l = 1}^k E_{i_l}(x)}|^{m-1} |T_x v| 
$$
where $m=\dim (\oplus_{l = 1}^k E_{i_l})$, it follows that 
$$
|(T_x|_{\oplus_{l = 1}^k E_{i_l}(x)})^{-1}| = \max_{v \in \oplus_{l = 1}^k E_{i_l}(x), |v| = 1} \frac{1}{|T_x v|} \le 
\frac{C}{\det(T_x|\oplus_{l = 1}^k E_{i_l}(x))}
$$
where $C>0$ is a constant depending only on $\sup_{x \in X} |T_x|$ and $m$. Our assertion follows from the fact that
$-\log \det(T_x | \oplus_{l = 1}^k E_{i_l}(x) ) \in L^1(\mu)$, which we have just proved. 
\end{proof}

\subsection{Continuity of certain distributions on sets with uniform estimates}

Here we discuss the continuity of the distributions $E^i := E_1 \oplus E_2 \oplus \cdots \oplus E_i$ for any $i \leq r$. While the results are analogous to those in finite dimensions, 
some of the often used arguments in finite dimensions, such as compactness
of the Grassmannian of $m$-dimensional subspaces, are not applicable in the present
setting. 

\medskip
Let $i$ be fixed throughout. We let $E^i$ be as above, and let
$\td>0$ be such that $3 \td< |\la_i - \la_{i+1}|$ if $i<r$, and
$3 \td < |\la_i - \la_{\a}|$ if $i=r$. For $L>1$, let 
$$
G^i_L:= \{x \in \Gamma :  |(T^n_{f^{-n}x}|_{E^i(f^{-n}x)})^{-1}| \le L e^{-n(\la_i - \td)} \text{ for all } n \geq 1\}\ .
$$

\begin{lem}\label{lem:contSplitting}
For any $i\le r$ and $L>1$, the map $x \mapsto E^i(x)$ is continuous 
with respect to the $d_H$-metric on $\Gc(\Bc)$ as $x$ varies over $G^i_L$. 
\end{lem}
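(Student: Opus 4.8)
The plan is to show that $x \mapsto E^i(x)$ is continuous on $G^i_L$ by exploiting three ingredients: the uniform backward-contraction estimate built into the definition of $G^i_L$, the $f$-invariance of the splitting, and a closed-graph-type argument leveraging the continuity of the cocycle $T$. First I would fix $x_0 \in G^i_L$ and a sequence $x_m \to x_0$ in $G^i_L$. Since $E^i$ takes values in the Grassmannian $\Gc_{d}(\Bc)$ of $d$-dimensional subspaces ($d = m_1 + \cdots + m_i$), it suffices to show every subsequential limit of $E^i(x_m)$ (in the $d_H$-metric) equals $E^i(x_0)$; but a priori $\Gc_d(\Bc)$ need not be compact, so I cannot simply extract a convergent subsequence — instead I would work directly with the structure of $E^i$ as the "slowly-contracting-backward" subspace.

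\textbf{Key steps.} (1) \emph{Characterize $E^i(x)$ dynamically on $G^i_L$:} for $x \in G^i_L$ and any unit $v \in \Bc$, $v \in E^i(x)$ forces $|(T^n_{f^{-n}x})^{-1} w| \le L e^{-n(\la_i - \td)}$ along the unique backward orbit, so backward iterates of $E^i(x)$ stay bounded while, by Theorem~\ref{thm:MET}(b) and the choice of $\td$, vectors transverse to $E^i$ grow backward at rate at least $\la_i - \td$ faster — i.e. $E^i(x)$ is the set of $v$ whose backward expansion is sub-exponentially close to $e^{n(\la_i-\td)}$. (2) \emph{Pull back a fixed number of steps:} using $T_{f^{-N}x} \cdots T_{f^{-1}x}$ to relate $E^i(x)$ to $E^i(f^{-N}x)$, and noting $f$ is a homeomorphism so $x_m \to x_0$ gives $f^{-N} x_m \to f^{-N} x_0$, reduce continuity at $x_0$ to continuity of a finite composition of the continuous maps $T_{(\cdot)}$ applied to a subspace that is "almost stable." (3) \emph{Quantitative transversality / cone estimate:} build adapted cones around $E^i(f^{-N}x_0)$; the uniform bound $L$ and the spectral gap $3\td < |\la_i - \la_{i+1}|$ (resp. $|\la_i - \la_\a|$) make these cones forward-invariant under $T^N_{f^{-N}x}$ for $x$ near $x_0$ and $N$ large, which pins down $\lim_m E^i(x_m)$ inside a cone of radius $\to 0$ around $E^i(x_0)$. (4) Combine: given $\e>0$, choose $N$ large so the cone radius is below $\e/2$, then choose $m$ large so $f^{-N}x_m$ is close enough to $f^{-N}x_0$ that the pushed-forward subspace lies in that cone; conclude $d_H(E^i(x_m), E^i(x_0)) < \e$.

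\textbf{Main obstacle.} The hard part will be Step (3): making the cone argument work without the compactness of $\Gc_d(\Bc)$ and without an intrinsic notion of volume/angle that behaves nicely under the Banach norm. The substitute is to use the complementation and `angle' machinery of Section~2.1.2 — in particular Lemma~\ref{lem:compExist} and Lemma~\ref{lem:graNormEst} — to quantify the gap between $E^i$ and a fixed complement, and then to track how $T^N$ distorts this gap using only the operator-norm continuity of $T$ and the uniform estimate defining $G^i_L$. One must be careful that the backward estimate $|(T^n_{f^{-n}x}|_{E^i(f^{-n}x)})^{-1}| \le L e^{-n(\la_i - \td)}$ is uniform over $G^i_L$ but the complementary growth (from part (b) of the MET) is only an $\limsup$ statement holding $\mu$-a.e., so passing to the closure of $G^i_L$ or working with points not in the full-measure set requires either restricting attention to $G^i_L$ itself (which is what the statement asks) or an approximation argument; I would handle this by noting that $G^i_L \subset \Gamma$ and that on $\Gamma$ the splitting exists, so all estimates are applied only at points where the MET conclusions hold, and the uniformity in $L$ is exactly what upgrades pointwise statements to continuity.
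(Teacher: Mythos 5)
Your plan is correct and follows essentially the same route as the paper's proof: the paper also fixes the limit point $x_0\in\Gamma$, decomposes unit vectors of $E^i(x_m)$ with respect to the splitting $E^i(x_0)\oplus\bar F(x_0)$ (where $\bar F=\oplus_{j>i}E_j\oplus F$), pulls back $k$ steps using the uniform bound $|(T^k_{f^{-k}x_m}|_{E^i(f^{-k}x_m)})^{-1}|\le Le^{-k(\la_i-\td)}$ together with $|T^k_{f^{-k}x_m}-T^k_{f^{-k}x_0}|\to 0$ for fixed $k$, and then invokes the slow forward growth of $T^k$ on $\bar F(f^{-k}x_0)$ along a recurrent subsequence of times to kill the $\bar F$-component. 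Your ``cone'' phrasing in Step (3) is just a repackaging of this two-scale estimate, and you correctly flag the only delicate points: non-compactness of the Grassmannian and the fact that the complementary growth bound is an a.e.\ $\limsup$ statement, which the paper handles exactly as you suggest (restricting to $\Gamma$ and using recurrence to uniformity sets).
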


\begin{proof} Let $E=E^i$, and define $\bar F(x) = \oplus_{j>i} E_j(x) \oplus F(x)$, so that
at each $x \in \Gamma$, we have $\Bc = E(x) \oplus \bar F(x)$. 
Fix $x \in \Gamma$, and let $x^n \in G^i_L$ be such that $x^n \to x$.
Let $\{v^n\} \subset \Bc$ be any sequence of unit vectors such that $v^n \in E(x^n)$ for each $n$, and let $v^n = w^{n, E} + w^{n, \bar F} \in E(x) \oplus \bar F(x)$ be the decomposition with respect to the splitting $E(x) \oplus \bar F(x)$. It suffices 
to show that $w^{n, \bar F} \to 0$ as $n \to \infty$; that this is sufficient for proving 
$E(x^n) \to E(x)$ follows from the fact that $E(x)$ and $E(x^n)$ have the same finite dimension 
(for instance, one could use this to show that $|\pi_{\bar F(x) \ds E(x)}|_{E(x^n)}| \to 0$ as $n \to \infty$, hence 
$\d_a(E(x^n), E(x)) \to 0$ as $n \to \infty$; see Section 2.1.2).
To derive a contradiction, we will assume, after passing to a subsequence, 
that $|w^{n, \bar F}| \geq c$ for some $c > 0$ for all $n$. 

The following notation will be used: $x_{-k}=f^{-k}x$, $x^n_{-k}=f^{-k} x^n$, and 
let $v^n_{-k}$ be the unique vector 
in $E(x^n_{-k})$ such that 
$T^k_{x^n_{-k}} v^n_{-k} = v^n$. We split 
$$v^n_{-k} = \hat w^{n, E}_{-k} + \hat w^{n, \bar F}_{-k} \in E( x_{-k}) 
\oplus \bar F( x_{-k})\ . $$ 

Let $\pi_E(x)$ denote the projection onto $E(x)$ along $\bar F(x)$.
We will show that for every $k>0$ large enough, 
there exists $n(k)$ such that for all $n \ge n(k)$, 
\begin{equation} \label{unifbound}
|\hat w^{n, \bar F}_{-k}| \le (L+ 2|\pi_E(x)|) \ e^{-k(\la_i -\td)} \qquad  and 
\qquad |T^k_{x_{-k}}(\hat w^{n, \bar F}_{-k})| \ge \frac{c}{2} \ .
\end{equation}
Now since $x \in \Gamma$, we may assume $x_{-k}$ visits infinitely often sets 
on which there are uniform bounds for $|T^m|_{\bar F}|, m=1,2, \dots$, so 
there exists arbitrarily large $k$ for which 
$|T^k_{x_{-k}}|_{\bar F(x_{-k})}| \ll e^{k(\la_i - 2\td)}$. 
That is clearly inconsistent with (\ref{unifbound}).

To prove (\ref{unifbound}), observe first that since $x^n \in G^i_L$, we have
$|v^n_{-k}| \le L e^{-k(\la_i -\td)}$, and notice that this bound is independent of $n$. 
Thus for each fixed $k$, 
$$|T^k_{x_{-k}}v^n_{-k} - v^n| = |(T^k_{x_{-k}}-T^k_{x^n_{-k}})(v^n_{-k})| 
\le |T^k_{x_{-k}}-T^k_{x^n_{-k}}| \cdot |v^n_{-k}| \to 0 \quad \mbox{ as } n \to \infty\ .
$$
In particular, $|T^k_{x_{-k}} \hat w^{n, E}_{-k} - w^{n, E}| \to 0$ and $|T^k_{x_{-k}} \hat w^{n, \bar F}_{-k} - w^{n, \bar F}| \to 0$ as $n \to \infty$, which implies that for all $n \ge$ some $ n(k)$,
$$
|T^k_{x_{-k}} \hat w_{-k}^{n, E}| \le 2 |\pi_E(x) v^n| \le 2 |\pi_E(x)| \qquad \mbox{ and } \qquad
|T^k_{x_{-k}} \hat w_{-k}^{n,\bar F}| \ge \frac{c}{2}\ .
$$
Finally, as $x \in \Gamma$, $|(T^k_{x_{-k}}|_{E(x_{-k})})^{-1}| < e^{-k(\la_i -\td)}$
holds for all large enough $k$. Thus
$$
|\hat w_{-k}^{n, \bar F}| \le |v^n_{-k}| + |\hat w_{-k}^{n, E}| \le (L+ 2|\pi_E(x)|)\ e^{-k(\la_i -\td)}\ ,
$$
completing the proof of (\ref{unifbound}).
\end{proof}

%
%


\vskip .2in \noindent
{\it In the rest of this paper, $(f,\mu)$ is assumed to satisfy
Hypotheses (H1)--(H3) in Section 1. For simplicity, we first treat the ergodic case,
assuming $\mu$ is ergodic from here through Sect. 5.3, removing the ergodicity
assumption only in Sect. 5.4.}

\section{Preparation I : Lyapunov metrics}

The goal of this section is to introduce new norms $|\cdot |'_x$ in 
the tangent spaces of $x \in \Gamma$ with respect to which expansions and 
contractions are reflected in a single time step. We also introduce a function $l$ that, roughly
speaking, measures
the degree to which $f$ deviates from uniform hyperbolicity.

These techniques have been used in finite dimensions and on separable Hilbert spaces
(we follow more closely \cite{ledyou1, ledyou2} and \cite{lianyou1}; see also the references in \cite{barreira}). There is, however, the following difference: 
In finite dimensions, for instance,
it is customary to fix a model space
$\mathbb R^{\dim E^u} \times \mathbb R^{\dim E^c} \times \mathbb R^{\dim E^s}$
with a Euclidean inner product 
and to identify a neighborhood of each $x$ (with its Lyapunov metric) with
a neighborhood of $0$ in the model space. We do not do this here, as there is
no obvious common model space for $E^s_x, x \in \Gamma$. Indeed
there is no standard model space 
for infinite dimensional subspaces of Banach spaces. Instead, we will work directly 
on the tangent spaces $\Bc_x$ of $x$.

For completeness, we will go through the entire construction, providing complete statements of results, but will omit proofs that require no modification.

\subsection{Adapted norms}

Consider the cocycle $(f, \mu; df)$. By condition (H3), $l_\a <0$.
We fix an arbitrary $\lambda_\a \in (l_\a, 0)$, and apply Theorem \ref{thm:MET}
to obtain Lyapunov exponents $\l_1 > \l_2 > \cdots > \l_r$ and a splitting
of $\Bc_x$, the tangent space at $x$, into $\Bc_x = \oplus_{i=1}^r E_i(x) \oplus F(x)$ 
for every $x\in \Gamma$. 


For many purposes, it is sufficient to distinguish between
unstable, center and stable subspaces, defined to be 
$$
E^u_x  =  \bigoplus_{i: \l_i > 0} E_i(x)\ , \qquad 
E^c_x =  \bigoplus_{i: \l_i = 0} E_i(x)\ , \quad \mbox{ and } \quad
E^s_x =  \bigoplus_{i: \l_i < 0} E_i(x) \ \oplus \ F(x)\ .
$$
We will also write $E^{cu}_x= E^u_x \oplus E^c_x$, and use $\pi^u_x, \pi^c_x$ and
$\pi^s_x$
to denote the projections onto $E^u_x, E^c_x$ and $ E^s_x$ respectively according to the splitting $\Bc_x = E^u_x \oplus E^c_x \oplus E^s_x$. We do not require that all
these subspaces be nontrivial; in particular, in our main results, $E^c = \{0\}$.

We now proceed to modify the norms on tangent spaces of individual points,
with the aim of producing new norms with respect to which Lyapunov exponents 
will be reflected in a single time step. Let $\lambda^+ = \min\{\la_i: \la_i>0\}$ and
$\lambda^- = \max\{l_\a, \la_i: \la_i<0\}$.
We define $\l_0 = \min\{\lambda^+, -\lambda^- \}$, fix $\d_0 \ll \l_0$ and let 
$\l=\l_0-2\d_0$. For $n>0$ and $u \in E^{cu}_x$, let us agree to use the 
shorthand $df^{-n}_xu$ to mean $(df^n_{f^{-n}x}|_{E^{cu}_{f^{-n}x}})^{-1}u$.
We introduce for each $x \in \Gamma$ a new norm $|\cdot|'_x$ on $\Bc_x$ as follows:
\begin{align*}
\text{For } \qquad u &\in E^u_x, ~~ |u|'_x  = \sum_{n = 0}^\infty \frac{|df^{-n}_x  u|}{e^{-n \l }} \ , \\
 v& \in E^c_x, ~~ |v|'_x = \sum_{n = - \infty}^{\infty} \frac{|df^n_x v |}{e^{2 |n| \d_0}}\ , \\
w& \in E^s_x, ~~ |w|'_x = \sum_{n = 0}^{\infty} \frac{|df^n_xw |}{e^{- n \l}}\ ,
\end{align*}
and for $p = u +v +  w \in \Bc_x, u \in E^u_x, v \in E^c_x, w \in E^s_x$, we define 
\begin{equation} \label{max}
|p|_x' = \max \{|u|_x', |v|_x', |w|_x'\}\ .
\end{equation}
To estimate how far these new norms deviate from the original ones, we let
\begin{eqnarray*}
C_u(x) & =  & \sup_{n \geq 0} \frac{\sup_{v \in E^u_x, |v| = 1} |df^{-n}_x v|}
{e^{- n(\la_0 - \d_0)}}\ , \\
C_c(x) & =& \sup_{n \in \mathbb Z} \frac{\sup_{v \in E^c_x, |v| = 1} |df^{n}_x v|}
{e^{|n|\d_0}}\ , \\
C_s(x) & =  & 
\sup_{n \geq 0} \frac{\sup_{v \in E^s_x, |v| = 1} |df^n_x v|}{e^{- n(\la_0 - \d_0)}},
\end{eqnarray*}
and let 
$$
C(x) = \max\{C_u(x), C_c(x), C_s(x), |\pi^u_x|, |\pi^c_x|, |\pi^s_x|\}\ .
$$
Observe that all are finite-valued, Borel measurable functions on $\Gamma$ (see Lemma \ref{lem:measurable}).

The following lemma summarizes the properties of the adapted norms $|\cdot|_x'$.
The proof is a simple computation and is omitted.

\begin{lem}\label{prop:pointwiseNorm}The following hold for all $x \in \Gamma$:
\begin{enumerate}
\item (One-step hyperbolicity) For any $u \in E^u_x, v \in E^c_x, w \in E^s_x$, we have
\begin{gather*}
|df_x u|_{fx}'  \ge e^{\lambda} |u|'_x \\
e^{- 2 \d_0} |v|_x' \leq |df_x  v|_{f x}' \leq e^{2 \d_0} |v|_x' \\
|df_x w|_{f x}' \leq  e^{ - \l } |w|'_x\ .
\end{gather*}
\item The norms $|\cdot|'_x$ are related to the usual norm $|\cdot|$ by
\begin{align}\label{normcomparison}
\frac13 |p| \le |p|_x' \leq \frac{3}{1 - e^{- \d_0}} C(x)^2 |p|\ .
\end{align}
\end{enumerate}
\end{lem}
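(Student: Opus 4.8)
\textbf{Proof plan for Lemma \ref{prop:pointwiseNorm}.}

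The plan is to verify the two items by direct computation, separating the analysis according to the three invariant subspaces $E^u_x$, $E^c_x$, $E^s_x$, and then combining them via the $\max$ that defines $|\cdot|'_x$. For Item 1, take $u \in E^u_x$. From the definition $|u|'_x = \sum_{n\ge 0} e^{n\lambda}|df^{-n}_x u|$, I would compute $|df_x u|'_{fx} = \sum_{n\ge 0} e^{n\lambda}|df^{-n}_{fx}(df_x u)| = \sum_{n\ge 0} e^{n\lambda}|df^{-(n-1)}_x u|$; reindexing with $m = n-1$ gives $\sum_{m\ge -1} e^{(m+1)\lambda}|df^{-m}_x u| = e^\lambda\big(|df_x u| + \sum_{m\ge 0}e^{m\lambda}|df^{-m}_x u|\big) \ge e^\lambda|u|'_x$, which is the claimed expansion bound. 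The stable case is the mirror image: for $w\in E^s_x$, $|df_x w|'_{fx} = \sum_{n\ge 0}e^{n\lambda}|df^{n+1}_x w| = e^{-\lambda}\sum_{m\ge 1}e^{m\lambda}|df^m_x w| \le e^{-\lambda}|w|'_x$. The center case is symmetric under $n\mapsto -n$: for $v\in E^c_x$, $|df_x v|'_{fx} = \sum_{n\in\Z}e^{-2|n|\d_0}|df^{n+1}_x v| = \sum_{m\in\Z}e^{-2|m-1|\d_0}|df^m_x v|$, and using $\big||m-1|-|m|\big|\le 1$ termwise yields the two-sided bound $e^{-2\d_0}|v|'_x \le |df_x v|'_{fx}\le e^{2\d_0}|v|'_x$. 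Note these estimates use only invariance of the subspaces under $df_x$ (from Theorem \ref{thm:MET}(a)) and the fact that the series defining $|\cdot|'_x$ converge, which follows from the exponential growth/decay rates provided by the MET together with the choice $\lambda = \lambda_0 - 2\d_0 < \lambda_0$; convergence of the center sum uses that $E^c$ has subexponential growth in both time directions, dominated by $e^{|n|\d_0}$, against the weight $e^{-2|n|\d_0}$.

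For Item 2, the lower bound $\frac13|p|\le|p|'_x$ is the easy direction: for $p = u+v+w$, the $n=0$ term in each of the three series shows $|u|'_x\ge|u|$, $|v|'_x\ge|v|$, $|w|'_x\ge|w|$, so $|p|'_x = \max\{|u|'_x,|v|'_x,|w|'_x\}\ge\max\{|u|,|v|,|w|\}\ge\frac13(|u|+|v|+|w|)\ge\frac13|p|$. For the upper bound, I would bound each series using the functions $C_u, C_c, C_s$: for $u\in E^u_x$, each term satisfies $e^{n\lambda}|df^{-n}_x u|\le e^{n\lambda}C_u(x)e^{-n(\lambda_0-\d_0)}|u| = C_u(x)e^{n(\lambda-\lambda_0+\d_0)}|u| = C_u(x)e^{-n\d_0}|u|$ since $\lambda-\lambda_0+\d_0 = -\d_0$, so summing the geometric series gives $|u|'_x\le \frac{C_u(x)}{1-e^{-\d_0}}|u|$. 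The stable estimate is identical with $C_s$ in place of $C_u$, and the center estimate gives $|v|'_x\le C_c(x)\sum_{n\in\Z}e^{-|n|\d_0}|v| = C_c(x)\cdot\frac{1+e^{-\d_0}}{1-e^{-\d_0}}|v|\le\frac{2C_c(x)}{1-e^{-\d_0}}|v|$. Finally I would control $|u|,|v|,|w|$ in terms of $|p|$ via the projection norms: $|u| = |\pi^u_x p|\le|\pi^u_x|\,|p|\le C(x)|p|$, and likewise for $v,w$. Combining, each of $|u|'_x,|v|'_x,|w|'_x$ is at most $\frac{2C(x)}{1-e^{-\d_0}}\cdot C(x)|p| = \frac{2C(x)^2}{1-e^{-\d_0}}|p|$, so $|p|'_x\le\frac{3}{1-e^{-\d_0}}C(x)^2|p|$ with room to spare (the constant $3$ absorbing the factor $2$).

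There is no serious obstacle here — the lemma is, as the authors say, a simple computation — so the "main obstacle" is really just bookkeeping: making sure the reindexing in Item 1 is done consistently, that the exponent arithmetic $\lambda - \lambda_0 + \d_0 = -\d_0$ (using $\lambda = \lambda_0 - 2\d_0$) is tracked correctly so the geometric series actually converge, and that the two-sided estimate in the center direction correctly uses $\big||n\pm1| - |n|\big|\le 1$. I would present Item 1 first, then Item 2, and in each case handle $E^u$ in full detail and remark that $E^s$ is the time-reversed mirror image and $E^c$ the symmetric two-sided analogue, to avoid repetition.
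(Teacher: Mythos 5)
Your computation is correct and is exactly the "simple computation" the paper omits: split into the three invariant subspaces, reindex the defining series using $df^{\mp n}_{fx}(df_x\,\cdot) = df^{\mp(n\mp 1)}_x(\cdot)$ for Item 1, and bound each series by a geometric series via $C_u, C_c, C_s$ and the identity $\l - \l_0 + \d_0 = -\d_0$ for Item 2, finishing with the projection bounds $|\pi^\tau_x| \le C(x)$. One cosmetic slip: in the unstable reindexing the $m=-1$ term is $|df_x u|$, not $e^{\l}|df_x u|$, so the correct identity is $|df_x u|'_{fx} = |df_x u| + e^{\l}|u|'_x$; the desired inequality $\ge e^{\l}|u|'_x$ of course still follows.
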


\smallskip

Identifying $\Bc_x$ with $x + \Bc$ via the exponential map $\exp_x : \Bc_x \to \Bc$,
i.e., the map that sends $v \in \Bc_x$ to $x + v \in \Bc$, we view $\{(\Bc_x, |\cdot|_x'): x \in \Gamma\}$
as a collection of charts, and define the connecting maps 
\begin{align*} \tf_x : \Bc_x \to \Bc_{fx} \quad \text{ by } \quad \tf_x = \exp_{fx}^{-1} \circ f \circ \exp_x \ .\end{align*}
We will also use the notation $\tf^n_x = \tf_{f^{n-1}x} \circ \cdots \circ \tf_{f^{-1}x}
\circ \tf_x$. Since the derivative at $0$ of $\tf_x$, written as
$(d \tf_x)_0$, is the same as $df_x$, these derivatives exhibit hyperbolicity 
in one timestep with respect to the $|\cdot|'$ norms.

Our next task is to reduce
the sizes of the domains for $\tf_x$ so that on these reduced domains, $\tf_x$ is well 
approximated by the linear map $(d \tf_x)_0$. Since $f$ is assumed to be $C^2$ 
and $\As$ is compact
(see (H1), (H2) in Sect. 1), it is easy to see that there exist $M_0>0$ and $r_0>0$ such that
$|d^2 f_x|<M_0$ for all $x \in \Bc$ with dist$(x, \As)<r_0$.

Below, we use the notation $\tilde{B}_x(r) = \{p \in \Bc_x \mid |p|_x' \leq r\}$.
In all statements regarding the chart maps $\tf_x$, the norms on their domain and 
range spaces should be understood to be $|\cdot|'_x$ and $|\cdot|'_{fx}$ respectively. 
The next lemma is straightforward.

\begin{lem}\label{prop:chartProps}
Define $\tilde{l} : \Gamma \to [1,\infty)$ by
\begin{align}\label{eq:chartsize}
\tilde{l}(x) = \max\bigg\{\frac{27 M_0}{1 - e^{- \d_0}}, 1\bigg\}   \cdot C(f x)^2\ .
\end{align}
Then there exists $\d_1 > 0$ such that for any $\d \le \d_1$, the following holds for 
$\tf_x : \tilde{B}_x(\d \tilde{l}(x)^{-1}) \to \Bc_{fx}$, i.e., for $\tf_x$ restricted to
the domain $\tilde{B}_x(\d \tilde{l}(x)^{-1})$:
\begin{enumerate}
\vspace{-4pt}
\item $\lip( \tf_x - (d \tf_x)_0) \leq \d$;
\vspace{-4pt}
\item the mapping $z \mapsto (d\tilde f_x)_z$ satisfies 
$\lip \big( d \tf_x \big) \leq \tilde{l}(x)$.
\end{enumerate}
\end{lem}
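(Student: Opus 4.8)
The plan is to verify both items by a direct computation, relying on the $C^2$ bound $|d^2 f_x| < M_0$ near $\As$, the comparison (\ref{normcomparison}) between $|\cdot|'_x$ and $|\cdot|$, and the definition (\ref{eq:chartsize}) of $\tilde l$. First I would record the elementary structure: since $\exp_x$ and $\exp_{fx}$ are affine isometries between $(\Bc_x,|\cdot|)$ and $(\Bc,|\cdot|)$ (they merely translate by $x$ and $fx$), the map $\tf_x$ has the same derivatives as $f$ up to translation in the base point; in particular $(d\tf_x)_z$, viewed as an operator, equals $df_{\exp_x z}$ and $(d^2\tf_x)_z = d^2 f_{\exp_x z}$, so $|(d^2\tf_x)_z|<M_0$ as long as $\exp_x z$ stays within $r_0$ of $\As$. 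The one caveat is that these bounds are in the original norm $|\cdot|$, so each passage between $\tf_x - (d\tf_x)_0$ estimates in $|\cdot|$ and the required estimates in $|\cdot|'$ costs a factor controlled by (\ref{normcomparison}), namely at worst $3$ on the domain side and $\tfrac{3}{1-e^{-\d_0}}C(fx)^2$ on the range side (and their reciprocals in the other direction).

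For item 2, I would use the mean value inequality: for $z, z' \in \tilde B_x(\d\tilde l(x)^{-1})$, writing everything in $|\cdot|$ first, $|(d\tf_x)_z - (d\tf_x)_{z'}| \le M_0 |z - z'|$ provided the segment from $\exp_x z$ to $\exp_x z'$ lies in the $r_0$-neighborhood of $\As$ — which holds once $\d_1$ is chosen small enough that $\d\tilde l(x)^{-1}$ is small in the $|\cdot|$ norm as well (here one uses $\tilde l(x)^{-1} \le$ const and the lower bound $|p| \le 3|p|'_x$). Converting to the $|\cdot|'$ norms costs a factor of $3$ coming in (from $|z-z'|\le 3|z-z'|'_x$) and a factor $\tfrac{3}{1-e^{-\d_0}}C(fx)^2$ coming out, so the Lipschitz constant of $z \mapsto (d\tf_x)_z$ in the chart norms is at most $\tfrac{9 M_0}{1-e^{-\d_0}}C(fx)^2 \le \tilde l(x)$ by the definition (\ref{eq:chartsize}) (the extra factor of $3$ in (\ref{eq:chartsize}) leaves room to spare). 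This is exactly item 2.

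For item 1, I would apply the fundamental theorem of calculus along the segment from $0$ to $z$: $\tf_x(z) - \tf_x(0) - (d\tf_x)_0 z = \int_0^1 \big((d\tf_x)_{tz} - (d\tf_x)_0\big) z \, dt$, so $\lip(\tf_x - (d\tf_x)_0)$ on $\tilde B_x(\d\tilde l(x)^{-1})$ is bounded by $\sup_z |(d\tf_x)_z - (d\tf_x)_0|'$, which by item 2 is at most $\tilde l(x) \cdot \d\tilde l(x)^{-1} = \d$ — precisely the claimed bound, and the reason the chart radius was scaled by $\tilde l(x)^{-1}$ in the first place. I would present item 2 before item 1 for this reason, or at least extract the Hessian bound once and use it for both.

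The only genuinely delicate point — and the main obstacle — is making sure the constant $\d_1$ can be chosen \emph{uniformly} in $x \in \Gamma$, i.e. independently of the point: the domain radius $\d\tilde l(x)^{-1}$ must be small enough in the \emph{original} norm $|\cdot|$ to keep $\exp_x z$ within $r_0$ of $\As$, and since $\tilde l(x) \ge 1$ this follows from $|p| \le 3 |p|'_x \le 3\d\tilde l(x)^{-1} \le 3\d$, so taking $\d_1 \le r_0/3$ suffices and no pointwise-varying bound is needed there; the factors $C(fx)^2$ that appear in the conversions are absorbed into $\tilde l(x)$ itself by design, so they do not force $\d_1$ to depend on $x$. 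Once this uniformity is seen, both estimates are routine calculus, and the proof is essentially the two displayed integral/mean-value computations plus bookkeeping with (\ref{normcomparison}).
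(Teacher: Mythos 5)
Your proof is correct and is exactly the intended argument (the paper omits the proof as ``straightforward''); indeed the constant $27$ in (\ref{eq:chartsize}) is reverse-engineered from precisely this computation, and your handling of the uniformity of $\d_1$ via $|z|\le 3|z|'_x\le 3\d$ is the right observation. One bookkeeping correction: converting the operator norm of $(d\tf_x)_z-(d\tf_x)_{z'}$ to the adapted norms costs \emph{two} factors --- a $3$ for the input vector (since $|v|\le 3|v|'_x$) and $\tfrac{3}{1-e^{-\d_0}}C(fx)^2$ for the output --- and a third factor of $3$ comes from $|z-z'|\le 3|z-z'|'_x$, so the Lipschitz constant in item 2 is $\tfrac{27M_0}{1-e^{-\d_0}}C(fx)^2$ rather than your $\tfrac{9M_0}{1-e^{-\d_0}}C(fx)^2$; there is no ``room to spare,'' but the definition of $\tilde l$ absorbs exactly this, so the conclusion stands.
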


\subsection{Measuring deviation from uniform hyperbolicity}

The maneuvers in Sect. 4.1 transform the nonuniformly hyperbolic map
$f$ into a family of uniformly hyperbolic local maps $\tilde f_x$, but it is at the expense of coordinate changes that can be unboundedly large as $x$ varies over $\Gamma$.
The sizes of these coordinate changes, which we may think of as measuring how far
$f$ deviates from being uniformly hyperbolic, are incorporated into
the function $\tilde l$ in Lemma \ref{prop:chartProps}, a function that contains two other pieces of
related information: chart sizes, i.e. how quickly $f$ deviates from $df_x$
as we move away from $x$, and the regularity of $df$ as seen in these coordinates.
Informally, the larger $\tilde l(x)$ at a point, the weaker the hyperbolicity at $x$.

The function $\tilde l$ is measurable and usually unbounded on $\Gamma$.
We show next that it is dominated by a function that varies slowly 
along orbits.

\begin{lem}\label{lem:slowvary}
Given any $\d_2 > 0$, there exists a function $l : \Gamma \to [1,\infty)$ (depending
only on $\tilde l$ and $\d_2$) such that for $\mu$-a.e. $x \in \Gamma$,
\begin{align}\label{eq:slowvary3}
\tilde{l}(x) \leq l(x), \text{ and }\; l(f^{\pm} x) \leq e^{\d_2} l(x).
\end{align}
\end{lem}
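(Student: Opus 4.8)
\textbf{Proof plan for Lemma \ref{lem:slowvary}.}

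The plan is to define $l$ by a standard ``flattening'' construction: set
$$
l(x) = \sup_{n \in \Z} \, \tilde l(f^n x)\, e^{-\d_2 |n|}\ .
$$
The intuition is that we enlarge $\tilde l$ just enough to absorb the fluctuations along the orbit, penalizing contributions from far-away iterates by the factor $e^{-\d_2|n|}$. Taking $n=0$ in the supremum immediately gives $\tilde l(x) \le l(x)$, so the first assertion in \eqref{eq:slowvary3} is free (and $l(x) \ge \tilde l(x) \ge 1$, so $l$ maps into $[1,\infty)$). For the slow-variation bound, one checks directly from the definition that
$$
l(f^{\pm 1} x) = \sup_{n} \tilde l(f^{n \pm 1} x) e^{-\d_2|n|}
= \sup_{m} \tilde l(f^{m} x) e^{-\d_2|m \mp 1|}
\le e^{\d_2} \sup_m \tilde l(f^m x) e^{-\d_2 |m|} = e^{\d_2} l(x)\ ,
$$
using the triangle inequality $|m| \le |m \mp 1| + 1$ in the exponent. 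This is the entire computation behind \eqref{eq:slowvary3}; it is purely formal and uses nothing about $\tilde l$ beyond $\tilde l \ge 1$.

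The one genuine point requiring an argument is that $l(x) < \infty$ for $\mu$-a.e.\ $x$, i.e.\ that the supremum defining $l$ is finite almost surely. Here is where measurability and integrability enter. Recall from Lemma \ref{prop:chartProps} that $\tilde l(x) = \text{const}\cdot C(fx)^2$, and $C$ is a finite-valued Borel measurable function on $\Gamma$ (noted after its definition in Sect.\ 4.1, via Lemma \ref{lem:measurable}); hence $\log^+ \tilde l$ is a finite-valued measurable function. The key is to show $\log^+ \tilde l \in L^1(\mu)$ — more precisely, one only needs that $\frac1n \log^+ \tilde l(f^n x) \to 0$ $\mu$-a.s.\ as $|n| \to \infty$, which by the Borel–Cantelli lemma (applied to the events $\{\log^+\tilde l(f^n x) > \d_2 |n|\}$, whose measures are summable once $\log^+\tilde l \in L^1$) forces only finitely many iterates to exceed $e^{\d_2 |n|}$, making the supremum finite. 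To see $\log^+ \tilde l \in L^1(\mu)$: since $\tilde l \le \text{const}\cdot C\circ f^{\,2}$ up to constants and $\mu$ is $f$-invariant, it suffices that $\log^+ C \in L^1(\mu)$; and $\log^+ C$ is controlled by $\log^+$ of the various quantities $C_u, C_c, C_s, |\pi^u|, |\pi^c|, |\pi^s|$. The projection norms have zero asymptotic growth rate along orbits (Theorem \ref{thm:MET}(d), equivalently \eqref{prop:pointwiseNorm}'s ingredients), and $\log^+ C_u, \log^+ C_s$ are dominated using the uniform sub-exponential bounds underlying the MET together with the integrability already established in Corollary \ref{cor:integrable} for $\log^+$ of the relevant inverse minimum norms. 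The standard way to package all this is the Tempering (Lusin) Lemma: any measurable $g \ge 0$ with $\frac1n \log^+ g(f^n x) \to 0$ a.s.\ is dominated by a tempered function, which is exactly the content of $l$; so invoking the tempering lemma with $g = \tilde l$ gives the result directly.

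The main obstacle is thus not the algebra of the slow-variation estimate — that is a one-line manipulation — but verifying the hypothesis of the tempering lemma, namely that $\tilde l$ grows sub-exponentially along $\mu$-a.e.\ orbit. In the finite-dimensional setting this is routine because all the relevant norms are bounded on a compact manifold; in the present Banach-space setting one must be a little careful that the functions $C_u, C_s$ built from $(df^n)^{-1}$ on the unstable/center directions and the projection norms $|\pi^u_x|, |\pi^s_x|$ are (i) genuinely measurable — which is where Lemma \ref{lem:measurable} and the continuity of $(A,V)\mapsto \det(A|V)$ from Proposition \ref{prop:detReg} are used — and (ii) sub-exponentially tempered, which follows from parts (a), (b), (d) of the MET (Theorem \ref{thm:MET}) and Corollary \ref{cor:integrable}. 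Once those measurability and tempering facts are in hand, the lemma follows immediately from the explicit formula for $l$ above.
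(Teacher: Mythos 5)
Your explicit formula $l(x) = \sup_{n\in\Z} \tilde l(f^n x)e^{-\d_2|n|}$ and the one-line verification of the slow-variation property match the paper exactly (the paper applies this construction componentwise — e.g.\ to $C_u$ in Lemma \ref{slowvary2} — and the formula $\psi'(x) = \sup_n e^{-|n|\d_2}\psi(f^n x)$ is verbatim). The substance of the lemma, however, is the a.s.\ finiteness of this supremum, and there is a genuine gap there: you reduce it to the claim $\log^+\tilde l \in L^1(\mu)$ and then invoke Borel--Cantelli. That $L^1$ claim is strictly stronger than what the paper proves, and it is not established by anything you cite. Unwinding the definition, $\log C_u(x) = \sup_{n\geq 0}\big(n(\la_0-\d_0) + \log|df^{-n}_x|_{E^u_x}|\big)$ is the running maximum of a Birkhoff-type sum with negative drift; $L^1$-integrability of such a maximum does \emph{not} follow from $L^1$-integrability of the positive part of the summand (already in the i.i.d.\ case Spitzer's theorem requires a second moment), and Corollary \ref{cor:integrable} only gives $\log^+|(df|_{E^u})^{-1}| \in L^1$, i.e.\ $L^1$ of the one-step increment.

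The paper circumvents precisely this difficulty. Instead of attempting to show $\log^+ C_u \in L^1$, it invokes the Walters lemma (Lemma \ref{lem:ergodSlowVary}): to conclude $\frac{1}{|n|}\log C_u(f^n x)\to 0$ a.s.\ — which is exactly the hypothesis needed for your supremum to be a.s.\ finite — it suffices that \emph{either} $(\log C_u \circ f - \log C_u)^+$ \emph{or} $(\log C_u\circ f - \log C_u)^-$ be integrable. The paper then shows via submultiplicativity of the cocycle that $(\log C_u(f^{-1}x) - \log C_u(x))^-$ is dominated by $\log^+|(df|_{E^u})^{-1}|$, and only \emph{this} reduces to Corollary \ref{cor:integrable}. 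The key conceptual point you miss is that controlling the integrability of a one-sided increment of $\log C_u$ is both easier and sufficient, whereas controlling the integrability of $\log C_u$ itself is neither necessary nor (with the available tools) achievable. Your final paragraph gestures at ``sub-exponential temperedness following from MET(a),(b),(d) and Corollary \ref{cor:integrable}'' without giving the argument; the Walters-lemma route is exactly the missing mechanism that makes this precise, and without it the proof is incomplete at its only nontrivial step.
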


Once this lemma is proved, we will use $l$ instead of $\tilde l$ in all subsequent
estimates, and Lemma \ref{prop:chartProps} clearly holds for $\tilde f_x$ on
the domain of $\tilde{B}_x(\d {l}(x)^{-1})$ for any $\d \le \d_1$.
An obvious advantage of having slowly varying chart sizes is that it ensures
 that graph transforms of functions from $E^{u}$ to $E^{cs}$ are well
defined (see Sect. 5.1). Another advantage of a slowly varying $l$
is, as we will see, that it ensures that
estimates can deteriorate at most slow exponentially along orbits.

Lemma \ref{lem:slowvary} is well known in finite dimensions. In the present setting, 
there is a subtle difference in the proof caused by the fact that $df_x|_{E^{cu}}$ is not 
assumed to have a uniformly bounded inverse. 
This difference is exemplified by the task of finding a 
slowly-varying enlargement of the function $C_u$ (see Sect. 4.1). So instead of giving a full proof of Lemma \ref{lem:slowvary}, we will limit our discussion to enlarging $C_u$.

\begin{lem}\label{slowvary2}
Given any $\d_2 > 0$, there exists a measurable function $C_u' : \Gamma \to [1,\infty)$ such that for $\mu$-a.e. $x \in \Gamma$,
\begin{align*}
C_u(x) \leq C_u'(x), \text{ and }\; C_u'(f^{\pm} x) \leq e^{\d_2} C_u'(x).
\end{align*}
\end{lem}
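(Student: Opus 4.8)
The plan is to enlarge $C_u$ by taking a suitable supremum of exponentially discounted translates of $C_u$ along the orbit, a standard ``tempering'' construction. Concretely, I would try
$$
C_u'(x) = \sup_{m \in \Z} C_u(f^m x)\, e^{-\d_2 |m|}\ .
$$
The slow-variation inequality $C_u'(f^{\pm 1} x) \le e^{\d_2} C_u'(x)$ is then purely formal: replacing $m$ by $m \mp 1$ in the sup defining $C_u'(f^{\pm 1}x)$ and using $|m \mp 1| \ge |m| - 1$ gives each term bounded by $e^{\d_2}$ times the corresponding term for $C_u'(x)$. The inequality $C_u(x) \le C_u'(x)$ is the $m=0$ term. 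So the entire content of the lemma is that $C_u'(x) < \infty$ for $\mu$-a.e.\ $x$, i.e.\ that $C_u$ grows subexponentially along orbits in both time directions.

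To prove finiteness, the standard route is Birkhoff / Borel--Cantelli: it suffices to show $\frac1m \log^+ C_u(f^m x) \to 0$ as $m \to \pm\infty$ for $\mu$-a.e.\ $x$, since then for each $\e < \d_2$ only finitely many $m$ have $C_u(f^m x) > e^{\e|m|}$, and those finitely many terms are finite, so the sup is finite. A clean way to get the subexponential growth is to show $\log^+ C_u \in L^1(\mu)$; then $\frac1m \log^+ C_u(f^m x) \to 0$ a.e.\ (in both directions, using $f^{-1}$) is the classical consequence of the Birkhoff ergodic theorem applied to $\log^+ C_u$ (or more precisely the fact that $a_m/m \to 0$ a.e.\ whenever $a_{m+1}-a_m$ is the $m$-th Birkhoff sum of an $L^1$ function, combined with $|\log C_u(f^{m+1}x) - \log C_u(f^m x)|$ being controlled).

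Thus the heart of the matter, and the step I expect to be the main obstacle, is the integrability $\log^+ C_u \in L^1(\mu)$. Recall
$$
C_u(x) = \sup_{n \ge 0} \frac{\sup_{v \in E^u_x,\, |v|=1} |df^{-n}_x v|}{e^{-n(\la_0 - \d_0)}}\ ,
$$
where $df^{-n}_x$ means $(df^n_{f^{-n}x}|_{E^{cu}_{f^{-n}x}})^{-1}$ restricted appropriately. The numerator is $|(df^n_{f^{-n}x}|_{E^u_{f^{-n}x}})^{-1}|$ up to the bounded projection factors, and the issue flagged in the text is precisely that these inverse norms can be arbitrarily large because $df|_{E^{cu}}$ need not have bounded inverse. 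This is where the work of Section 3.2 is used: Corollary \ref{cor:integrable} gives exactly that $\log^+ |(df_x|_{E^u_x})^{-1}| \in L^1(\mu)$ (taking the index set to be those $i$ with $\la_i > 0$), and Proposition \ref{prop:volGrowthMET} identifies $\lim_n \frac1n \log |(df^n_{f^{-n}x}|_{E^u})^{-1}|$ with a finite quantity $\mu$-a.e. The plan is to combine the MET statement (a), which gives for each $\e>0$ a measurable $D_\e(x) < \infty$ with $|(df^n_{f^{-n}x}|_{E^u})^{-1}| \le D_\e(x) e^{n(\text{(something} < \la_0) + \e)}$, together with the $L^1$ bound on the ``one-step'' defect $\log^+|(df_x|_{E^u_x})^{-1}|$, to write $\log^+ C_u(x)$ as dominated by a finite sum / a single instance of a Birkhoff-summable function plus a bounded term. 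More directly: for a suitable $\e$ with $\la_0 - \d_0 > \la^+ - \e$ one has $\sup_{n} \big[\log^+|(df^n_{f^{-n}x}|_{E^u})^{-1}| - n(\la_0-\d_0)\big]$ finite a.e.\ by the MET, and showing it is \emph{integrable} is then done by estimating $\log^+|(df^n_{f^{-n}x}|_{E^u})^{-1}| \le \sum_{j=1}^n \log^+|(df_{f^{-j}x}|_{E^u_{f^{-j}x}})^{-1}|$, a Birkhoff sum of an $L^1$ function by Corollary \ref{cor:integrable}, so that the maximal function $C_u$ is integrable by the maximal ergodic theorem (Wiener's dominated ergodic theorem, or directly because $\log^+$ of a sup of exponentially discounted Birkhoff sums of an $L^1$ function of strictly negative mean is in $L^1$). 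Once integrability is in hand, the tempering argument above finishes the proof; passing from $C_u$ to the full function $l$ in Lemma \ref{lem:slowvary} is then routine, as the other ingredients of $\tilde l$ (namely $C_c$, $C_s$, and the projection norms) are handled identically, $C_s$ being the easy case already familiar from finite dimensions since $df|_{E^s}$ is genuinely contracting.
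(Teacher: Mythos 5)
Your tempering construction $C_u'(x)=\sup_{m\in\Z}C_u(f^mx)\,e^{-\delta_2|m|}$ is exactly the one used in the paper, and you have correctly located the content of the lemma in the subexponential growth of $C_u$ along orbits, with Corollary \ref{cor:integrable} as the essential input. But the route you take to that growth estimate --- proving $\log^+C_u\in L^1(\mu)$ --- contains a genuine gap, and the paper deliberately avoids this claim. First, the dominating quantity you propose, $\sup_{n}\bigl[n(\lambda_0-\delta_0)+\sum_{j=1}^n\log^+|(df_{f^{-j}x}|_{E^u_{f^{-j}x}})^{-1}|\bigr]$, is identically $+\infty$: every term is at least $n(\lambda_0-\delta_0)>0$. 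The factor $e^{n(\lambda_0-\delta_0)}$ in the definition of $C_u$ is an exponential \emph{inflation}, not a discount; the decay that must compensate it lives in the genuinely negative values of $\log|(df^n_{f^{-n}x}|_{E^u})^{-1}|$, which $\log^+$ discards and which the submultiplicative bound $\log|(df^n_{f^{-n}x}|_{E^u})^{-1}|\le\sum_{j=1}^n\log|(df_{f^{-j}x}|_{E^u})^{-1}|$ does not capture: superadditivity of the minimum norm only gives $\int\log|(df_x|_{E^u_x})^{-1}|\,d\mu\ge-\lambda^+$, and this integral can exceed $-(\lambda_0-\delta_0)$, so the one-step summand can have positive mean and the Birkhoff sums then diverge. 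Second, even when the mean is negative, the all-time maximum $\sup_nS_ng$ of Birkhoff sums of an $L^1$ function is finite a.e.\ but need not be integrable: the maximal ergodic theorem controls $\sup_n\frac1nS_ng$ in weak $L^1$, not $\sup_nS_ng$ in $L^1$ (compare the running maximum of a mean-negative random walk, whose integrability requires more than first moments).

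The paper's proof never asserts integrability of $\log C_u$. It controls only the one-step increment: either $C_u(x)=1$, or $C_u(x)\le e^{\lambda_0-\delta_0}\,|df^{-1}_x|_{E^u_x}|\cdot C_u(f^{-1}x)$, whence $\bigl(\log C_u(f^{-1}x)-\log C_u(x)\bigr)^-$ is dominated by $\log^+|(df|_{E^u})^{-1}|\circ f^{-1}$, which is integrable by Corollary \ref{cor:integrable}. Lemma \ref{lem:ergodSlowVary} (Walters) then yields $\frac1{|n|}\log C_u(f^nx)\to0$ a.e., requiring only that $\log C_u$ be measurable and finite a.e.\ and that \emph{one side} of the increment be integrable. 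To repair your argument, replace the integrability claim for $\log^+C_u$ by this one-sided increment estimate; the recursion above is the one missing computation, and the rest of your tempering argument then goes through as written.
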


We will use the following ergodic theory lemma.

\begin{lem}[Lemmas 8 \& 9 in \cite{walMET}] \label{lem:ergodSlowVary}
Let $(X, \Fc, \mu, f)$ be an invertible measure-preserving transformation (mpt) of a probability space. Let $\phi :  X \to \R$ be measurable and assume that either $(\phi \circ f - \phi)^+$ or $(\phi \circ f - \phi)^-$ is integrable. Then
$$
\lim_{n \to \pm \infty} \frac{1}{|n|} \phi \circ f^n \to 0 \qquad \mbox{ a.s. } 
$$
\end{lem}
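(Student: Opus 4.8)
The plan is to combine a telescoping identity with the Birkhoff ergodic theorem and then use Poincaré recurrence to pin down the limit. Since both the hypothesis and the conclusion are symmetric under $\phi \mapsto -\phi$ (which interchanges $(\phi\circ f-\phi)^+$ and $(\phi\circ f-\phi)^-$), I may assume $g := \phi\circ f - \phi$ has $g^+ \in L^1(\mu)$. Set $S_n^{\pm}(x) = \sum_{k=0}^{n-1} g^{\pm}(f^k x)$, so that telescoping gives $\phi(f^n x) = \phi(x) + S_n^+(x) - S_n^-(x)$ for every $n \geq 1$. Dividing by $n$: the term $\frac1n\phi(x)$ tends to $0$; Birkhoff's theorem (valid since $g^+\in L^1$) gives $\frac1n S_n^+(x) \to a(x) := \E[g^+\mid \mathcal I](x) < \infty$ $\mu$-a.e., where $\mathcal I$ is the $\sigma$-algebra of $f$-invariant sets; and the version of Birkhoff's theorem for nonnegative measurable functions gives $\frac1n S_n^-(x) \to b(x) \in [0,\infty]$ $\mu$-a.e. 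Consequently $\frac1n\phi(f^n x) \to c(x) := a(x) - b(x) \in [-\infty,\infty)$ $\mu$-a.e., and $c$ is $\mathcal I$-measurable, i.e.\ $c\circ f = c$ $\mu$-a.e.

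It then remains to show $c = 0$ $\mu$-a.e., which together with the above yields $\frac1n\phi(f^n x)\to 0$ as $n\to+\infty$. Suppose $\mu(\{c < -\epsilon\}) > 0$ for some $\epsilon > 0$. The set $Y := \{c < -\epsilon\}$ is $f$-invariant, and for every $x\in Y$ one has $\phi(f^n x)\to -\infty$ (this covers both $c(x)$ finite negative and $c(x) = -\infty$). Because $\phi$ is finite-valued, $\mu(\{|\phi| \leq T\}) \uparrow 1$ as $T\to\infty$, so $\mu(Y\cap\{\phi > -T\}) > 0$ for $T$ large; Poincaré recurrence, applied to $f$ and the set $Y\cap\{\phi > -T\}$, then forces $\mu$-a.e.\ point $x$ of this set to satisfy $\phi(f^n x) > -T$ for infinitely many $n$, contradicting $\phi(f^n x)\to -\infty$. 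Hence $c\geq 0$ $\mu$-a.e.; the symmetric argument with $\{c > \epsilon\}$ and $\{\phi < T\}$ (using that $c$ is never $+\infty$, so $c(x) > 0$ forces $\phi(f^n x)\to +\infty$) gives $c\leq 0$ $\mu$-a.e. Thus $c = 0$.

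For the limit as $n\to -\infty$, I apply the case just established to the measure-preserving transformation $f^{-1}$ in place of $f$, with the same $\phi$: here the relevant increment is $\phi\circ f^{-1} - \phi = -(g\circ f^{-1})$, whose negative part $g^+\circ f^{-1}$ lies in $L^1(\mu)$ since $f^{-1}$ preserves $\mu$, so the hypothesis holds for $(f^{-1},\phi)$ and the forward conclusion reads $\frac1m\phi(f^{-m}x)\to 0$ $\mu$-a.e., i.e.\ $\frac{1}{|n|}\phi(f^n x)\to 0$ as $n\to -\infty$.

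I expect the only real subtlety to be the recurrence step: one must verify that $c$ is genuinely $f$-invariant so that $Y = \{c<-\epsilon\}$ is $f$-invariant and $Y\cap\{\phi>-T\}$ is a legitimate target set for recurrence, that the possibility $c(x) = -\infty$ is handled uniformly with $c(x)$ finite and negative (both give $\phi(f^n x)\to -\infty$), and that finiteness of $\phi$ really does make the level sets $\{|\phi|\leq T\}$ exhaust $X$. Everything else is the standard Birkhoff theorem (including its elementary extension to nonnegative functions, which if one prefers can be replaced by truncating $g^-$ from above and letting the truncation level tend to $\infty$).
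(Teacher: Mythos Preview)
The paper does not supply its own proof of this lemma; it simply cites Lemmas~8 and~9 of Walters. Your argument is correct and is essentially the standard one: telescope, apply Birkhoff (extended to nonnegative functions for the $g^-$ sum), and then use Poincar\'e recurrence to kill any nonzero limit.

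One small remark: your parenthetical that the extension of Birkhoff to a nonnegative, possibly nonintegrable $g^-$ ``can be replaced by truncating $g^-$ from above and letting the truncation level tend to $\infty$'' is slightly glib. Truncation alone gives only $\liminf_n \tfrac1n S_n^- \geq \E[g^-\mid\mathcal I]$; to get convergence (which your recurrence step genuinely needs, since a mere $\liminf$ or $\limsup$ for $\tfrac1n\phi\circ f^n$ would not contradict recurrence), one also restricts to the invariant level sets $\{\E[g^-\mid\mathcal I]\le N\}$, on which $g^-$ is integrable and ordinary Birkhoff applies. This is routine, and your headline claim that $\tfrac1n S_n^- \to b(x)\in[0,\infty]$ a.e.\ is correct.
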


\begin{proof}[Proof of Lemma \ref{slowvary2}]
Let $\psi : \As \to \R$ be a function for which $\log \psi$ satisfies the hypotheses 
 of Lemma \ref{lem:ergodSlowVary} for the mpt $(\As, \Bs, \mu, f^{-1})$, where $\Bs$ 
 is the Borel $\sigma$-algebra of subsets of $\As$. Then it will follow 
 that the function
\[
{\psi}'(x) := \sup_{n \in \Z} e^{- |n| \d_2} \psi(f^n x)
\]
is almost-surely finite valued, satisfies $\psi \leq {\psi}'$, and, as one can easily check, ${\psi}'(f^{\pm} x) \leq e^{\d_2} {\psi}'(x)$. So it suffices to check the hypotheses 
of Lemma \ref{lem:ergodSlowVary} for $\psi(x) = \log C_u(x)$.
Observe that either $C_u(x) = 1$, i.e., the supremum in the definition of $C_u(x)$
is attained at $n=0$, or that
\begin{eqnarray*}
C_u(x) & \le & |df^{-1}_x|_{E^u_{x}}| \ \cdot \ \sup_{n \ge 1, v \in E^u_{f^{-1}x}, |v|=1} e^{n(\lambda_0 -\d_0)}  |df^{-n+1}_{f^{-1}x} v|\\
& = & e^{\lambda_0 -\d_0} |df^{-1}_x|_{E^u_x}| \cdot C_u(f^{-1}x)\ .
\end{eqnarray*}
Hence 
$$
\log C_u(f^{-1}x) - \log C_u(x) \ge  \min\{ -\lambda_0 + \d_0 - \log |df^{-1}_x|_{E^u_x}|, 0\} \ .
$$
Thus to check that $(\log C_u(f^{-1} x) - \log C_u(x))^- \in L^1(\mu)$, it suffices to check 
$$\log^+ |df^{-1}_x|_{E^u_x}| = \log^+ |(df|_{E^u})^{-1}| \circ f^{-1}(x)
\in L^1(\mu)\ .
$$
Unlike the case of finite dimensional diffeomorphisms, this requires justification, as
$|(df|_{E^u})^{-1}|$ can be unboundedly 
large as $x$ varies over $\Gamma$. We have in fact anticipated this issue, 
and have proved in Corollary \ref{cor:integrable} that
 $\log^+ |(df|_{E^u})^{-1}| \in L^1(\mu)$. 
\end{proof}

\smallskip
The numbers $\lambda_0, \d_0$, (hence $\lambda$) and $\d_1$ introduced in 
Section 4 are fixed once and for all. We now fix $\d_2 > 0$ with $\d_2 \ll \l$,
(e.g. $\d_2 \leq \frac{1}{100 m_u} \la$ ; this will be useful later in Section 5.3),
and let $l$ be given by Lemma \ref{lem:slowvary}.
For $l_0 \ge 1$, let
\[
\Gamma_{l_0} := \{x \in {\Gamma} : l(x) \leq l_0\}\ .
\]
We will refer to sets of the form $\Gamma_{l_0}$ as {\it uniformity sets}. On
such a set, one has uniform expansion and contraction estimates, uniform 
`angles' of separation between $E^u, E^c$ and $E^s$, and uniform
bounds on the extent to which the adapted norms differ from the original norms.
The number $\d \leq \d_1$ can be chosen independently of all the
quantities above, permitting us to shrink our charts as needed.
Once $\d$ is fixed, chart sizes, nonlinearities and second derivatives 
in charts will also be uniformly bounded for
$x \in \Gamma_{l_0}$. Furthermore, 
Lemma \ref{lem:slowvary} tells us that for $x \in \Gamma_{l_0}$,
$f^n x \in \Gamma_{l_0 e^{|n|\d_2}}$, that is to say, the quantities above
have bounds that can deteriorate at most slow exponentially along orbits.



\section{Preparation II : Elements of Hyperbolic Theory}

Unstable manifolds and how volumes on them are transformed are 
central ideas in this paper. In Sects. 5.1 and 5.2, we record some
basic facts about continuous families (called ``stacks") of local unstable manifolds, 
in preparation for the definition of SRB measures in Sect. 6.1. In Sect. 5.3, 
we consider densities on unstable manifolds with respect to reference measures
derived from the induced volumes on finite dimensional subspaces 
introduced earlier. We provide a detailed proof
of distortion bounds as these densities are pushed forward,
confirming that the notion of volume proposed is adequate for our needs. 

From Section 3 through Sect. 5.3, we have operated under the assumption
of ergodicity, which has simplified considerably the exposition.
In Sect. 5.4, we discuss how the constructions and results given so far 
can be adapted for nonergodic measures.

\subsection{Local unstable manifolds}

Continuing to work in the Lyapunov metric,
we employ the following notation: for $\tau = u, c, s$ and $r > 0$, we write $\tilde{B}^{\tau}_x(r) = \{a \in E^{\tau}_{x} : |a|_x' \leq r\}$ and $\tilde{B}^{cs}_x(r) = \tilde{B}^c_x(r) + \tilde{B}^s_x(r)$, so that $\tilde{B}_x(r)$, which was defined earlier, is equal to
$\tilde{B}^{u}_x(r) + \tilde{B}^{cs}_x(r)$.

\begin{thm}[Unstable Manifolds Theorem in charts] \label{thm:unstabMfld}
For all $\d>0$ sufficiently small, there exists a unique family of continuous 
maps $\{g_x: \tilde{B}^u_x(\d l(x)^{-1}) \to \tilde{B}^{cs}_x(\d l(x)^{-1})\}_{x \in \Gamma}$ 
such that 
\[
g_x(0)=0 \qquad \mbox{and} \qquad 
\tilde{f}_x (\graph g_x) \supset \graph g_{fx} \quad \text{ for all } x \in \Gamma \ .\]
With respect to the $|\cdot|'_x$ norms on $\tilde{B}_x(\d l(x)^{-1})$, the family $\{g_x\}_{x \in 
\Gamma}$ has the following additional properties:
\begin{enumerate}
\vspace{-4pt}
\item $g_x$ is $C^{1 + \lip}$ Fr\'echet differentiable, with $(dg_x)_0 = 0$;
\vspace{-4pt}
\item $\lip g_x \leq \frac{1}{10 }$ and $\lip d g_x \leq C l(x)$ where $C > 0$ is 
independent of $x$;
\vspace{-4pt}
\item  if $\tilde{f}_x (u_i + g_x(u_i)) \in \tilde{B}_{fx}(\d l(f x)^{-1})$ for $u_i \in \tilde{B}_x^u(\d l(x)^{-1})$, $i = 1,2$, then 
\begin{align}\label{eq:unstableExpansion}
| \tilde{f}_x (u_1+ g_x(u_1))- \tilde{f}_x (u_2 + g_x(u_2)) |_{fx}' \geq (e^{\la} - \d) | (u_1 +  g_x(u_1))-  (u_2 +  g_x(u_2)) |_x' \ .
\end{align}
\end{enumerate}
\end{thm}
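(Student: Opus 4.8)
The plan is to obtain the family $\{g_x\}$ as a fixed point of a graph transform operator acting on a suitable complete metric space of graphs, following the classical Hadamard approach adapted to the nonstationary, nonuniformly hyperbolic setting. First I would fix the parameters: choose $\d \le \d_1$ small enough (relative to $\l$, $\d_2$, and the Lipschitz constant $\tfrac{1}{10}$ we aim for) so that Lemma \ref{prop:chartProps} applies on each $\tilde B_x(\d l(x)^{-1})$, giving $\lip(\tf_x - (d\tf_x)_0) \le \d$ and $\lip(d\tf_x) \le l(x)$. Because $l$ is slowly varying (Lemma \ref{lem:slowvary}), the chart sizes $\d l(f^{\pm 1}x)^{-1}$ are comparable to $\d l(x)^{-1}$ up to the factor $e^{\d_2}$ with $\d_2 \ll \l$; this is exactly what makes the graph transform well-defined, since the image of a graph over $\tilde B^u_x(\d l(x)^{-1})$ under $\tf_x$ covers a graph over $\tilde B^u_{fx}(\d l(fx)^{-1})$ after the admissible shrinkage is absorbed by the $e^\l$ expansion in the $u$-direction. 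I would let $\Gc_x$ be the space of Lipschitz maps $h : \tilde B^u_x(\d l(x)^{-1}) \to \tilde B^{cs}_x(\d l(x)^{-1})$ with $h(0)=0$ and $\lip h \le \tfrac{1}{10}$, equipped with the sup-norm in the $|\cdot|'$ metrics, and define the graph transform $\Phi_x : \Gc_{x} \to \Gc_{fx}$ by declaring $\graph(\Phi_x h)$ to be the portion of $\tf_{x}(\graph h)$ lying over $\tilde B^u_{fx}(\d l(fx)^{-1})$ — one must check this is itself a graph, using invertibility of the $u$-component of $d\tf_x$ restricted to nearly-horizontal graphs (a consequence of one-step hyperbolicity in Lemma \ref{prop:pointwiseNorm}(1) plus the $\d$-small nonlinearity) and a quantitative implicit-function / inverse-function argument.

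Next I would show $\Phi_x$ preserves the Lipschitz bound $\tfrac 1{10}$ and that the induced operator on the space of sections $h = (h_x)_{x\in\Gamma}$, $(\Phi h)_{fx} = \Phi_x(h_x)$, is a contraction in the appropriate sense. The Lipschitz-cone estimate is the usual cone-field invariance computation: a tangent vector to $\graph h$ with $cs$-component at most $\tfrac1{10}$ times its $u$-component is mapped, under $d\tf_x$, to a vector whose $u$-component grows by roughly $e^\l$ and whose $cs$-component shrinks by roughly $e^{-\l}$ (with $e^{2\d_0}$ slack in the center), so the ratio improves by $\sim e^{-2\l}$, leaving room to re-establish $\tfrac1{10}$ after adding the $O(\d)$ error; choosing $\d$ small enough closes this. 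For the contraction on $C^0$: two graphs at $C^0$-distance $\epsilon$ in the charts at $x$ have images whose fiberwise difference, measured after projecting back over the $u$-disc at $fx$, is at most $(e^{-\l}+O(\d))\epsilon$ — again because backward-unstable directions contract. Since the charts form a bi-infinite orbit, I would set this up as a fixed point over the whole orbit simultaneously: the space of bounded sections with the weighted sup-norm (or simply argue on each backward orbit $\{f^{-n}x\}$ and pass to the limit $g_x = \lim_{n\to\infty}\Phi_{f^{-1}x}\circ\cdots\circ\Phi_{f^{-n}x}(\text{zero section})$, the limit existing by the $C^0$-contraction and being independent of the starting section). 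Uniqueness of $\{g_x\}$ with $g_x(0)=0$ and the forward-invariance relation follows from the same contraction estimate. The invariance relation $\tf_x(\graph g_x) \supset \graph g_{fx}$ is built into the construction; checking $g_x(0)=0$ uses $\tf_x(0)=0$.

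For the regularity claims in items (1)–(2): $C^{1+\lip}$ differentiability of $g_x$ is obtained by running a second graph transform on the derivative bundle — i.e. showing that the formal tangent map $Dg_x$ is itself the fixed point of a fiber-contraction over the base dynamics (the standard $C^1$-section theorem / fiber-contraction argument of Hirsch–Pugh–Shub), using $\lip(d\tf_x)\le l(x)$ and slow variation of $l$ to control the Hölder/Lipschitz modulus; the bound $\lip\, dg_x \le C\, l(x)$ comes out of tracking constants in that argument, with the single-step factor $l(x)$ surviving because the $e^{-2\l}$-type contraction in the derivative cocycle beats the $e^{\d_2}$ growth of $l$ along orbits (here $\d_2 \ll \l$ is used). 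That $(dg_x)_0 = 0$ follows since $(d\tf_x)_0 = df_x$ maps $E^u_x$ to $E^u_{fx}$, so the zero-derivative condition at the fixed point $0$ is preserved by the derivative transform and is its unique fixed value. Finally, item (3), the expansion estimate \eqref{eq:unstableExpansion}, is a direct consequence of one-step hyperbolicity once the graph is known to have Lipschitz constant $\le\tfrac1{10}$: for $p_i = u_i + g_x(u_i)$, the $u$-components dominate, $|p_1-p_2|'_x$ is comparable to $|u_1-u_2|'_x$, and $d\tf_x$ expands the $u$-direction by at least $e^\l$ while the nonlinearity contributes an error $\le \d|p_1-p_2|'_x$; integrating the derivative along the segment (legitimate since $g_x$ is $C^1$ and the relevant points stay in the domain by hypothesis) gives the stated lower bound $(e^\l-\d)|p_1-p_2|'_x$.

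\textbf{Main obstacle.} The routine parts are the cone-invariance and $C^0$-contraction estimates; the delicate point — and the one genuinely different from the finite-dimensional or Hilbert-space treatments — is verifying that the graph transform is \emph{well-defined as a map into graphs over the correct domains}. This requires that $\tf_x(\graph h)$, after projecting to $E^u_{fx}$, actually covers $\tilde B^u_{fx}(\d l(fx)^{-1})$, which hinges on the interplay between (a) the $e^\l$ expansion in the $u$-direction, (b) the admissible domain shrinkage dictated by slow variation of $l$, and (c) the possibly poor behavior of $df_x|_{E^{cu}}$ — recall this operator is not assumed to have uniformly bounded inverse, so the quantitative inverse-function argument producing the graph must be carried out with constants depending only on the one-step hyperbolicity gap $\l$ and $l(x)$, not on any uniform bound. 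Keeping these constants clean, and in particular making sure the $l(x)$-dependence in $\lip\, dg_x$ is genuinely linear (not, say, $l(x)^2$), is where I expect to spend the most care.
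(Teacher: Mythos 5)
Your plan follows the classical Hadamard / graph-transform template, which is precisely the route the paper takes: the paper states Theorem \ref{thm:unstabMfld} without proof and defers to the Lyapunov-chart construction of \cite{lianyou1}, remarking that those methods carry over verbatim. Your outline — cone-field invariance for the Lipschitz bound, contraction of the graph transform, a Hirsch--Pugh--Shub fiber contraction for the $C^{1+\lip}$ regularity, and a direct one-step hyperbolicity argument for item (3) — matches that standard scheme, and your identification of the delicate point (that the transformed graph must cover the shrinking target $u$-disc with constants depending only on $l(x)$, not on a uniform bound for $(df|_{E^{cu}})^{-1}$) is exactly the reason the paper sets up the slowly varying function $l$ and proves the integrability of $\log^+|(df|_{E^u})^{-1}|$ in Corollary~\ref{cor:integrable}.

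One technical slip: the $C^0$-contraction factor $(e^{-\l}+O(\d))$ you claim for the sup norm is only valid when $E^c=\{0\}$. Theorem~\ref{thm:unstabMfld} is proved under (H1)--(H3) alone, so center directions are allowed, and there $|d\tf_x|_{E^{cs}}|$ can be as large as $e^{2\d_0}>1$; the fiberwise sup distance need not contract at all. The cure — and what the paper does in Lemma~\ref{lem:graphTprops}, with the footnote explicitly remarking that the sup norm suffices only in the no-zero-exponent case — is to measure distances in the ratio norm $\tri{h}_z = \sup_{v\neq 0} |h(v)|'_z/|v|'_z$. There the preimage bound $|u_{-1}|' \le (e^\l-\d)^{-1}|u|'$ coming from the $u$-expansion supplies the missing factor, and the contraction constant becomes roughly $e^{2\d_0}/(e^\l-\d)<1$ (using $\d_0\ll\l$). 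The same correction applies to your cone-invariance step: the improvement is $\sim e^{2\d_0-\l}$, not $e^{-2\l}$, in the general case. Neither affects the structure of the argument — the same mechanism you invoke (``backward-unstable directions contract'') is what makes the ratio norm work — but the norm must be chosen to match the one-step estimates in Lemma~\ref{prop:pointwiseNorm}.
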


These results are well known for finite-dimensional systems. For Hilbert space maps, stable and unstable manifolds were constructed using Lyapunov charts in \cite{lianyou1};
the methods in that paper can be carried over without any substantive change to our Banach space setting, and we omit the proof. 

We fix $\d'_1 \le \d_1$ small enough that Theorem \ref{thm:unstabMfld} holds with
$\d \le \d'_1$, and write $\tilde W^u_{\d, x} = \graph (g_x)$ where $g_x$ is as above.
It is easy to see that $\tilde W^u_{\d, x} \subset \tilde W^u_{\d', x}$ for $\d < \d'$.
We let $W^u_{\d, x} = \exp_x \tilde W^u_{\d, x}$, and call $W^u_{\d, x}$ a 
{\it local unstable manifold} at $x$. It will be assumed implicitly in all future references
to local unstable manifolds that $\d \le \d'_1$ where $\d'_1$ is as above. Note that
we may shrink $\d$ without harm, and will do so a finite number of times in the proofs to come.
The {\it global unstable manifold} at $x$, defined to be 
$$W^u_x := \cup_{n \geq 0} f^n (W^u_{\d,x}),
$$ 
is an immersed submanifold in $\Bc$ (by the injectivity of $f$ and $df_x$; 
see (H1) in Section 1). 

Theorem \ref{thm:unstabMfld} is proved using graph transforms, an idea we will need
again later on. We state (without proof) the following known result. 
Let
$$
\Wc(x) = \left\{g : \tilde{B}^u_x(\d l(x)^{-1}) \to  E^{cs}_x \ \mbox{ s.t. }
   \ g(0) = 0  \text{ and }  \lip g \leq \frac{1}{10} \right\}\ ,
$$
and for $g \in \Wc(x)$, we let $\Psi_x g$ denote the {\it graph transform} of $g$
if it is defined; i.e., $ \Psi_x g : \tilde{B}^u_{fx}(\d l(fx)^{-1}) \to  
E^{cs}_{fx}$ is the map with the property that
$$
\tilde{f}_x \left( \graph g \right) \supset \graph \Psi_x g \ .
$$

\begin{lem}[Contraction of graph transforms] \label{lem:graphTprops}
The following hold for every $x \in \Gamma$:
\begin{itemize}
\vspace{-4pt}
\item[(i)] for every  $g \in \Wc(x)$, $\Psi_x g$ is defined and is $\in \Wc(fx)$;
\vspace{-4pt}
\item[(ii)] there exists a constant $c \in (0,1)$ such that for all $g_1, g_2 \in \Wc(x)$,
$$
\tri{\Psi_x g_1-\Psi_x g_2}_{fx} \le c \ \tri{g_1-g_2}_x 
$$
where
$$
\tri{ h}_z = \sup_{v \in \tilde{B}^u_z(\d l(z)^{-1}) \setminus \{0\}} \frac{|h(v)|_z'}{|v|_z'}
\qquad \mbox{for} \qquad z=x, fx\ .\footnote{In the case when $(f, \mu; df)$ does not have zero exponents, the uniform norm \begin{align*} \|h\|_{z, \infty} = \sup_{v \in \tilde{B}^u_z(\d l(z)^{-1})} |h(v)|_z' \end{align*} is
often used when stating contraction estimates for the graph transform. }
$$
\end{itemize}
\end{lem}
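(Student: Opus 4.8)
The plan is to realize the graph transform $\Psi_x$ concretely as a composition $\psi_g\circ\phi_g^{-1}$ of an ``unstable–component'' map and a ``$cs$–component'' map, and then extract everything from Lipschitz perturbation estimates carried out directly in the adapted norms $|\cdot|'$. First I would write $\tf_x=(d\tf_x)_0+R_x$ on the chart domain $\tilde B_x(\d l(x)^{-1})$, where $\lip R_x\le\d$ by Lemma \ref{prop:chartProps} and $(d\tf_x)_0=df_x$ respects the splittings $\Bc_x=E^u_x\oplus E^{cs}_x$ and $\Bc_{fx}=E^u_{fx}\oplus E^{cs}_{fx}$ (writing $E^{cs}=E^c\oplus E^s$), with one–step hyperbolicity $|df_x u|'_{fx}\ge e^{\la}|u|'_x$ on $E^u_x$ and $|df_x\eta|'_{fx}\le e^{2\d_0}|\eta|'_x$ on $E^{cs}_x$ (Lemma \ref{prop:pointwiseNorm}; the second bound because $|\cdot|'$ is a max over $E^u,E^c,E^s$ and $e^{2\d_0}>e^{-\la}$). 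For $g\in\Wc(x)$ the graph map $\iota_g(u)=u+g(u)$ has Lipschitz constant $1$, so, setting $\phi_g(u):=\pi^u_{fx}\tf_x(\iota_g(u))$ and $\psi_g(u):=\pi^{cs}_{fx}\tf_x(\iota_g(u))$, both have nonlinear parts of Lipschitz constant $\le\d$, since the projections $\pi^u_{fx},\pi^{cs}_{fx}$ have operator norm $1$ in the adapted max-norm. Note $\phi_g(0)=\psi_g(0)=0$.

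For item (i) I would observe that $\phi_g$ is a $\d$-Lipschitz perturbation of the linear isomorphism $df_x|_{E^u_x}:E^u_x\to E^u_{fx}$, whose inverse has adapted norm $\le e^{-\la}$. The Lipschitz inverse function theorem then gives that $\phi_g$ is a homeomorphism onto its image, that $\lip\phi_g^{-1}\le(e^{\la}-\d)^{-1}<1$, and that $\phi_g\big(\tilde B^u_x(\d l(x)^{-1})\big)\supseteq\tilde B^u_{fx}\big((e^{\la}-\d)\,\d l(x)^{-1}\big)$. Invoking the slow-variation bound $l(f^{\pm}x)\le e^{\d_2}l(x)$ with $\d_2\ll\la$, one sees that for $\d$ small this image contains $\tilde B^u_{fx}(\d l(fx)^{-1})$ and that $\phi_g^{-1}$ maps the latter back inside $\tilde B^u_x(\d l(x)^{-1})$; hence $\Psi_x g:=\psi_g\circ\phi_g^{-1}$ is defined on $\tilde B^u_{fx}(\d l(fx)^{-1})$, satisfies $\tf_x(\graph g)\supseteq\graph\Psi_x g$ and $(\Psi_x g)(0)=0$, and since $\lip\psi_g\le\tfrac1{10}e^{2\d_0}+\d$,
$$
\lip\Psi_x g\le\lip\psi_g\cdot\lip\phi_g^{-1}\le\frac{\tfrac1{10}e^{2\d_0}+\d}{e^{\la}-\d}\le\frac1{10}
$$
for $\d$ small enough, because $e^{2\d_0-\la}<1$ by the choice $\d_0\ll\la_0$ (recall $\la=\la_0-2\d_0$). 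This proves $\Psi_x g\in\Wc(fx)$.

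For the contraction estimate (ii), given $g_1,g_2\in\Wc(x)$ and $v\in\tilde B^u_{fx}(\d l(fx)^{-1})\setminus\{0\}$, I would set $u_i=\phi_{g_i}^{-1}(v)\in\tilde B^u_x(\d l(x)^{-1})$, so that $\Psi_x g_i(v)=\psi_{g_i}(u_i)$, and split
$$
\Psi_x g_1(v)-\Psi_x g_2(v)=\big[\psi_{g_1}(u_1)-\psi_{g_2}(u_1)\big]+\big[\psi_{g_2}(u_1)-\psi_{g_2}(u_2)\big].
$$
The first bracket equals $df_x\big(g_1(u_1)-g_2(u_1)\big)+\pi^{cs}_{fx}\big(R_x(\iota_{g_1}(u_1))-R_x(\iota_{g_2}(u_1))\big)$, of norm $\le(e^{2\d_0}+\d)\,\tri{g_1-g_2}_x\,|u_1|'_x$; the second is $\le\lip(\psi_{g_2})\,|u_1-u_2|'_x\le(\tfrac1{10}e^{2\d_0}+\d)\,|u_1-u_2|'_x$. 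To control $|u_1-u_2|'_x$, I would use $\phi_{g_1}(u_1)=\phi_{g_2}(u_2)=v$ to write $\phi_{g_1}(u_1)-\phi_{g_1}(u_2)=-\pi^u_{fx}\big(R_x(\iota_{g_1}(u_2))-R_x(\iota_{g_2}(u_2))\big)$, of norm $\le\d\,\tri{g_1-g_2}_x\,|u_2|'_x$, and combine with $|\phi_{g_1}(u_1)-\phi_{g_1}(u_2)|'_{fx}\ge(e^{\la}-\d)\,|u_1-u_2|'_x$; together with $|u_i|'_x\le(e^{\la}-\d)^{-1}|v|'_{fx}$ this yields
$$
|\Psi_x g_1(v)-\Psi_x g_2(v)|'_{fx}\le c\,\tri{g_1-g_2}_x\,|v|'_{fx},\qquad c:=\frac{e^{2\d_0}+\d}{e^{\la}-\d}+\frac{(\tfrac1{10}e^{2\d_0}+\d)\,\d}{(e^{\la}-\d)^2}.
$$
Since $c\to e^{2\d_0-\la}<1$ as $\d\to0$, the constant $c$ lies in $(0,1)$ for $\d$ small; dividing by $|v|'_{fx}$ and taking the supremum over $v$ completes item (ii). The routine perturbation bookkeeping is not the hard part; the genuinely delicate point — and the place where the machinery of Sections 3–4 is used — is verifying that the transformed graph is defined over the \emph{full} prescribed domain $\tilde B^u_{fx}(\d l(fx)^{-1})$, which needs the slow-variation estimate $l(f^{\pm}x)\le e^{\d_2}l(x)$ with $\d_2\ll\la$ to compare chart sizes at $x$ and $fx$ against the hyperbolicity rate, plus the fact that the adapted max-norm keeps $|\pi^u_{fx}|=|\pi^{cs}_{fx}|=1$ so no angle-type constants degrade the estimates. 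Because the decisive expansion and inversion occur entirely inside the finite-dimensional $E^u$, infinite-dimensionality causes no trouble here.
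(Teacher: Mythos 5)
The paper states this lemma without proof, citing it as a known result, and your argument is precisely the standard Hadamard--Perron graph-transform proof (as in \cite{lianyou1} and the finite-dimensional literature): it is correct. In particular you handle correctly the only two points specific to this setting — the domain comparison $\d l(fx)^{-1}\le(e^{\la}-\d)\,\d l(x)^{-1}$, which uses the slow variation $l(fx)\le e^{\d_2}l(x)$ with $\d_2\ll\la$, and the fact that the adapted max-norm (\ref{max}) makes $\pi^u_{fx},\pi^{cs}_{fx}$ norm-one so that no angle constants enter the contraction factor $c\to e^{2\d_0-\la}<1$.
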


We will also need the following characterization of unstable manifolds, valid only in 
the absence of zero exponents.

\begin{lem} \label{char}
Assume $E^c = \{0\}$. For  $\d$ small enough, the following hold for all
$x \in \Gamma$. 
\begin{itemize} \vspace{-4pt}
\item[(a)]  $W^u_{\d, x}$ has the characterization \vspace{-4pt}
$$
W^u_{\d, x}  = \exp_x \big\{z \in \tilde{B}_x( \d l(x)^{-1}) : \forall n \in \N, \exists   
z_n \in \tilde{B}_{f^{-n} x}( \d l(f^{-n} x)^{-1}) \mbox{ s.t. } \tilde{f}_{f^{-n} x}^n z_n = z \big\} \, ;
$$
\item[(b)] for $y \in W^u_{\d, x}$, the tangent space $E^u_y :=  T_y W^u_{\d, x}$ to $W^u_{\d, x}$ at $y$ has the characterization \vspace{-4pt}
\[
E^u_y = \big\{v \in \Bc_y : df^{-n}_y v \text{ exists for all } n \geq 1, \text{ and } 
\limsup_{n \to \infty} \frac{1}{n} \log |df^{-n}_y v| \leq  - \la \big\} \, .
\]
\end{itemize} 
%
\end{lem}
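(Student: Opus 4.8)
The plan is to prove the two characterizations in parallel, bootstrapping from the graph-transform picture established in Theorem \ref{thm:unstabMfld} and from the dynamical estimates collected in Section 4. For part (a), the inclusion of $W^u_{\d,x}$ (viewed in charts as $\graph g_x$) into the backward-iterable set is essentially built into the construction: the relation $\tilde f_{f^{-1}x}(\graph g_{f^{-1}x}) \supset \graph g_x$ iterated $n$ times produces, for each $z \in \graph g_x$, a preimage $z_n \in \graph g_{f^{-n}x} \subset \tilde B_{f^{-n}x}(\d l(f^{-n}x)^{-1})$, which is what is required. The content is the reverse inclusion: if $z \in \tilde B_x(\d l(x)^{-1})$ has preimages $z_n$ staying in the (shrinking) charts for all $n$, then $z \in \graph g_x$. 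Here I would argue by the uniform expansion estimate (\ref{eq:unstableExpansion}) together with one-step hyperbolicity on $E^{cs}$: write $z = u + w$ with $u \in E^u_x$, $w \in E^{cs}_x$, and compare the orbit of $z_n$ to that of the point $u' + g_x(u') \in \graph g_x$ on the same local unstable leaf — where $u'$ is determined by pushing forward the $E^u$-component of $z_n$. The point to exploit is that on $\graph g_x$ the backward iterates contract (at rate roughly $e^{-\la}$ in the $|\cdot|'$ norms, using that $g_x$ is a Lipschitz-$\tfrac{1}{10}$ graph so the $E^{cs}$-component is slaved to the $E^u$-component), whereas any nonzero $E^{cs}$-deviation of $z$ from $\graph g_x$ forces the backward iterates $z_n$ to grow at rate at least $e^{\la}$ (inverting the stable-contraction estimate), eventually violating the constraint $z_n \in \tilde B_{f^{-n}x}(\d l(f^{-n}x)^{-1})$ — recall by Lemma \ref{lem:slowvary} the chart sizes $\d l(f^{-n}x)^{-1}$ shrink at most slow-exponentially ($e^{-n\d_2}$ with $\d_2 \ll \la$), so genuine exponential growth at rate $e^\la$ wins. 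This forces the $E^{cs}$-deviation to be zero, i.e. $z \in \graph g_x$.

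For part (b), I would show both inclusions of the tangent-space characterization. The inclusion ``$\subseteq$'' is the easier direction: $E^u_y = T_y W^u_{\d,x}$, and since $W^u_x = \cup_{n\ge 0} f^n(W^u_{\d,x})$ is a genuine immersed submanifold (by injectivity of $f$ and $df_x$, as noted after Theorem \ref{thm:unstabMfld}), $df_y$ maps $E^u_y$ onto $E^u_{fy}$ and hence $df^{-n}_y$ is defined on $E^u_y$ for all $n$; the backward-contraction rate $\limsup \frac1n \log|df^{-n}_y v| \le -\la$ then follows from the expansion estimate (\ref{eq:unstableExpansion}) applied along the orbit (in charts this is the statement that the graph is exponentially expanded forward, equivalently exponentially contracted backward, with constant $e^{\la}-\d$, and then one absorbs the chart-norm distortion, which by slow variation of $l$ costs only an $e^{n\d_2}$ factor — negligible since $\d_2 \ll \la$ — and one lets the initial $\d$ shrink to remove the $-\d$). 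For the reverse inclusion ``$\supseteq$'', suppose $v \in \Bc_y$ is backward-iterable with $\limsup \frac1n \log|df^{-n}_y v| \le -\la$; decompose $v = v^u + v^s$ according to $\Bc_y = E^u_y \oplus E^s_y$ (using $E^c = \{0\}$). The component $v^u$ automatically satisfies the backward estimate by the ``$\subseteq$'' direction, so $v^s = v - v^u$ would also have to be backward-iterable with the same bound; but on $E^s$ the map $df_y$ is injective and backward iteration, where defined, expands at rate at least $e^{\la}$ in the $|\cdot|'$ norm by one-step hyperbolicity — contradicting the $-\la$ bound unless $v^s = 0$. Hence $v = v^u \in E^u_y$.

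The main obstacle I anticipate is the bookkeeping in part (a): one must run the comparison between the orbit of a hypothetical $z$ outside $\graph g_x$ and a genuine orbit on the leaf while all of the relevant constants — the chart sizes $\d l(f^{-n}x)^{-1}$, the norm-comparison constants $C(f^{-n}x)$ relating $|\cdot|'$ to $|\cdot|$, the nonlinearity bounds — are varying along the backward orbit, and one has to be sure the slow-exponential deterioration (rate $\d_2$) is genuinely dominated by the hyperbolic rate ($\la$). This is exactly the kind of estimate the choice $\d_2 \le \frac{1}{100 m_u}\la$ in Section 4.2 was set up to handle, so the argument should go through, but it requires care to state cleanly. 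A secondary subtlety is that backward iteration of $df_y$ on $E^s_y$ is only defined a priori on a subspace (where $f^{-1}$ makes sense as a map of the relevant leaf), so in the ``$\supseteq$'' half of (b) one should phrase the contradiction in terms of the hypothesis that $df^{-n}_y v$ exists for all $n$, rather than asserting invertibility outright; this is a minor point but worth getting right given the paper's emphasis on noninvertibility of $f$.
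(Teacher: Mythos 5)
Your overall strategy — show the backward orbit of a point off $\graph g_x$ diverges from the backward orbit of a reference point on the graph, with the slow exponential variation $\d_2 \ll \la$ ensuring the chart constraint is eventually violated — matches the paper's proof of (a), and the easy directions of both (a) and (b) are handled the same way. But two of your steps, as sketched, do not go through.

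First, in the hard direction of (a), you assert that ``any nonzero $E^{cs}$-deviation of $z$ from $\graph g_x$ forces the backward iterates $z_n$ to grow at rate at least $e^{\la}$ (inverting the stable-contraction estimate).'' The one-step estimate in Lemma \ref{prop:pointwiseNorm} applies to \emph{vectors in $E^s_x$}; the quantity you want to grow is a \emph{difference of points}, $z_n - \hat z_n$, whose displacement is not in $E^s$ and is moreover transformed by the nonlinear maps $\tilde f_{f^{-k}x}$, not their linearizations. To convert the linear hyperbolicity into an honest Lipschitz-inverse bound for the nonlinear maps on a set containing both $z_n$ and $\hat z_n$, the paper invokes the backward graph transform (Lemma \ref{lem:existsBackwardsTform}): starting from the constant graph $h \equiv \pi^u_x z$, one transforms backward to $h_n = \Psi^s_{f^{-(n-1)}x}\circ\cdots\circ\Psi^s_x h$, argues that $z_n$ and $\hat z_n$ both lie on $\graph h_n$, and then applies part (ii) of that lemma to get $|z_n-\hat z_n|' \ge (e^{-\la}+\d)^{-n}|z-\hat z|'$. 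Without an argument of this kind, your ``inversion'' step has a genuine gap. Second, in the $\supseteq$ direction of (b) you decompose $v = v^u + v^s$ using $\Bc_y = E^u_y \oplus E^s_y$. But $y \in W^u_{\d,x}$ is generally \emph{not} in $\Gamma$ (only $x$ is), and the paper explicitly notes — in the proof of Lemma \ref{lem:stackContUnstable} — that for such $y$ there is no intrinsically defined $E^s_y$; a surrogate subspace has to be constructed. The paper sidesteps this by working exclusively in the chart at $x$, where only the splittings at the $\Gamma$-points $f^{-n}x$ are used, and rerunning the backward-graph-transform argument with the linear maps $d(\tilde f_{f^{-n}x})_{\tilde y_{-n}}$ (covered by the last sentence of Lemma \ref{lem:existsBackwardsTform}) in place of the $\tilde f$'s. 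Your decomposition is unavailable precisely in the case where the lemma is later needed (at non-$\Gamma$ points of a stack), so this step needs to be rebuilt along the paper's lines.
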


\smallskip
The proof of Lemma \ref{char} involves the so-called ``backwards graph transform", which
is different in infinite dimensions because one cannot iterate backwards. We recall
the definition of this transform  $\Psi^s_x$, as it will be used a number of times.

Continuing to assume
$E^c = \{0\}$, we define
\[
\Wc^s_{\frac12}(x) = \bigg\{ h : \tilde B^s_x(\d l(x)^{-1}) \to \tilde B^u_x( \d l(x)^{-1}) \ \mbox{ s.t. }
 |h(0)|'_x \le \frac12 \d l(x)^{-1}  \text{ and } \lip h \leq \frac{1}{10} \bigg\} \, .
\]
For $h \in \Wc^s_{\frac12}(x)$, if $\ell : \tilde{B}^s_{f^{-1} x}(\d l(f^{-1} x)^{-1} ) \to \tilde{B}^u_{f^{-1}x}(\d l(f^{-1} x)^{-1})$ is a map with the property that 
\begin{align} \label{eq:defineGraphTform}
\tilde{f}_{f^{-1} x} (\graph \ell ) \subset \graph h \, ,
\end{align}
then we say $\ell$ is the graph transform of $h$ by $\tilde f^{-1}$, and write
$\ell =  \Psi^s_x h$. The result in Lemma \ref{lem:existsBackwardsTform} 
was proved in \cite{lianyou1} for 
Hilbert space maps in a context similar to ours; their proof generalizes without change
to Banach spaces. 

\begin{lem}\label{lem:existsBackwardsTform} Assume $E^c = \{0\}$. Let $x \in \Gamma$ and $h \in \Wc^s_{\frac12}(x)$. Then
\begin{itemize}
\vspace{-4pt}
\item[(i)] $\Psi^s_x h$ is well defined and is $\in \Wc^s_{\frac12}(f^{-1} x)$; 
\vspace{-4pt}
\item[(ii)] for $z_1, z_2 \in \graph \Psi^s_x h$,
$$
|\tilde f_{f^{-1}x} z_1 - \tilde f_{f^{-1}x} z_2|_x' \leq (e^{- \la} + \d) |z_1 - z_2|_{f^{-1} x}'\ .
$$
\end{itemize}
(i) and (ii) continue to hold for graph transforms by maps that are $C^1$
sufficiently near $\tilde f_{f^{-1}x}$.
\end{lem}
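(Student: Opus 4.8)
The plan is to run the standard graph-transform argument, now applied to graphs of maps $E^s \to E^u$ (so that the graphs lie \emph{along} the stable direction) transformed by $\tilde f_{f^{-1}x}$, and to arrange the estimates so that the only inversion ever needed is that of the expanding, finite-dimensional block. Write $y = f^{-1}x$; since $E^c = \{0\}$ we have $\Bc_z = E^u_z \oplus E^s_z$ for $z = x,y$. Put $L = df_y = (d\tilde f_y)_0$, so that $\tilde f_y = L + g$ with $g(0)=0$ (as $\tilde f_y(0)=0$) and $\lip g \le \d$ on $\tilde B_y(\d l(y)^{-1})$ by Lemma \ref{prop:chartProps}; write $g = g^u + g^s$ for its $E^u_x$- and $E^s_x$-components. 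Since $df_y$ maps $E^u_y$ onto $E^u_x$ and $E^s_y$ into $E^s_x$, the map $A_u := L|_{E^u_y} : E^u_y \to E^u_x$ is a linear isomorphism of finite-dimensional spaces with $|A_u^{-1}|'_x \le e^{-\la}$, while $A_s := L|_{E^s_y}$ satisfies $|A_s w|'_x \le e^{-\la}|w|'_y$---both by the one-step hyperbolicity of Lemma \ref{prop:pointwiseNorm}. With this notation, the requirement $\tilde f_{f^{-1}x}(\graph \ell) \subset \graph h$ is exactly the functional equation
\[
A_u \ell(w) + g^u\big(w + \ell(w)\big) \;=\; h\big(A_s w + g^s(w + \ell(w))\big), \qquad w \in \tilde B^s_y(\d l(y)^{-1}).
\]

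First I would prove existence and uniqueness by solving this pointwise in $w$. For fixed $w$ it reads $u = \Phi_w(u)$ with $u = \ell(w)$ and $\Phi_w(u) = A_u^{-1}\big[\,h(A_s w + g^s(w+u)) - g^u(w+u)\,\big]$. Using $|A_u^{-1}|'_x \le e^{-\la}$, $\lip g \le \d$, $\lip h \le \tfrac1{10}$, $|h(0)|'_x \le \tfrac12\d l(x)^{-1}$, and the slowly-varying bound $l(x) \le e^{\d_2}l(y)$ with $\d_2 \ll \la$, a routine estimate shows that for $\d$ small $\Phi_w$ contracts the ball $\tilde B^u_y(\d l(y)^{-1})$ into itself, uniformly in $w$; its unique fixed point defines $\ell(w) =: (\Psi^s_x h)(w)$, and the same estimate at $w=0$ gives $|\ell(0)|'_y \le \tfrac12 \d l(y)^{-1}$. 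Differentiating the fixed-point relation in $w$ (or invoking Lipschitz dependence of a contraction on its parameter) gives $\lip \ell \le \tfrac1{10}$ for $\d$ small, so $\ell \in \Wc^s_{\frac12}(f^{-1}x)$---this also makes $\graph \ell$ lie in the domain of $\tilde f_y$ and forces its image onto $\graph h \subset \tilde B_x(\d l(x)^{-1})$, which is precisely what the functional equation encodes, so the domain bookkeeping comes out automatically consistent.

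For item (ii) I would take $z_i = w_i + \ell(w_i) \in \graph \ell$. Since $\lip \ell \le \tfrac1{10}$ and we use the max-norm of \eqref{max}, $|z_1 - z_2|'_y = |w_1 - w_2|'_y$; since $\tilde f_y z_1, \tilde f_y z_2$ lie on $\graph h$ with $\lip h \le \tfrac1{10}$, the norm $|\tilde f_y z_1 - \tilde f_y z_2|'_x$ is realized by its $E^s_x$-component, which equals $A_s(w_1-w_2) + g^s(z_1) - g^s(z_2)$ and hence has norm at most $e^{-\la}|w_1-w_2|'_y + \d|z_1-z_2|'_y = (e^{-\la}+\d)|z_1-z_2|'_y$. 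Finally, every estimate above uses only $\lip(\tilde f_y - L) \le \d$ together with the invertibility and norm bound on the finite-dimensional block $A_u$, and both of these persist under a $C^1$-small perturbation of $\tilde f_y$; hence (i) and (ii) hold verbatim for graph transforms by maps that are $C^1$-close to $\tilde f_{f^{-1}x}$.

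The step I expect to be the real obstacle---and the one point where the non-invertible, infinite-dimensional setting genuinely differs from the classical picture---is the very first one: there is no map ``$\tilde f_y^{-1}$'' to compose with, since $L = df_y$ need not be surjective on $E^s$, so one cannot build the backwards graph transform by inverting $\tilde f_y$ the way one would in finite dimensions. What rescues the construction is exactly that the functional equation only requires inverting the expanding, finite-dimensional block $A_u$, which is always possible, while the possibly non-surjective, infinite-dimensional contracting block $A_s$ enters only through the one-sided bound $|A_s|'_x \le e^{-\la}$. A secondary, purely technical, nuisance is matching the chart domains at $y$ and at $x$; this is handled, as everywhere in this section, by the slowly-varying property $l(f^{\pm}x) \le e^{\d_2}l(x)$ with $\d_2 \ll \la$. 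These are the only places where the argument departs from the Hilbert-space version of \cite{lianyou1}, which otherwise carries over unchanged.
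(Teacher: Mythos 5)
The paper does not actually prove Lemma \ref{lem:existsBackwardsTform}; it cites \cite{lianyou1} and asserts that the Hilbert-space proof carries over verbatim to Banach spaces. Your argument supplies that missing proof, and it is correct: it is the standard backward graph transform implemented as a fiberwise contraction mapping in the expanding direction. You correctly identify the key structural point that the non-invertibility of $df_y$ on $E^s$ is irrelevant because the functional equation
$A_u\ell(w) + g^u(w+\ell(w)) = h\bigl(A_s w + g^s(w+\ell(w))\bigr)$
only requires inverting the finite-dimensional expanding block $A_u$; the contracting block $A_s$ enters only through the one-sided bound $|A_s|'\le e^{-\la}$, and the max-norm structure \eqref{max} cleanly decouples the $u$- and $s$-components. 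The domain bookkeeping (that $A_s w + g^s(w+\ell(w))$ lands in $\tilde B^s_x(\d l(x)^{-1})$, and that $\ell$ maps into $\tilde B^u_y(\d l(y)^{-1})$ with $|\ell(0)|'_y\le\tfrac12\d l(y)^{-1}$) does go through with the slowly-varying bound $l(x)\le e^{\d_2}l(y)$, $\d_2\ll\la$, as you indicate, and the contraction factor for $\Phi_w$ is roughly $e^{-\la}\cdot\tfrac{11}{10}\d$, which is $<1$ for $\d$ small.

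One small point worth tightening in the final sentence of your proof: for a $C^1$ perturbation $\tilde g$ of $\tilde f_y$ one should not reuse the same block map $A_u = df_y|_{E^u_y}$ and simply claim it persists; rather, one keeps $L = df_y$ as the linear reference and writes $\tilde g = L + (\tilde g - L)$, noting that $\tilde g - L$ still has small Lipschitz constant and small value at $0$ (though $(\tilde g - L)(0)$ need not vanish exactly). The contraction argument then goes through unchanged with the residual $\tilde g - L$ playing the role of $g$. This is exactly what you mean, but the phrasing suggests the expanding block itself is being perturbed rather than the nonlinear remainder, which is what actually absorbs the perturbation. With that cosmetic adjustment, your write-up is a valid self-contained proof of a lemma the paper leaves to a reference.
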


\begin{proof}[Proof of Lemma \ref{char}.] (a) Let $z \in \tilde{B}_x( \d l(x)^{-1})$ be such
that $z \not \in \graph g_x$, and let $\hat z \in \graph g_x$ be such that 
$\pi^u_x z = \pi^u_x \hat z$. For  $n \in \N$, we let 
$z_n \in \tilde{B}_{f^{-n} x}( \d l(f^{-n} x)^{-1})$ be such that  $ \tilde{f}_{f^{-n} x}^n z_n = z$
if such a $z_n$ exists, and let $\hat z_n \in \graph g_{f^{-n}x}$ be such that 
$\tilde f_{f^{-n}x}^n \hat z_n = \hat z$; we know $\hat z_n$ exists for all $n$. 
We will show that $z_n$ and $\hat z_n$ diverge exponentially at a rate faster than
$\d_2$, so one of them 
must leave the chart eventually.

We assume there exists $z_1 \in \tilde{B}_{f^{-1} x}( \d l(f^{-1} x)^{-1})$ such that 
$\tilde{f}_{f^{-1} x} z_1 =z$; if no such $z_1$ exists, we are done.
Let $h \in \Wc^s_{\frac12}(x)$ (defined using $\tilde{B}_{f^{-1} x}( 2\d l(f^{-1} x)^{-1})$)
be the constant map $h(v) \equiv  \pi^u(z)$, and let $h_1 = \Psi^s_x h$.
We claim that $z_1 \in \graph h_1$. If not, let $\pi^s_{f^{-1}x} z_1 = s_1 \in 
\tilde{B}^s_{f^{-1} x} (\d l(f^{-1} x))$. By standard hyperbolic estimates, 
\[
|\pi^u_x \big( z - \tilde{f}_{f^{-1} x}(h_1(s_1) + s_1) \big) |_{ x}' \geq |\pi^s_x \big( z - \tilde{f}_{f^{-1} x}(h_1(s_1) + s_1 ) \big)|_{ x}' \, ,
\]
contradicting $z, \tilde{f}_{f^{-1} x}(h_1(s_1) + s_1) \in \graph h$. This proves $z_1 \in \graph h_1$.
By Lemma \ref{lem:existsBackwardsTform} (ii), $|z_1 - \hat z_1|'_{f^{-1}x} \ge (e^{-\lambda}
+ \d)^{-1} |z - \hat z |'_x$.

Repeating the argument in the last paragraph with $h_1$ in the place of $h$, we obtain
that either there does not exist $z_2 \in \tilde{B}_{f^{-2} x}( \d l(f^{-2} x)^{-1})$ such that 
$\tilde{f}_{f^{-2} x} z_2 =z_1$, or $|z_2 - \hat z_2|'_{f^{-2}x} \ge (e^{-\lambda}
+ \d)^{-1} |z_1 - \hat z_1 |'_{f^{-1}x}$, providing the exponential divergence of
$z_n$ and $\hat z_n$ claimed.

\medskip 
Part (b) is proved similarly: Continuing to work in charts, we let $\tilde y = 
\exp_x^{-1} y$, and let $\tilde y_{-n}$ be such that $\tilde f_{f^{-n}x}(\tilde y_{-n})=
\tilde y$. We assume $\d$ is small enough that Lemma \ref{lem:existsBackwardsTform} applies to backward
graph transforms by the linear maps $d(\tilde f_{f^{-n}x})_{\tilde y_{-n}}$. 
Repeating the argument above using these graph transforms, we conclude that for
$v \in \Bc(y)$ such that $v \not \in E^u_y$, either $df^{-n}_y v$ is not defined for some
$n$, or  $ |df^{-n}_y v|$ diverges exponentially as $n \to \infty$.
\end{proof}

\subsection{Unstable stacks}

Notice that we have not made any assertion in Theorem \ref{thm:unstabMfld} 
regarding the regularity 
of the assignment $x \mapsto g_x$. In finite dimensions and on separable Hilbert
spaces, one often asserts that $W_{\d,x}^u$ varies measurably with $x$.
In the spirit of the discussion at the end of  Sect. 3.1, we will prove here
the continuity of unstable leaves on certain measurable sets. That almost every point is
contained in a ``stack of unstable manifolds" will be
relevant in Section 6. 

We first define precisely what is meant by such a stack. Consider nearby points 
$x,y \in \Gamma$ with $d_H(E^u_y, E^u_x), \ d_H(E^{cs}_y, E^{cs}_x) \ll 1$.  Let 
$\phi_y : \operatorname{Dom}(\phi_y) \to E^{cs}_y$, where $\operatorname{Dom}(\phi_y) 
\subset E^u_y$; we let $\phi_y^x : \operatorname{Dom}(\phi_y^{x}) \to E^{cs}_x$ 
(with $\operatorname{Dom}(\phi_y^{x}) \subset E^u_{x}$) 
be the mapping for which
\begin{align}\label{eq:graphEquality}
\exp_{y} ( \graph \phi_y ) = \exp_{x} ( \graph \phi_y^{x} ) 
\end{align}
if such a mapping can be uniquely defined; whether or not this can be done depends on
$x, y$ and $\phi_y$. We say that $\phi_y^{x}$ is \emph{defined on} $V \subset E^u_x$ if 
$\operatorname{Dom}(\phi_y^x) \supset V$.

%

In discussions that involve more than one chart, it is natural to 
use $|\cdot|$ norms rather than 
the pointwise adapted $|\cdot|'_x$ norms. We introduce the following notation:
For $\tau =u,c,s$, let $B^\tau_x(r) = \{v \in E^\tau_x: |v|\le r\}$, and  let $B_x(r) = B^u_x(r) + B^{cs}_x(r)$.
(Notice the distinction between $B_x^\tau(r)$ and $\tilde B^\tau_x(r)$.) Below, the space $C(B^u_x(r), E^{cs}_x)$ of 
continuous functions from $B^u_x(r)$ to $E^{cs}_x$ will be endowed with
 its $C^0$ norm $\|\cdot\|$ defined using the $|\cdot|$ norm on $E^{cs}_x$.

Recall the definition of $\Gamma_{l}$ at the end of Section 4 and the definition
of $K_n$ in Sect. 3.1.

\begin{lem} \label{lem:uStacks}
Let $l_0$ and $n_0 > 1$ be fixed, and fix $x_0 \in \Gamma_{l_0} \cap K_{n_0}$.
For $\e>0$ and $x \in \Gamma$, we let 
$$U(x, \e) := \Gamma_{l_0} \cap K_{n_0} \cap \{y:|x-y|<\e\}\ ,
$$ and let $g_y$
be as in Theorem \ref{thm:unstabMfld}. Then for any $\d \leq \frac14 \d_1'$, we may choose $\e_0 > 0$ sufficiently small so that the following holds:
\begin{itemize}
\vspace{-4pt}
\item[(a)] for every $y \in U(x_0, \e_0)$, $g^{x_0}_y$ is defined on $B^u_{x_0}(\d l_0^{-3})$, and
\vspace{-4pt}
\item[(b)] the mapping $\Theta: U(x_0, \e_0) \to C(B^u_{x_0}(\d l_0^{-3} ), E^{cs}_{x_0})$
defined by $\Theta(y) = g^{x_0}_y$ is continuous. 
\end{itemize}
\end{lem}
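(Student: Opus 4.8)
The plan is to establish both parts by a ``shadowing along backward orbits'' argument, exploiting the characterization of $W^u_{\d,x}$ from Lemma~\ref{char}(a) together with the uniform hyperbolic estimates available on the uniformity set $\Gamma_{l_0}$. First I would fix $x_0 \in \Gamma_{l_0}\cap K_{n_0}$ and work in the chart at $x_0$; the key preliminary observation is that for $y$ close to $x_0$ in $\Bc$, the $\mu$-continuity of $x\mapsto E^u_x$, $x\mapsto E^{cs}_x$ on $\bar K_{n_0}$ (see Sect.~3.1) gives $d_H(E^u_y,E^u_{x_0})$, $d_H(E^{cs}_y,E^{cs}_{x_0})$ as small as we like, and likewise the adapted norms $|\cdot|'_y$, $|\cdot|'_{x_0}$ are uniformly comparable (via $C(\cdot)\le$ const on $\Gamma_{l_0}$, Lemma~\ref{prop:pointwiseNorm}(2)). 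Consequently the ``change of chart'' map sending $\graph\phi_y\subset\Bc_y$ to $\graph\phi_y^{x_0}\subset\Bc_{x_0}$ is, near $0$, a small $C^1$ perturbation of the identity, so for $\phi_y$ with $\lip\phi_y\le\frac1{10}$ and small $C^0$ size, $\phi_y^{x_0}$ is well defined on a ball of radius comparable to $\d l_0^{-1}$ — this is where the factor $l_0^{-3}$ in the statement comes from, leaving slack for later shrinkings. Applying this to $\phi_y = g_y$ gives part (a).

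For part (b), continuity of $\Theta$, the strategy is to represent $g_y$ as a limit of backward graph transforms and pass the continuity through uniformly. Concretely, fix a large $N$ and for $y\in U(x_0,\e_0)$ consider $g_y^{(N)} := \Psi_{f^{N-1}y}\circ\cdots\circ\Psi_y$ applied to, say, the zero function in $\Wc(f^{-N}y)$ pulled back — more precisely, iterate the forward graph transform $N$ times starting from a fixed section over $E^u_{f^{-N}y}$. By Lemma~\ref{lem:graphTprops}(ii) the graph transform contracts with a uniform rate $c\in(0,1)$ on $\Gamma_{l_0}$-orbits (using that $f^{\pm j}y\in\Gamma_{l_0 e^{|j|\d_2}}$ and choosing $\e_0,\d$ so the relevant orbit segments stay in a fixed larger uniformity set), so $\|g_y^{(N)} - g_y\|\to 0$ uniformly in $y\in U(x_0,\e_0)$ as $N\to\infty$. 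Transferred to the common chart at $x_0$, it suffices to prove $y\mapsto (g_y^{(N)})^{x_0}$ is continuous for each fixed $N$. This in turn follows because (i) each single graph transform $\Psi_w$ depends continuously on $w$ through the data $(\tilde f_w, E^u_w, E^{cs}_w)$, which vary continuously for $w$ in the orbit of a point of $\bar K_{n_0}\cap\Gamma_{l_0}$ — here one uses continuity of $x\mapsto E_i(x), F(x)$ on $\bar K_{n_0}$, continuity of $x\mapsto df_x$ (from $f\in C^2$), and continuity of the induced norms — and (ii) the change-of-chart map $\phi\mapsto\phi^{x_0}$ depends continuously on $y$ by the same perturbation-of-identity estimate used in part (a). Composing finitely many continuous maps yields continuity of $y\mapsto (g_y^{(N)})^{x_0}$, and the uniform limit of continuous maps is continuous.

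The main obstacle I expect is bookkeeping the domains and the uniformity: backward orbits $f^{-n}y$ of points $y$ near $x_0$ need not stay in $\Gamma_{l_0}$, only in $\Gamma_{l_0 e^{n\d_2}}$, so the chart sizes $\d l(f^{-n}x)^{-1}$ shrink (sub)exponentially and the graph transforms are defined on shrinking balls; one must check that the portion of $g_y$ over the fixed radius $\d l_0^{-3}$ ball is nonetheless determined by finitely-deep backward iterates up to error $c^N$, and that all the continuity estimates in step (i) are uniform over the finite orbit segment $\{f^{-j}y : 0\le j\le N\}$ for $y$ ranging over $U(x_0,\e_0)$. This is handled by first fixing $N$ large (depending on the target accuracy via $c^N$), then choosing $\e_0$ small enough — after $N$ is fixed — that for all $y\in U(x_0,\e_0)$ the segment $\{f^{-j}y\}_{j\le N}$ is uniformly close to $\{f^{-j}x_0\}_{j\le N}$ and lies in a common uniformity set $\Gamma_{l_0 e^{N\d_2}}$, on which all constants (angles, norm comparisons, Lipschitz bounds on $d\tilde f$, chart sizes) are uniform. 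A secondary technical point is that the change-of-chart identity \eqref{eq:graphEquality} must be solved via an implicit-function/contraction argument in $C^0$ or $C^1$; I would carry this out once, in the regime $d_H(E^u_y,E^u_{x_0})+d_H(E^{cs}_y,E^{cs}_{x_0})+|x_0-y|$ small, and record that the solution and its dependence on $y$ are as regular as the data, which suffices here.
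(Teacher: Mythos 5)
Your overall strategy --- approximate $g_y$ by $N$-fold forward graph transforms of the zero section over $E^u_{f^{-N}y}$, use the uniform contraction of Lemma \ref{lem:graphTprops} to make the tail error $c^N$ uniform in $y$, and prove continuity in $y$ for each fixed depth $N$ --- is exactly the paper's, and your part (a) matches the paper's verification of the transversality condition \eqref{changechart}. But there is a genuine gap in the fixed-depth continuity step. You justify the continuity of the data $(\tilde f_w, E^u_w, E^{cs}_w)$ at the intermediate points $w=f^{-j}y$, $1\le j\le N$, by appealing to the continuity of $x\mapsto E_i(x), F(x)$ on $\bar K_{n_0}$. That does not apply: the backward iterates $f^{-j}y$ have no reason to lie in $K_{n_0}$ (the sets $K_n$ are continuity sets, not $f$-invariant sets), and one cannot transport the continuity of $E^u$ from $K_{n_0}$ backward along orbits because $df$ is not invertible --- $E^u_{f^{-j}y}$ is not obtained by applying a fixed continuous operator to $E^u_y$. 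This is precisely the point the paper flags parenthetically in its proof ("we could not have concluded this from the continuity of $E^u$ on $K_{n_0}$ alone because $df_x$ is not invertible"). The correct ingredient is Lemma \ref{lem:contSplitting}: the backward iterates do lie in $\Gamma_{l_0 e^{j\d_2}}$, which sits inside a set $G^i_L$ of uniform backward estimates, and on such sets $x\mapsto E^u_x$ is continuous by the separate argument of Section 3.3 (itself resting on the backward characterization of $E^u$). Without invoking that lemma, your claim that $E^u_{f^{-j}y^n}\to E^u_{f^{-j}x}$ is unsupported, and the whole fixed-$N$ continuity collapses.

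A secondary, related soft spot: you propose to get continuity of each individual graph transform $\Psi_{f^{-j}y}$ in $y$ "through the data $(\tilde f_w,E^u_w,E^{cs}_w)$", but $\Psi_w$ is defined relative to the adapted norm $|\cdot|'_w$ and the chart size $\d\, l(w)^{-1}$, and the functions $C(\cdot)$ and $l(\cdot)$ are only measurable, not continuous, even on uniformity sets. The paper avoids this by not tracking the intermediate graph transforms at all: for fixed $k$ it observes that the embedded disks $\exp_{y^n_{-k}}(B^u_{y^n_{-k}}(1))$ converge (in the ambient $|\cdot|$ norm) to $\exp_{x_{-k}}(B^u_{x_{-k}}(1))$ once $E^u_{y^n_{-k}}\to E^u_{x_{-k}}$ is known, and then pushes these disks forward by the single fixed $C^1$ map $f^k$, reading off the result as a graph only in the one chart at $x$. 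You should restructure step (i) the same way, or else supply a separate argument that the intermediate charts vary continuously, which the paper does not claim and which is likely false as stated.
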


\smallskip
We will refer to sets of the form
\begin{align}\label{eq:stackForm}
\Sc = \bigcup_{y \in \bar U} \exp_{x_0}(\graph \Theta(y)) \, ,
\end{align}
where $x_0, U(x_0, \e_0)$ and $\Theta$ are as in Lemma \ref{lem:uStacks}
and $\bar U \subset U(x_0, \e_0)$ is a compact subset,
as a {\it stack of local unstable manifolds}.

\begin{proof} (a) We begin by giving sufficient conditions for $\phi^x_y$ to be
defined on $B^u_x(2 \d l_0^{-3})$ for a given $\phi_y : \operatorname{Dom}(\phi_y) \to E^{cs}_y$. Identifying $\Bc_x$ and $\Bc_y$, the tangent spaces at $x$ and $y$,
with $\Bc + x$ and $\Bc +y$ respectively, we define $\Xi_y^{x} : \operatorname{Dom}(\phi_y) \to E^u_x$ by 
\begin{align}\label{eq:graphingCoordinateMap}
\Xi_y^{x}(v) = \pi^u_x((\Id_{E^u_y} + \phi_y)(v) + y-x)\ , 
\end{align}
so that formally, at least, $\phi^x_y(w) = \pi^{cs}_x((\Id_{E^u_y} + \phi_y)((\Xi^x_y)^{-1}(w)) +
y-x)$. From this, we see that
 $ \phi^x_y$ is well defined on $\Xi^x_y(\operatorname{Dom}(\phi_y))$
if $\pi^u_x$ is invertible when restricted to the set $\graph(\phi_y)+y-x$. This is guaranteed if for all $w_1, w_2 \in \graph (\phi_y)$, one has
$|\pi^{cs}_x(w_1-w_2)| < |w_1-w_2|$, and that is implied by
\begin{equation} \label{changechart}
|\pi^{cs}_{x}|_{E^u_y}| + \operatorname{Lip}(\phi_y) \cdot |\pi^{cs}_{x}|_{E^{cs}_y}| < 1\ .
\end{equation}

We bound $\operatorname{Lip}(\phi_y)$ as follows: First we work in $\Bc_y$,
 letting $\operatorname{Lip}'_y(\phi_y)$ denote the Lipschitz constant of 
$\phi_y$ with respect to the norm $|\cdot|'_y$. Assume that $d(\phi_y)_0 =0$ 
and $\operatorname{Lip}'_y(d(\phi_y)) < Cl_0$; these properties are enjoyed
by $g_y$, the graphing map for the local unstable manifold at $y$ (Theorem 5.1). Then assuming
$\operatorname{Dom}(\phi_y) \subset \tilde B^u_y(4\d l_0^{-2})$, we have
$\operatorname{Lip}'_y(\phi_y) < 4C\d l_0^{-1}$. Passing to the $|\cdot|$ norm, we
have $\operatorname{Lip}(\phi_y) < 12 C\d$, which we may assume is $\ll 1$.

Thus with $\epsilon_0$ small enough that $d_H(E^u_x, E^u_y)$ and 
$d_H(E^{cs}_x, E^{cs}_y)$ are sufficiently small (depending only on $n_0, l_0$), 
the inequality in (\ref{changechart}) is satisfied.

Finally, let $\operatorname{Dom}(\phi_y) = B_y^u(4\d l_0^{-3}) \subset
\tilde B^u_y(4\d l_0^{-2})$. 
Shrinking $\epsilon_0$ if necessary so that $|x-y|$ is sufficiently small, we have
$\Xi_y^x(\operatorname{Dom}(\phi_y)) \supset B^u_x(2 \d l_0^{-3})$,
proving (a).

For (b), to prove the continuity of $\Theta$ at $x \in U=U(x_0, \e_0)$, we let $y^n \in U$
be a sequence with $y^n \to x$ as $n \to \infty$. That $\Theta(y^n) \to \Theta(x)$ 
on $B_x^u(\d l_0^{-3})$ will follow once we show $\|g_{y^n}^{x}-g_x^{x}\| \to 0$ on 
$B^u_x(2 \d l_0^{-3})$. To do this, we will show that given $\gamma > 0$, we have 
$\|g^{x}_{y^n} - g_x^{x}\| < \gamma$ for all $n$ sufficiently large.

For $k \in \mathbb Z^+$, write $x_{-k} = f^{-k} x$ and $y^n_{-k} = f^{-k}y$. 
Since  $x_{-k}, y^n_{-k} \in \Gamma_{l_0 e^{k\d_2}}$ (Lemma \ref{lem:slowvary}), we have, by Lemma \ref{lem:contSplitting}, $E^u_{y^n_{-k}} \to E^u_{x_{-k}}$ as $n \to \infty$. (We could not
have concluded this from the continuity of $E^u$ on $K_{n_0}$ alone because 
$df_x$ is not invertible.) Thus for a fixed $k$, $\exp_{y^n_{-k}}(B^u_{y^n_{-k}}(1))$ 
converges as a family of embedded disks to $\exp_{x_{-k}}(B^u_{x_{-k}}(1))$, so their $f^k$-images 
converge as well. Another way to express this is as follows: Let ${\bf 0}_{y^n_{-k}}: 
\tilde B^u_{y^n_{-k}}(\d'_1l(y^n_{-k})^{-2}) \to E^{cs}_{y^n_{-k}}$ be the function that is identically equal to $0$,
and let $\phi_{y^n,k} = \Psi_{y^n_{-1}} \circ \cdots \circ \Psi_{y^n_{-k}}({\bf 0}_{y^n_{-k}})$
where $\Psi$ is the graph transform. 
Likewise, define $\phi_{x,k}$. Then for each fixed $k$, $\|\phi_{y^n,k}^x - \phi_{x,k}^x\|
\to 0$ (as mappings defined on $B^u_x(2 \d l_0^{-3})$) as $n \to \infty$.

To finish, we estimate $\|g_{y^n}^{x}-g_x^{x}\|$ by
$$
\|g_{y^n}^{x}-g_x^{x}\| \le \|g_{y^n}^{x}-\phi^x_{y^n,k}\| + \|\phi_{y^n,k}^{x}-\phi^x_{x,k}\|
+ \|\phi_{x,k}^{x}-g_x^x\|\ .
$$
Using Lemma \ref{lem:graphTprops} and the uniform equivalence of the $|\cdot|$ and $|\cdot|_y'$ norms
on $\Gamma_{l_0}$, we have that the first and third terms above are $< \gamma/3$
for $k$ large enough. Fix one such $k$, and choose $n$ large enough that the middle term
is $<\gamma/3$. This gives the desired estimate.
\end{proof}

\begin{rmk}\label{rmk:versatileStacks}
Lemma \ref{lem:uStacks} guarantees that $\mu$-a.e. $x$ is contained in a stack,
but  observe that the involvement of the $\mu$-continuity set $K_{n_0}$ is solely to guarantee control on $E^{cs}$. That is to say, if $V \subset \Gamma_{l_0}$ is 
such that \eqref{changechart} holds for all $x, y \in V$ (setting $\phi_y = g_y|_{\tilde B^u_y(4 \d l_0^{-3})}$), and has sufficiently small diameter (depending only on $l_0$), then Lemma \ref{lem:uStacks} holds with $V$ in the place
of $U(x_0, \e_0)$. From here on we will extend the definition of stacks to include
 sets of the form 
$\Sc = \bigcup_{y \in V} \exp_{x_0}(\graph \Theta(y))$ for compact $V$ with
the properties above. 
\end{rmk}

To complete the geometric picture, we will show that $\Sc$ is homeomorphic to 
the product of a finite-dimensional ball with a compact set, and we will do this in the 
absence of zero Lyapunov exponents (the zero exponent case will require that we
strengthen Lemma \ref{char}).


\begin{lem}\label{lem:homeoStack} Assume $E^c=\{0\}$, 
and let $\Sc$ be an unstable stack as defined in \eqref{eq:stackForm}. Let $\Sigma = \exp_{x_0}^{-1}\big(\Sc \big) \cap E^{s}_{x_0}$. Then $\Sc$ is homeomorphic to $\Sigma \times B^u_{x_0}(\d l_0^{-3})$ under the mapping $\Psi(\sigma, u) := \exp_{x_0}\big( u + g_{\sigma}(u) \big)$, where $g_{\sigma} = \Theta(y)$ corresponds to the unique leaf of $\Sc$ for which $\Theta(y)(0) = \sigma$.
\end{lem}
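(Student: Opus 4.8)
The plan is to verify in turn: (1) that the leaves making up $\Sc$, viewed as graphs over $B^u_{x_0}(\d l_0^{-3})$, are pairwise either disjoint or identical, so that $\Sigma$ and the maps $g_\sigma$ are well defined and $\Psi$ makes sense; (2) that $\Psi$ is a continuous bijection onto $\Sc$; and (3) that $\Psi$ is a homeomorphism by a soft compactness argument. Throughout, recall that with $E^c=\{0\}$ we have $E^{cs}_{x_0}=E^s_{x_0}$; that by \eqref{eq:graphEquality} and Lemma~\ref{lem:uStacks}(a) each $\exp_{x_0}(\graph\Theta(y))$, $y\in\bar U$, is a sub-disk of the local unstable manifold $W^u_{\d,y}$; and that $\Theta(y)(u)$ is the $E^s_{x_0}$-component of the unique point of $\graph\Theta(y)$ lying over $u\in E^u_{x_0}$. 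In particular, intersecting with $E^s_{x_0}$ picks out the points over $u=0$, so $\Sigma=\{\Theta(y)(0):y\in\bar U\}$.

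\emph{Disjointness of leaves (the crux).} Suppose two leaves $\exp_{x_0}(\graph\Theta(y_1))$ and $\exp_{x_0}(\graph\Theta(y_2))$ share a point $p$. Comparing $E^u_{x_0}$-components at $p$ gives a common $u_*$ with $\Theta(y_1)(u_*)=\Theta(y_2)(u_*)$, so $p\in W^u_{\d,y_1}\cap W^u_{\d,y_2}$. By the characterization in Lemma~\ref{char}(a), $p$ admits backward $\tilde f$-preimages along the orbit of $y_1$ staying inside the charts of the prescribed size, and likewise along the orbit of $y_2$; since $f$, hence each $f^n$, is injective, these two preimage sequences coincide with the single sequence $p_n$ determined by $f^n(p_n)=p$, which therefore lies in $\graph g_{f^{-n}y_1}\cap\graph g_{f^{-n}y_2}$ for every $n$. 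Now take any $q\in\exp_{x_0}(\graph\Theta(y_1))$; it has unique backward preimages $q_n\in\graph g_{f^{-n}y_1}$, and by the unstable expansion estimate \eqref{eq:unstableExpansion} the distance $|q_n-p_n|$ decays at the exponential rate $(e^\la-\d)^{-n}$. Because $\d_2\ll\la$, this rate beats the at-most-slow-exponential growth of the chart-size factor $l(f^{-n}y_2)\le l_0 e^{n\d_2}$, so for all large $n$ the point $q_n$ still lies in the chart at $f^{-n}y_2$ within the size demanded by Lemma~\ref{char}(a); applying the backward graph transform $\Psi^s$ then propagates this to all $n$, so $q\in W^u_{\d,y_2}$. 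Letting $u=\pi^u_{x_0}(\exp_{x_0}^{-1}q)$ range over all of $B^u_{x_0}(\d l_0^{-3})$ yields $\Theta(y_1)\equiv\Theta(y_2)$. (The slack needed for this ``exponential beats slow-exponential'' estimate is precisely why the stack is taken over $B^u_{x_0}(\d l_0^{-3})$ rather than over the full leaf domains.) It follows that for $\sigma\in\Sigma$ the function $g_\sigma:=\Theta(y)$, for any $y$ with $\Theta(y)(0)=\sigma$, is well defined, and that distinct leaves are disjoint.

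\emph{$\Psi$ is a continuous bijection onto $\Sc$.} Surjectivity is immediate: any $p\in\Sc$ equals $\exp_{x_0}(u+\Theta(y)(u))$ for some $y\in\bar U$, $u\in B^u_{x_0}(\d l_0^{-3})$, and then $\Psi(\Theta(y)(0),u)=p$. For injectivity, if $\Psi(\sigma_1,u_1)=\Psi(\sigma_2,u_2)$, comparing $E^u_{x_0}$-components gives $u_1=u_2=:u$ and $g_{\sigma_1}(u)=g_{\sigma_2}(u)$; the corresponding leaves then share a point, hence coincide, so $\sigma_1=g_{\sigma_1}(0)=g_{\sigma_2}(0)=\sigma_2$. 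For continuity, Lemma~\ref{lem:uStacks}(b) and the compactness of $\bar U$ make $\Theta(\bar U)$ a compact subset of $C(B^u_{x_0}(\d l_0^{-3}),E^{cs}_{x_0})$; evaluation at $0$ is continuous on it and, by the disjointness above, injective, hence a homeomorphism onto its image $\Sigma$ (a continuous bijection from a compact space into a metric space). Thus $\Sigma$ is compact and the inverse map $\sigma\mapsto g_\sigma$ is continuous. Composing this with the joint evaluation map $(g,u)\mapsto g(u)$, which is continuous since $B^u_{x_0}(\d l_0^{-3})$ is compact, and with the continuous map $\exp_{x_0}$, shows $\Psi$ is continuous.

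\emph{Conclusion and main obstacle.} The domain $\Sigma\times B^u_{x_0}(\d l_0^{-3})$ is compact, being the product of the compact set $\Sigma$ with a closed ball in the finite-dimensional space $E^u_{x_0}$, while $\Sc\subset\Bc$ is Hausdorff; a continuous bijection from a compact space onto a Hausdorff space is automatically a homeomorphism, so $\Psi$ is the desired homeomorphism. The only genuinely nontrivial step is the disjointness of leaves: the usual finite-dimensional lamination argument is unavailable because $f^{-1}$ is badly behaved, and one must instead exploit the exact coincidence of backward preimages (from injectivity of $f$) together with the slow-exponential control of chart sizes along backward orbits; everything afterwards is point-set topology.
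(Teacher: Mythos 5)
Your overall strategy matches the paper's: establish that distinct leaves of the stack are disjoint (so $g_\sigma$ is well defined), deduce continuity of $\sigma\mapsto g_\sigma$ from compactness and the disjointness, and conclude by the ``continuous bijection from a compact space to a Hausdorff space is a homeomorphism'' principle. Your continuity argument factors through $\Theta(\bar U)$ with evaluation at $0$, while the paper factors through the quotient $\bar U/\!\sim$ where $y\sim y'$ iff $\Theta(y)=\Theta(y')$; these are literally homeomorphic, so this is a cosmetic difference. The paper compresses the disjointness step into ``a direct consequence of Lemma~\ref{char},'' whereas you spell it out, which is fair since it is the substantive point.

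However, there is a flaw in the propagation step of your disjointness argument. You show $|q_n-p_n|'$ decays fast enough to beat the slow-exponential growth of $l(f^{-n}y_2)$, conclude that $q_n$ is in the chart at $f^{-n}y_2$ \emph{for all large $n$}, and then assert that ``applying the backward graph transform $\Psi^s$ then propagates this to all $n$.'' This cannot work as written: by its definition (see the discussion around \eqref{eq:defineGraphTform}), $\Psi^s_x$ sends data at $x$ to data at $f^{-1}x$, i.e.\ from level $n$ to level $n+1$, so it propagates toward \emph{larger} $n$, not from $N$ down to $0$. In fact no propagation is needed: the same chain of estimates you used for large $n$ — namely $|q_n-f^{-n}y_2|'_{f^{-n}y_2}\le |q_n-p_n|'_{f^{-n}y_2}+|p_n-f^{-n}y_2|'_{f^{-n}y_2}$, converting from the $y_1$-adapted norm to the $y_2$-adapted norm at the cost of a factor $\lesssim l_0 e^{n\d_2}$, and using that $q,p$ lie in the small ball of radius $\sim\d l_0^{-3}$ around $x_0$ — already yields the bound $|q_n-f^{-n}y_2|'_{f^{-n}y_2}\lesssim\bigl(e^{2\d_2}/(e^\l-\d)\bigr)^n\,\d l_0^{-2}\le \d l_0^{-1}e^{-n\d_2}\le\d\, l(f^{-n}y_2)^{-1}$ for \emph{every} $n\ge 0$, because the ratio $e^{2\d_2}/(e^\l-\d)$ is strictly less than $1$. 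This is precisely where the extra powers of $l_0^{-1}$ in $\d l_0^{-3}$ (and the smallness of $\e_0$) are spent, as you correctly intuit. With that replacement, Lemma~\ref{char}(a) gives $q\in W^u_{\d,y_2}$, and since $W^u_{\d,y_2}$ is a Lipschitz graph over $E^u_{x_0}$ near $x_0$, equality of the $\pi^u_{x_0}$-components forces $\Theta(y_1)(u)=\Theta(y_2)(u)$; the rest of your proof is sound.
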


\begin{proof}
That $g_{\sigma}$ is well-defined for $\sigma \in \Sigma$ follows from the fact that distinct leaves in the unstable stack do not intersect, and that in turn is a direct consequence of Lemma 
\ref{char}.
%
%

We now check that $\Psi$ is a homeomorphism. By compactness, it suffices to check that $\Psi$ is a continuous bijection. To prove continuity, we define the (continuous) map $\theta : \bar U \to \Sigma$ by $\theta(y) = \Theta(y)(0)$ and the equivalence relation $\sim$ on $\bar U$ by $x \sim y$ iff $\Theta(x) = \Theta(y)$, i.e., if $x$ and $y$ fall on the same unstable leaf in $\Sc$.
As $\Theta, \theta$ are constant on the equivalence classes of $\sim$, they descend to continuous maps $\tilde{\Theta}, \tilde{\theta}$ defined on the quotient space $\bar U / \sim$. The map $\tilde{\theta}$ is a continuous bijection, hence a homeomorphism (by compactness of $\bar U$), and so the proof is complete on noting that the mapping $\sigma \mapsto g_{\sigma}$ can be represented by the composition $\tilde{\Theta} \circ \tilde{\theta}^{-1}$.
\end{proof}


\subsection{Induced volume on submanifolds, and distortion estimates 
along unstable leaves}\label{sec:distEst}

In Section 2, we introduced $m_E$, a notion of volume induced on finite 
dimensional subspaces $E$ of $\Bc$. It is straightforward to extend this idea
to volumes on embedded (or injectively immersed) finite dimensional submanifolds:
Let $U \subset \R^d$ be an open set, $\phi: U \to \Bc$ a $C^1$ 
Fr\'echet embedding, and $W= \phi(U)$. For 
a Borel subset $V \subset W$, we define
$$
\nu_{\phi,W} (V) = \int_{\phi^{-1} V} \det(d\phi_y) \ dy
$$
where we have identified the tangent space at $y \in \R^d$ with $\R^d$ and 
$\det(d\phi_y)$ here is taken with respect to Euclidean volume on $\R^d$ and 
$m_{T_{\phi(y)} W}$ on the tangent space $T_{\phi(y)} W$ to $W$ at $\phi(y)$. 
That $\nu_{\phi,W}$ does not depend on $\phi$ is checked in the usual way: 
Let $\phi':U' \to \Bc$ be another embedding with
$\phi'(U')=W$. Then $\phi' = \phi \circ (\phi^{-1} \circ \phi')$, and since
$\phi^{-1} \circ \phi':U' \to U$ is a diffeomorphism, we have, by the multiplicativity
of determinants and the usual change
of variables formula,
$$ 
\nu_{\phi',W} (V) = \int_{(\phi')^{-1} V}  \det(d \phi_{(\phi^{-1} \circ \phi')(y')}) 
\det(d ( \phi^{-1} \circ \phi')_{y'}) \ dy'
 = \int_{\phi^{-1} V} \det(d\phi_y) \ dy\ .
$$
We will denote the induced volume on $W$ by $\nu_W$ from here on, having
shown that it is independent of embedding. The discussion above 
is easily extended to injectively immersed finite dimensional submanifolds,
such as unstable manifolds. 

For $x \in \Gamma$, let us abbreviate $\nu_{W^u_x}$ as $\nu_x$, and for $y\in
W^u_x$, we will use $E^u_y$ to denote the tangent space to $W^u_x$ at 
$y$.\footnote{This is in fact true even though $y$ is not necessarily in $\Gamma$:
since $|f^{-n}x-f^{-n}y| \to 0$ exponentially as $n \to \infty$, 
and the tangent spaces of $f^{-n}x$ and $f^{-n}y$ to $W^u_{f^{-n}x}$ converge exponentially as well, it follows that  
backward time Lyapunov exponents for $y \in W^u_x$ are well defined 
and are identical to those at $x$, with $E^u_y$ being the tangent space to 
$W^u_x$ at $y$.}
Then letting $f_* \nu_{f^{-1}x}$ denote the pushforward of $\nu_{f^{-1}x}$
from $W^u_{f^{-1}x}$ to $W^u_x$, we have, from the discussion above,
\begin{equation} \label{changevariable}
\frac{d (f_* \nu_{f^{-1} x})}{d \nu_x}(y) = \frac{1}{\det (df_{f^{-1}y} | E^u_{f^{-1} y})} 
\qquad \mbox{ for } \ y \in W^u_x\ .
\end{equation}
The distortion estimate below is crucially important for the arguments
in Section 6. Note that we allow $E^c \neq \{0\}$.

Let $\d_1'$ be the largest $\d$ for which Theorem \ref{thm:unstabMfld} holds. Let us write
$W^u_{{\rm loc},x}=W^u_{\d'_1,x}$.

\begin{prop}\label{prop:unstabDistEst}
For every $l \geq 1$, there is a constant $D_l$ such that the following holds for any $x \in \Gamma_l$.
\begin{itemize}
\item[(a)] For all $y^1, y^2 \in W^u_{{\rm loc},x}$ 
and all $n \ge 1$:
\begin{equation} \label{distortion1}
\left|\log \frac{\det (df^n_{f^{-n} y^1} | E^u_{f^{-n} y^1})  }
{\det (df^n_{f^{-n}y^2} | E^u_{f^{-n} y^2})} \right| \ \le \ D_{l} \ |y^1-y^2|\ .
\end{equation}
\item[(b)] For any fixed $x' \in W_{{\rm loc}, x}^u$, the sequence of functions $y \mapsto  \log \Delta_N(x',y)$, where
\begin{align}\label{eq:finiteProducts}
\Delta_N(x',y) := \prod_{k = 1}^{N} \frac{\det(df_{f^{-k} x'} | E^u_{f^{-k} x'})}{\det(df_{f^{-k} y} | E^u_{f^{-k} y})} \qquad N=1,2,\dots,
\end{align}
defined for $y \in W^u_{{\rm loc},x}$ 
converges uniformly (at a rate depending only on $l(x)$) as $N \to \infty$ to a Lipschitz continuous function with constant $\leq D_l$ in the $|\cdot|$ norm.
\end{itemize}
\end{prop}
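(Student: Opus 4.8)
The plan is to reduce the distortion estimate to the regularity of the determinant, Proposition~\ref{prop:detReg}, by establishing uniform control along backward orbits of the two ingredients feeding into that proposition: (i) the operator norms $|df_{f^{-k}y}|$ and the minimum norms $m(df_{f^{-k}y}|_{E^u_{f^{-k}y}})$, and (ii) the gap $d_H(E^u_{f^{-k}y^1}, E^u_{f^{-k}y^2})$ between the unstable tangent spaces at corresponding backward iterates. For (i): since $x \in \Gamma_l$, Lemma~\ref{lem:slowvary} gives $f^{-k}x \in \Gamma_{l e^{k\d_2}}$, and because $y^1, y^2 \in W^u_{{\rm loc},x}$ their backward iterates shadow those of $x$ (the footnote after \eqref{changevariable}), so $f^{-k}y^j$ lies in a fixed neighborhood of $\As$; thus $|df_{f^{-k}y^j}| \le \sup_{\As'}|df| =: M_0$, while $m(df_{f^{-k}y^j}|_{E^u})^{-1} = |(df|_{E^u})^{-1}|$ at that point is bounded using the uniformity-set estimates of Section~4 (or directly from $l(f^{-k}x) \le le^{k\d_2}$), so that $M_k := \max_j\max(|df_{f^{-k}y^j}|, |(df_{f^{-k}y^j}|_{E^u})^{-1}|) \le C_l e^{k\d_2}$.

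The heart of the matter is (ii): I need $d_H(E^u_{f^{-k}y^1}, E^u_{f^{-k}y^2}) \le C_l e^{-k\a} |y^1-y^2|$ for some $\a>0$. This has two parts. First, by the unstable manifolds theorem and the slowly-varying chart sizes, working in the chart at $f^{-k}x$ one shows the backward iterates $\tilde f^{-k} \tilde y^j$ of $\tilde y^j = \exp_{f^{-k}x}^{-1} y^j$ satisfy $|\tilde f^{-k}\tilde y^1 - \tilde f^{-k}\tilde y^2|'_{f^{-k}x} \le (e^\la - \d)^{-k}|\tilde y^1 - \tilde y^2|'_x$ by \eqref{eq:unstableExpansion}, hence the points $f^{-k}y^j$ are exponentially close in the $|\cdot|$ norm, with a loss of at most $e^{k\d_2}$ from converting norms on the uniformity set --- and since $\d_2 \ll \la$, this is still exponential contraction. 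Second, the tangent spaces: $E^u_{f^{-k}y^j} = d(g_{f^{-k}x})$-graphs, and by the $C^{1+\lip}$ bound $\lip\, dg_{f^{-k}x} \le C\, l(f^{-k}x) \le C l e^{k\d_2}$ from Theorem~\ref{thm:unstabMfld}(2), one gets $d_H(E^u_{f^{-k}y^1}, E^u_{f^{-k}y^2}) \le C l e^{k\d_2} \cdot |f^{-k}y^1 - f^{-k}y^2| \le C_l e^{-k(\la - 2\d_2)}|y^1-y^2|$. I expect this bookkeeping --- carefully combining the metric distortion $C(x)$ factors, the slowly-varying losses $e^{k\d_2}$, and the contraction $e^{-k\la}$, all while staying on a fixed uniformity set --- to be the main obstacle, precisely the point where infinite dimensionality and noninvertibility complicate the finite-dimensional argument (one must invoke Lemma~\ref{char}(b) for the very definition of $E^u_y$ at non-$\Gamma$ points $y$).

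With these two estimates in hand, part (a) follows by telescoping: writing $\det(df^n_{f^{-n}y^j}|E^u) = \prod_{k=1}^n \det(df_{f^{-k}y^j}|E^u_{f^{-k}y^j})$ via Lemma~\ref{lem:detProps}(2), we get
$$
\left|\log \frac{\det(df^n_{f^{-n}y^1}|E^u_{f^{-n}y^1})}{\det(df^n_{f^{-n}y^2}|E^u_{f^{-n}y^2})}\right| \le \sum_{k=1}^n \left|\log \frac{\det(df_{f^{-k}y^1}|E^u_{f^{-k}y^1})}{\det(df_{f^{-k}y^2}|E^u_{f^{-k}y^2})}\right|.
$$
For each term I apply Proposition~\ref{prop:detReg} with $A_j = df_{f^{-k}y^j}$, $E_j = E^u_{f^{-k}y^j}$, $M = M_k = C_l e^{k\d_2}$, using Remark~\ref{rmk:constantsDependence} which gives $L_2 = C_{m_u} M_k^{10 m_u} = C_{m_u} C_l^{10m_u} e^{10 m_u k \d_2}$; the hypotheses $|A_1-A_2| \le \d_2$, $d_H(E_1,E_2)\le\d_2$ hold for $k$ large by the contraction (and for the finitely many small $k$ by a separate crude bound absorbed into $D_l$). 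Thus each term is $\le C_{m_u}C_l^{10m_u} e^{10m_u k\d_2}\cdot C_l e^{-k(\la-2\d_2)}|y^1-y^2|$. Since $\d_2 \le \frac{1}{100 m_u}\la$ was chosen in Section~4, the exponent $10 m_u \d_2 - (\la - 2\d_2) < 0$, so the geometric series converges with sum bounded by a constant $D_l$ depending only on $l$; this proves (a) and, taking $n\to\infty$, also shows $\log\Delta_N(x',y)$ converges uniformly at a rate depending only on $l(x)$. Finally, for (b), the limit function $y \mapsto \log\Delta_\infty(x',y)$ is a uniform limit of functions each Lipschitz in $|\cdot|$ with the same telescoped bound, hence Lipschitz with constant $\le D_l$, completing the proof.
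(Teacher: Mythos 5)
Your approach is broadly sound and reaches the same conclusion by a slightly different organization of the computation. The paper proves the estimate in two cleanly separated steps: first a chart distortion estimate (Proposition~\ref{prop:chartDistEst}), telescoping the \emph{primed} determinants $\det'(d\tf_{z^i_{-k}}|E^i_{-k})$ and applying Lemma~\ref{lem:refineDetReg} to each factor (here the primed norms pay off: $|(d\tf|_{E^u})^{-1}|'\leq 1$, $d_H'$ is controlled via $\lip dg$, etc.); second, a conversion to the $|\cdot|$ norm via the factorization
\[
\frac{\det(df^n_{y^1_{-n}}|E^u)}{\det(df^n_{y^2_{-n}}|E^u)} = I \cdot II \cdot III \, ,
\]
where $III$ is the chart ratio and $I, II$ are Radon--Nikodym ``boundary'' factors $dm_{E^u}/dm'_{E^u}$ at the endpoints (which the telescoping causes to cancel in the interior). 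You instead telescope the \emph{unprimed} determinants $\det(df_{f^{-k}y^j}|E^u_{f^{-k}y^j})$ directly and apply Proposition~\ref{prop:detReg} term by term, folding the norm-conversion factors into each $k$. This is a legitimate alternative; the per-term constants pick up a few extra powers of $l(f^{-k}x)\sim le^{k\d_2}$ that the paper sequesters into the two boundary factors, but since $\d_2\ll\l/m_u$, these are dominated by the $e^{-k\l}$ contraction just as in the paper.

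Two points where your write-up is imprecise, one cosmetic and one a genuine gap. Cosmetically: your estimate $d_H(E^u_{f^{-k}y^1},E^u_{f^{-k}y^2})\leq Cle^{k\d_2}|f^{-k}y^1-f^{-k}y^2|$ undercounts the powers of $l(f^{-k}x)$ --- the Lipschitz bound $\lip dg_{f^{-k}x}\leq Cl(f^{-k}x)$ is in the $|\cdot|'_{f^{-k}x}$ norm, the conversion of $d_H'$ to $d_H$ costs a factor $\sim l(f^{-k}x)$ (norm equivalence is via $C(f^{-k}x)^2$), and $|\pi^u_{f^{-k}x}|$ contributes another; altogether something like $l(f^{-k}x)^3$ rather than $l(f^{-k}x)$. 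This does not affect the conclusion (the exponent $c\d_2-\l$ stays negative), but the bookkeeping should be honest about it. The genuine gap is your treatment of ``the finitely many small $k$ by a separate crude bound absorbed into $D_l$'': for small $k$ the hypotheses of Proposition~\ref{prop:detReg} require $|A_1-A_2|$ and $d_H(E_1,E_2)$ to be $\leq(C_{m_u}M_k^{10m_u})^{-1}$, which can fail whenever $|y^1-y^2|$ is not small --- a crude upper bound on those few factors is a constant, not $O(|y^1-y^2|)$, so it does not yield the Lipschitz estimate. What is needed here is the chaining argument that closes the paper's proof of part~(a): first prove the bound for $|z^1-z^2|'<\d_1'D_l^{-1}$, then for general $y^1,y^2\in W^u_{{\rm loc},x}$ insert intermediate points along the segment in $E^u_x$ and add the telescoped increments. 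Without this, part~(a) is only proved for nearby pairs.
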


Though distortion estimates of the kind in Proposition \ref{prop:unstabDistEst}
are standard for finite dimensional systems, they are new for mappings
of infinite dimensional Banach spaces: These estimates have to do with the 
regularity of determinants for $df^n$ along unstable manifolds, where determinants are 
defined with respect to the induced volumes introduced in this paper. 
Below we include a complete proof, proceeding in two steps.  
In the first step, formulated as Proposition \ref{prop:chartDistEst}, we prove
a distortion estimate in {\it charts}, i.e., using Lyapunov metrics,
taking advantage of the uniform expansion 
along unstable leaves in adapted norms. In the second step, we bring 
this estimate back to the usual norm $|\cdot|$ on $\Bc$.

Fix $x \in \Gamma$; we introduce the following abbreviated notation.
In the first step we will be working exclusively with the maps $$\tilde f_{f^{-k}x} : 
\tilde B_{f^{-k}x}(\d'_1 l(f^{-k} x)^{-1}) \to \Bc_{f^{- (k-1)}x},  \qquad k \ge 1\ ,
$$ where 
the notation is as in Theorem \ref{thm:unstabMfld}, and the norm of interest 
on each $\Bc_{f^{-k}x}$ is exclusively
$|\cdot|'_{f^{-k}x}$. As the meanings will be clear from context, 
we will drop the subscripts in
$\tilde f_{f^{-k}x}$ and $|\cdot|'_{f^{-k}x}$, writing only $\tilde f$ and $|\cdot|'$.
For a finite dimensional subspace $E \subset \Bc_{f^{-k}x}$, $m_E'$ will
denote the volume on $E$ induced from $|\cdot|'$, and $\det'$ is to be understood
to be the determinant with respect to these volumes. We also write 
$g=g_x$ and $g_{-k}=g_{f^{-k}x}$, the graphing maps of 
$W^u_{{\rm loc}, f^{-k}x}$ given by 
Theorem \ref{thm:unstabMfld}.

\begin{prop}\label{prop:chartDistEst}
For any $l \geq 1$, there is a constant $D_l'$ with the following property. Let $x \in \Gamma$. Then for any $z^1, z^2 \in \graph g$ with $|z^1-z^2|' \leq \d_1' (D_{l(x)}')^{-1}$, 
$i = 1,2$, and any $n \ge 1$, we have that
\begin{align}\label{eq:chartDistForm}
\bigg| \log  { \det' (d \tf^n_{z^1_{-n} } | E^1_{-n})  \over \det' (d \tf^n_{z^2_{-n}} | E^2_{-n}) } \bigg| \leq  D_{l(x)}' |z^1 - z^2|'\ ,
\end{align}
where $z^i_n$ is the unique point in $\graph g_{-n}$ with 
$\tf^n z^i_{-n} = z^i$, and $E^i_{-n}$ is the tangent space to 
$\graph g_{-n}$ at $z^i_n$.
\end{prop}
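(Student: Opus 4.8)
The plan is to prove \eqref{eq:chartDistForm} by telescoping the log-ratio of iterated determinants into a sum of single-step terms and then applying the chain-rule decomposition of the determinant together with the regularity estimate of Proposition~\ref{prop:detReg}. First I would write
\[
\log \frac{\det'(d\tf^n_{z^1_{-n}}\mid E^1_{-n})}{\det'(d\tf^n_{z^2_{-n}}\mid E^2_{-n})}
= \sum_{k=1}^{n} \log \frac{\det'(d\tf_{z^1_{-k}}\mid E^1_{-k})}{\det'(d\tf_{z^2_{-k}}\mid E^2_{-k})}\, ,
\]
using Lemma~\ref{lem:detProps}(2) (multiplicativity of determinants along the orbit, valid since $\tf$ maps the relevant tangent spaces to one another by Theorem~\ref{thm:unstabMfld}). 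The point of working in charts is that by \eqref{eq:unstableExpansion} the backward iterates contract: $|z^1_{-k}-z^2_{-k}|' \le (e^\la-\d)^{-k}|z^1-z^2|'$, so each summand will be controlled by a geometrically decreasing quantity and the sum converges with a bound independent of $n$.

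Next I would estimate a single summand $\log \det'(d\tf_{z^1_{-k}}\mid E^1_{-k}) - \log \det'(d\tf_{z^2_{-k}}\mid E^2_{-k})$ by applying Proposition~\ref{prop:detReg} with $A_1 = (d\tf_{z^1_{-k}})_{z^1_{-k}}$, $A_2 = (d\tf_{z^2_{-k}})_{z^2_{-k}}$ (in the $|\cdot|'$ norms) and $E_j = E^j_{-k}$. For this I need three inputs, all of which follow from the uniformity available on $\Gamma_l$ and its slow-exponential deterioration along orbits (Lemma~\ref{lem:slowvary}): (i) uniform bounds $|A_j|', |(A_j|_{E_j})^{-1}|' \le M$ for an $M = M(l(x))$ — here the expansion estimate \eqref{eq:unstableExpansion} and one-step hyperbolicity (Lemma~\ref{prop:pointwiseNorm}) give the lower bound on $A_j|_{E_j}$, and Lemma~\ref{prop:chartProps} controls $|A_j|'$; (ii) $|A_1-A_2|' \le \text{const}\cdot|z^1_{-k}-z^2_{-k}|'$, which comes from the Lipschitz control $\lip(d\tf) \le l(\cdot)$ in Lemma~\ref{prop:chartProps}(2) applied at the point $f^{-k}x$ (recall $l(f^{-k}x) \le l(x)e^{k\d_2}$ and $\d_2 \ll \la$, so this growth is swamped by the contraction rate); and (iii) $d_H(E^1_{-k}, E^2_{-k}) \le \text{const}\cdot|z^1_{-k}-z^2_{-k}|'$, i.e.\ the tangent spaces to $\graph g_{-k}$ at the two nearby base points are close, which follows from $\lip(dg_{-k}) \le C l(f^{-k}x)$ in Theorem~\ref{thm:unstabMfld}(2) together with the fact that the base points in $E^u$ are at distance $\le |z^1_{-k}-z^2_{-k}|'$ after projecting. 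Once the smallness hypotheses of Proposition~\ref{prop:detReg} are met (which is why we need $|z^1-z^2|'$ below a threshold $\d_1'(D_{l(x)}')^{-1}$), that proposition yields
\[
\Big|\log \frac{\det'(d\tf_{z^1_{-k}}\mid E^1_{-k})}{\det'(d\tf_{z^2_{-k}}\mid E^2_{-k})}\Big|
\le L_2\big(|A_1-A_2|' + d_H(E^1_{-k},E^2_{-k})\big) \le \text{const}(l(x))\cdot (e^\la-\d)^{-k}|z^1-z^2|'\, .
\]
Summing the geometric series in $k$ gives \eqref{eq:chartDistForm} with $D_{l(x)}' = \text{const}(l(x))\cdot \sum_k (e^\la-\d)^{-k}$, and choosing the threshold in the hypothesis small enough (again depending only on $l(x)$, using Remark~\ref{rmk:constantsDependence} to track how $\d_2, L_2$ depend on $M$) guarantees the inductive smallness of $|z^1_{-k}-z^2_{-k}|'$ needed to invoke Proposition~\ref{prop:detReg} at every step.

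The main obstacle I anticipate is \textbf{bookkeeping the dependence of all constants on $l(x)$ while ensuring the per-step error genuinely decays geometrically}. The subtlety is that $M$, and hence $L_2 = C_k M^{10k}$ and $\d_2^{-1} = C_k M^{10k}$ from Remark~\ref{rmk:constantsDependence}, degrade like $e^{k\d_2}$ raised to a power as we iterate backwards, so I must be careful that the chosen $\d_2$ (which Section~4 allows to be taken $\le \frac{1}{100 m_u}\la$) is small enough that this polynomial-in-$e^{k\d_2}$ growth is still dominated by the contraction factor $(e^\la-\d)^{-k}$ — this is exactly the kind of ``estimates deteriorate at most slow-exponentially along orbits'' situation flagged at the end of Section~4. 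A secondary technical point is verifying the subspace-closeness estimate (iii) carefully, since $d_H$ comparisons between tangent spaces at two points of the same graph require combining the Lipschitz bound on $dg_{-k}$ with the (uniform on $\Gamma_l$) equivalence of $|\cdot|'$ and $|\cdot|$ norms; but this is routine given Lemma~\ref{prop:pointwiseNorm} and the graph structure, and once (i)--(iii) are in hand the rest is the geometric summation above.
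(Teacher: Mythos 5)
Your proposal is correct and follows essentially the same route as the paper: telescoping via multiplicativity of the determinant, applying the determinant-regularity estimate (Proposition~\ref{prop:detReg}, via Lemma~\ref{lem:refineDetReg}) at each step with the choices $A_i = d\tf_{z^i_{-k}}$, $E_i = E^i_{-k}$, and tracking how $M_k \sim e^{k\d_2}l(x)$ feeds into $L_2$ and the smallness threshold. You also correctly identify the crux — that the constants degrade like a power of $e^{k\d_2}$, so the choice $\d_2 \ll \la$ (specifically $e^{\la}-\d_1' > e^{(10 m_u + 1)\d_2}$) is what makes the series summable — which is precisely how the paper closes the argument.
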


\begin{proof}[Proof of Proposition \ref{prop:chartDistEst}] Consider to begin with
arbitrary $z^1, z^2 \in \graph g$.
Using the multiplicativity of the determinant, we decompose the argument of 
$\log$ in the LHS of \eqref{eq:chartDistForm} as
\begin{align}\label{eq:chartExpanded}
 \frac{\det'(d \tf^n_{z^1_{-n} } | E^1_{-n}) }
 { \det' ( d \tf^n_{z^2_{-n}} | E^2_{-n})} 
 = \prod_{k = 1}^n \frac{\det' ( d \tf_{z^1_{-k}} | E^1_{-k}) }
 {\det' ( d \tf_{z^2_{-k}} | E^2_{-k}) }  
\end{align}
and bound the factors on the right side of (\ref{eq:chartExpanded}) one at a time.

We will use the following slight refinement of Proposition \ref{prop:detReg} 
(see Remark \ref{rmk:constantsDependence}).

\begin{lem}\label{lem:refineDetReg}
For $m \in \N$, there is a constant $C_m > 1$ with the following property. 
Let $X, Y$ be Banach spaces, and fix $M \geq 1$. If $A_1, A_2 : X \to Y$ are bounded linear operators and $E_1, E_2 \subset X$ are subspaces with the same finite dimension $m$ such that
\begin{gather*}
|A_i|,~ |(A_i|_{E_i})^{-1}| \leq M \quad i=1,2 ,\\
|A_1 - A_2|, ~d_H(E_1, E_2) \leq \frac{1}{C_m M^{10 m}}\ ,
\end{gather*}
then
\[
\bigg| \log \frac{\det(A_1 | E_1)}{\det(A_2 | E_2)} \bigg| \leq C_m M^{10 m } ( |A_1 - A_2| + d_H(E_1, E_2) ) \ .
\]
\end{lem}

For each fixed $1 \leq k \leq n$, we apply Lemma \ref{lem:refineDetReg} to $m = m_u$, $A_i = d \tf_{z^i_{-k}}$ and $E_i = E^i_{-k}$, for an appropriate choice of $M = M_k$. To fulfill the hypotheses of the lemma, we need the following:
\begin{gather}
\label{eq:conda} | d \tf_{z^i_{-k}} |', \ \ | \big( d \tf_{z^i_{-k}} |_{E^i_{-k} } \big)^{-1} |' 
\ \leq \ M_k  \quad \text{ for } i = 1,2, \\
\label{eq:condb} |d \tf_{z^1_{-k}} - d \tf_{z^2_{-k}} |' , \ \  d_H'(E^1_{-k}, E^2_{-k})  \ 
\leq \ C_{m_u}^{-1} M_k^{-10 m_u},
\end{gather}
where $d_H'$ refers to the Hausdorff distance in the adapted norm $|\cdot|'$ on $\Bc_{f^{-k} x}$. 

First we choose $M_k$ so that \eqref{eq:conda} holds: 
$|(d \tf_{z^i_{-k}}|_{E^i_{-k}})^{-1}|' \leq 1$ poses no problem, but from the way
our adapted norms are defined in Sect. 4.1, we only have
\[
|d \tf_{z^i_{-k}}|' \leq 3 l(f^{-(k-1)} x) \cdot |d f_{\exp_{f^{-k} x}(z^i_{-k})}| \leq 3 K e^{k \d_2} l(x) \ ,
\]
where $K$ is an upper bound for $|df|$ on $\{y \in \Bc : d (y, \As) \leq r_0\}$ (see the paragraphs preceding Lemma \ref{prop:chartProps}).  So, on setting $M_k = 3 K e^{k \d_2} l(x)$, \eqref{eq:conda} is satisfied. 

Next we estimate the two terms on the left side of \eqref{eq:condb}:
\begin{align*}
|d \tf_{z^1_{-k}} - d \tf_{z^2_{-k}} |' & \leq l(f^{-k} x) |z^1_{-k} - z^2_{-k}|'  & \text{ by Lemma \ref{prop:chartProps},} \\
& \leq l(f^{-k} x) (e^{\la} - \d_1')^{-k} |z^1 - z^2|' &  \text{ by Item 3 of Theorem \ref{thm:unstabMfld},} \\
& \leq l(x) \bigg( \frac{e^{\d_2}}{e^{\la} - \d_1'} \bigg)^k \cdot |z^1 - z^2|' =: (*) & \text{ by Lemma \ref{lem:slowvary}.} 
\end{align*}
For $d_H'(E_{-k}^1, E^2_{-k})$, observe that if $z^i_{-k} = g_{-k}(u_{-k}^i)$, then
$E_{-k}^i = (\Id + (d g_{-k})_{u_{-k}^i}) E^u(f^{-k} x)$. A simple computation (see
Sect. 2.1.2) gives  
$$d_H'(E_{-k}^1, E^2_{-k}) \leq 2 \ |(d g_{-k})_{u_{-k}^1} - (d g_{-k})_{u_{-k}^2}|'\ ,
$$ 
hence
\begin{align*}
d_H'(E_{-k}^1, E_{-k}^2) & \leq 2 C l(f^{-k} x) |u_{-k}^1 - u_{-k}^2|' & \text{by Item 2 of Theorem \ref{thm:unstabMfld},} \\
& = 2 C  l(f^{-k} x) |z^1_{-k} - z^2_{-k}|' 
\ \le \ 2C \cdot (*)\ .
\end{align*}
Notice that while \eqref{eq:conda} imposes a lower bound on $M_k$, 
\eqref{eq:condb} imposes an {\it upper} bound, namely
$M_k^{10 m_u} \le (C_{m_u} \max\{2C, 1\} \cdot (*))^{-1}$. Both conditions can be
satisfied if $|z^1 - z^2|'$ is sufficiently small, such as $|z^1 - z^2|' < \d'_1 D_{l(x)}^{-1}$
where
\[
D_{l}' \geq \d_1'  C_{m_u} (3K)^{10 m_u} \cdot \max\{2 C, 1\} \cdot   l^{10 m_u + 1}\ ,
\]
assuming, as we may, that $e^{\la} - \d'_1 > e^{(10 m_u + 1) \d_2}$.

At last, we apply Lemma \ref{lem:refineDetReg} to $z^1, z^2$ with
$|z^1 - z^2|' < \d'_1 D_{l(x)}^{-1}$, obtaining
\begin{align}
\bigg| \log \frac{\det(d \tf_{z^1_{-k}} | E^1_{-k} )}{ \det(d \tf_{z^2_{-k}} | E^2_{-k} )} \bigg| &  \leq C_{m_u} (3 K e^{ k \d_2} l(x))^{10 m_u } \cdot (2 C + 1)  l(x) \bigg( \frac{e^{\d_2}}{e^{\la} - \d_1'} \bigg)^k  \cdot |z^1 - z^2|' \notag \\
& = K' \bigg( \frac{e^{(10 m_u + 1) \d_2}}{e^{\la} - \d_1'} \bigg)^k l(x)^{10 m_u + 1} |z^1 - z^2|' \label{eq:kthTermEstimate} \ .
\end{align}
Reconstituting the expression \eqref{eq:chartExpanded}, we obtain the estimate
\[
\bigg| \log \frac{\det'(d \tf^n_{z^1_{-n} } | E^1_{-n}) }{ \det' ( d \tf^n_{z^2_{-n}} | E^2_{-n} ) }\bigg| \leq K' l(x)^{10 m_u + 1} |z^1 - z^2|' \cdot \sum_{k = 1}^n \bigg( \frac{e^{(10 m_u + 1) \d_2}}{e^{\la} - \d_1'} \bigg)^k \leq K'' l(x)^{10 m_u + 1} |z^1 - z^2|' \ ,
\]
where $K''$ is independent of $x$ and $n$. By increasing $D_l'$ once more so that $D_l' \geq K'' l^{10 m_u + 1}$, the conclusion of Proposition \ref{prop:chartDistEst} follows.
\end{proof}

We now complete the proof of Proposition \ref{prop:unstabDistEst}.

\begin{proof}[Proof of Proposition \ref{prop:unstabDistEst}]
Fix $x \in \Gamma_l$. For $y^1, y^2 \in W^u_{loc,x}$, we let $z^i = \exp_{x}^{-1} y^i$,
and  write $y^i_{-k} = f^{-k} y^i$. For objects and quantities in charts, we will use the
same notation as in Proposition \ref{prop:chartDistEst}, so for example, 
$y^i_{-k} = \exp_{f^{-k} x}\big(u^i_{-k} + g_{-k}(u^i_{-k})\big)$ etc.

For part (a), we first consider $y^1, y^2 \in W^u_{loc,x}$ with
$|z^1-z^2| \le \d'_1 D_l^{-1}$, proving that the left side 
of (\ref{distortion1}) is $\le D_l |z^1-z^2|'$ for some $D_l$
that will be enlarged a finite number of times in the course of the proof.
Fixing $n \ge 1$, we compute that
\begin{align*} 
\frac{\det(df^n_{y^1_{-n}} | E^u_{y^1_{-n}} )}{\det(df^n_{y^2_{-n}} | E^u_{y^2_{-n}} )} =  \underbrace{{dm_{E^u_{y^1}} / dm'_{E^u_{y^1}} \over dm_{E^u_{y^2}} / dm'_{E^u_{y^2}} }}_{I} \times 
\underbrace{{dm_{E^u_{y^2_{-n}}} / dm'_{E^u_{ y^2_{-n}}} \over dm_{E^u_{y^1_{-n}}} / dm'_{E^u_{y^1_{-n}}} } }_{II} 
  \times  \underbrace{{ \det' (d\tf^n_{z^1_{-n}} | E^1_{-n})  \over \det' (d\tf^n_{z^2_{-n}} | E^2_{-n} )  } }_{III} \ .
\end{align*}
By Proposition \ref{prop:chartDistEst}, we have 
\[
|\log III| \leq D'_{l} |z^1 - z^2|' \ .
\]
It remains to estimate the terms $I$ and $II$. 

For $I$, observe that 
if $L : E^u_x \to E^{cs}_x$ is a linear map with $|L|' \leq 1$, then as 
a consequence of (\ref{max}) in the definition of $|\cdot|'$ norms, 
\begin{align}\label{eq:indVolComp}
\frac{dm_{(\Id + L) E^u_x}}{dm'_{(\Id + L) E^u_x}} = 
\frac{m_{(\Id + L) E^u_x}\big((\Id + L) \Oc \big)}{m_{E^u_x}' \big( \Oc\big)} 
\end{align}
for any Borel subset  $\Oc \subset E^u_x$ of positive Haar measure. 
Since $\lip g \leq 1$ and $E^u_{y^i} = (\Id + (dg)_{u_0^i}) E^u_x$, it follows from \eqref{eq:indVolComp} that
\[
I = \frac{\det (\Id + (dg)_{u^1_0} | E^u_x)}{\det (\Id + (dg)_{u^2_0} | E^u_x)} \ .
\]
Note that all determinants involved are in the natural norm $|\cdot|$, considered as a norm on $\Bc_x \cong \Bc$.

We will estimate this expression for $I$ using Lemma \ref{lem:refineDetReg}, applying that result with $A_i = \Id + (d g)_{u_0^i}$ and $E_1 = E_2 = E^u_x$. First, 
\begin{align*}
|\Id + (d g)_{u^i_0}| & \leq 3 l |\Id + d g|' = 3 l \ , \\
|\big( (\Id + (d g)_{u_0^i})|_{E^u_x} \big)^{-1}| & \leq 3 l |\big( (\Id + (d g)_{u_0^i})|_{E^u_x} \big)^{-1}|' = 3 l \ .
\end{align*}
Here we have used again the fact that $\Id + (d g)_{u_0^i} : E^u_x \to E^u_{y^i}$ is an isometry in $|\cdot|'$. So, for the purpose of bounding $I$, we may take $M$ in
Lemma \ref{lem:refineDetReg} to be $M = 3 l$. The only estimate needed in
the analog of \eqref{eq:condb} is 
\begin{align*}
 |(d g_0)_{u^1_0} - (d g)_{u^2_0}| \leq 3 l |(d g)_{u^1_0} - (d g)_{u^2_0}|' \leq 3 l \cdot C l |u_0^1 - u_0^2|' = 3 C l^2 |z^1 - z^2|' \ ,
\end{align*}
so it suffices to enlarge $D_l$ to $D_l \geq 3^{10 m_u + 1}  \d_1' C C_{m_u} l^{10 m_u + 2}$.
Lemma \ref{lem:refineDetReg} then applies to give
\begin{align}\label{eq:Iestimate}
|\log I | \leq C_{m_u} (3 l)^{10 m_u } \cdot 3 C l^2 |z^1 - z^2|' \leq 
K''' l^{10 m_u + 2} |z^1 - z^2|' \ .
\end{align}

The estimate for $|\log II|$ proceeds similarly, replacing $g$ with $g_{-n}$ and 
$u^i_{0}$ with $u^i_{-n}$. We leave it to the reader to check that it has the 
same bound as $|\log I |$. This completes the proof of part (a) for 
$y^1, y^2 \in W^u_{{\rm loc},x}$ with $|z^1-z^2|' < \d'_1 D_l^{-1}$.

For $y^1, y^2 \in W^u_{{\rm loc},x}$ for which $|z^1-z_2|' = |u^1-u^2|'>
\d'_1 D_l^{-1}$, we insert points $\hat u^1, \dots, \hat u^k$ on the line segment joining 
$u^1$ and $u^2$ so that if $\hat u^0=u^1$ and $\hat u^{k+1}=u^2$, then $|\hat u^i-
\hat u^{i-1}|' \le \d'_1 D_l^{-1}$. Let $\hat z^i = \hat u^i + g(\hat u^i)$ and
$\hat y^i=\exp_x \hat z^i, i=0,1, \dots, k+1$. Then the argument above gives
\begin{eqnarray*}
\left|\log \frac{\det (df^n_{f^{-n} y^1} | E^u_{f^{-n} y^1})  }
{\det (df^n_{f^{-n}y^2} | E^u_{f^{-n} y^2})} \right| & = &
\left|\log \left(\prod_{i=0}^{k} \frac{\det (df^n_{f^{-n} \hat y^i} | E^u_{f^{-n} \hat y^i} ) }
{\det (df^n_{f^{-n} \hat y^{i+1}} | E^u_{f^{-n} \hat y^{i+1}} ) } \right) \right|\\
& \le & D_l (|\hat z^1-\hat z^0|' + \dots + |\hat z^{k+1}-\hat z^k|')\\
& =&  D_l |z_1-z_2|' \ \le \ l D_l |y^1-y^2|\ .
\end{eqnarray*}
This completes the proof of part (a).

\medskip
For part (b), observe that as a consequence of (a), it will suffice to show that 
the sequence $\log \Delta_N$ in \eqref{eq:finiteProducts} is uniformly Cauchy 
over $y \in W^u_{{\rm loc},x}$,
 the value of $D_l$ having been fixed so that (a) holds. 
 This in turn will follow from (uniform in $y$) bounds on
 
\begin{align}\label{eq:tailProduct}
\log \prod_{k = M + 1}^{N} \frac{\det(df_{f^{-k} x'} | E^u_{f^{-k} x'}) }{\det(df_{f^{-k} y} | E^u_{f^{-k} y}) } = 
\log \frac{\det(df^{N-M}_{f^{-N} x'} | E^u_{f^{-N} x'}) }{\det(df^{N - M}_{f^{-N} y} | E^u_{f^{-N} y}) }
\end{align}
for $M, N$ large, $M < N$. We leave it to the reader to check that the functions in
(\ref{eq:tailProduct}) are bounded by quantities exponentially small in $M$.
\end{proof}

Tracing through the proof of Proposition \ref{prop:unstabDistEst}, one sees that $D_l$ 
can be taken as $C l^q$, where $q \in \N$ and $C$ depend only on $m_u$ and are independent of $l$.

\medskip

Observe that the proof of Proposition \ref{prop:unstabDistEst} used the Lipschitz regularity of the determinant in a crucial way. In some sense, the preceding proof used all possible regularity of the determinant available in our setting.

\subsection{The nonergodic case}\label{sec:nonergodic}

In Sections 3--5, up until this point, we have operated under the assumption
that $(f,\mu)$ is ergodic. We now discuss the extension of these results to the
nonergodic case.

The nonergodic case of the Multiplicative Ergodic Theorem reads as follows: 
Fix a measurable function $\lambda_\a > l_\a$. Then there is a measurable
integer-valuded function $r$ on $\Gamma$ such that at every $x \in \Gamma$, there are $r(x)$
Lyapunov exponents
$\lambda_1(x) > \lambda_2(x) > \cdots > \lambda_{r(x)}$
with $\lambda_{r(x)} > \lambda_\a(x)$ and an associated splitting
$\Bc = E_1(x) \oplus E_2(x) \oplus \cdots \oplus E_{r(x)}(x) \oplus F(x)$
with respect to which properties (a)--(d) in Theorem \ref{thm:MET} hold. Here
$\dim E_i(x) = m_i(x)$ where $m_i$ are measurable functions 
on $\{x \in \Gamma :  r(x) \ge i\}$. 

Next we define, as in Sect. 4.1, $E^\tau_x$ for $\tau=u,c, s$, and let 
$\lambda^+$ and let $\lambda^-$ be as before, except that they are now
measurable functions that need not be bounded away from $0$.
For $m,n \in \{0,1,2,\dots\}$ and $p,q \in \{ 1,2,\dots\}$, let
$$
\Gamma(m,n;p,q) = \left\{x\in \Gamma : \dim E^u_x=m, \ \dim E^c_x = n; \ 
\lambda^+(x) \ge \frac{1}{p}, \ \lambda^-(x) \le -\frac{1}{q} \right\}\ .
$$
Then each $\Gamma(m,n;p,q)$ is either empty, or it is $f$-invariant, and
$\Gamma = \cup_{m,n,p,q} \Gamma(m,n;p,q)$. For results that concern individual
$E_i$, it will be advantageous to further subdivide $\Gamma(m,n;p,q)$ according 
to the dimensions of these subspaces etc. We will focus here on
 the extension of the results in Sections 4 and 5 to the nonergodic case, for which
 the decomposition into $\Gamma(m,n;p,q)$ suffices. 

We claim -- and leave it for the reader to check -- that for these results, 
the proofs in Sections 4 and 5 go through {\it verbatim} provided that
one restricts to one $\Gamma(m,n;p,q)$ at a time, and allow 
the quantities $\lambda_0, \d_0$, 
hence $\lambda$, and $\d_1, \d_2$, hence the function $l$ and constant $\d'_1$,  
to depend on $(m,n;p,q)$. Notice that when we refer to Corollary \ref{cor:integrable} and Lemma \ref{lem:contSplitting}, 
the subspaces in question are $E^u$, the dimension of which is constant on 
$\Gamma(m,n;p,q)$ and the proofs there go through unchanged as well. Once this is checked, it will follow, for example, that for a fixed $(m,n;p,q)$, 
local unstable manifolds are defined for $\mu$-a.e.
$x \in \Gamma(m,n;p,q)$, stacks of unstable manifolds 
are well defined, and $\mu$-a.e. $x \in \Gamma(m,n;p,q)$ is contained 
in such a stack.

Obviously, the sets $\Gamma(m,n;p,q)$ are not pairwise disjoint.
If one wishes to work with pairwise disjoint $f$-invariant sets, the countable family
$$
\hat \Gamma(m,n;p,q) = \Gamma(m,n;p,q) \setminus \Gamma(m,n;p-1,q-1)
$$
is an alternative to ergodic decompositions.

\section{SRB Measures and the Entropy Formula}

Our proofs of Theorems 1 and 2 follow in outline \cite{ledstrel} and  \cite{led}, which contain analogous results for
diffeomorphisms of finite dimensional manifolds. These proofs are conceptually
as direct as can be: they relate $h_{\mu}(f)$, which measures the rate of information growth with respect to $\mu$, to the rate of volume growth on unstable 
manifolds -- under the assumption that conditional measures of $\mu$ on $W^u$-manifolds are in the same measure class as the induced volumes on these manifolds. 
Other proofs of the entropy formula in finite dimensions, such as \cite{pes, mane}, 
start from
invariant measures with densities on the entire phase space and are less suitable for adaptation
to infinite dimension.

In the last few sections we have laid the groundwork needed to extend the ideas in \cite{ledstrel} and  \cite{led} to Banach space settings. To make feasible the idea of
volume growth, we introduced a $d$-dimensional volume on unstable manifolds. To set the stage for conditional densities of invariant measures,
we proved distortion estimates of iterated determinants. Some technical work remains;
it is carried out in Sect. 6.2.
In Sects. 6.3 and 6.4, we verify carefully that all technical issues have been addressed. 

\medskip
{\it Hypotheses (H1)--(H4) are assumed throughout this section}; they will not 
be repeated in statements of results. Notice the addition
of (H4), the no zero exponents assumption, that was not present in most of the last 
two sections. (H5) will be introduced as needed.

\subsection{Equivalent definitions of SRB measures}

We begin with a formal definition of SRB measures for Banach space
mappings. This definition is relatively easy to state, and is equivalent to 
standard definitions used in finite dimensional hyperbolic theory.

Let $\Sc$ be a compact stack of local unstable manifolds as defined in Sect. 5.2, and let
$\xi^\Sc$ be the partition of $\Sc$ into unstable leaves.  By Lemma \ref{lem:homeoStack}, $\xi^\Sc$
is a measurable partition. Assuming $\mu(\Sc)>0$, we let 
$\{\mu_{\xi^{\Sc}(x)}\}_{x \in \Sc}$ denote the canonical disintegration 
of $\mu|_\Sc$ on elements of $\xi^{\Sc}$ (for details on canonical disintegrations, see \cite{rokhlin1949fundamental, rokhlin1967lectures} and \cite{chang1997conditioning}). Recall that $\nu_x$ is the induced volume 
on $W^u_x$. 

\begin{defn}\label{defn:SRB} We say that $\mu$ is an \emph{SRB measure}
of $f$ if
\begin{itemize}
\vspace{-4pt}
\item[(i)] $f$ has a strictly positive Lyapunov exponent $\mu$-a.e., and 
\vspace{-4pt}
\item[(ii)] for any stack $\Sc$ with $\mu(\Sc)>0$, $\mu_{\xi^{\Sc}(x)}$ is absolutely continuous
with respect to $\nu_x$, written $\mu_{\xi^{\Sc}(x)} \ll \nu_x$, for $\mu$-a.e. $x \in \Sc$.
\end{itemize}
\end{defn}

This definition was used in \cite{young1998statistical} (see also \cite{barreira}), and differs {\it a priori} from that used in \cite{ledstrel}, \cite{led}, \cite{ledyou1}, 
which we now recall. 

\begin{defn}
We say that a measurable partition $\eta$ is \emph{subordinate to the unstable foliation}
(abbreviated below as ``subordinate to $W^u$") if for $\mu$-a.e. $x$, we have 
\begin{itemize}
\vspace{-4pt}
\item[(i)] $\eta(x) \subset W^u_x$, 
\vspace{-4pt}
\item[(ii)] $\eta(x)$ contains a neighborhood of $x$ in $W^u_x$, and 
\vspace{-4pt}
\item[(iii)] $\eta(x) \subset f^N(W^u_{{\rm loc}, f^{-N} x})$ for some $N \in \N$ (depending on $x$).
\end{itemize}
\end{defn}

A proof of the existence of measurable partitions subordinate to the $W^u$ 
foliation was given in \cite{ledstrel}; we will provide a sketch in
Sect. 6.3. In \cite{ledstrel,led} and \cite{ledyou1}, SRB measures are defined
in terms of partitions subordinate to $W^u$.

\begin{lem}\label{lem:SRBequiv}
The following are equivalent.
\begin{enumerate}
\vspace{-4pt}
\item $\mu$ is an SRB measure in the sense of Definition \ref{defn:SRB}.
\vspace{-4pt}
\item There exists a partition $\eta$ subordinate to $W^u$ with
the property that $\mu_{\eta(x)} \ll \nu_x$ for $\mu$-a.e. $x$.
\vspace{-4pt}
\item Every partition $\eta$ subordinate to $W^u$ has the property
  $\mu_{\eta(x)} \ll \nu_x$ for $\mu$-a.e. $x$.
\end{enumerate}
\end{lem}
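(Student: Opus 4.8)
The plan is to establish the cycle of implications $3 \Rightarrow 2 \Rightarrow 1 \Rightarrow 3$, since $3 \Rightarrow 2$ is immediate once one knows a partition subordinate to $W^u$ exists (as asserted, with a sketch to come in Sect.~6.3). The two substantive directions are therefore $2 \Rightarrow 1$ and $1 \Rightarrow 3$, and both rest on a single technical principle: the absolute continuity of $\mu_{\xi^\Sc(x)}$ (resp.\ $\mu_{\eta(x)}$) with respect to $\nu_x$ is \emph{not} sensitive to which transverse family of unstable leaves one uses to disintegrate, as long as the two families are built from genuine local unstable manifolds through the same point. First I would record the elementary measure-theoretic fact that if $\xi$ and $\eta$ are two measurable partitions with $\eta$ finer than $\xi$ (mod $\mu$), and $\mu_{\xi(x)} \ll \nu_x$ for a.e.\ $x$, then $\mu_{\eta(x)} \ll (\nu_x)|_{\eta(x)} \ll \nu_x$ for a.e.\ $x$ — this is just the transitivity of disintegrations together with the fact that $\nu_x$ restricted to a submanifold of $W^u_x$ is (up to the bounded density from Proposition~\ref{prop:unstabDistEst}) the induced volume on that submanifold. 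Conversely, if $\xi$ is finer than $\eta$ and $\mu_{\xi(x)} \ll \nu_x$, then integrating out the extra directions gives $\mu_{\eta(x)} \ll \nu_x$ as well, provided the ``transverse'' conditional measures behave — here one needs the stack structure (Lemma~\ref{lem:homeoStack}) to see that the transverse coordinate is a genuine measurable quotient.

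For $2 \Rightarrow 1$: given a partition $\eta$ subordinate to $W^u$ with $\mu_{\eta(x)} \ll \nu_x$, and given an arbitrary stack $\Sc$ with $\mu(\Sc) > 0$, I would argue that $\mu$-a.e.\ $x \in \Sc$ lies in $\eta(y) \cap \Sc$ for some $y$, and that $\eta(y) \cap W^u_x$ contains a neighborhood of $x$ in $W^u_x$ (by property (ii) of subordinate partitions and property (ii) in the characterization of unstable manifolds in Lemma~\ref{char}, which guarantees that $\eta(y)$ and the leaf $\xi^\Sc(x)$ agree near $x$ as subsets of the same global unstable manifold $W^u_x = W^u_y$). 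Hence the partition $\xi^\Sc \vee \eta$ refines $\xi^\Sc$ near a.e.\ point, and its conditionals are a.c.\ w.r.t.\ $\nu_x$ because they are (locally) the conditionals of $\eta$; then restricting to the leaf $\xi^\Sc(x)$ and using the first elementary fact above gives $\mu_{\xi^\Sc(x)} \ll \nu_x$. Care is needed because a single leaf of $\eta$ may meet many leaves of $\xi^\Sc$ and vice versa, but the local coincidence of leaves on the common global unstable manifold makes the two partitions locally equal, which is all that is needed for an a.c.\ statement.

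For $1 \Rightarrow 3$: let $\eta$ be any partition subordinate to $W^u$. By property (iii), for $\mu$-a.e.\ $x$ there is $N(x)$ with $\eta(x) \subset f^{N}(W^u_{{\rm loc}, f^{-N}x})$, and $\eta(x)$ contains a neighborhood of $x$ in $W^u_x$. I would cover a positive-measure piece of the space by countably many stacks $\Sc_j$ (possible by Lemma~\ref{lem:uStacks} and Remark~\ref{rmk:versatileStacks}), pull the question back by $f^{-N}$ using the change-of-variables formula \eqref{changevariable} and the distortion bound of Proposition~\ref{prop:unstabDistEst} (which ensure $f_*$ preserves the $\nu$-measure class with controlled densities, so ``$\ll \nu$'' is an $f$-invariant notion), and thereby reduce to comparing $\eta$ with the leaf partition $\xi^{\Sc_j}$ of a stack through a.e.\ point. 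Then the same local-coincidence argument as above, run in the other direction, transfers absolute continuity from $\xi^{\Sc_j}$ (which holds by hypothesis 1) to $\eta$. The main obstacle I expect is bookkeeping rather than conceptual: matching up the ``thickness'' of $\eta(x)$ inside $W^u_x$ with the fixed-size leaves of a stack, handling the $x$-dependent $N(x)$ (which one controls by intersecting with uniformity sets $\Gamma_{l_0}$ and decomposing $\{N(x) = N\}$ into countably many pieces), and keeping the disintegration/reassembly of conditional measures honest across these decompositions. The only genuinely infinite-dimensional input beyond Lemma~\ref{char} and the stack lemmas is the distortion estimate, which guarantees the $\nu$-class is $f$-invariant with Lipschitz densities — exactly the regularity that was the point of Sections~2 and~5.3.
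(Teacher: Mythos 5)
The paper omits the proof and only records that the lemma ``follows by the uniqueness of the canonical disintegration and the fact that $\mu$-a.e.\ $x$ is contained in an unstable stack.'' Your proposal spells this out, and the core mechanism you identify --- joining $\xi^\Sc$ with $\eta$, moving up and down the refinement using the compatibility of canonical disintegrations, and covering a.e.\ point with countably many stacks --- is exactly what the paper has in mind. One remark: the $f^{-N}$ pullback in your $1\Rightarrow 3$ direction, and the accompanying appeal to the change-of-variables formula \eqref{changevariable} and Proposition~\ref{prop:unstabDistEst} to make the $\nu$-measure class $f$-invariant, is unnecessary overhead. Since for $\mu$-a.e.\ $x$ the conditional $\mu_{\eta(x)}$ assigns zero mass to $\eta(x)\setminus B$ for any full-measure Borel set $B$, and a full-measure set is covered by countably many stacks $\Sc_j$, it suffices to verify $\mu_{\eta(x)}\ll\nu_x$ on each $\eta(x)\cap\Sc_j$; there the direct local comparison of $\eta$ with $\xi^{\Sc_j}$ via $\xi^{\Sc_j}\vee\eta$ does the job, so one need not arrange for $\eta(x)$ to sit inside a single stack leaf. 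This is the content of the paper's phrase about uniqueness of the disintegration together with stacks covering a full-measure set, and it sidesteps the bookkeeping you anticipate around the $x$-dependent $N(x)$.
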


\noindent Lemma \ref{lem:SRBequiv} follows by the uniqueness of the canonical disintegration and the fact that $\mu$-a.e. $x$ is contained in an unstable stack; 
its proof is omitted. 

\subsection{Technical issues arising from noninvertibility}

Before proceeding to the proofs of our main results, we wish to dispose of 
some technical issues that do not present themselves in 
the setting of finite dimensional diffeomorphisms. These issues stem from 
the fact that in the course of our proofs, we will need
to deal with dynamics {\it outside of} $\As$. For example, to prove that $\mu$ is
an SRB measure given that the entropy formula holds, we will want to compare
$\mu$ to a measure with conditional densities on $W^u$-leaves, and the
construction of this measure will have to proceed without 
{\it a priori} knowledge that $W^u$-leaves are contained in $\As$. 

The material in this section holds under assumptions (H1)--(H4), with no
additional assumptions on $\mu$. Since the sets under consideration may not be
contained in $\As$, $f^{-1}$ is not necessarily defined, 
and certainly cannot be assumed to be continuous. As a consequence, properties 
that involve backward iterations, such as continuity of $y \mapsto E^u_y$, must be
treated with care, and discussions of $\mu$-typical behavior do not apply.

\medskip
%
%

For the rest of this subsection, we restrict ourselves to a component 
$\Gamma(m, n; p, q)$ for some $m, n, p, q \in \N$ (see Section 5.4). We let $l_0 \geq 1$ and fix an unstable stack  
\[
\Sc = \bigcup_{x \in \bar U} \exp_{x_0} (\graph \Theta(x)) \, ,
\]
where for each $x \in U(x_0, \e_0)$, we have $\Theta(x) : B^u_{x_0}(\d l_0^{-3}) \to E^{s}_{x_0}$; all notation is as in Lemma \ref{lem:uStacks}. 


\begin{lem}\label{lem:continuousInvert} 
For all $n \in \N$, $f^{-n}$ is well-defined and continuous on $\Sc$.
\end{lem}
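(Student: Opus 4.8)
The plan is to obtain both assertions from the injectivity of $f$ by way of a compactness argument, the point being that a continuous injection of a compact set is a homeomorphism onto its image.

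\emph{Well-definedness.} By Theorem \ref{thm:unstabMfld} we have $\tilde f_{f^{-1}x}(\graph g_{f^{-1}x}) \supset \graph g_x$ for every $x \in \Gamma$, and iterating gives $\tilde f^n_{f^{-n}x}(\graph g_{f^{-n}x}) \supset \graph g_x$. Since $f$ and every $df_x$ are injective (hypothesis (H1)), each connecting map $\tilde f_{f^{-j}x}$ is injective, so every $y \in W^u_{{\rm loc},x}$ has a unique $f$-preimage of order $n$, and it lies in $W^u_{{\rm loc},f^{-n}x}$. As every point of $\Sc$ lies on $W^u_{{\rm loc},x}$ for some $x \in \bar U \subset \Gamma$, this shows $f^{-n}$ is well-defined (and injective) on $\Sc$, with
$$
f^{-n}(\Sc) \ \subseteq \ \hat\Sc_n \ := \ \bigcup_{x \in \bar U} W^u_{{\rm loc},f^{-n}x}\ .
$$

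\emph{Continuity.} Since $f^n$ is continuous and injective on $\Bc$, its restriction to any compact set is a homeomorphism onto its image; hence, once we know $\hat\Sc_n$ is compact, $f^{-n}|_{\Sc}$ -- being the restriction to $\Sc \subseteq f^n(\hat\Sc_n)$ of the continuous map $(f^n|_{\hat\Sc_n})^{-1}$ -- is continuous. To see $\hat\Sc_n$ is compact, note that by (H1)--(H2) the map $f$ restricts to a homeomorphism of the compact set $\As$ onto itself, so $x \mapsto f^{-n}x$ is continuous on $\bar U \subset \As$, and by Lemma \ref{lem:slowvary} it carries $\bar U \subset \Gamma_{l_0}$ into the uniformity set $\Gamma_{l_0 e^{n\d_2}}$. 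On a uniformity set the assignment $x' \mapsto W^u_{{\rm loc},x'}$ is continuous for the Hausdorff metric on compact subsets of $\Bc$: this follows, by the argument of Lemma \ref{lem:uStacks}, from the continuity of $x' \mapsto E^u_{x'}$ there (Lemma \ref{lem:contSplitting}, a uniformity set being contained in a set $G^i_L$), the graph-transform construction of the local unstable leaves (Theorem \ref{thm:unstabMfld}, Lemma \ref{lem:graphTprops}), and the continuity of the connecting maps $z \mapsto \tilde f_z$ in the base point -- these are just base-point translates of $f$, hence continuous by continuity of $f$. Therefore $\{W^u_{{\rm loc},x'} : x' \in f^{-n}\bar U\}$ is the image of the compact set $f^{-n}\bar U$ under a continuous map into the hyperspace of compact subsets of $\Bc$, hence is itself compact there, and a union of a compact family of compact sets is compact; so $\hat\Sc_n$ is compact. (Alternatively: cover $f^{-n}\bar U$ by finitely many sets each serving as the base of an unstable stack, via Remark \ref{rmk:versatileStacks}, and let $\hat\Sc_n$ be the union of these finitely many compact stacks.)

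\emph{The main obstacle.} The step that genuinely needs care is the continuity of $x' \mapsto W^u_{{\rm loc},x'}$ on uniformity sets, since $f^{-1}$ is a priori neither defined nor continuous off $\As$ and the chart-size function $l$ is only measurable. Two points must be watched: first, in Lemma \ref{lem:uStacks} the center-stable splitting is controlled only on the $\mu$-continuity sets of the MET (which are not $f^{-1}$-invariant), but for the present purpose -- compactness, hence Hausdorff continuity of the leaves themselves rather than of their graph representations over a fixed subspace -- it is continuity of $E^u$, which does hold on uniformity sets, that is needed; second, the discontinuity of $l$ is absorbed by working with fixed sub-radii, exactly as is done elsewhere in Section 5. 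Everything else is routine.
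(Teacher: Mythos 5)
Your proposal is correct in outline and rests on the same core mechanism as the paper's proof --- exhibit a compact set containing $f^{-n}\Sc$ and use the fact that a continuous injection restricted to a compact set is a homeomorphism onto its image --- but you establish the key compactness by a genuinely different route. The paper proves Claim \ref{cla:pullbackStack}: for $n\geq n_0$ and each $x\in\bar U$ there is a small neighborhood $\bar V_{n,x}$ such that $f^{-n}\bar V_{n,x}$ carries a compact unstable \emph{stack} in the sense of Remark \ref{rmk:versatileStacks}. Verifying the stack hypotheses off the $\mu$-continuity sets forces the paper to bound $|\pi^s_z|_{E^s_{z'}}|$ for $z,z'\in f^{-n}\bar V_{n,x}$ by a hyperbolicity argument (decompose $v\in E^s_{z'}$ in the splitting at $z$ and push forward $n$ times), which only works for $n\geq n_0$, with small $n$ handled by writing $f^{-n}=f^{n_0-n}\circ f^{-n_0}$; a separate graph-transform argument is then needed to check that $f^n(\Sc_{n,x})$ covers the corresponding leaves of $\Sc$. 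You instead get compactness from Hausdorff continuity of the leaf assignment on the compact set $f^{-n}\bar U$ plus the fact that the union of a Hausdorff-continuous compact family of compact sets is compact. Your key observation --- that Hausdorff continuity of the leaves, unlike continuity of their graph representations over a fixed subspace, needs only continuity of $E^u$ on uniformity sets (Lemma \ref{lem:contSplitting}), continuity of $f^{-k}|_{\As}$, and the uniform graph-transform contraction, with no control of $E^{cs}$ --- is correct, and matches what the paper's own proof of Lemma \ref{lem:uStacks}(b) actually uses for the set-convergence of the iterated disks. This buys a proof valid for all $n\geq 1$ at once and dispenses with the stable-projection estimate entirely.

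One loose end should be tightened. As written, $\hat\Sc_n$ is a union of \emph{full} local leaves $W^u_{{\rm loc},f^{-n}x}$ of radius $\d\, l(f^{-n}x)^{-1}$; since $l$ is only measurable these radii jump, so $x'\mapsto W^u_{{\rm loc},x'}$ is \emph{not} Hausdorff continuous with those radii. Your closing remark about fixed sub-radii is the right fix, but it is not purely cosmetic: after truncating to a uniform radius $\rho=\rho(l_0e^{n\d_2})$ you must re-verify the containment $f^{-n}(\Sc)\subseteq\hat\Sc_n$, i.e.\ that the $n$-fold preimage of each leaf $\exp_{x_0}(\graph\Theta(y))$ stays within distance $\rho$ of $f^{-n}y$ along $W^u_{{\rm loc},f^{-n}y}$. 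This does hold --- by the backward contraction \eqref{eq:unstableExpansion} the preimage has adapted-norm diameter $\lesssim (e^{\la}-\d)^{-n}\d l_0^{-2}$, which beats the truncation radius since $\d_2\ll\la$ --- but it is a step that must appear. Finally, be aware that your parenthetical alternative (cover $f^{-n}\bar U$ by bases of stacks via Remark \ref{rmk:versatileStacks}) is essentially the paper's route and is not free: verifying the hypotheses of that remark on $f^{-n}\bar U$ is exactly where the paper's estimate $|\pi^s_z|_{E^s_{z'}}|<2$ and the restriction $n\geq n_0$ come from.
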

\begin{proof}
That $f^{-n}$ is well-defined on $\Sc$ follows from Theorem \ref{thm:unstabMfld} and the injectivity of $f$ on $\Bc$ (see (H1) in Section 1); the bulk of our work is in showing that $f^{-n}|_{\Sc}$ is continuous. 

\begin{cla}\label{cla:pullbackStack}
There exists $n_0 \in \N$ such that for all $n \geq n_0$ there is, for each $x \in \bar U$, a small neighborhood $V_{n, x}$ of $x$ in $\bar U$ such that (a) the set $f^{-n} \bar V_{n, x}$ obeys the criteria for possessing a compact stack $\Sc_{n, x}$ of unstable leaves as in Remark \ref{rmk:versatileStacks} (here $\bar V_{n, x}$ is the closure of $V_{n, x}$), and (b) we have that
\begin{equation} \label{containment}
f^n(\Sc_{n,x}) \supset \bigcup_{y \in \bar V_{n, x}} \exp_{x_0} (\graph \Theta(y))\ .
\end{equation}
\end{cla}

Assuming Claim \ref{cla:pullbackStack}, we let $n \geq n_0$ and let $\{V_{n, x_i}, i=1,2,\dots, q\}$ be a finite subcover of 
$\{V_{n, x}, x \in \bar U\}$. Then $\cup_{i=1}^q \Sc_{n,x_i}$ is compact, and since
$f^n|_{\cup_{i=1}^q \Sc_{ n, x_i}}$ is continuous, $f^{-n}$ is continuous on
$f^n(\cup_{i=1}^q \Sc_{n, x_i})$, hence $f^{-n}$ is continuous on $\Sc$. For $n < n_0$,
write $f^{-n} = f^{n_0-n} \circ f^{-n_0}$.


It remains to prove Claim \ref{cla:pullbackStack}. Let us assume for the moment that we can find a neighborhood $V_{n, x}$ satisfying (a). By Remark \ref{rmk:versatileStacks}, the stack
 \[
 \Sc_{n,x} = \bigcup_{z \in f^{-n} \bar V_{n, x}} \exp_{f^{-n} x} \big( \graph g_z^{f^{-n} x}|_{D_{n, x}} \big) \, ,
 \]
where $D_{n, x} = B_{f^{-n} x}^u(\d (e^{n \d_2} l_0)^{-3})$, is well-defined and is comprised of continuously-varying unstable leaves.

To check \eqref{containment}, we relate the leaf through each point $z \in f^{-n} \bar V_{n, x}$ back to the leaf in the chart at $z$, using the considerations at the beginning of the proof of Lemma \ref{lem:uStacks} (see in particular \eqref{eq:graphingCoordinateMap}). To wit, one checks that
\begin{align*}
\exp_{f^{-n} x}  \graph g_z^{f^{-n} x}|_{D_{n,x}} &= \exp_z \graph g_{z}|_{\pi^u_z (\graph g_z^{f^{-n} x}|_{D_{n, x}} + f^{-n} x - z )} \\
& \supset \exp_z \graph g_{z}|_{\tilde B_z^u(\frac \d 6 (e^{n \d_2} l_0)^{-3})} \, ;
\end{align*}
this may require shrinking $V_{n, x}$ so that the diameter of $f^{-n} V_{n, x}$ is sufficiently small. Likewise, we relate the leaf in $\Sc$ through $f^n z$ with the corresponding unstable manifold leaf in the chart at $f^n z$: inspecting the proof of Lemma \ref{lem:uStacks}, we see that
\[
\exp_{x_0} \graph \Theta(f^n z)  \subset \exp_{f^n z} \graph g_{f^n z}|_{\tilde B^u_{f^n z}(4 \d l_0^{-2})} \, .
\]
So, to check \eqref{containment}, it suffices to show that for each $z \in f^{-n} \bar V_{n, x}$,
$$
\tilde f_{z}^n \graph \big(g_{z}|_{\tilde B_z^u(\frac \d 6 (e^{n \d_2} l_0)^{-3})} \big)
\supset \graph (g_{f^n z}|_{\tilde B^u_{f^n z}(4 \d l_0^{-2})}) \, .
$$
%
This follows from a graph transform argument (see Section 5.1) for all $n \geq n_0$, where $n_0$ depends on $l_0$.

We now set about finding a neighborhood $V_{n, x}$ satisfying (a) in 
Claim \ref{cla:pullbackStack}. We will show 
that there exists $n_0 \in \N$ such that for all $n \geq n_0$ and any 
$x \in \bar U$, there is $V_{n, x}$ such that $|\pi^s_z|_{E^u_{z'}}| < \frac12$, and
$|\pi^s_z|_{E^s_{z'}}| <2$ for all $z, z' \in f^{-n} \bar V_{n, x}$; see (\ref{changechart}).
%
%
Having done so, and perhaps on shrinking $V_{n, x}$ further, it will follow from Remark \ref{rmk:versatileStacks} that the stack 
$\Sc_{n, x}$ as above satisfies the conclusions of Lemma \ref{lem:uStacks}.

To control $|\pi^s_z|_{E^u_{z'}}|$, observe that  $|\pi^s_z|_{E^u_{z'}}| \leq 
|\pi^s_z| d_H(E^u_z, E^u_{z'}) \leq 3 e^{n \d_2} l_0 d_H(E^u_z, E^u_{z'})$.
Since $f^{-n} \bar U \subset \Gamma_{e^{n \d_2} l_0}$ (Lemma \ref{lem:slowvary}), it follows from Lemma \ref{lem:contSplitting} that $z \mapsto E^u_z$ is 
continuous on $f^{-n} \bar U$. Thus we obtain the desired bound by choosing
$V_{n,x}$ sufficiently small. 
%
%
%
%
To control $|\pi^s_z|_{E^s_{z'}}|$, we let
$v \in E^s_{z'}$ be a unit vector, and write $v=v^u + v^s \in E^u_{z} \oplus E^s_{z}$.
Since $|v^s| \le |v^u|+1$, it suffices to bound $|v^u|$. Now
\begin{equation} \label{Es}
|df^n_{z}(v) - df^n_{z'}(v)| \ge |df^n_{z}(v^u)| - |df^n_{z}(v^s) - df^n_{z'}(v)|\ .
\end{equation}
Choose $V_{n,x}$ small enough  that $\sup_{z, z' \in f^{-n} \bar V_{n, x} }|df^n_{z}-df^n_{z'}| < 1$, and $n_0$ large enough (depending only on $l_0$) 
so that for all $n \geq n_0$,
 $m(df^n_{f^{-n}y}|_{E^u_{f^{-n}y}}) \gtrsim e^{n \l}$ and $|df^n_{f^{-n}y}|_{E^s_{f^{-n}y}}| \lesssim e^{- n \l}$ for all $y \in \Gamma_{l_0}$. Then it follows from
(\ref{Es}) and $|v^s| \le |v^u|+1$ that $|v^u| \lesssim e^{-\l n}$.
\end{proof}

We now apply Lemma \ref{lem:continuousInvert} to obtain various facts about
$\Sc$.

\begin{lem}\label{lem:stackContUnstable}
The mappings $y \mapsto E^u_y$ are continuous on $f^{-q}\Sc$ for $q=0,1,2,
\dots$.
\end{lem}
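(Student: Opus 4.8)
\emph{Reduction to the case $q=0$.} First I would observe that each $f^{-q}\Sc$ is covered by finitely many unstable stacks (in the sense of Lemma \ref{lem:uStacks} or Remark \ref{rmk:versatileStacks}): for $q\ge n_0$ this is exactly the finite subcover $\bigcup_i\Sc_{n,x_i}\supset f^{-q}\Sc$ produced in the proof of Lemma \ref{lem:continuousInvert}, while for $1\le q<n_0$ one writes $f^{-q}\Sc=f^{n_0-q}(f^{-n_0}\Sc)$, covers $f^{-n_0}\Sc$ by finitely many stacks, and iterates the observation that the $f$-image of a stack is again covered by finitely many stacks (its points lie on local unstable manifolds forming a continuously varying family by Remark \ref{rmk:versatileStacks}, and the image is compact). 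Since, by Lemma \ref{char}(b) and the footnote in Sect. 5.3, $E^u_y$ is intrinsically the tangent space to the unstable manifold through $y$ -- hence independent of which stack $y$ is viewed in -- the pasting lemma for finite closed covers reduces Lemma \ref{lem:stackContUnstable} to showing that $y\mapsto E^u_y$ is $d_H$-continuous on a single stack $\Sc=\bigcup_{y\in\bar U}\exp_{x_0}(\graph\Theta(y))$.

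\emph{Reduction to $C^1$ continuity of the stack.} Each leaf of $\Sc$ equals $W^u_{{\rm loc},y}=\exp_{x_0}(\graph\Theta(y))$ with $\Theta(y)=g_y^{x_0}$, and $\lip\Theta(y)\ll1$, $\lip d\Theta(y)\le C(l_0)$ (inherited from Theorem \ref{thm:unstabMfld} through the chart change \eqref{eq:graphEquality}). A point $p\in\Sc$ is $p=\exp_{x_0}(u+\Theta(y)(u))$, and by the characterization just quoted, in the $x_0$-chart $E^u_p=\graph\big((d\Theta(y))_u\big)\subset E^u_{x_0}\oplus E^{cs}_{x_0}$. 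Using the homeomorphism $\Sc\cong\Sigma\times B^u_{x_0}(\d l_0^{-3})$, $p\leftrightarrow(\sigma,u)$, of Lemma \ref{lem:homeoStack} -- under which the leaf assignment $\sigma\mapsto g_\sigma=\Theta(y)$ is the composition $\tilde\Theta\circ\tilde\theta^{-1}$ there -- together with the continuity of $L\mapsto\graph L$ on bounded subsets of $\mathcal L(E^u_{x_0},E^{cs}_{x_0})$ and of $(g,u)\mapsto(dg)_u$, it then suffices to show that $\Theta:\bar U\to C^1(B^u_{x_0}(\d l_0^{-3}),E^{cs}_{x_0})$ is continuous, i.e. to upgrade the $C^0$ continuity of Lemma \ref{lem:uStacks} to $C^1$.

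\emph{The $C^0\to C^1$ upgrade.} Write $\Theta(y)=g_y^{x_0}=\lim_{k\to\infty}\phi_{y,k}^{x_0}$, where $\phi_{y,k}=\Psi_{f^{-1}y}\circ\cdots\circ\Psi_{f^{-k}y}(\mathbf 0_{f^{-k}y})$ is the $k$-step forward graph transform of the zero section, as in the proof of Lemma \ref{lem:uStacks}. I would establish, uniformly in $y\in\bar U$: (i) $\|\phi_{y,k}^{x_0}-g_y^{x_0}\|\le\e_k\to0$, which follows since $g_y=\Psi_{f^{-1}y}\circ\cdots\circ\Psi_{f^{-k}y}(g_{f^{-k}y})$, so $\tri{\phi_{y,k}-g_y}_y\le c^k\tri{g_{f^{-k}y}}_{f^{-k}y}\le c^k/10$ by Lemma \ref{lem:graphTprops}(ii) iterated (then pass to $|\cdot|$ and to the $x_0$-chart, the constants being controlled over $\Gamma_{l_0}$); (ii) $\lip d\phi_{y,k}^{x_0}\le C'(l_0)$ uniformly in $k$ (the graph-transform bounds behind Theorem \ref{thm:unstabMfld} propagate with geometric ratio $\le c''e^{\d_2}<1$), and likewise $\lip d\Theta(y)\le C'(l_0)$; (iii) for each fixed $k$, $y\mapsto\phi_{y,k}^{x_0}$ is $C^1$-continuous -- because $\exp_{x_0}(\graph\phi_{y,k}^{x_0})$ is, up to restriction, the $f^k$-image of the horizontal disk $\exp_{f^{-k}y}(B^u_{f^{-k}y}(1))$, which varies $C^1$-continuously in $y$ since $y\mapsto E^u_{f^{-k}y}$ does (Lemma \ref{lem:contSplitting}, as $f^{-k}\bar U\subset\Gamma_{l_0e^{k\d_2}}$ by Lemma \ref{lem:slowvary}) and $f^k$ is $C^1$, and since this submanifold is transverse to the \emph{fixed} subspace $E^{cs}_{x_0}$ (its tangent spaces are $C^1$-near $E^u_{x_0}$), writing it as a graph over $B^u_{x_0}(\d l_0^{-3})$ parallel to $E^{cs}_{x_0}$ is a $C^1$-continuous operation. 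Granting (i)--(iii), the elementary interpolation estimate $\|dh\|_\infty\le2\big(\|h\|_\infty\lip(dh)\big)^{1/2}$ (valid once $\|h\|_\infty$ is small relative to $(\text{domain radius})^2\cdot\lip(dh)$) applied to $h=g_y^{x_0}-\phi_{y,k}^{x_0}$ gives $\|d\Theta(y)-d\phi_{y,k}^{x_0}\|\le2(\e_kC'(l_0))^{1/2}\to0$ uniformly in $y$; combining this with (iii) via the triangle inequality yields $\|d\Theta(y')-d\Theta(y)\|\to0$ as $y'\to y$, so $\Theta$ is $C^1$-continuous and the lemma follows.

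\emph{Main obstacle.} The genuine difficulty is precisely this $C^0\to C^1$ upgrade: Lemma \ref{lem:uStacks} asserts only $C^0$ continuity of the stack, and a direct graph-transform argument for $C^1$ continuity would appeal to the backward iterates of $E^{cs}$, which are neither continuous nor even defined off $\Gamma$ (the noninvertibility issue flagged in the Introduction). Recasting the $k$-step graph transform as the forward push-forward of a horizontal disk -- which sees only the backward iterates of $E^u$ and the \emph{fixed} splitting at $x_0$ -- together with the interpolation trick to trade $C^0$ smallness for $C^1$ smallness, is what circumvents this; I expect the bookkeeping of constants (in (ii) and the chart changes) to be the only other place requiring care.
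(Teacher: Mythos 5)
Your proof is correct in its essentials, but it takes a genuinely different route from the paper's. The paper proves the lemma by rerunning the backward-orbit contradiction argument of Lemma \ref{lem:contSplitting}: given $x^n\to x$ in $\Sc$, one decomposes unit vectors of $E^u_{x^n}$ against a splitting $E^u_x\oplus\bar F(x)$ and shows the complementary component must vanish; since $E^{cs}$ is undefined off $\Gamma$, the key new step is to manufacture surrogate complements $\bar F(x_{-k})$ with $df_{x_{-k}}\bar F(x_{-k})\subset\bar F(x_{-(k-1)})$ by applying the backward graph transform (Lemma \ref{lem:existsBackwardsTform}) along the backward orbit in the charts of the base point $y\in\bar U$ with $x\in W^u_{\d,y}$. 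You instead sidestep complements at non-$\Gamma$ points entirely: you identify $E^u_p$ with $\graph((d\Theta(y))_u)$ in the fixed splitting at $x_0$ and upgrade the $C^0$-continuity of Lemma \ref{lem:uStacks} to $C^1$-continuity, combining (a) the representation of the $k$-step graph transform as the $f^k$-pushforward of a horizontal disk, whose $C^1$-continuous dependence on $y$ needs only Lemma \ref{lem:contSplitting} at the $\Gamma$-points $f^{-k}y$, with (b) the interpolation estimate $\|dh\|_\infty\le 2(\|h\|_\infty\lip(dh))^{1/2}$ to convert uniform $C^0$-convergence plus equi-$C^{1,1}$ bounds into $C^1$-convergence. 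Your approach yields a strictly stronger conclusion (the leaves of the stack vary continuously in the $C^1$ topology, not merely their tangent spaces pointwise) and the interpolation trick is pleasantly elementary; the price is that you must invoke uniform bounds $\lip(d\phi_{y,k}^{x_0})\le C'(l_0)$ for the finite-step graph transforms, which are implicit in the proof of Theorem \ref{thm:unstabMfld} cited from the literature but are not stated in the paper, together with a $C^1$ version of the disk-convergence argument of Lemma \ref{lem:uStacks}(b); both are standard but would need to be written out to the same level of detail as the paper's surrogate-complement construction. (Your reduction of $q\ge1$ to $q=0$ via images of stacks is also more laborious than necessary: since $E^u_{f^mz}=df^m_z E^u_z$ and $df^m_z$ varies continuously with uniformly bounded inverse on these finite-dimensional subspaces over the compact set $f^{-n_0}\Sc$, continuity for large $q$ pushes forward directly to smaller $q$.)
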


Continuity of $E^u$ along individual unstable leaves follows from Theorem \ref{thm:unstabMfld},
but Lemma \ref{lem:stackContUnstable} asserts more than that: 
it asserts continuity across all the 
{\it different} leaves that comprise $f^{-q}\Sc$.
In the case that $\Sc \subset \Gamma$, this result follows from Lemma \ref{lem:contSplitting}, but we do not assume that. Still, we will follow the proof of Lemma \ref{lem:contSplitting} closely, supplying additional justification where needed. 

\begin{proof} We give the proof for $q=0$; it will be clear that this argument
will also prove the assertion for all $q \ge 1$.

Let $x, x^n \in \Sc$ be such that 
$x^n \to x$. Then $x_{-k}$ and 
$x^n_{-k}$ are defined for all $k \in \N$, and for each $k$, $x^n_{-k} \to x_{-k}$
as $n \to \infty$ by Lemma \ref{lem:continuousInvert}. We let $E(z)$ in Lemma \ref{lem:contSplitting}
be $E^u_z$.
Since every $z \in \Sc$ lies in $W^u_{\d, y}$ for some $y \in \bar U \subset \Gamma_{l_0}$, $(df^k_{z_{-k}}|_{E^u_{z_{-k}}})^{-1}, k=1,2,\dots$, 
have the uniform estimates required for $G^i_L$ in Lemma \ref{lem:contSplitting},
and Lemma \ref{char}(b) shows that these estimates uniquely characterize $E^u_z$.

To carry out the argument in Lemma \ref{lem:contSplitting}, we need to
show that along the backward orbit of $x$, there are closed subspaces
$\bar F(x)$ and $\bar F(x_{-k})$ such that (i) $\Bc_x = E^u_{x} \oplus \bar F(x)$
and $\Bc_{x_{-k}} = E^u_{x_{-k}} \oplus \bar F(x_{-k})$, (ii) $df_{x_{-k}} \bar F(x_{-k}) \subset \bar F(x_{-(k-1)})$, and
(iii) there exist arbitrarily large $k$ for which
$|df^k_{x_{-k}}|_{\bar F(x_{-k})}| \le e^{-\frac34 k\la}$.
Notice that this is needed for $x$ only, not for $x^n$.

Here is where the situations differ: For $z \in \Sc \setminus \Gamma$, 
there is no intrinsically defined $E^s_z$, hence we will have to construct a 
surrogate sequence of subspaces $\bar F(x)$ and
 $\bar F(x_{-k})$. Identifying
the tangent space $\Bc_x$ with $\Bc_y$ where $y$ is a point in 
$\bar U$ with the property that 
$x \in W^u_{\d, y}$, we let $\bar F(x) = E^s_y$, and claim that
 $\bar F(x_{-k})$ for $k=1,2,\dots$ are determined
by property (ii) in the last paragraph. To justify this claim, it is convenient to
work in the charts 
associated with the backwards orbit of $y$. Let  $\tilde \ $ denote corresponding 
objects in charts, so that $\tilde x_{-k} \in \tilde B_{f^{-k}y}( \d (e^{\d_2 k} l_0)^{-1})$, and
$\tilde F(\tilde x_{-k})$ is the subspace we seek etc.
For $\d$ small enough, we may assume, by Lemma \ref{prop:chartProps}, that 
$d(\tilde f_{f^{-1}y})_{\tilde x_{-1}}$ is sufficiently close to $d(\tilde f_{f^{-1}y})_0$
that the backward graph transform argument (Lemma \ref{lem:existsBackwardsTform}) can be applied to
 give a subspace $\tilde F(\tilde x_{-1})$ such that  
$d(\tilde f_{f^{-1}y})_{\tilde x_{-1}}(\tilde F(\tilde x_{-1})) \subset \tilde F(\tilde x)$.
Another application of the backward graph transform gives
$\tilde F(\tilde x_{-2})$ such that $d(\tilde f_{f^{-2}y})_{\tilde x_{-2}}(\tilde F(\tilde x_{-2})) \subset \tilde F(\tilde x_{-1})$, and so on. The situation only improves as we go
along, as $\tilde x_{-k} \to 0$ as $k \to \infty$. Moreover, since
$|d(\tilde f_{f^{-k}y})_{\tilde x_{-k}}|_{\tilde F(x_{-k})}|'_{f^{-k}y}
\leq e^{-\la} + \d$, property (iii) in the last paragraph follows.

We have now fully duplicated the conditions in Lemma \ref{lem:contSplitting},
and the arguments there carry over verbatim to give the continuity of $E^u$ on $\Sc$.
\end{proof}

\begin{rmk}
It is curious to compare Lemma \ref{lem:stackContUnstable} to the case of Anosov diffeomorphisms
of compact (finite dimensional) manifolds, where $y \mapsto E^u_y$ is known to be 
H\"older continuous (and generally not more than that) as $y$ moves in stable 
directions, with a H\"older exponent depending on the magnitude of greatest contraction
along stable directions \cite{hirsch1977invariant}.
As our map $f$ can and does contract arbitrarily
strongly in stable directions, one cannot expect any control on the modulus 
of continuity of $y \mapsto E^u_y$.
\end{rmk}

We conclude Sect. 6.2 with two applications of Lemmas \ref{lem:continuousInvert} 
and \ref{lem:stackContUnstable} to resolve some measurability issues we will 
encounter later on. Let $\Sc$ be as at the beginning of this section, and let 
$\xi = \xi^\Sc$ denote the (measurable) partition of $\Sc$ into unstable leaves.

\begin{lem}\label{lem:nuMeasurable}
For any Borel measurable set $B \subset \Sc$, the map
$x \mapsto \nu_x(B \cap \xi(x)) $ is measurable.
\end{lem}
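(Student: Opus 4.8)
The idea is to reduce the claimed measurability to the continuity statements already established (Lemmas \ref{lem:continuousInvert} and \ref{lem:stackContUnstable}) and to the homeomorphism structure of the stack (Lemma \ref{lem:homeoStack}), via the coordinates $\Psi : \Sigma \times B^u_{x_0}(\d l_0^{-3}) \to \Sc$. First I would transport everything to these coordinates: writing $B' = \Psi^{-1}(B)$, the leaf $\xi(x)$ through a point $x = \Psi(\sigma, u)$ corresponds to the slice $\{\sigma\} \times B^u_{x_0}(\d l_0^{-3})$, so $\nu_x(B \cap \xi(x))$ depends only on $\sigma$, and it suffices to show $\sigma \mapsto \nu_{\Psi(\sigma, \cdot)}\big(\Psi(\{\sigma\}\times B^u_{x_0}(\d l_0^{-3})) \cap B\big)$ is measurable on $\Sigma$, where $\nu$ here is the induced volume on the leaf $W^u_x$.

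Next, I would make the induced volume on each leaf concrete using the embedding furnished by Lemma \ref{lem:homeoStack}: for fixed $\sigma$, the leaf is $u \mapsto \exp_{x_0}(u + g_\sigma(u))$ with $u$ ranging over $B^u_{x_0}(\d l_0^{-3})$, an $m_u$-dimensional ball in $E^u_{x_0}$. By the definition of $\nu_W$ in Sect. \ref{sec:distEst}, we have
\begin{equation*}
\nu_x(B \cap \xi(x)) = \int_{\{u\, :\, \exp_{x_0}(u + g_\sigma(u)) \in B\}} \det\big( d(\exp_{x_0}(\,\cdot\, + g_\sigma(\cdot)))_u \big|\, \cdot\, \big)\ du\ ,
\end{equation*}
the determinant being taken with respect to Haar measure on $E^u_{x_0}$ and the induced volume $m_{T_y W^u_x}$ at the image point $y$. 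The integrand is $\det(\Id + (dg_\sigma)_u | E^u_{x_0})$ up to the fixed comparison between $m_{E^u_{x_0}}$ and $m_{(\Id + (dg_\sigma)_u)E^u_{x_0}}$; by Proposition \ref{prop:detReg} (via Lemma \ref{lem:johnSVD}) this is a continuous function of the pair $(\sigma, u)$, since $(\sigma, u) \mapsto (dg_\sigma)_u$ is continuous --- this is exactly the content of the continuity of $x \mapsto E^u_x$ and of $dg_x$ in Lemma \ref{lem:stackContUnstable} and Theorem \ref{thm:unstabMfld}, combined through the continuous parametrization $\Psi$. So the integrand is a bounded continuous function on $\Sigma \times B^u_{x_0}(\d l_0^{-3})$, and the domain of integration is the $\sigma$-slice of the Borel set $B' = \Psi^{-1}(B)$ (Borel because $\Psi$ is a homeomorphism).

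Finally I would invoke the measurability-of-parametrized-integrals statement: if $h(\sigma, u)$ is a bounded Borel (here continuous) function on $\Sigma \times B^u_{x_0}(\d l_0^{-3})$ and $B' \subset \Sigma \times B^u_{x_0}(\d l_0^{-3})$ is Borel, then $\sigma \mapsto \int_{B'_\sigma} h(\sigma, u)\ du$ is Borel measurable --- a routine Fubini-type argument (approximate $\mathbf{1}_{B'}$ by simple functions built from measurable rectangles). Applying this with $h$ the determinant integrand yields the measurability of $\sigma \mapsto \nu_x(B\cap\xi(x))$ and hence of $x \mapsto \nu_x(B \cap \xi(x))$. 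The only point requiring genuine care --- and the main obstacle --- is confirming the joint continuity of $(\sigma, u) \mapsto (dg_\sigma)_u$ in the $\Psi$-coordinates: the graphing maps $g_y$ vary continuously in the $C^1$ sense across leaves by Lemma \ref{lem:uStacks}(b), but one must check this passes correctly through the change-of-chart maps $\phi^{x_0}_y$ defining $\Theta$ and through the reparametrization by $\tilde\theta^{-1}$ of Lemma \ref{lem:homeoStack}; since each of these is continuous and the determinant is only Lipschitz (not smooth), continuity --- rather than any higher regularity --- is precisely what is both needed and available.
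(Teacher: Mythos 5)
Your proposal is correct and follows essentially the same route as the paper: transport to the product coordinates of Lemma \ref{lem:homeoStack}, express the leafwise volume as an integral of the density $\det(\Id + (dg_\sigma)_u \,|\, E^u_{x_0})$ against Haar measure on $E^u_{x_0}$, and deduce joint continuity of that density from Lemma \ref{lem:stackContUnstable} together with Proposition \ref{prop:detReg}. The Fubini-type step you spell out at the end is left implicit in the paper's "the desired result follows from the continuity of $\tau$," but it is the same argument.
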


\begin{proof} Following Lemma \ref{lem:homeoStack}, we have a homeomorphism 
$\Psi : B^u_{x_0}(\d l_0^{-3}) \times \Sigma \to \Sc$ where 
$\Sigma = \exp^{-1}_{x_0}\big(\Sc\big) \cap E^{s}_{x_0}$, so 
it suffices  to prove the corresponding result on 
$B^u_{x_0}(\d l_0^{-3}) \times \Sigma$ for the family of measures
$\{\hat \nu_\sigma, \sigma \in \Sigma\}$ that are carried by $\Psi$ to $\nu_x$.
More precisely, if $\Psi( B^u_{x_0}(\d l_0^{-3}) \times \{\sigma\}) = \xi(x)$, 
then $\hat \nu_\sigma$ is the measure defined by
$\Psi_*(\hat \nu_\sigma) = \nu_x$. Let $m$ denote
the volume induced on the finite dimensional space $B^u_{x_0}$, and on
$ B^u_{x_0} \times \{\sigma\}$ via its natural identification with $ B^u_{x_0}$. Then 
$\frac{d \hat \nu_\sigma}{dm}(u) = \det(\Id + dg_\sigma(u))$ where $\det$ is 
with respect to $m$ and the induced volume on $E^u_{\Psi(u, \sigma)}$.
Observe that $(u,\sigma) \mapsto E^u_{\Psi(u, \sigma)}$
is continuous by Lemma \ref{lem:stackContUnstable}.
From this and from properties of the $\det$-function
(Proposition \ref{prop:detReg}), we deduce that the mapping $(u,\sigma) \mapsto 
\tau(u,\sigma) := \frac{d \hat \nu_\sigma}{dm}(u)$ is continuous. The desired result follows from 
the continuity of $\tau$.
\end{proof}

Let $J^u(x) := \det(df_x|E^u_x)$.
It follows from Proposition \ref{prop:unstabDistEst} (b) that on each leaf $\xi (x)$,
the function
\[
z \mapsto \Delta(x,z) := \prod_{i = 1}^\infty \frac{ J_u(f^{-i} x)}{  J_u(f^{-i }z)} \ ,
\qquad z \in \xi(x)\ ,
\]
is Lipchitz-continuous and bounded from above and below. Let $q: \Sc \to \mathbb R$
be given by
\[
q(z) := \frac{\Delta(x, z)}{\int_{\xi(x)} \Delta(x,z) d \nu_x(z)} \, 
\]
where $z \in \xi(x)$. Observe that $q$ is well defined and independent of the choice
of $x$. 

\begin{lem}\label{lem:qCont}
The function $q$ is continuous.
\end{lem}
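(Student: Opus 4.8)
The plan is to show that $q$ is the ratio of two continuous positive functions on $\Sc$, hence continuous. Write $q(z) = \Delta(x,z) / Q(\xi(x))$, where $x$ is any point with $z \in \xi(x)$ and $Q(\xi(x)) := \int_{\xi(x)} \Delta(x,z')\, d\nu_x(z')$. The numerator $\Delta(x,z)$, as a function of $z \in \Sc$, is continuous: by Proposition \ref{prop:unstabDistEst}(b) it is Lipschitz (with uniform constant) \emph{along each leaf} $\xi(x)$, and the constructions of Lemma \ref{lem:continuousInvert} and Lemma \ref{lem:stackContUnstable} give that $y \mapsto f^{-i} y$ and $y \mapsto E^u_y$ are continuous on $\Sc$, so by Proposition \ref{prop:detReg} (regularity of the determinant) each factor $J^u(f^{-i} z) = \det(df_{f^{-i}z} \mid E^u_{f^{-i}z})$ depends continuously on $z \in \Sc$; combined with the uniform exponential tail bound from Proposition \ref{prop:unstabDistEst}(b), the infinite product $\Delta(x, z) = \prod_{i=1}^\infty J^u(f^{-i}x)/J^u(f^{-i}z)$ converges uniformly to a continuous function on all of $\Sc$ (once we fix a reference point on each leaf, but note $\Delta(x,z)\Delta(z,z') = \Delta(x,z')$, so continuity of $z \mapsto \Delta(x_0', z)$ on $\Sc$ for a single global reference point $x_0'$ suffices, after which $\Delta(x,z) = \Delta(x_0',z)/\Delta(x_0',x)$ and the denominator cancels in $q$).

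Next I would handle the normalizing constant $Q$ as a function of the leaf. Using the homeomorphism $\Psi : B^u_{x_0}(\d l_0^{-3}) \times \Sigma \to \Sc$ from Lemma \ref{lem:homeoStack}, and the continuous family of densities $\tau(u,\sigma) = \frac{d\hat\nu_\sigma}{dm}(u) = \det(\Id + dg_\sigma(u))$ established in the proof of Lemma \ref{lem:nuMeasurable}, I rewrite
\[
Q\big(\xi(\Psi(u,\sigma))\big) \;=\; \int_{B^u_{x_0}(\d l_0^{-3})} \Delta\big(x_0', \Psi(u',\sigma)\big)\,\tau(u',\sigma)\, dm(u')\, .
\]
The integrand is continuous in $(u',\sigma)$ by the continuity of $\Delta(x_0', \cdot)$ on $\Sc$ and of $\tau$, and it is bounded (the determinants $\det(\Id + dg_\sigma(u'))$ are uniformly bounded since $\lip g_\sigma \le 1/10$, and $\Delta(x_0', \cdot)$ is bounded on the compact stack $\Sc$). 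So by dominated convergence, $\sigma \mapsto Q(\xi(\Psi(u,\sigma)))$ is continuous and depends only on $\sigma$; moreover it is bounded below by a positive constant, again because $\tau$ and $\Delta$ are bounded below and $m(B^u_{x_0}(\d l_0^{-3})) > 0$. Pulling back through $\Psi$, the map $z \mapsto Q(\xi(z))$ is continuous and bounded away from $0$ on $\Sc$.

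Finally, $q(z) = \Delta(x_0', z) / \big(\Delta(x_0', x)\, \cdot \text{(the $x$-dependent factor)}\big)$; more cleanly, writing $z = \Psi(u,\sigma)$, we have $q(z) = \Delta(x_0', \Psi(u,\sigma))\,/\,Q(\xi(\Psi(u,\sigma))) \cdot c(\sigma)$ for the obvious reindexing, and since both numerator and denominator are continuous on $\Sc$ with the denominator bounded below by a positive constant, $q$ is continuous. The main obstacle I anticipate is purely bookkeeping: making sure that the reference point used to define $\Delta$ can be taken globally on $\Sc$ (so that $z \mapsto \Delta(\cdot, z)$ is genuinely continuous on the whole stack, not just leafwise), which follows from the cocycle identity $\Delta(x,z)\Delta(z,w) = \Delta(x,w)$ together with Lemma \ref{lem:stackContUnstable} and Lemma \ref{lem:continuousInvert}; once that is in hand, everything reduces to continuity of a parametrized integral, handled by dominated convergence using the uniform bounds already recorded in Proposition \ref{prop:unstabDistEst}(b) and the proof of Lemma \ref{lem:nuMeasurable}.
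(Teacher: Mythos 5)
Your overall strategy is the same as the paper's: establish continuity of the numerator by combining Lemmas \ref{lem:continuousInvert} and \ref{lem:stackContUnstable} with Proposition \ref{prop:detReg} to get continuity of each finite product, pass to the limit using the uniform convergence in Proposition \ref{prop:unstabDistEst}(b), and then handle the normalizing integral through the homeomorphism $\Psi$ of Lemma \ref{lem:homeoStack} and the continuous densities $\tau(u,\sigma)$ from the proof of Lemma \ref{lem:nuMeasurable}. All of that is sound and matches the paper.

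There is, however, one genuine error: your reduction to ``a single global reference point $x_0'$.'' The quantity $\Delta(x,z)=\prod_{i\ge 1} J_u(f^{-i}x)/J_u(f^{-i}z)$ is only defined when $x$ and $z$ lie on the \emph{same} unstable leaf; convergence of the infinite product comes from the exponential contraction $|f^{-i}x-f^{-i}z|\to 0$ along a common leaf (that is exactly what Proposition \ref{prop:unstabDistEst}(b) provides), and for $z$ on a different leaf of the stack the backward orbits of $x_0'$ and $z$ do not approach each other, so $\Delta(x_0',z)$ need not converge at all. Consequently the identity $\Delta(x,z)=\Delta(x_0',z)/\Delta(x_0',x)$ is meaningless off the leaf of $x_0'$, and ``continuity of $z\mapsto\Delta(x_0',z)$ on $\Sc$'' is not a well-posed statement. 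The fix is what the paper does: replace the global reference point by the \emph{leafwise} reference point $\bar\sigma(z)$, the unique point of $\xi(z)\cap\exp_{x_0}(\Sigma)$, which varies continuously with $z$ by Lemma \ref{lem:homeoStack}. One then shows each finite product $\bar\Delta_n(z)=\Delta_n(\bar\sigma(z),z)$ is continuous (using the continuity of $z\mapsto\bar\sigma(z)$ in addition to the ingredients you already listed), and the uniform convergence $\bar\Delta_n\to\bar\Delta$ gives continuity of $\bar\Delta$ on all of $\Sc$. Since $q$ is independent of the choice of reference point on each leaf, $q(z)=\bar\Delta(z)/\int_{\xi(z)}\bar\Delta\,d\nu$, and your treatment of the denominator (continuity and positive lower bound via $\Psi$, $\tau$, and dominated convergence) then closes the argument without the extra ``$c(\sigma)$'' factor that appears in your last paragraph.
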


\begin{proof} First we claim that for every $n \in \N$, the function
$z \mapsto \det(df^n_{f^{-n}z} |E^u_{f^{-n}z})$ is continuous. That is true 
because
(i) $f^{-n}|_\Sc$ is continuous (Lemma \ref{lem:continuousInvert}), 
(ii) $y \mapsto E^u_y$ is continuous on $f^{-n}\Sc$ (Lemma \ref{lem:stackContUnstable}),
and (iii) $y \mapsto \det(df^n_y |E^u_y)$ is continuous on $f^{-n}\Sc$ 
(Proposition \ref{prop:detReg} together with (ii)). 

To prove the continuity of $q$, we fix, for each $z \in \Sc$, a reference point $\bar \sigma (z)$ defined
to be the unique point in $\xi(z) \cap \exp_{x_0}(\Sigma)$ where $\Sigma$ is
as in the proof of Lemma \ref{lem:nuMeasurable}. Define 
$$z \mapsto \bar \Delta_n(z) : = 
\Delta_n (\bar \sigma (z), z) = 
\prod_{i = 1}^n  \frac{J_u(f^{-i} \bar \sigma (z))}{J_u(f^{-i }z)}\ .
$$
Then $z \mapsto \bar \Delta_n(z)$ is continuous by the argument above and the continuity of $z \mapsto \bar \sigma (z)$. 
By Proposition \ref{prop:unstabDistEst}, the sequence $\bar \Delta_n$ 
converges uniformly to $\bar \Delta$ where $\bar \Delta (z) = \Delta (\bar \sigma (z),z)$. Thus $\bar \Delta$ is continuous on $\Sc$.

It remains to show that $z \mapsto \int \bar \Delta d\nu_z$ is continuous, 
and that follows from the continuity of $\bar \Delta$ and arguments given in the proof of Lemma \ref{lem:nuMeasurable}.
\end{proof}

\subsection{Proof of entropy formula for maps with SRB measures}

Recall that the distinct Lyapunov exponents of $(f,\mu)$ are denoted by $\lambda_i$ with
multiplicity $m_i$. Let $h_\mu(f)$ denote the entropy of $f$ with respect to
$\mu$, and let $a^+ := \max\{a,0\}$. This section contains the proof of Theorem 1, which
we state for the reader's convenience:

\bigskip \noindent
{\bf Theorem 1.} {\it Assume that $\mu$ is an SRB measure of $f$. Then
$$
h_{\mu}(f) = \int \sum_i m_i \l_i^+ \ d\mu\ .$$
}

Below we recall in outline the proof in \cite{ledstrel} (referring the reader to \cite{ledstrel} for detail), and point out the
modifications needed to make the argument in \cite{ledstrel} work in the present Banach space
setting. We divide the proof into
two parts:

\bigskip \noindent
{\bf (A) Construction of partitions subordinate to $W^u$.} First, some notation:
For a partition $\a$, we let $\a(x)$ denote the atom of $\a$ containing $x$. For two partitions $\a, \beta$, we write $\a \leq \beta$ if $\beta$ is a refinement of $\a$, 
and let
$\a \vee \b= \{A \cap B : A \in \a, B \in \b\}$, and $f^{-1} \a = \{f^{-1}A, A \in \a\}$. 
Finally, we say a partition $\a$ is \emph{decreasing} if $\a \leq f^{-1} \a$.

The following is the analog of Proposition 3.1 in \cite{ledstrel}.

\begin{prop} \label{eta} Assuming that $(f,\mu)$ has a positive Lyapunov
exponent $\mu$-a.e., there is a stack of unstable manifolds $\Sc$ with $\mu(\Sc)>0$
and a measurable partition $\eta$ on $\tilde \Sc := \cup_{n \ge 0} f^n \Sc$
with the following properties:
\begin{itemize}
\vspace{-4pt}
\item[(a)] $\eta$ is subordinate to $W^u$,
\vspace{-4pt}
\item[(b)] it is decreasing, and
\vspace{-4pt}
\item[(c)] for any Borel set $B \subset \As$, the function $x \mapsto \nu_x(\eta(x) \cap B)$
is finite-valued and measurable. 
\end{itemize}
\end{prop}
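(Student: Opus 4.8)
The plan is to follow Proposition 3.1 of \cite{ledstrel}, where the only genuinely new points are (1) that the non-invertibility of $f$ must not spoil the ``subordinate to $W^u$'' property and (2) that the induced volumes $\nu_x$ behave measurably along and across the leaves of a stack, both of which are addressed by Lemmas \ref{lem:continuousInvert}, \ref{lem:stackContUnstable} and \ref{lem:nuMeasurable}. Since (H4) gives $E^c=\{0\}$, I would first restrict to a fixed component $\Gamma(m,0;p,q)$ (Sect.~5.4), which we may take with $m\ge1$ and positive $\mu$-measure; on it the constants $\la,\d_0,\d_1,\d_2,\d_1'$ and the function $l$ are uniform, and it suffices to produce $\Sc$ and $\eta$ there. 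Fix $l_0$ with $\mu(\Gamma_{l_0})>0$, a $\mu$-density point $x_0\in\Gamma_{l_0}\cap K_{n_0}$, and, as in Lemma \ref{lem:uStacks} and Remark \ref{rmk:versatileStacks}, a one-parameter family of compact stacks $\Sc_t=\bigcup_{y\in\bar U}\exp_{x_0}\!\big(\graph\Theta(y)|_{B^u_{x_0}(t)}\big)$, $t\in(t_0,2t_0)$, whose boundaries $\partial\Sc_t$ (the unstable-direction boundary together with the lateral boundary over $\partial\bar U$) are pairwise disjoint. Then $\mu(\partial\Sc_t)=0$ for all but countably many $t$, and since $f$ is injective one also gets $\mu\big(\bigcup_{n\ge0}f^n\partial\Sc_t\big)=0$. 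Fix one such $\Sc:=\Sc_t$ with $\mu(\Sc)>0$.

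Let $\xi=\xi^\Sc$ be the partition of $\Sc$ into unstable leaves, which is measurable by Lemma \ref{lem:homeoStack}; let $\hat\xi$ be the partition of $\As$ obtained by adjoining the single atom $\As\setminus\Sc$ to $\xi$, and set
\[
\eta:=\Big(\bigvee_{n\ge0}f^n\hat\xi\Big)\Big|_{\tilde\Sc},\qquad\tilde\Sc=\bigcup_{n\ge0}f^n\Sc .
\]
Then $\eta$ is a measurable partition of $\tilde\Sc$, and since $f^{-1}\eta=f^{-1}\hat\xi\vee\eta\ge\eta$ it is decreasing, which is (b). For $x\in\tilde\Sc$ put $k(x)=\min\{n\ge0:f^{-n}x\in\Sc\}$ (well-defined and measurable, with the whole backward orbit of $x$ existing by Lemma \ref{lem:continuousInvert}); each atom of $\eta$ lies in one forward image of a stack-leaf, so $\eta(x)\subset f^{k(x)}\big(\xi(f^{-k(x)}x)\big)$. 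After shrinking $\d$ so that every stack-leaf through $z\in\Gamma_{l_0}$ sits inside $W^u_{{\rm loc},z}$ (its diameter being $\ll\d_1'l(z)^{-1}$), this gives $\eta(x)\subset f^{k(x)}(W^u_{{\rm loc},f^{-k(x)}x})\subset W^u_x$, i.e.\ properties (i) and (iii) in the definition of ``subordinate to $W^u$''.

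Property (ii)---that $\eta(x)$ contains a neighborhood of $x$ in $W^u_x$ for $\mu$-a.e.\ $x$---is the crux. For $\mu$-a.e.\ $x\in\tilde\Sc$ the backward orbit stays in $\As\cap\Gamma$ (by $f(\As)=\As$, injectivity of $f$, and $f^{-1}\Gamma=\Gamma$) and, by the null-boundary choice above, never meets $\partial\Sc$; for such $x$ each factor of $\eta(x)=\bigcap_{n\ge0}f^n\hat\xi(f^{-n}x)$ is open in $W^u_x$ and contains $x$ (the factors with $f^{-n}x\notin\Sc$ removing only $f^n\Sc$). The remaining point is that this decreasing countable intersection stabilizes, and this is forced by the uniform unstable expansion on $\Gamma_{l_0}$: whenever $f^{-n}x\in\Sc$, the forward orbit $f^{-n}x\to\cdots\to x$ lies in $\Gamma_{e^{n\d_2}l_0}$ by Lemma \ref{lem:slowvary}, so by the expansion estimate \eqref{eq:unstableExpansion} of Theorem \ref{thm:unstabMfld} the map $f^n$ stretches $W^u$-distances by $\gtrsim e^{(\la-C\d_2)n}$, and since $\d_2\ll\la$ the factor contributed by step $n$ eventually contains a neighborhood of $x$ whose size is bounded below independently of $n$; hence only finitely many factors are active. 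The bookkeeping here is as in Proposition 3.1 of \cite{ledstrel}, with Lemma \ref{lem:continuousInvert} (continuity of $f^{-n}|_{\Sc}$) standing in for the invertibility of $f$. I expect this verification of (ii) to be the main obstacle; everything else is routine.

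Finally, for (c): given a Borel $B\subset\As$, the change-of-variables formula \eqref{changevariable} gives
\[
\nu_x\big(\eta(x)\cap B\big)=\int_{f^{-k(x)}(\eta(x)\cap B)}\det\big(df^{k(x)}_w\,\big|\,E^u_w\big)\,d\nu_{f^{-k(x)}x}(w),
\]
where $f^{-k(x)}(\eta(x)\cap B)$ is a Borel subset of the stack-leaf $\xi(f^{-k(x)}x)$ cut out by the countably many conditions ``$f^{-j}x\in\Sc$'' or ``$f^{-j}x\notin\Sc$''. Since $x\mapsto k(x)$ is measurable, $w\mapsto\det(df^{k(x)}_w|E^u_w)$ is measurable (Lemmas \ref{lem:continuousInvert}, \ref{lem:stackContUnstable} and Proposition \ref{prop:detReg}), and $x\mapsto\nu_x(\,\cdot\,\cap\xi(x))$ is measurable on $\Sc$ (Lemma \ref{lem:nuMeasurable}), the function $x\mapsto\nu_x(\eta(x)\cap B)$ is measurable; it is finite-valued because $\eta(x)$ is a bounded piece of $W^u_x$.
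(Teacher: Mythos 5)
Your construction of $\Sc$ and $\eta$, and your treatment of (b), (c) and of properties (i) and (iii) of ``subordinate to $W^u$'', follow the same route as the paper (which itself follows Proposition 3.1 of \cite{ledstrel}), and the infinite-dimensional ingredients you invoke (Lemmas \ref{lem:continuousInvert}, \ref{lem:stackContUnstable}, \ref{lem:nuMeasurable}) are the right ones. The gap is exactly where you anticipated it: property (ii). Knowing that the backward orbit of $x$ never meets $\partial\Sc$ tells you only that each factor $f^n\hat\xi(f^{-n}x)$ contains \emph{some} neighborhood of $x$ in $W^u_x$; it does not prevent those neighborhoods from shrinking to $\{x\}$ as $n\to\infty$. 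Concretely, when $f^{-n}x\in\Sc$ the factor contains a $W^u_x$-ball about $x$ of radius comparable to $e^{(\la-C\d_2)n}\,d_n$, where $d_n$ is the leafwise distance from $f^{-n}x$ to $\partial\Sc$; the expansion estimate \eqref{eq:unstableExpansion} supplies the factor $e^{(\la-C\d_2)n}$ but says nothing about $d_n$, which for a fixed stack can decay super-exponentially along a subsequence. So your assertion that ``the factor contributed by step $n$ eventually contains a neighborhood of $x$ whose size is bounded below independently of $n$'' does not follow from what precedes it.

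The missing step is the Borel--Cantelli/Fubini selection of the radius: one shows
$\int_{t_0}^{2t_0}\sum_n\mu\{x: d(f^{-n}x,\partial\Sc_t)<\e e^{-\la n/2}\}\,dt<\infty$
(using the invariance of $\mu$ and the fact that for fixed $z$ the set of $t$ with $d(z,\partial\Sc_t)<\rho$ has Lebesgue measure $O(\rho)$, because the boundaries $\partial\Sc_t$ foliate a neighborhood), then picks $t$ for which the inner sum is finite, so that for $\mu$-a.e.\ $x$ there is $\e(x)>0$ with $d(f^{-n}x,\partial\Sc_t)\ge\e(x)e^{-\la n/2}$ for all $n$; combined with the expansion this yields $W^u_{\e(x),x}\subset\eta(x)$. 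This is precisely the ``choose $r$ judiciously'' step in the paper's sketch and is the heart of the proposition; merely arranging $\mu(\partial\Sc_t)=0$, as you do, is not enough. The rest of your write-up is fine.
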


We remark that if $\eta(x)$ is to contain a neighborhood of $x$ in $W^u_x$
for $\mu$-a.e. $x$ (part of the definition of being subordinate to $W^u$), 
then we cannot assume $\eta(x) \subset \As$. This condition is used in the proof 
of Lemma \ref{lem:partialconverse}, where we have to work with the set $\tilde \Sc$.

%
%

\begin{proof}[Sketch of proof] Using the notation in Sect. 5.2, we first
construct a stack. The stack in the statement of 
this proposition will be of the form $\Sc = \Sc_r$ where
\begin{equation} \label{stack}
\Sc_r := \bigcup_{y \in \bar U} \exp_{x_0} (\graph \big( \Theta(y)|_{B_{x_0}^u(r \d l_0^{-3})} \big)) \, ,
\end{equation}
$x_0 \in \As$ is a point, and $r \in (0,1)$ is a number to be determined. We choose these so that
$\mu(\Sc_r)>0$ for any choice of $r$, and let $\tilde \Sc = \cup_{n \ge 0} f^n \Sc$. 

The partition $\eta$ on $\tilde \Sc$ is constructed as follows:
Let $\xi$ be the partition of $\Sc$ into unstable leaves. For $k=0,1,2, \dots$, 
let $\xi_k = \{f^k(W), W \in \xi\} \cup \{\tilde \Sc \setminus f^k(\Sc)\}$, and let 
$\eta = \bigvee_{k=0}^\infty \xi_k$. Since $x \in f^k(W)$ if and only if $fx \in f^{k+1}(W)$,
it follows immediately that $\eta$ is decreasing. 

Item (a) also follows automatically from this construction except for the requirement that
$\eta(x)$ contains a neighborhood of $x$ in $W^u_x$ for $\mu$-a.e. $x$.
This is done by choosing $r$ judiciously. Let 
$$
\partial \Sc_r := \bigcup_{y \in \bar U} \exp_{x_0} \big( \{\Theta(y)(u) : u \in E^u_{x_0}, |u| = r \d l_0^{-3} \} \big) \, .
$$
Since $\partial (\eta(x)) \subset \cup_{k=0}^\infty f^k(\partial \Sc)$, to 
guarantee $W^u_{\epsilon(x),x} \subset \eta(x)$, it suffices to have
$f^{-k}W^u_{\epsilon(x),x} \cap \partial \Sc_r = \emptyset$ for all $k \ge 0$. 
We choose $r$ so that
$\mu(\partial \Sc_r)=0$ and $\epsilon(x)>0$ for $\mu$-a.e. $x$ by a Borel-Cantelli type argument. See \cite{ledstrel} for details.

Item (c) follows from Lemma \ref{lem:nuMeasurable} together with standard approximation arguments (to go from $\xi$ to $\eta$). 
\end{proof}

If $(f, \mu)$ is nonergodic, it may happen that $\mu(\tilde \Sc) <1$.
It can be shown that at most a countable number of (disjoint) sets of the type
$\tilde \Sc$ will cover a full $\mu$-measure set.

\medskip \noindent
{\bf (B) Entropy computation.} The following are the main points in the rest of 
the proof in \cite{ledstrel}. We first list them (as they appear in \cite{ledstrel}) before commenting on modifications needed:

\smallskip \noindent
1. Since Ruelle's Inequality \cite{ruelle} states that 
\begin{equation} \label{ruelle}
h_{\mu'}(f) \le \int \sum_i m_i \lambda_i^+ d\mu'
\end{equation}
for any Borel probability invariant measure $\mu'$, to prove the entropy formula
 it suffices to show that the 
reverse inequality holds for SRB measures.
In particular, it suffices to show that if $\mu$ is an SRB measure,
then 
\begin{equation} \label{relation}
H(f^{-1}\eta| \eta) = h_\mu(f,\eta) = \int \sum_i m_i \lambda_i^+ d\mu
= \int \log J^u d\mu  
\end{equation}
where $\eta$ is the partition constructed in Part (A) and $J^u(x) := 
\det(df_x|E^u(x))$.

\smallskip \noindent
2. Let $\nu$ be the $\sigma$-finite measure with the property that
for any Borel subset $K$, 
$$
\nu (K) = \int \nu_x(\eta(x) \cap K) d \mu (x)\ .
$$
By assumption, $\mu \ll \nu$. Let $\rho = \frac{d\mu}{d\nu}$, and let $\mu_{\eta(x)}$ 
denote the disintegration of $\mu$ on partition elements of $\eta$. Then
$$
\rho = \frac{d\mu_{\eta(x)}}{d\nu_x} \qquad \nu_x-a.e. \mbox{ on } 
\eta(x) \ \ \mbox{ for } \mu - a.e.\  x\ .
$$

\smallskip \noindent
3. The main computation is the following transformation rule for $\rho$:
\begin{lem}\label{lem:transRule} For $\mu$-a.e. $x$ and $\nu_x$-a.e. $z \in \eta(x)$,
$$
\rho(z) = \rho(fz) \cdot J_u(z) \cdot \mu_{\eta(x)} ((f^{-1} \eta)(x))\ .
$$
\end{lem}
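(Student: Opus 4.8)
The plan is to follow the computation in \cite{ledstrel}: test both sides of the asserted identity against Borel subsets of a single unstable atom and read off the densities. I shall use three ingredients. First, the identity noted above that on each atom $\eta(x)$ one has $\rho = d\mu_{\eta(x)}/d\nu_x$. Second, the leafwise change of variables \eqref{changevariable}, which (using injectivity of $f$, so that $f^{-1}(fx)=x$) may be written as
$$
\frac{d(f_*\nu_x)}{d\nu_{fx}}(y)=\frac{1}{J_u(f^{-1}y)}\qquad\text{for } y\in W^u_{fx}\,,
$$
equivalently $\int_{W^u_x}(g\circ f)\,d\nu_x=\int_{W^u_{fx}}g(y)\,J_u(f^{-1}y)^{-1}\,d\nu_{fx}(y)$ for nonnegative measurable $g$ on $W^u_{fx}$. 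Third, that $\eta$ is decreasing: $f^{-1}\eta$ refines $\eta$ with $(f^{-1}\eta)(x)=f^{-1}(\eta(fx))\subset\eta(x)$, so $f$ carries $(f^{-1}\eta)(x)$ bijectively onto $\eta(fx)$, and in particular $fz\in\eta(fx)$ whenever $z\in(f^{-1}\eta)(x)$.

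The key dynamical input is the transformation law of conditional measures under $f$. Since $\mu$ is $f$-invariant and $f^{-1}\eta$ is exactly the preimage partition of $\eta$, the canonical disintegration is compatible with $f$: for $\mu$-a.e.\ $x$ one has $f_*\big(\mu_{(f^{-1}\eta)(x)}\big)=\mu_{\eta(fx)}$ (the standard property of disintegrations of an invariant measure; cf.\ the references in Sect.\ 6.1). Combined with the elementary fact that the restriction of $\mu_{\eta(x)}$ to the sub-atom $(f^{-1}\eta)(x)$ equals $\mu_{\eta(x)}\big((f^{-1}\eta)(x)\big)\cdot\mu_{(f^{-1}\eta)(x)}$, and with $f^{-1}E\subset(f^{-1}\eta)(x)$ for $E\subset\eta(fx)$, this yields, for every Borel $E\subset\eta(fx)$,
\begin{equation*}
\mu_{\eta(x)}(f^{-1}E)=\mu_{\eta(x)}\big((f^{-1}\eta)(x)\big)\cdot\mu_{\eta(fx)}(E)\,.
\end{equation*}

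Next I evaluate both sides by densities. The right side is $\mu_{\eta(x)}\big((f^{-1}\eta)(x)\big)\int_E\rho\,d\nu_{fx}$. For the left side, $\mu_{\eta(x)}(f^{-1}E)=\int_{f^{-1}E}\rho\,d\nu_x$, and applying the change of variables above with $g(y)=\rho(f^{-1}y)\,\mathbf{1}_E(y)$ gives $\int_{f^{-1}E}\rho\,d\nu_x=\int_E\rho(f^{-1}y)\,J_u(f^{-1}y)^{-1}\,d\nu_{fx}(y)$. Since $E\subset\eta(fx)$ is an arbitrary Borel set, the two integrands agree $\nu_{fx}$-a.e.\ on $\eta(fx)$; substituting $y=fz$ (which sends $\nu_{fx}$-a.e.\ $y\in\eta(fx)$ to $\nu_x$-a.e.\ $z\in(f^{-1}\eta)(x)$, as $f_*\nu_x$ and $\nu_{fx}$ are mutually absolutely continuous) produces $\rho(z)=\rho(fz)\,J_u(z)\,\mu_{\eta(x)}\big((f^{-1}\eta)(x)\big)$ for $\nu_x$-a.e.\ $z\in(f^{-1}\eta)(x)$. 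Finally, $\eta(x)$ equals, up to a $\nu_x$-null set, the union of its countably many $f^{-1}\eta$ sub-atoms; running the identical argument on each sub-atom (with $(f^{-1}\eta)(z)$ and $\eta(fz)$ in place of $(f^{-1}\eta)(x)$ and $\eta(fx)$) extends the formula to $\nu_x$-a.e.\ $z\in\eta(x)$, which is the assertion.

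The main obstacle I anticipate is verifying the disintegration transformation law $f_*\mu_{(f^{-1}\eta)(x)}=\mu_{\eta(fx)}$ in this noninvertible, infinite-dimensional setting, where the classical proof's free use of $f^{-1}$ is unavailable. This is precisely what the work of Sect.\ 6.2 supplies: $\eta$ is a measurable partition (it is built by countable operations from $\xi^\Sc$, which is measurable by Lemma \ref{lem:homeoStack}), and $f^{-1}$ is well defined and continuous on each stack $f^{-n}\Sc$ (Lemma \ref{lem:continuousInvert}), so that $f^{-1}\eta$ and its disintegration are genuinely well behaved on $\tilde\Sc$. A secondary point is that $\nu_x$, hence $\nu$ and $\rho$, are built from the induced volumes of Sect.\ 2, so every measurability claim used above rests on Lemma \ref{lem:nuMeasurable} and Proposition \ref{eta}(c), where the regularity of the determinant (Proposition \ref{prop:detReg}) is invoked.
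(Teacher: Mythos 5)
Your proof is correct and takes the same route the paper indicates: it defers to \cite{ledstrel} and verifies that the two needed ingredients there — the leafwise change of variables \eqref{changevariable} and the invariance of $\mu$, packaged here as the disintegration law $f_*\mu_{(f^{-1}\eta)(x)}=\mu_{\eta(fx)}$ — are available in the Banach-space setting, then carries out the density computation that the paper elides. One small remark: the constant you obtain when extending across $\eta(x)$ is $\mu_{\eta(x)}((f^{-1}\eta)(z))$, which coincides with the paper's $\mu_{\eta(x)}((f^{-1}\eta)(x))$ only on the sub-atom containing $x$; this matches the standard reading of the lemma (with $x$ and $z$ interchangeable up to null sets) and is exactly what the entropy computation in Sect.\ 6.3 uses, via the case $z=x$.
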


\smallskip \noindent
4. From Lemma \ref{lem:transRule}, one deduces easily that the information function $I(f^{-1}\eta|\eta)$
satisfies
\begin{equation} \label{cocycle}
I(f^{-1}\eta|\eta)(x) = \log J^u(x) + \log \frac{\rho(fx)}{\rho(x)}
\end{equation}
for $\mu$-almost every $x$. As $I(f^{-1}\eta|\eta) \ge 0$ and $\log J^u \in L^1(\mu)$, it follows that
$\log^- \frac{\rho \circ f}{\rho} \in L^1(\mu)$. A general measure-theoretic
lemma then gives $\int \log \frac{\rho \circ f}{\rho} d\mu =0$. Integrating (\ref{cocycle})
gives (\ref{relation}).

\medskip
We now comment on the modifications needed for the arguments above to carry
over to Banach space mappings. For Banach space mappings, the
analog of \eqref{ruelle} is proved in \cite{thieu}, so here as well, it suffices to prove
(\ref{relation}). But to make sense of the last equality in (\ref{relation}),
one needs to first introduce a notion of volume on finite dimensional subspaces,
so that $\det(\cdot | \cdot)$ is defined; this is done in Section 2, and the last equality 
in (\ref{relation}), which relates Lyapunov exponents to volume growth, is proved 
both in \cite{blumenthal} and in Proposition \ref{prop:volGrowthMET}.

With regard to Item 2, that $\nu$ so defined is a measure follows from part (c)
of Proposition \ref{eta}. The characterization of $\rho$ given is a purely 
measure-theoretic fact. 

The proof of Lemma \ref{lem:transRule} uses only (i) the change of variables formula
for induced measures on finite dimensional manifolds and (ii) the invariance of $\mu$,
more precisely that $\mu (f^{-1}K)=\mu(K)$ for a countable sequence of Borel sets $K$. 
For the measures $\nu_x$, (i) is proved in (\ref{changevariable}). (ii) is not an issue 
for us as $\mu$ is supported on the compact metric space $\As$.

The last item is also a purely measure-theoretic fact. 

This completes the proof of Theorem 1.

\subsection{Entropy formula implies SRB measure}

We first prove Theorem 2 under the assumption that $h_\mu(f)=
h_\mu(f, \eta)$ for a partition $\eta$ of the type constructed in Proposition \ref{eta}, leaving
the justification of this assumption for later. 

\begin{lem}\label{lem:partialconverse} Given $(f, \mu)$ with a positive Lyapunov
exponent $\mu$-a.e., let $\eta$ be as in 
Proposition \ref{eta}. If 
$$
h_\mu (f,\eta) = \int \log J^u d\mu\ ,
$$
 then $\mu$ is an SRB measure whose
densities on unstable manifolds satisfy (\ref{density}).
\end{lem}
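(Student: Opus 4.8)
The plan is to run the Ledrappier--Strelcyn argument for the converse implication, the only new ingredients being the volume theory of Section 2, the distortion bound of Proposition~\ref{prop:unstabDistEst}, and the measurability facts of Section 6.2. Keep the notation of Proposition~\ref{eta}: $\Sc$ is the stack, $\tilde\Sc=\bigcup_{n\ge 0}f^n\Sc$, $\eta$ is the decreasing partition of $\tilde\Sc$ subordinate to $W^u$, and $\xi=\xi^\Sc$; write $J^u(z)=\det(df_z|E^u_z)$. By Proposition~\ref{prop:unstabDistEst}(b) the products $\Delta(x,z)=\prod_{i\ge 1}J^u(f^{-i}x)/J^u(f^{-i}z)$ converge on each unstable leaf to a Lipschitz, bounded, strictly positive function, so the normalized leafwise densities $q$ of Lemma~\ref{lem:qCont} are defined; let $\nu$ be the $\sigma$-finite measure $\nu(K)=\int\nu_x(\eta(x)\cap K)\,d\mu(x)$ (well defined by Proposition~\ref{eta}(c)) and set $\hat\mu=q\,\nu$, a Borel probability measure whose disintegration over $\eta$ is $\hat\mu_{\eta(x)}=q\,\nu_x|_{\eta(x)}$ and whose image on the quotient coincides with that of $\mu$. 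It suffices to prove $\mu_{\eta(x)}=\hat\mu_{\eta(x)}$ for $\mu$-a.e.\ $x$: this yields $\mu_{\eta(x)}\ll\nu_x$ with density $q$, hence the density formula (\ref{density}), and then Lemma~\ref{lem:SRBequiv} promotes the conclusion to the SRB property of Definition~\ref{defn:SRB}.

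The computational core is a transformation rule for $\hat\mu$. Since $\eta$ is decreasing, for each $n$ the map $f^n$ carries the atom $(f^{-n}\eta)(x)\subset\eta(x)$ bijectively onto $\eta(f^n x)$; combining the change-of-variables identity (\ref{changevariable}) for the induced volumes $\nu_x$ with the cocycle relation $\Delta(fx,fz)=\Delta(x,z)\,J^u(z)/J^u(x)$ and telescoping over $0\le j<n$ gives
\[
\hat\mu_{\eta(x)}\big((f^{-n}\eta)(x)\big)=\frac{Z_{f^n x}}{Z_x\,\prod_{j=0}^{n-1}J^u(f^j x)},\qquad Z_x:=\int_{\eta(x)}\Delta(x,z)\,d\nu_x(z).
\]
Taking $-\log$, integrating against $\mu$, and using the $f$-invariance of $\mu$ together with $\log J^u\in L^1(\mu)$ (Proposition~\ref{prop:volGrowthMET}) and the measure-theoretic cocycle lemma used in the proof of Theorem~1 to kill the $\log Z$ term, one obtains $-\int\log\hat\mu_{\eta(x)}((f^{-n}\eta)(x))\,d\mu = n\int\log J^u\,d\mu$ for every $n\ge 1$.

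Now compare entropies. Disintegrating leafwise, $H(f^{-n}\eta\mid\eta)=\int H_{\mu_{\eta(x)}}(f^{-n}\eta)\,d\bar\mu$, and the elementary information inequality applied on each leaf against the reference probability $\hat\mu_{\eta(x)}$ gives $H_{\mu_{\eta(x)}}(f^{-n}\eta)\le -\int_{\eta(x)}\log\hat\mu_{\eta(x)}((f^{-n}\eta)(w))\,d\mu_{\eta(x)}(w)$, with equality iff $\mu_{\eta(x)}$ and $\hat\mu_{\eta(x)}$ agree on $f^{-n}\eta|_{\eta(x)}$. Integrating, and using $H(f^{-n}\eta\mid\eta)=n\,h_\mu(f,\eta)$ (from $f$-invariance and $h_\mu(f,\eta)=H(f^{-1}\eta\mid\eta)$), the hypothesis $h_\mu(f,\eta)=\int\log J^u\,d\mu$, and the previous paragraph, both sides equal $n\int\log J^u\,d\mu$; hence equality holds in the information inequality for every $n$ and $\mu$-a.e.\ $x$, i.e.\ $\mu_{\eta(x)}$ and $\hat\mu_{\eta(x)}$ agree on $f^{-n}\eta|_{\eta(x)}$ for all $n$. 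Since $\{f^{-n}\eta\}_{n\ge 0}$ increases to the point partition on each unstable leaf — the atoms $(f^{-n}\eta)(x)\cap\eta(x)$ shrink to $\{x\}$ by the backward contraction built into Theorem~\ref{thm:unstabMfld}(3), exactly as in \cite{ledstrel} — this forces $\mu_{\eta(x)}=\hat\mu_{\eta(x)}$ for $\mu$-a.e.\ $x$, finishing the proof.

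I expect the work to be almost entirely bookkeeping rather than conceptual. The genuinely infinite-dimensional issue is that all the objects above ($\Delta$, $Z_x$, $q$, $\hat\mu$, the disintegrations, the atoms of $f^{-n}\eta$) must be shown honestly Borel and to transform correctly while one operates on $\tilde\Sc$ rather than on $\As$, where $f^{-1}$ need not even be defined — which is precisely what Lemmas~\ref{lem:continuousInvert}, \ref{lem:stackContUnstable}, \ref{lem:nuMeasurable} and \ref{lem:qCont} were prepared for — and the convergence and boundedness of $\Delta$ (hence the integrability needed in the cocycle step) rests on the distortion estimate Proposition~\ref{prop:unstabDistEst}, the one place where the non-Hilbert Banach geometry really enters. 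The transformation rule, the information inequality together with its equality case, and the generating property of $\bigvee_n f^{-n}\eta$ on leaves are all dimension-insensitive and go through as in \cite{ledstrel}.
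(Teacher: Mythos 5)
Your proposal is correct and follows essentially the same route as the paper: construct the candidate measure with leafwise densities proportional to $\Delta(x,\cdot)$, use the change of variables and the entropy hypothesis to show that $-\int \log$ of the candidate's and of $\mu$'s conditional masses of $(f^{-n}\eta)$-atoms agree (killing the $\log Z$ term with the same measure-theoretic cocycle lemma), conclude equality of the conditional measures from the equality case of the convexity inequality (your leafwise Gibbs inequality is the paper's Jensen argument for $\phi_n$ in disguise, and handles the possible singular part correctly), and finish with $f^{-n}\eta \nearrow \varepsilon$. The only blemish is that your intermediate cocycle relation should read $\Delta(fx,fz)=\Delta(x,z)\,J^u(x)/J^u(z)$ (the ratio is inverted as written), though the final transformation rule for $\hat\mu_{\eta(x)}\big((f^{-n}\eta)(x)\big)$ that you derive from it is nonetheless the correct one.
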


\begin{proof} Our proof follows \cite{led} in outline. Here it is essential
that the elements of $\eta$ contain open
subsets of $W^u_x$ for $\mu$-a.e. $x$.
Let $\Sc, \xi$ and $\tilde \Sc$ be as in Proposition \ref{eta}. We discuss 
the ergodic case, dividing it into two main steps.

\medskip \noindent
{\bf (A) Construction of a candidate SRB measure $\vartheta$.} 
As noted in Sect. 6.2, the function $z \mapsto \Delta(x,z)$ is 
%
Lipchitz-continuous and bounded from above and below on $\xi(x)$. Since for $\mu$-a.e. $x$, there exists $n \ge 0$
such that $f^{-n}(\eta(x))$ is contained in a leaf of $\Sc$, and $f^n$ 
restricted to each leaf is a $C^2$ embedding
(by Hypothesis (H1)), the statement above  holds (with nonuniform Lipschitz bounds)
for all $z \in \eta(x)$ for $\mu$-a.e. $x$. This together with the fact that
$\nu_x(\eta(x))>0$ for $\mu$-a.e. $x$ implies that
\[
p(z) := \frac{\Delta(x, z)}{\int_{\eta(x)} \Delta(x,z) d \nu_x(z)}\ , \qquad z \in \eta(x)\ ,
\]
is well defined for $\mu$-a.e. $x$. 

We seek to define a probability measure $\vartheta$ on Borel subsets of 
$\tilde \Sc = \cup_{n \geq 0} f^n \Sc$ by letting
\[
\vartheta(K) = \int \left( \int_{\eta(x) \cap K} p(z) d\nu_x(z) \right) d \mu(x) 
\]
for $K \subset \tilde \Sc$. 
%
That is to say, we want $\vartheta$ and $\mu$ to project to the same measure on 
the quotient space $\tilde \Sc/\eta$, and we want the conditional measures of
$\vartheta$ on elements of $\eta$ to be given
by the (normalized) densities $p(\cdot)$ (We note that
$\tilde \Sc$ is a complete, separable metric space,
 being a countable union of compact sets in $\Bc$; the existence of canonical disintegrations in this setting is proved in, e.g., \cite{chang1997conditioning}). To prove that $\vartheta$ is a {\it bona fide} measure, we need to prove the measurability of $p$,
which can be deduced, via standard arguments, from the measurability of the function
$$
q(z) := \frac{\Delta(x, z)}{\int_{\xi(x)} \Delta(x,z) d \nu_x(z)}\ , \qquad z \in \Sc\ .
$$
This involves studying backward iterates not just along individual $W^u_{\rm loc}$-leaves
but {\it across} the leaves that comprise $\Sc$, at points that are not necessarily
$\mu$-typical. We have treated these issues in Sect. 6.2; the continuity of $q$ is
proved in Lemma \ref{lem:qCont}.

\medskip \noindent
{\bf (B) Proof of $\vartheta = \mu$.}
This part of the argument is identical to that in \cite{led}; we recall it for completeness.
 For any $n \in \N$, we have
\begin{eqnarray*}\label{eq:nuInfoEst}
\int - \log \vartheta_{\eta(x)} ((f^{-n} \eta)(x)) d \mu(x) & = & \int \log \det(df^n_x|_{E^u(x)}) d \mu\\
&=& H(f^{-n} \eta | \eta) \ = \ \int -  \log \mu_{\eta(x)} ((f^{-n} \eta)(x)) d \mu(x)\ .
\end{eqnarray*}
The first equality is by the change of variables formula (the same computation
as in Items 3 and 4 in Part (B) of Sect. 6.3), the second is
by the main assumption in Lemma \ref{lem:partialconverse}, iterated $n$ times, and the third is
the definition of entropy. 

We introduce the $\tilde \Sc/(f^{-n}\eta)$-measurable
function  
$$
\phi_n (x) = \frac{\vartheta_{\eta(x)} ((f^{-n} \eta)(x))}{\mu_{\eta(x)} ((f^{-n} \eta)(x))} \, ,
$$
which is well-defined $\mu$-almost surely. Let $\vartheta^{(n)}$ and $\mu^{(n)}$ 
denote the restriction of $\vartheta$ and $\mu$ to $\Bs_{f^{-n} \eta}$, the $\sigma$-algebra of measurable subsets that are unions of atoms in $f^{-n} \eta$, and
decompose $\vartheta^{(n)} = \vartheta^{(n)}_\mu + \vartheta^{(n)}_{\perp}$, where 
$\vartheta^{(n)}_\mu \ll \mu^{(n)}$ and $\vartheta^{(n)}_\perp$ is mutually singular with 
$\mu^{(n)}$. Observe that $\vartheta^{(n)}_{\perp}$ can be strictly positive
(this happens if a positive $\mu$-measure set of $x$ has the property that $\eta(x)$ contains one or more elements of $f^{-n}\eta$ with $\mu_{\eta(x)}$-measure zero), 
while $\phi_n = d\vartheta^{(n)}_\mu/d\mu^{(n)}$. Thus $\int \phi_n d\mu \leq 1$.

At the same time, it follows from the string of equalities at the beginning of the proof that
 $\int \log \phi_n d\mu =0$. Now Jensen's inequality tells us that
$$
\int \log \phi_n d \mu \leq \log \int \phi_n d \mu  \, ,
$$
with equality holding iff $\phi_n \equiv $ constant $\mu$-a.e. So $\phi_n \equiv 1$, 
from which it  follows that $\mu$ and $\vartheta$ coincide on $\Bs_{f^{-n} \eta}$. The conclusion now follows from the fact that $ f^{-n} \eta \nearrow \varepsilon$, the partition into points.
\end{proof}

\smallskip
It remains to address the issue of whether or not the (uncountable) partition $\eta$  
captures all the entropy of $f$, i.e., whether $h_\mu (f) = h_\mu(f,\eta)$. To get a
handle on this, we seek a measurable partition $\mathcal P$ with the properties that
$H_\mu(\mathcal P)<\infty$ and $\eta \le \mathcal P^+ := 
\vee_{n = 0}^{\infty} f^j \mathcal P$. 
Since elements of $\eta$ are contained $W^u$-leaves, and 
$\mathcal P^+(x) = \{y \in \Bc : f^{-n}y \in \mathcal P(f^{-n}x)$ for all $n \ge 0\}$,
the next lemma, which follows immediately
from the characterization of local  unstable manifolds in Lemma \ref{char}, is relevant:

\begin{lem}\label{lem:partitionInsideWuloc} Suppose $\mathcal P$ is a partition with the property that for $\mu$-a.e. $x$ and
every $n \in \N$, 
$$
|f^{-n}y - f^{-n}x|'_{f^{-n}x} \le \d l(f^{-n}x)^{-1} \qquad \mbox{ for all } \ y \in \mathcal P^+(x)\ .
$$
Then $ \mathcal P^+(x) \subset W^u_{\d, x}$.
\end{lem}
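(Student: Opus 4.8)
The plan is to prove Lemma \ref{lem:partitionInsideWuloc} by combining the hypothesis on backward orbits with the characterization of local unstable manifolds given in Lemma \ref{char}(a). Recall that Lemma \ref{char}(a) states that, working in charts,
$$
W^u_{\d, x} = \exp_x\big\{z \in \tilde B_x(\d l(x)^{-1}) : \forall n \in \N,\ \exists\, z_n \in \tilde B_{f^{-n}x}(\d l(f^{-n}x)^{-1}) \text{ s.t. } \tf^n_{f^{-n}x} z_n = z\big\}\ .
$$
So it suffices to show that for $\mu$-a.e.\ $x$ and every $y \in \mathcal P^+(x)$, the point $z := \exp_x^{-1} y$ lies in the set on the right, i.e.\ that $z \in \tilde B_x(\d l(x)^{-1})$ and that for every $n$ there is a preimage $z_n$ of $z$ under $\tf^n_{f^{-n}x}$ staying inside the chart $\tilde B_{f^{-n}x}(\d l(f^{-n}x)^{-1})$.

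First I would take the natural candidate preimages: since $y \in \mathcal P^+(x)$, by definition $f^{-n}y \in \mathcal P(f^{-n}x)$ for all $n \ge 0$, so in particular $f^{-n}y$ is defined for all $n$, and the hypothesis gives $|f^{-n}y - f^{-n}x|'_{f^{-n}x} \le \d l(f^{-n}x)^{-1}$. Setting $z_n := \exp_{f^{-n}x}^{-1}(f^{-n}y) \in \Bc_{f^{-n}x}$, this says precisely that $z_n \in \tilde B_{f^{-n}x}(\d l(f^{-n}x)^{-1})$ (recall $\tilde B_w(r) = \{p \in \Bc_w : |p|_w' \le r\}$). The $n = 0$ case gives $z = z_0 \in \tilde B_x(\d l(x)^{-1})$. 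Moreover, since $\tf_w = \exp_{fw}^{-1} \circ f \circ \exp_w$, we have $\tf^n_{f^{-n}x}\, z_n = \exp_x^{-1}(f^n(f^{-n}y)) = \exp_x^{-1}(y) = z$; here I would note that $f^n \circ f^{-n} = \mathrm{id}$ along this orbit because $f$ is injective (Hypothesis (H1)), so iterating $f$ forward genuinely inverts the backward iterates and no subtlety about domains of $\tf$ arises beyond the containment just established. Thus the $z_n$ are exactly the witnesses required by the characterization in Lemma \ref{char}(a), and $z$ lies in the indicated set, i.e.\ $y = \exp_x z \in W^u_{\d,x}$. Since this holds for all $y \in \mathcal P^+(x)$, we conclude $\mathcal P^+(x) \subset W^u_{\d,x}$ for $\mu$-a.e.\ $x$.

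There is essentially no main obstacle here — the statement is an immediate bookkeeping consequence of Lemma \ref{char}, as the text itself signals ("follows immediately"). The only points requiring a modicum of care are: (i) ensuring that $\d$ is small enough for Lemma \ref{char} to apply, which is already part of the standing setup (and $E^c = \{0\}$ is in force, which Lemma \ref{char} requires); (ii) confirming that the adapted-norm ball condition in the hypothesis of the present lemma is verbatim the chart-containment condition in Lemma \ref{char}(a) once one translates via $\exp$, which it is by the definitions of $\tilde B_w(r)$ and $\tf_w$; and (iii) the harmless observation that the backward iterates $f^{-n}y$ appearing in the hypothesis are the same points $f^{-n}$ applied to $f^{-n}x$'s forward image, so the forward-iterate identity $\tf^n_{f^{-n}x} z_n = z$ is exact. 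No quantitative estimates, graph transforms, or distortion bounds are needed for this particular lemma.
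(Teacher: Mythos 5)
Your proof is correct and is exactly the argument the paper intends: the hypothesis translates, via $z_n=\exp_{f^{-n}x}^{-1}(f^{-n}y)$, into the chart-containment condition in the characterization of Lemma \ref{char}(a), whose witnesses are precisely these backward iterates. The paper itself gives no further detail, stating only that the lemma ``follows immediately'' from Lemma \ref{char}, so your write-up matches its route.
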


Thus the problem is reduced to finding a finite entropy partition $\mathcal P$ with the
property in the lemma above. 
In \cite{led}, such a partition was constructed by appealing to a lemma due to Ma\~n\'e
\cite{mane}, and here lies another difference between finite and infinite dimensions: 
the lemma in \cite{mane} uses the finite dimensionality of the ambient manifold. 
Our next lemma contains a slight strengthening of this result that is adequate for
our purposes; see \cite{linshu1} for a similar result.

\begin{lem}[following Lemma 2 in \cite{mane}]\label{lem:Mane}
Let $Z$ be a compact metric space with box-counting dimension $=\d < \infty$.
Let $T : Z \to Z$ be a homeomorphism, let $m$ be a Borel invariant probability on $Z$,
and let $\rho : Z \to (0,1)$ be a measurable function for which $\log \rho \in L^1(m)$. Then, there exists a countable partition $\mathcal P$ of finite entropy such that for $m$-almost all $x \in Z$, 
\[
\mathcal P(x) \subset B(x, \rho(x)) \, ,
\]
where $B(x, r)$ is the ball of radius $r$ centered at $x$.
\end{lem}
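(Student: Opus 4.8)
\textbf{Proof plan for Lemma \ref{lem:Mane}.}

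The plan is to follow Mañé's original argument \cite{mane} closely, replacing the use of finite dimensionality of the ambient manifold by the finite box-counting dimension hypothesis. First I would observe that it suffices to produce, for each sufficiently small scale, an efficient covering of $Z$ by sets of controlled diameter whose cardinality grows only polynomially in the reciprocal of the scale; this is exactly what $\dim_{\mathrm{box}} Z = \d < \infty$ provides. Concretely, for each $k \in \N$ let $\epsilon_k = e^{-k}$ (say), and using the box-counting bound fix a cover of $Z$ by at most $N_k \le C \epsilon_k^{-(\d+1)}$ balls of radius $\epsilon_k$; from such a cover build a finite partition $\Qc_k$ of $Z$ into at most $N_k$ Borel pieces each of diameter $\le 2\epsilon_k$. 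The entropy of $\Qc_k$ satisfies $H_m(\Qc_k) \le \log N_k \le \log C + (\d+1) k$, which is the key polynomial (here: linear in $k$) growth estimate.

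Next I would assemble $\mathcal P$ from the $\Qc_k$'s by selecting, for each point, the partition whose scale matches $\rho$. Partition $Z$ into the level sets $A_k = \{x : \epsilon_{k+1} < \rho(x) \le \epsilon_k\}$ (together with $A_\infty = \{\rho = 0\}$, which has measure zero or can be absorbed), and define $\mathcal P$ so that on $A_k$ its atoms are the traces of (a refinement of) $\Qc_{k+1}$ — refined enough that each atom lies in a single $A_k$ and has diameter $\le \rho(x)$ for $x$ in it. Then $\mathcal P(x) \subset B(x, \rho(x))$ by construction. For the entropy bound, write $H_m(\mathcal P) \le H_m(\mathcal P \mid \{A_k\}) + H_m(\{A_k\})$; the second term is controlled because $\sum_k m(A_k) \log(1/m(A_k))$ converges once we know $\sum_k k\, m(A_k) < \infty$, and the first term is $\sum_k m(A_k) H_{m_k}(\mathcal P|_{A_k}) \le \sum_k m(A_k)\big(\log C + (\d+1)(k+1)\big)$. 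Both sums are finite precisely because $\log \rho \in L^1(m)$: indeed $k+1 \lesssim -\log \rho(x) + 1$ on $A_k$, so $\sum_k k\, m(A_k) \lesssim \int (-\log \rho)\, dm + 1 < \infty$. Note that $T$ and its invariance play no role here — the lemma is purely about the existence of a finite-entropy partition refining a prescribed scale function — so the homeomorphism $T$ and the invariance of $m$ are red herrings as far as this statement goes (they matter only for how the lemma is applied).

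The main obstacle, and the only place the hypotheses are genuinely used, is the interplay between the two sums: one must choose the correspondence $k \leftrightarrow$ (which $\Qc$ to use on $A_k$) so that simultaneously (i) atoms on $A_k$ have diameter $\le \rho \le \epsilon_k$, forcing us to use a partition of scale $\approx \epsilon_{k+1}$, hence cardinality $\approx \epsilon_k^{-(\d+1)}$, and (ii) the resulting entropy contribution $m(A_k)\cdot(\d+1)k$ is summable. Step (ii) is exactly the content of $\log\rho \in L^1$, and step (i) is exactly the content of finite box dimension; the lemma is the assertion that these two constraints are compatible. Checking the measurability of the level sets $A_k$ and that the refined partitions can be taken Borel is routine. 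I would also remark that one can absorb the ``strengthening'' over \cite{mane} — namely that $\rho$ is merely measurable with $\log\rho \in L^1$ rather than, say, continuous or bounded below — without difficulty, since the argument above never used continuity of $\rho$; this is the point flagged in the text as matching the treatment in \cite{linshu1}.
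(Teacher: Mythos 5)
Your proposal is correct and follows essentially the same route as the paper: exponential level sets of $\rho$, polynomial-cardinality covers at each scale supplied by the finite box-counting dimension, and the entropy split into the conditional part (controlled by $\log N_k \lesssim k$ and $\sum_k k\,m(A_k)<\infty$) plus the entropy of the level-set partition (controlled via Ma\~n\'e's Lemma 1). Your observation that $T$ and the invariance of $m$ play no role in this lemma is also consistent with the paper's argument.
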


\begin{proof}[Proof Sketch for Lemma \ref{lem:Mane}]
Define $U_n = \{x \in Z : e^{- (n + 1)} < \rho(x) \leq e^{- n}\}$ for $n \geq 1$. Since $\log \rho$ is integrable, we have that $\sum_{n = 1}^{\infty} n \, m(U_n) < \infty$. This implies (see Lemma 1 of \cite{mane}) that 
\[
\sum_{n = 1}^{\infty} - m(U_n) \log m(U_n) < \infty \, .
\]
By the definition of box-counting dimension, there exists $C>0$ be such that for any $r>0$, there is a finite cover of $Z$ by balls of radius $r$
with cardinality $\leq C r^{- (\d + 1)}$. It follows that there is a finite partition $\mathcal P_r$ of $Z$ of cardinality $\leq C r^{- (\d + 1)}$, each element of which is contained in one of these balls. Writing $r_n = e^{-(n + 1)}$ for each $n \in \N$, we fix such a partition $\mathcal P_{r_n}$. 
The desired partition $\mathcal P$ is now defined as follows: Elements of $\mathcal P$ are of the form $A \cap U_n$, $n \geq 0$, with $A \in \mathcal P_{r_n}$. One then estimates
\[
\sum_{P \in \mathcal P, P \subset U_n} - m (P) \log m(P) \leq m(U_n) \big( \log |\mathcal P_{r_n}| - \log m(U_n) \big) 
\]
for each $n \geq 0$, where the cardinality $|\mathcal P_{r_n}|$ is $ \leq C r_n^{- (\d + 1)}$; this decay is sufficient to show that $H_m(\mathcal P) < \infty$.
\end{proof}

The discussion above serves to motivate
 the finite box-counting assumption in Theorem 2, a complete
statement of which is as follows:

\medskip \noindent
{\bf Theorem 2.} {\it In addition to (H1)--(H4), 
we assume (H5), i.e., that the set $\As$ has finite box-counting dimension. If $\lambda_1>0$
and $(f,\mu)$ satisfies the Entropy Formula 
\[
h_{\mu}(f) = \int \sum_i m_i \l_i^+ \ d\mu\ ,
\]
then $\mu$ is an SRB measure.}

\medskip
Theorem 2 is implied by Lemma \ref{lem:entropySubsumed} below and Lemma \ref{lem:partialconverse}.

\begin{lem}\label{lem:entropySubsumed} Assume (H1)--(H5), and that $\lambda_1>0$ $\mu$-a.e. Let
$\eta$ be the partition  in Lemma 6.9. Then
$$
h_\mu(f) = h_\mu(f,\eta)\ .
$$
\end{lem}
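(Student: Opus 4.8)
The goal is to show that the uncountable partition $\eta$ from Proposition \ref{eta} carries all of the entropy of $f$, i.e. $h_\mu(f) = h_\mu(f,\eta)$. The standard strategy, following \cite{led} and \cite{mane}, is to sandwich $\eta$ between two partitions whose entropy we can control: one finite-entropy partition $\mathcal P$ with $\eta \le \mathcal P^+$, and the trivial observation that $h_\mu(f,\eta) \le h_\mu(f)$ always holds. The key identity to exploit is that for any countable partition $\mathcal P$ with $H_\mu(\mathcal P) < \infty$, if $\eta \le \mathcal P^+$ then $h_\mu(f) = h_\mu(f,\mathcal P) = h_\mu(f, \mathcal P^+)$ (the last equality because $\mathcal P^+$ is a one-sided refinement), and one then shows $h_\mu(f,\mathcal P^+) = h_\mu(f,\eta)$ using that $\eta$ is a decreasing partition (part (b) of Proposition \ref{eta}) refined appropriately by $\mathcal P^+$; more precisely, since $\mathcal P^+ \ge \eta \ge \xi$-type generators and both are decreasing under $f^{-1}$ modulo the relevant refinements, the Abramov--Rokhlin / Kolmogorov--Sinai machinery gives $h_\mu(f) = H_\mu(f^{-1}\eta \mid \eta) = h_\mu(f,\eta)$.

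\textbf{Step 1: reduce to constructing a good finite-entropy partition.} First I would invoke Lemma \ref{lem:partitionInsideWuloc}: to guarantee $\mathcal P^+(x) \subset W^u_{\d,x}$ it suffices that $|f^{-n}y - f^{-n}x|'_{f^{-n}x} \le \d\, l(f^{-n}x)^{-1}$ for all $y \in \mathcal P^+(x)$ and all $n$. Because $l$ varies slowly along orbits (Lemma \ref{lem:slowvary}, $l(f^{-n}x) \le e^{n\d_2}l(x)$) and the $|\cdot|'_z$ norms are comparable to $|\cdot|$ with a constant controlled by $l$ on uniformity sets, this reduces to asking that $|f^{-n}y - f^{-n}x| \le \rho(x)$ for a suitable measurable $\rho : \As \to (0,1)$ with $\log\rho \in L^1(\mu)$ — concretely something like $\rho(x) = c\, \d\, l(x)^{-2} e^{-\,\text{(const)}\,\log^+ l \circ f^{-n}}$ absorbed via the slow-variation bound, so that a fixed-rate exponentially shrinking ball at $x$ pulls back inside the required charts. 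The integrability $\log\rho \in L^1(\mu)$ follows from $\log l \in L^1(\mu)$, which in turn is a consequence of the construction of $l$ in Lemma \ref{lem:slowvary} (it is dominated by a slowly-varying function built from $\tilde l$, and $\log \tilde l \in L^1$ by Corollary \ref{cor:integrable} and the bounds on $C(x)$).

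\textbf{Step 2: apply the Mañé-type lemma.} With $Z = \As$ (compact, finite box-counting dimension by (H5)), $T = f|_{\As}$ (a homeomorphism of the compact invariant set), $m = \mu$, and $\rho$ as in Step 1, Lemma \ref{lem:Mane} produces a countable partition $\mathcal P$ with $H_\mu(\mathcal P) < \infty$ and $\mathcal P(x) \subset B(x,\rho(x))$ for $\mu$-a.e. $x$. Then for $y \in \mathcal P^+(x)$ we have $f^{-n}y \in \mathcal P(f^{-n}x) \subset B(f^{-n}x, \rho(f^{-n}x))$ for all $n \ge 0$, which by the choice of $\rho$ gives exactly the hypothesis of Lemma \ref{lem:partitionInsideWuloc}, hence $\mathcal P^+(x) \subset W^u_{\d,x}$ for $\mu$-a.e. $x$. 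In particular $\mathcal P^+(x) \subset W^u_x$, and since $\eta$ is subordinate to $W^u$ with $\eta(x)$ an open neighborhood of $x$ in $W^u_x$ sitting inside some $f^N(W^u_{{\rm loc},f^{-N}x})$, a short argument shows $\eta \le \mathcal P^+ \vee \eta$ refines $\eta$ in a way that is "$\eta$-trivial at the level of entropy" — more carefully, one uses $\eta \le \mathcal P^+ \vee \eta$ together with the fact that atoms of $\mathcal P^+$ lie on single unstable leaves to conclude $h_\mu(f,\mathcal P^+ \vee \eta) = h_\mu(f,\eta)$, while $h_\mu(f) = h_\mu(f,\mathcal P) \le h_\mu(f,\mathcal P^+ \vee \eta)$ since $\mathcal P \le \mathcal P^+ \le \mathcal P^+ \vee \eta$ and $\mathcal P^+$ generates. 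Combining with $h_\mu(f,\eta) \le h_\mu(f)$ yields equality.

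\textbf{Main obstacle.} The delicate point is \emph{not} the Mañé lemma (already proved as Lemma \ref{lem:Mane}) but the bookkeeping in Step 1: translating the chart-norm condition of Lemma \ref{lem:partitionInsideWuloc} into a genuine ambient-norm ball condition $B(x,\rho(x))$ with an \emph{integrable} $\log\rho$, in a setting where $f^{-1}$ is only defined $\mu$-a.e., where $l$ can be arbitrarily large, and where the comparison constant between $|\cdot|$ and $|\cdot|'_x$ also blows up with $l(x)$. One must chase the slow-variation estimates carefully to see that the accumulated deterioration along the backward orbit is only slow-exponential and is absorbed by choosing $\rho(x)$ to decay like a fixed power of $l(x)^{-1}$ times a fixed exponential factor — and that $\log$ of this is in $L^1(\mu)$ precisely because $\log l \in L^1(\mu)$, which was engineered in Section 4 via Corollary \ref{cor:integrable} (the integrability of $\log^+|(df|_{E^u})^{-1}|$, itself one of the genuinely infinite-dimensional inputs of the paper). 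I would also need to confirm the final entropy bookkeeping step carefully, since in infinite dimensions one cannot take generating partitions for granted; but $\As$ being a compact metric space on which $\mu$ lives means finite-diameter covers and hence genuine generators for $f|_{\As}$ exist, so this is routine.
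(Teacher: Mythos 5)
Your overall strategy is the paper's: build a finite-entropy partition $\mathcal P$ via the Ma\~n\'e-type Lemma \ref{lem:Mane}, force $\mathcal P^+(x)\subset W^u_{\d,x}$ via Lemma \ref{lem:partitionInsideWuloc}, and then compare entropies. But there are two genuine gaps in how you execute it.

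First, the integrability of $\log\rho$. You derive it from the claim that $\log l\in L^1(\mu)$, attributing this to the construction of $l$ in Lemma \ref{lem:slowvary}. That construction is a tempering argument, $l(x)=\sup_{n}e^{-|n|\d_2}\tilde l(f^nx)$ (roughly), and tempering guarantees only a.e.\ finiteness and slow variation of the majorant, \emph{not} integrability of its logarithm; in general $\sup_n(\phi\circ f^n-|n|\d_2)$ need not be integrable even when $\phi$ is. Corollary \ref{cor:integrable} gives integrability of $\log^+|(df|_{E^u})^{-1}|$, which feeds into the hypotheses of the tempering lemma, but it does not give $\log l\in L^1$. The paper avoids this entirely: it fixes a uniformity set $\Gamma_{l_0}$ of positive measure, lets $N$ be the first return time to $\Gamma_{l_0}$, and takes $\rho(x)=\d\, l_0^{-2}(e^{2\d_2}K)^{-N(x)}$; then $\log\rho\in L^1(\mu)$ by Kac's formula, $\int N\,d\mu=\mu(\Gamma_{l_0})^{-1}<\infty$. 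Your formula for $\rho$ (with a dangling index $n$ inside the exponent) does not make sense as written, and the reduction it is meant to encode rests on the unproved integrability of $\log l$.

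Second, the entropy bookkeeping at the end is not a proof. You assert $h_\mu(f)=h_\mu(f,\mathcal P)$ "since $\mathcal P^+$ generates," but the Ma\~n\'e partition is not generating and nothing in your construction makes it so; compactness of $\As$ gives you finite partitions approximating $h_\mu(f)$ within $\e$, not that this particular $\mathcal P$ achieves it. Moreover, knowing $\mathcal P^+(x)\subset W^u_{\d,x}$ does not yet give $\mathcal P^+\geq\eta$: an atom of $\mathcal P^+$ could straddle the boundaries of $\eta$-atoms along a single unstable leaf. The paper handles both points by passing to $\mathcal Q=\mathcal P\vee\{\Sc\cap\As,\As\setminus\Sc\}\vee\mathcal P_0$, where the two-set partition records visits of backward orbits to the stack $\Sc$ (this is what pins $\mathcal Q^+(x)$ inside $\eta(x)$, using the construction of $\eta=\vee_k\xi_k$ in Proposition \ref{eta}) and $\mathcal P_0$ is a finite partition with $h_\mu(f,\mathcal Q)>h_\mu(f)-\e$. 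The conclusion then comes from the inequality
$h_\mu(f,\eta)\geq \frac1n H_\mu(\vee_{j=0}^nf^{-j}\mathcal Q\mid\eta)-\frac1n H_\mu(\vee_{j=0}^nf^{-j}\mathcal Q\mid f^{-n}\eta)$,
with the first term bounded below by $h_\mu(f,\mathcal Q)$ using $\mathcal Q^+\geq\eta$ and the second shown to be $<\e$ for large $n$ because $\vee_jf^{-j}\eta$ separates points. Your appeal to $h_\mu(f,\mathcal P^+\vee\eta)=h_\mu(f,\eta)$ is asserted rather than argued and skips exactly these steps.
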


\begin{proof}[Sketch of proof.] This part of our proof is identical to that in \cite{led} (see also \cite{ledyou1}). 
For completeness we outline the proof of the ergodic case. 

\medskip \noindent
{\bf (A) Construction of an auxiliary partition $\mathcal P$.} The aim of this step is
to produce a partition
$\mathcal P$ of $\As$ with the property in Lemma \ref{lem:partitionInsideWuloc} and with $H_\mu(\mathcal P)<\infty$.
Fix $l_0 > 1$ such that $\mu (\Gamma_{l_0}) > 0$, and define $N : \Gamma_{l_0} \to \N$ to be the first return time to $\Gamma_{l_0}$. Extend $N$ to all of $\As$ by setting $N|_{\As \setminus \Gamma_{l_0}} = 0$. We apply Lemma \ref{lem:Mane} to the function $\rho(x) = \d l_0^{-2} (e^{2 \d_2} K)^{- N(x)} $ for any $\d \le \d'_1$,  (where $K$ is an upper bound on $|df|$ for $\{y \in \Bc : d(y, \As) \leq r_0\}$; see the discussion preceding Lemma \ref{prop:chartProps}), for which $\log \rho$ is in $L^1(\mu)$ since $\int_{\As} N(x) d \mu(x) = \big( \mu(\Gamma_{l_0} ) \big)^{-1} < \infty$. It is straightforward to check that the $\mathcal P$ given by 
Lemma \ref{lem:Mane} has the desired properties.

\medskip \noindent
{\bf (B) Proof of $h_\mu(f) = h_\mu(f, \eta)$.} Given a small $\varepsilon>0$, we let 
$\mathcal Q = \mathcal P \vee \{\Sc \cap \As, \As \setminus \Sc\} \vee \mathcal P_0$
where $\Sc$ is the stack used in the construction of $\eta$ and $\mathcal P_0$
is a finite partition chosen so that $h_\mu(f, \mathcal Q)>h_\mu(f) - \varepsilon$. Let
$\mathcal Q^+ = \vee_{n = 0}^{\infty} f^j \mathcal Q$, and check that by construction,
$\eta(x) \supset \mathcal Q^+(x)$ for $\mu$-a.e. $x$. Then
\begin{equation}  \label{converse}
h_{\mu}(f, \eta)  = \frac1n H_{\mu}(f^{-n} \eta | \eta)  \geq \frac{1}{n} H_{\mu}(\vee_{j = 0}^n f^{-j} \mathcal Q | \eta) - \frac1n H_{\mu}(\vee_{j = 0}^n f^{-j} \mathcal Q | f^{-n} \eta) \, .
\end{equation}
Since $\mathcal Q^+ \geq \eta$, the first term on the right side of (\ref{converse}) is
$$\geq \frac1n H_{\mu}(\vee_{j = 0}^n f^{-j} \mathcal Q | \mathcal Q^+) \geq h_{\mu}(f, \mathcal Q) \geq h_{\mu}(f) - \varepsilon\ .
$$
while the second term in \eqref{converse} can be shown to be $<\varepsilon$ for large $n$ since 
modulo sets of $\mu$-measure $0$, $\vee_{j = 0}^{\infty} f^{-j} \eta$ partitions
$\As$ into points. 
\end{proof}

\subsection{Corollaries}

We finish with the following corollaries to our main results.

\begin{thm}\label{thm:density} Let $\mu$ be an SRB measure of $f$. Let $\eta$
be given by Proposition \ref{eta}, and let $\rho$ be the densities with respect to 
$\nu_x$ of the conditional measures 
of $\mu$ on elements of $\eta$. Then $y \mapsto \rho(y)$ is Lipschitz on
each element of $\eta$, 
and  for $\mu$-a.e. $x$
\begin{equation} \label{density}
\frac{\rho(y)}{\rho(z)} =  \prod_{i=1}^\infty \frac{ \det(df_{f^{-i}z}|E^u_{f^{-i}z})}
{ \det(df_{f^{-i}y}|E^u_{f^{-i}y})} \quad \mbox{ for } 
\nu_x {\rm -a.e. } \ y,z \in \eta(x)\ .
\end{equation}
The conclusion above holds also if $\Sc$ is a stack of local unstable manifolds with 
$\mu(\Sc)>0$ and $\eta$ is replaced by $\xi$, the partition of $\Sc$ into unstable leaves. 
\end{thm}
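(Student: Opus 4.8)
\textbf{Proof proposal for Theorem \ref{thm:density}.}
The plan is to combine the transformation rule for the conditional density $\rho$ established in the proof of Theorem 1 (Lemma \ref{lem:transRule}) with the distortion estimates of Proposition \ref{prop:unstabDistEst}. First I would recall from the entropy computation that for $\mu$-a.e.\ $x$, the density $\rho = d\mu_{\eta(x)}/d\nu_x$ satisfies, for $\nu_x$-a.e.\ $z \in \eta(x)$,
\[
\rho(z) = \rho(fz)\cdot J^u(z)\cdot \mu_{\eta(x)}\big((f^{-1}\eta)(x)\big)\ .
\]
Applying this iteratively along the backward orbit of a point $y\in\eta(x)$ — which is legitimate since $\eta$ is decreasing and $\eta(x)\subset f^N(W^u_{{\rm loc},f^{-N}x})$ for some $N$, so $f^{-i}y$ stays on a local unstable leaf — and doing the same for a second point $z\in\eta(x)$, the awkward factors $\mu_{\eta(\cdot)}((f^{-1}\eta)(\cdot))$ cancel between the two telescoping products because $y$ and $z$ lie in the same atom at every stage of the backward iteration. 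This yields
\[
\frac{\rho(y)}{\rho(z)} = \lim_{N\to\infty}\prod_{i=1}^{N}\frac{J^u(f^{-i}z)}{J^u(f^{-i}y)}
= \lim_{N\to\infty}\prod_{i=1}^N \frac{\det(df_{f^{-i}z}|E^u_{f^{-i}z})}{\det(df_{f^{-i}y}|E^u_{f^{-i}y})}\ ,
\]
which is exactly \eqref{density}, provided the infinite product converges.

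Convergence and the Lipschitz bound are handled by Proposition \ref{prop:unstabDistEst}: since $\eta(x)\subset f^N(W^u_{{\rm loc},f^{-N}x})$ and $f^N$ restricted to a leaf is a $C^2$ embedding (Hypothesis (H1)), one reduces to the situation of that proposition after pulling back by $f^N$; part (b) gives uniform convergence of the partial products $\Delta_N(x',y)$ to a Lipschitz-continuous limit with constant $\le D_l$ in the $|\cdot|$ norm (the Lipschitz constant becoming nonuniform over the whole leaf $\eta(x)$, depending on the value of $l$ along $f^{-N}\eta(x)$, but finite a.e.). Hence $y\mapsto\rho(y)/\rho(z_0)$ is Lipschitz for a fixed reference point $z_0\in\eta(x)$, and normalizing so that $\int_{\eta(x)}\rho\,d\nu_x = 1$ shows $\rho$ itself is Lipschitz on $\eta(x)$, positive and bounded above and below there. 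One must check that the $\rho$ produced this way agrees $\nu_x$-a.e.\ with the Radon--Nikodym derivative $d\mu_{\eta(x)}/d\nu_x$; this is immediate from the transformation rule, which pins $\rho$ down up to the normalization already fixed.

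For the final sentence, concerning a stack $\Sc$ with $\mu(\Sc)>0$ and the partition $\xi$ into unstable leaves rather than $\eta$: here I would invoke Lemma \ref{lem:SRBequiv} and the uniqueness of canonical disintegrations. Since $\mu$ is SRB, $\mu_{\xi^\Sc(x)}\ll\nu_x$ for $\mu$-a.e.\ $x\in\Sc$, and the same telescoping argument applies verbatim once we observe that points $y,z$ on a common leaf of $\xi$ have $f^{-i}y, f^{-i}z$ on a common local unstable leaf for all $i\ge 0$ (each leaf of $\Sc$ sits inside some $W^u_{\d,y_0}$ with $y_0\in\bar U\subset\Gamma_{l_0}$, so Lemma \ref{lem:continuousInvert} and the backward contraction estimates of Theorem \ref{thm:unstabMfld} keep them together). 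The transformation rule for the $\xi$-conditionals follows from the same change-of-variables computation as in Lemma \ref{lem:transRule}, using \eqref{changevariable} and the measurability facts from Lemmas \ref{lem:nuMeasurable} and \ref{lem:qCont}; alternatively, one matches $\xi$ and $\eta$ on the overlap of $\Sc$ with $\tilde\Sc$ and transports the formula. The main obstacle I anticipate is not any single estimate but the bookkeeping of noninvertibility: ensuring that the backward iterates $f^{-i}y$ genuinely remain on a single local unstable leaf with uniformly controlled geometry, so that Proposition \ref{prop:unstabDistEst} applies — this is precisely what the machinery of Sect.\ 6.2 (Lemmas \ref{lem:continuousInvert}--\ref{lem:qCont}) was built to supply, and the proof should cite it rather than reprove it.
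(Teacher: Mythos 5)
Your telescoping of the transformation rule gives, for $y,z$ in the same atom of $\eta$,
\[
\frac{\rho(y)}{\rho(z)} \;=\; \frac{\rho(f^{-N}y)}{\rho(f^{-N}z)}\cdot\prod_{i=1}^{N}\frac{J^u(f^{-i}z)}{J^u(f^{-i}y)}\ ,
\]
and the factors $\mu_{\eta(f^{-i}x)}\big((f^{-1}\eta)(f^{-i}x)\big)$ do indeed cancel. But passing from this identity to \eqref{density} requires $\rho(f^{-N}y)/\rho(f^{-N}z)\to 1$, and that is precisely the step you skip. A priori $\rho$ is only a Radon--Nikodym derivative --- a measurable, $\nu$-a.e.\ defined function with no continuity --- so the fact that $f^{-N}y$ and $f^{-N}z$ approach each other exponentially does not by itself force this ratio to tend to $1$; a merely measurable density can oscillate arbitrarily at small scales. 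Your closing remark that the transformation rule ``pins $\rho$ down up to the normalization'' is circular: it determines $\rho$ only modulo exactly this unresolved tail factor. Closing the gap would require an additional argument (e.g.\ a Lebesgue density point argument for $\rho$ combined with bounded distortion), which is the real content of the statement.

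The paper avoids this issue entirely by a different and much shorter route: since $\mu$ is SRB, the proof of Theorem 1 yields $h_\mu(f,\eta)=\int\log J^u\,d\mu$, and the proof of Lemma \ref{lem:partialconverse} then applies as is --- one constructs the candidate measure $\vartheta$ whose conditionals on $\eta$ carry the explicit Lipschitz densities $p(z)=\Delta(x,z)/\int_{\eta(x)}\Delta\,d\nu_x$ and proves $\vartheta=\mu$ by the Jensen's inequality argument, so $\rho=p$ by uniqueness of the canonical disintegration. The remaining ingredients of your proposal are sound and consistent with the paper: Proposition \ref{prop:unstabDistEst}(b) does give the uniform convergence and the (nonuniform over atoms) Lipschitz bound, the stack statement does reduce to the $\eta$ statement via uniqueness of disintegrations and Lemma \ref{lem:SRBequiv}, and the continuity across leaves needed for measurability is supplied by Lemmas \ref{lem:continuousInvert}--\ref{lem:qCont}.
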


\begin{proof} In the proof of Theorem 1, we showed that
$h_\mu (f,\eta) = \int \log J^u d\mu$. The form of the densities comes
from the proof of Lemma \ref{lem:partialconverse}.
\end{proof}

\begin{cor} Let $\mu$ be an SRB measure of $f$. Then $W^u_x \subset \operatorname{supp}(\mu)
\subset \As$ for $\mu$-a.e. $x$.
\end{cor}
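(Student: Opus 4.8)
The plan is to combine Theorem \ref{thm:density} (equivalently, its version for the partition of a stack into unstable leaves) with the $f$-invariance of $\operatorname{supp}\mu$. The inclusion $\operatorname{supp}\mu\subseteq\As$ is immediate from (H2): $\mu(\As)=1$ and $\As$ is closed. Moreover, since $\mu$ is $f$-invariant and $f|_\As$ is a homeomorphism (a continuous bijection of a compact set onto itself), one has $f(\operatorname{supp}\mu)=\operatorname{supp}\mu$. By the standard reduction to the ergodic case it suffices to prove $W^u_x\subseteq\operatorname{supp}\mu$ for $\mu$-a.e.\ $x$ when $\mu$ is ergodic, treating one component $\Gamma(m,n;p,q)$ of Sect.\ 5.4 at a time so that the constants $\lambda,\delta'_1,\dots$ are well defined.

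Step 1: for $\mu$-a.e.\ $x$, a local unstable disk around $x$ lies in $\operatorname{supp}\mu$. Let $\eta$ be the partition of Proposition \ref{eta} on $\tilde\Sc$, so $\mu(\tilde\Sc)=1$ by ergodicity. By Theorem \ref{thm:density} the conditional density $\rho=d\mu_{\eta(x)}/d\nu_x$ is Lipschitz on $\eta(x)$ and obeys the ratio formula \eqref{density}; since the infinite product there is bounded above and below (Proposition \ref{prop:unstabDistEst}) and $\int_{\eta(x)}\rho\,d\nu_x=1$, the density $\rho$ is bounded away from $0$ on $\eta(x)$, so $\mu_{\eta(x)}$ is equivalent to $\nu_x|_{\eta(x)}$ and in particular $\operatorname{supp}\mu_{\eta(x)}\supseteq\eta(x)$. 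Fixing a countable base $\{U_j\}$ of open subsets of the (second countable) space $\As\setminus\operatorname{supp}\mu$, each $U_j$ is $\mu$-null, so $\int\mu_{\eta(x)}(U_j)\,d\mu=\mu(U_j)=0$ and hence $\mu_{\eta(x)}(U_j)=0$ for a.e.\ $x$ and all $j$; together with $\mu_{\eta(x)}(\Bc\setminus\As)=0$ (as $\mu$ is carried by $\As$) this gives $\operatorname{supp}\mu_{\eta(x)}\subseteq\operatorname{supp}\mu$. Since $\eta(x)$ contains a neighbourhood of $x$ in $W^u_x$, we conclude $W^u_{\epsilon(x),x}\subseteq\operatorname{supp}\mu$ for some $\epsilon(x)>0$, for $\mu$-a.e.\ $x$.

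Step 2: upgrade to the full local unstable manifold. Put $S_\delta=\{y:W^u_{\delta,y}\subseteq\operatorname{supp}\mu\}$ for $\delta\in(0,\delta'_1]$. Step 1 gives $\mu\bigl(\bigcup_k S_{1/k}\bigr)=1$, and each $S_\delta$ is forward invariant since $f(W^u_{\delta,y})\supseteq W^u_{\delta,fy}$ (Theorem \ref{thm:unstabMfld}) and $f(\operatorname{supp}\mu)=\operatorname{supp}\mu$; by ergodicity a forward-invariant set has measure $0$ or $1$, so $\mu(S_{1/k_0})=1$ for some $k_0$. For $x$ with $f^{-n}x\in S_{1/k_0}$ for all $n\ge0$ (a full-measure set, as $f^nS_{1/k_0}\downarrow$ with $\mu(f^nS_{1/k_0})=1$), every $y\in W^u_{\delta'_1,x}$ has $f^{-n}y$ contracting towards $f^{-n}x$ along the local unstable leaf (Theorem \ref{thm:unstabMfld} together with the slow variation of $l$), so $f^{-n}y\in W^u_{1/k_0,f^{-n}x}\subseteq\operatorname{supp}\mu$ once $n$ is large, and then $y=f^n(f^{-n}y)\in f^n(\operatorname{supp}\mu)=\operatorname{supp}\mu$. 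Hence $W^u_{\delta'_1,x}\subseteq\operatorname{supp}\mu$, i.e.\ $\mu(S_{\delta'_1})=1$. Consequently, for $\mu$-a.e.\ $x$, $f^n(W^u_{\delta'_1,x})\subseteq f^n(\operatorname{supp}\mu)=\operatorname{supp}\mu$ for every $n\ge0$, and since $W^u_x=\bigcup_{n\ge0}f^n(W^u_{\delta,x})$ with $\delta\le\delta'_1$ we obtain $W^u_x\subseteq\operatorname{supp}\mu\subseteq\As$.

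The step I expect to need the most care is Step 1 — the passage from a measure-class statement to a statement about supports: one must address the measurability of $x\mapsto\mu_{\eta(x)}(U_j)$, keep track of the fact that $\eta$ lives on $\tilde\Sc$ rather than $\As$ (which has full measure in the ergodic case), and verify that $\rho$ is genuinely bounded below on $\eta(x)$, which is exactly where the explicit form \eqref{density} of the density — obtained from the distortion estimates of Proposition \ref{prop:unstabDistEst} — is used. The bootstrapping in Step 2 is then routine, relying only on forward invariance of $\operatorname{supp}\mu$, ergodicity, and the backward contraction along local unstable manifolds, and the nonergodic case is handled by working componentwise as in Sect.\ 5.4.
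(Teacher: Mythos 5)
Your argument is correct and is essentially the paper's proof written out in full: the paper disposes of the corollary in one line by citing the positivity of the conditional densities $\rho$ from Theorem \ref{thm:density}, and your Steps 1--2 are exactly the standard details behind that assertion (passing from equivalence of $\mu_{\eta(x)}$ with $\nu_x$ to an inclusion of supports, then bootstrapping from a local unstable disk to all of $W^u_x$ via $f(\operatorname{supp}\mu)=\operatorname{supp}\mu$ and ergodicity).
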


\begin{proof} This is because $\rho >0$ $\nu_x$-a.e. on $W^u_x$ for $\mu$-a.e. $x$
by Theorem \ref{thm:density}.
\end{proof}

\section*{Appendix}

Recall that the hypothesis of Lemma \ref{lem:openCond} are
$$
(*) \qquad E, E', F \in \Gc(\Bc)\ , \quad \Bc = E \oplus F\ , \quad \mbox{ and } 
\quad d_H(E, E') < |\pi_{E \ds F}|^{-1}\ .
$$
Let us write $d(\cdot, \cdot)$ instead of $d_H(\cdot, \cdot)$ for simplicity.

\begin{proof}[Proof of Lemma \ref{lem:openCond}] To prove $\Bc = E' \oplus F $,
we first show $E' \cap F = \{0\}$. If not, pick $e' \in E' \cap F$ with $|e'|=1$.
Since $d(e', S_E) \le d(E, E') < |\pi_{E \ds F}|^{-1}$, 
there exists $e \in E$ with $|e|=1$ such that $|e-e'| < |\pi_{E \ds F}|^{-1}$.
This is incompatible with
$$
1 = |e| = |\pi_{E \ds F} e|  = |\pi_{E \ds F} (e - e')| \leq |\pi_{E \ds F}| \cdot |e - e'| < 1\ .
$$

Next, we claim that $E' \oplus F$ is closed. It will suffice to show that there exists $A > 0$ such that for any $e' \in E', f \in F$, we have
\begin{align} \label{eq:verifyKober}
|e'| \leq A |e' + f|\ .
\end{align}
This is known as the Kober criterion \cite{kober}. Indeed, if \eqref{eq:verifyKober} holds and $x_n = e'_n + f_n$ is Cauchy, then $e_n'$ and $f_n$ individually are Cauchy, and thus converge to some $e' \in E', f \in F$ respectively, hence $x_n \to x := e' + f \in E' + F$.
To prove \eqref{eq:verifyKober}, pick arbitrary $e' \in E'$ and $f \in F$, and fix $c>1$ with
$c d(E,E')< |\pi_{E \ds F}|^{-1}$. As before, let
$e \in E$ be such that $|e|  = |e'|$ and  $|e - e'| \leq |e'| c d(E, E')$. 
 Then
$$
|e' + f| \geq |e + f| - |e - e'| \geq |e'| (|\pi_{E \ds F}|^{-1} - c d(E, E')) =: A^{-1} |e'| \ .
$$

To finish, assume for the sake of contradiction that $E' \oplus F \ne \Bc$.
By Assumption (ii), there exist $c_1<1<c_2 $ such that $c_2 d(E,E') |\pi_{E \ds F}| < c_1$.
Since $E' \oplus F$ is closed, the Riesz Lemma \cite{sch} asserts that there exists $x \in \Bc$ with $|x|=1$ such that $|x -  (e' + f)| \geq c_1$ for all $e' \in E', f \in F$. 
On the other hand, since $\Bc = E \oplus F$, we have that $x = e + f$ for some $e \in E, f \in F$;
notice that $|e| \le |\pi_{E \ds F}|$. But there exists $e' \in E$ with $|e'|=|e|$ and $|e-e'| \le c_2 
|e| d(E,E')$, and for such an $e'$, 
$$
|x - (e' + f)| = |e - e'| \leq |e| c_2 d(E, E') < c_1 \ ,
$$
contradicting our choice of $x$.
\end{proof}

\smallskip \noindent
{\bf Lemma A.1.} {\it Assume (*). Then} \vspace{-4pt}
\begin{itemize} 
\item[(i)] $$ |\pi_{E' \ds F}| \leq \frac{|\pi_{E \ds F}|}{1 -  |\pi_{E \ds F}| d(E, E')} \, , $$
\item[(ii)] $$ |\pi_{F \ds E'}|_E| \leq 2 |\pi_{E' \ds F}| d(E, E') \, . $$
\end{itemize}


\medskip
\begin{proof} Since $\Bc = E\oplus F = E'\oplus F$, (i) above is equivalent to
\begin{equation} \label{bound1}
\a(E, F) \leq d(E, E') + \a(E', F)
\end{equation}
by the formula $\a(E, F) = |\pi_{E \ds F}|^{-1}$ from Sect. 2.1.2. To
estimate $\a(E', F)$ from below, we let $e' \in E'$ with $|e'| = 1$ and $f \in F$ be arbitrary. 
For $c > 1$, we let $e \in E, |e| = 1$ be such that $|e - e'| \leq c d(E, E')$. Then,
$$
|e'  - f| \geq |e - f| - |e' - e| \geq \a(E, F) - c d(E, E')\ .
$$
But $e', f$ were arbitrary and so our formula follows on taking $c \to 1$.

\medskip
To prove (ii), fix $e \in E, |e| = 1$. Then for $c > 1$ arbitrarily close to $1$, let $e' \in E', |e'| = 1$ be such that $|e - e'| \leq c d(E, E')$. Then
$$
|\pi_{F \ds E'} e| = |\pi_{F \ds E'} (e - e')| \leq |\pi_{F \ds E'}| \cdot |e - e'| \leq 2 |\pi_{E' \ds F}| \cdot c d(E, E')\ .
$$
\end{proof}

\begin{proof}[Proof of Lemma \ref{lem:graNormEst}.] That $\Bc = E'\oplus F$ follows from Lemma \ref{lem:openCond}.
The bounds in (a) are given by Lemma A.1, and (b) follows from (a). 
\end{proof}

\bibliography{bibliography}
\bibliographystyle{plain}

\end{document}